\documentclass[12pt, usenames,dvipsnames]{amsart}
\usepackage{amssymb,amsmath,amsfonts,amscd,euscript,mathrsfs,bm,bbm}
\usepackage{lineno}

\usepackage{amsthm}
\usepackage{amsopn} 
\usepackage{verbatim} 
\usepackage{mathtools}
\usepackage{enumitem}

\usepackage{fullpage}


\usepackage{tikz}
\usepackage{tikz-cd}
\usetikzlibrary{babel}
\usetikzlibrary{matrix}
\usetikzlibrary{shapes}
\usetikzlibrary{arrows}
\usetikzlibrary{calc,3d}
\usetikzlibrary{decorations,decorations.pathmorphing}
\usetikzlibrary{through}
\tikzset{ext/.style={circle, draw,inner sep=1pt},int/.style={circle,draw,fill,inner sep=1pt},nil/.style={inner sep=1pt}}
\tikzset{exte/.style={circle, draw,inner sep=3pt},inte/.style={circle,draw,fill,inner sep=3pt}}
\tikzset{diagram/.style={matrix of math nodes, row sep=3em, column sep=2.5em, text height=1.5ex, text depth=0.25ex}}
\tikzset{diagram2/.style={matrix of math nodes, row sep=0.5em, column sep=0.5em, text height=1.5ex, text depth=0.25ex}}
\tikzset{every picture/.append style={baseline=-.65ex}}

\usepackage{hyperref}

\usepackage{ulem}
\usepackage{color}

\newcommand\cbl[1]{{\color{blue} #1}}


\let\le\leqslant
\let\ge\geqslant
\let\leq\leqslant
\let\geq\geqslant

\newcounter{alphaequation}


\newcommand{\nc}{\newcommand}

\numberwithin{equation}{section}
\newtheorem{conjecture}[equation]{Conjecture}
\newtheorem*{conj*}{Conjecture}
\newtheorem{theorem}[equation]{Theorem}
\newtheorem{proposition}[equation]{Proposition}
\newtheorem{proposition-definition}[equation]{Proposition-Definition}

\newtheorem{lemma}[equation]{Lemma}
\newtheorem{corollary}[equation]{Corollary}
\newtheorem*{cor*}{Corollary}
\newtheorem{remark}[equation]{Remark}
\newtheorem{prop::def}[equation]{Proposition-Definition}


\newtheorem{dfn}[equation]{Definition}
\newtheorem{definition}[equation]{{\bf Definition}}

\newtheorem{algorithm}[equation]{{\bf Algorithmic Definition}}
\newtheorem{notation}[equation]{{\bf Notation}}
\newtheorem{fact}[equation]{Fact}

\theoremstyle{definition}
\newtheorem{example}[equation]{Example}

\nc{\fB}{\mathfrak{B}}
\nc{\gl}{\mathfrak{gl}}
\nc{\GL}{\mathfrak{GL}}
\nc{\g}{\mathfrak{g}}
\nc{\gh}{\widehat\g}
\nc{\h}{\mathfrak{h}}
\nc{\wfh}{\widehat{\mathfrak{h}}}
\nc{\la}{\lambda}
\nc{\al}{\alpha }
\nc{\be}{\beta }
\nc{\ve}{\varepsilon }
\nc{\om}{\omega }
\nc{\lr}{\text{-}}
\nc{\ta}{\theta}
\nc{\ch}{{\mathop {\rm ch}}}
\nc{\Tr}{{\mathop {\rm Tr}\,}}
\nc{\tr}{{\mathrm tr}}
\nc{\Id}{{\mathop {\rm Id}}}
\nc{\ad}{{\mathop {\rm ad}}}
\nc{\End}{{\mathop {\rm End}}}

\nc{\bra}{\langle}
\nc{\ket}{\rangle}
\nc{\bi}{{\bf i}}
\nc{\pa}{\partial}
\nc{\ld}{\ldots}
\nc{\cd}{\cdots}
\nc{\hk}{\hookrightarrow}
\nc{\T}{\otimes}
\nc{\gr}{\mathrm{gr}}
\nc{\ov}{\overline}

\nc{\msl}{\mathfrak{sl}}
\nc{\mgl}{\mathfrak{gl}}
\nc{\U}{\mathrm U}
\nc{\euJ}{\EuScript J}
\nc{\cO}{\mathcal{O}}
\nc{\cL}{\mathcal{L}}
\nc{\Res}{{\mathbf{Res}}}
\nc{\Ind}{{\mathbf{Ind}}}
\nc{\Coind}{{\mathbf{CoInd}}}
\nc{\LInd}{{\mathbf{LInd}}}
\nc{\RCoind}{{\mathbf{RCoInd}}}

\nc{\tI}{{\mathsf{I}}}
\nc{\sR}{\mathbf{R}}
\nc{\sZ}{\mathbf{Z}}

\newcommand{\Lattice}{\mathcal{L}}
\newcommand{\bC}{{\Bbbk}}

\newcommand{\bD}{{\mathbb D}}  
\newcommand{\bA}{{\mathbb A}} 
\newcommand{\bAD}{{\mathbb B}}

\newcommand{\bZ}{{\mathbb Z}}

\newcommand{\fh}{{\mathfrak h}}

\newcommand{\fg}{{\mathfrak g}}

\newcommand{\fb}{{\mathfrak b}}

\newcommand{\fn}{{\mathfrak n}}

\newcommand{\euE}{\EuScript{E}}
\newcommand{\EL}{\mathsf{EL}}

\nc{\fI}{\mathfrak I}
\nc{\bfI}{\mathbf I}

\nc{\Q}{\mathfrak Q}
\nc{\fr}{\mathfrak r}
\nc{\W}{\mathbb W}
\nc{\bU}{\mathbb U}

\nc{\Gm}{\mathbb{G}_{m}}

\newcommand{\bS}{\mathbb{S}}

\newcommand{\calF}{\mathcal{F}}

\newcommand{\bbY}{\mathbf{Y}}

\newcommand{\sT}{\mathsf{T}}
\newcommand{\sS}{\mathsf{S}}

\newcommand{\de}{\text{-}}

\newcommand{\KK}{\mathsf{K}}
\newcommand{\Mat}{\mathsf{Mat}}

\newcommand{\ldot}{{\:\raisebox{1.5pt}{\selectfont\text{\circle*{1.5}}}}}
\newcommand{\udot}{{\:\raisebox{4pt}{\selectfont\text{\circle*{1.5}}}}}

\newcommand\ttt{\text{-}}

\newcommand{\St}{\mathbf{S}\kern -.25em{\mathbf{c}}}

\newcommand{\DL}{\mathsf{DL}}

\newcommand{\Bruhat}{\small\mathsf{Br}}

\newcommand{\vrt}{\mathsf{vrt}}
\newcommand{\hor}{\mathsf{hor}}
\newcommand{\hb}{\mathsf{hbs}} 
\newcommand{\bbs}{\mathsf{bs}} 

\newcommand{\pth}{\mathsf{path}}

\newcommand{\gridV}[4]{
	{
		\begin{tikzpicture}[scale=0.37]
			\draw[step=1cm] (0,0) grid (1,-4);
			\node (v1) at (0.5,-0.8) {{\cbl{\tiny {#1}}}};
			\node (v2) at (0.5,-1.8) {{\cbl{\tiny {#2}}}};
			\node (v3) at (0.5,-2.8) {{\cbl{\tiny {#3}}}};
			\node (v4) at (0.5,-3.8) {{\cbl{\tiny {#4}}}};
		\end{tikzpicture}
}}

\newcommand{\gridH}[4]{
	{
		\begin{tikzpicture}[scale=0.37]
			\draw[step=1cm] (0,0) grid (4,-1);
			\node (v1) at (0.5,-0.8) {{\cbl{\tiny {#1}}}};
			\node (v2) at (1.5,-0.8) {{\cbl{\tiny {#2}}}};
			\node (v3) at (2.5,-0.8) {{\cbl{\tiny {#3}}}};
			\node (v4) at (3.5,-0.8) {{\cbl{\tiny {#4}}}};
		\end{tikzpicture}
}}
\newcommand{\gridDL}[4]{
	{
		\begin{tikzpicture}[scale=0.37]
			\draw[step=1cm] (0,0) grid (4,-2);
			\draw[step=1cm] (1,0) grid (4,-3);
			\draw[step=1cm] (3,0) grid (4,-4);
			\node (v21) at (0.5,-1.8) {{\cbl{\tiny {#1}}}};
			\node (v32) at (1.5,-2.8) {{\cbl{\tiny {#2}}}};
			\node (v13) at (2.5,-.8) {{\cbl{\tiny {#3}}}};
			\node (v44) at (3.5,-3.8) {{\cbl{\tiny {#4}}}};
		\end{tikzpicture}
}}

\newcommand{\fgrid}[5]{
	\begin{tikzpicture}[scale=0.37]
		\draw[step=1cm] (0,0) grid (5,1);
		\node (v1) at (0.5,0.4) {{\cbl{\small {#1}}}};
		\node (v2) at (1.5,0.4) {{\cbl{\small {#2}}}};
		\node (v3) at (2.5,0.4) {{\cbl{\small {#3}}}};
		\node (v4) at (3.5,0.4) {{\cbl{\small {#4}}}};
		\node (v5) at (4.5,0.4) {{\cbl{\small {#5}}}};
	\end{tikzpicture}
}

\newcommand{\grDLex}[5]{
	{
		\begin{tikzpicture}[scale=0.3,shift={(0,2)}]
			\draw[step=1cm] (0,0) grid (6,-2);
			\draw[step=1cm] (1,0) grid (6,-3);
			\draw[step=1cm] (4,0) grid (6,-5);
			\node (v21) at (0.5,-1.5) {{\color{blue}{\tiny {#1}}}};
			\node (v32) at (1.5,-2.5) {{\color{blue}{\tiny {#2}}}};
			\node (v13) at (2.5,-.5) {{\color{blue}{\tiny {#3}}}};
			\node (v54) at (4.5,-4.5) {{\color{blue}{\tiny {#4}}}};
			\node (v46) at (5.5,-3.5) {{\color{blue}{\tiny {#5}}}};  
		\end{tikzpicture}
}}

\nc{\sD}{{{{D}\hspace*{-.9em}\text{\bf{--}}\hspace*{0.3em}}}}
\nc{\sL}{{{{L}\hspace*{-.7em}\text{\bf{--}}\hspace*{0.3em}}}}
\nc{\sLD}{{{{LD}\hspace*{-.9em}\text{\bf{---}}\hspace*{0.3em}}}}
\nc{\sDL}{{{{DL}\hspace*{-.9em}\text{\bf{---}}\hspace*{0.3em}}}}
\nc{\ssD}{{{{D}\hspace*{-.65em}\text{\bf{--}}\hspace*{0.2em}}}}
\nc{\ssL}{{{{L}\hspace*{-.55em}\text{\bf{--}}\hspace*{0.2em}}}}

\nc{\Ar}{{\mathsf{Ar}}}

\nc{\disorder}{\mathsf{Disorder}}
\nc{\rk}{\mathsf{rk}}
\nc{\op}{\mathsf{op}}
\nc{\precsucc}{\overset{\prec}{\succ}}

\begin{document}
	\title{Bubble sort and Howe duality for staircase matrices }
	

	\author[Khoroshkin]{Anton Khoroshkin}
	\address{Anton Khoroshkin: \newline
		Department of Mathematics, University of Haifa, Mount Carmel, 3498838, Haifa, Israel
	}
	
	\email{khoroshkin@gmail.com}
	
	\author[Makedonskyi]{Ievgen Makedonskyi}
	\address{Ievgen Makedonskyi:\newline
		Yanqi Lake Beijing Institute of Mathematical Sciences And Applications (BIMSA), 
		No. 544, Hefangkou Village, Huaibei Town, Huairou District, Beijing 101408.}
	\email{makedonskii\_e@mail.ru}

	\begin{abstract}
		We prove the alternating Cauchy identity for staircase matrices conjectured in~\cite{FKM::Cauchy}, together with an explicit description of the coefficients occurring in it. As a byproduct, our approach also yields a new, independent (more combinatorial) proof of the Cauchy identities for staircase matrices established in~\cite{FKM::Cauchy}.
		
		The first part of the paper focuses on combinatorial aspects. It is self-contained, of independent interest, and introduces a generalization of parabolic Bruhat graphs for monotone functions on an arborescent poset.
		The second part centers on representation theory. We propose a generalization of the classical Howe duality for staircase matrices in terms of the distributive lattice of Demazure submodules within a given integrable representation. Computing the associated character yields all desired Cauchy identities for staircase matrices.
	\end{abstract}
	
	\maketitle
	
	\setcounter{tocdepth}{1}
	\tableofcontents

	\setcounter{section}{-1}
	\section{Introduction}
	
	\renewcommand{\theequation}{\Alph{equation}}
	
	\subsection{Motivation: the Cauchy identity and Howe duality}
	\label{intro::motivation}
	
	Fix positive integers $n,m$ and variables $x=(x_1,\dots,x_n)$, $y=(y_1,\dots,y_m)$. The classical {\it "Cauchy identity"} states that
	\begin{equation}
		\label{eq::classical::Cauchy::intro}
		\prod_{i=1}^n\prod_{j=1}^m \frac{1}{1-x_iy_j} = \sum_{\lambda} s_\lambda(x)\, s_\lambda(y),
	\end{equation}
	the sum being over partitions $\lambda$ with at most $\min(n,m)$ parts and $s_\lambda$ denoting the Schur polynomial. Combinatorially, \eqref{eq::classical::Cauchy::intro} is equivalent to the Robinson--Schensted--Knuth (RSK) correspondence between matrices with non-negative integer entries and pairs of semistandard tableaux of equal shape; we refer to the textbooks~\cite{Macdonald, Fulton, Sagan, St, Romik, DK1} for a systematic treatment of RSK and its many variants, and to the original papers~\cite{Robinson, Schensted, Knuth}. Representation-theoretically, it is a shadow of {\it "Schur-Weyl"} duality, or, equivalently, the {\it "Howe duality"}: the Lie algebras $\gl_n$ and $\gl_m$, acting on the space of rectangular matrices $\Mat_{n\times m}$ from the left and from the right respectively, form a dual pair, and R.\,Howe~\cite{Ho, Howe89} showed that the symmetric algebra decomposes multiplicity-freely as a direct sum of irreducible $\gl_n\ttt\gl_m$-modules (see also the textbook account in~\cite{Goodman::Wallach}):
	$$
	S^{\bullet}(\Mat_{n\times m}) \simeq \bigoplus_{\lambda} V_\lambda^{\gl_n}\otimes (V_\lambda^{\gl_m})^{\op}, \qquad l(\lambda)\leq \min(m,n).
	$$
	Since $\Mat_{n\times m}$ has a weight basis $\left\{E_{ij}\colon \genfrac{}{}{0pt}{}{1\leq i \leq n,}{1\leq j\leq m}\right\}$ of $\gl_n\ttt\gl_m$-weight $x_iy_j$, the $(\gl_n\ttt\gl_m)$-character of its symmetric algebra is exactly the product on the left-hand side of \eqref{eq::classical::Cauchy::intro}, while the character of $V_\lambda^{\gl_n}\otimes (V_\lambda^{\gl_m})^{\op}$ is the product $s_\lambda(x)s_\lambda(y)$ of two Schur polynomials. Comparing characters on both sides of the Howe decomposition recovers the Cauchy identity. This representation-theoretic proof is the starting point for the present paper: below we replace $\Mat_{n\times m}$ by a staircase-shaped subspace of matrices, the pair of full Lie algebras $\gl_n\ttt\gl_m$ by a pair of Borel subalgebras $\fb_n\ttt\fb_m$, and the irreducible modules $V_\lambda$ by Demazure modules, and we ask for the resulting analogues of the Howe decomposition and of the Cauchy identity.
	
	The Cauchy identity, together with its many generalizations, $q$-deformations and applications, can be found in different areas of mathematics:
\begin{itemize}[itemsep=0pt,topsep=0pt]
	\item in {\it combinatorics}, through the theory of symmetric functions, plane partitions and lattice paths -- see the textbooks~\cite{Macdonald, St, Fulton} and, e.g.,~\cite{BW, Las};
	\item in {\it representation theory}, through branching rules and other instances of reductive dual pairs -- see the textbook~\cite{Goodman::Wallach} and the foundational papers~\cite{Ho,Howe89};
	\item in {\it probability}, through last-passage percolation, corner-growth models and determinantal processes built out of Schur measures -- see the textbook~\cite{Romik} and the survey~\cite{BP}, together with~\cite{BC, Ok, OR, Johansson, BDJ};
	\item in {\it mathematical physics}, through the partition functions of vertex models and quantum integrable systems -- see the textbooks~\cite{Baxter, KBI} and, e.g.,~\cite{BBF, CK, FL}.
\end{itemize}
Non-symmetric and staircase generalizations of the Cauchy identity, in which the full linear group is replaced by a Borel subgroup or by an intersection of parabolic subgroups, have likewise found combinatorial and probabilistic applications~\cite{Las,AE,AGL}. Part of our motivation is to place these staircase Cauchy identities on the same representation-theoretic footing as the classical one, via an appropriate generalization of Howe duality.
	
	\subsection{Ingredients of the generalised Cauchy identity for staircase matrices}
	\label{intro::ingredients}
	
	Let us fix the space of rectangular matrices \( \Mat_{n\times m} \) with \( n \) rows and \( m \) columns, on which \( \gl_n \) and \( \gl_m \) act from the left and right respectively, as in \S\ref{intro::motivation}. Fix \( \ov{n}:=(n_1\leq n_2\leq \ldots\leq n_m) \) (with \( n_m=n \)), and let \( \Mat_{\ov{n}}\subset\Mat_{n\times m} \) be the subspace of {\it "staircase matrices"} of shape \( \ov n \): those matrices supported on the {\it "reversed Young diagram"} \( \bbY_{\ov{n}} \) obtained, as suggested by Gaussian elimination, by keeping only the bottom \( n_j \) entries of the \( j \)-th column (see Example~\ref{ex::staircase::poset} in \S\ref{intro::combinatorics} below). The left-hand side of every Cauchy identity in this paper is the bigraded character of the symmetric algebra of this space, namely
	$$
	\prod_{(i,j)\in\bbY_{\ov{n}}} \frac{1}{1-x_iy_j}.
	$$
	
	Working with a staircase shape breaks the full \( \gl \)-symmetry, but preserves the action of the upper-triangular (Borel) subalgebras -- \( \fb_n \) from the left and \( \fb_m \) from the right -- so that \( S^{N}(\Mat_{\ov{n}}) \) is only a \( \fb_n\ttt\fb_m \)-sub-bimodule of \( S^N(\Mat_{n\times m}) \). Consequently, the natural building blocks on the right-hand side of a staircase Cauchy identity are no longer Schur polynomials but {\it "key polynomials"}: for a composition \( \mu\in\bZ_{\geq0}^n \), the key polynomial \( \kappa_\mu(x) \) is, by definition, the character of the Demazure \( \fb \)-submodule \( D_\mu \) of the \( \gl \)-irreducible representation \( V_{\mu^+} \) (here \( \mu^+ \) is the partition obtained from \( \mu \) by sorting), generated by the extremal weight vector of weight \( \mu \) (see~\cite{Dem1,Dem2}); dually, the (opposite) Demazure atom \( a^{\mu}(y) \) is the character of the corresponding {\it "van der Kallen module"}, the minimal \( \fb \)-subquotient of \( V_{\mu^+} \) supported at weight \( \mu \) (see~\cite{vdK}). As \( \mu \) ranges over \( \bZ_{\geq0}^n \), the key polynomials \( \kappa_\mu(x) \) form a linear basis of the {\it full} polynomial ring \( \bC[x_1,\dots,x_n] \) -- the ring of {\it "nonsymmetric"} polynomials -- refining the classical fact that the Schur polynomials \( s_\lambda=\kappa_\lambda \), for dominant (i.e., partition) \( \lambda \), form a basis of the subring of symmetric polynomials. It is exactly this basis, on both the \( x \)- and the \( y \)-side, in which we expand every staircase Cauchy identity below.
	
	The Howe decomposition of \S\ref{intro::motivation} induces on \( S^{N}(\Mat_{\ov{n}}) \) the filtration
	$$
	\calF^{\lambda} S^{N}(\Mat_{\ov{n}}):= S^{N}(\Mat_{\ov{n}}) \bigcap \left(\bigoplus_{\nu\geq\lambda} V_{\nu}^{\gl_n}\otimes (V_{\lambda}^{\gl_m})^{\op}\right),
	$$
	with respect to the standard partial ordering on partitions (see Definition~\ref{def::partition::order}). The monomial generators of the associated graded \( \fb_n\ttt\fb_m \)-bimodule \( \gr\calF^{\lambda} \) are indexed by order-preserving ({\it "monotone"}) \( \bZ_{\geq0} \)-valued functions on a poset \( \St_{\ov{n}} \) of {\it "staircase corners"} of \( \bbY_{\ov{n}} \) -- we call such a function a {\it "\(DL\)-dense array"} -- and the set of \(DL\)-dense arrays with a given multiset \( \lambda \) of nonzero values is denoted \( \DL_{\ov{n}}(\lambda) \); a full account, together with pictorial examples, is given in \S\ref{intro::combinatorics} below. Our first structural result identifies the associated graded pieces of \( \calF^{\lambda} \) explicitly in these terms:
	\begin{theorem}
		\label{thm::structural::intro}
		For each partition \( \lambda\vdash N \) with \( l(\lambda)\leq \#\St_{\ov{n}} \), the following isomorphism of \( \fb_n\ttt\fb_m \)-(bi)-submodules holds:
		\begin{equation}
			\label{eq::subquot::hor::vrt::intro}
			\gr\calF^{\lambda}:=\calF^{\lambda}\left(S^{N}(\Mat_{\ov{n}})\right) \Big/ \sum_{\nu>\lambda} \calF^{\nu} \simeq \sum_{A\in\DL_{\ov{n}}(\lambda)} D_{\hor(A)}\otimes D_{\vrt(A)}^{\op} \subset V_{\lambda}^{\gl_n}\otimes (V_{\lambda}^{\gl_m})^{\op},
		\end{equation}
		where \( \hor(A) \) and \( \vrt(A) \) denote the horizontal and the vertical weights of the \(DL\)-dense array \( A \) (see the pictorial illustration and Definition~\ref{def:arrays} in \S\ref{intro::combinatorics} below).
	\end{theorem}
	
	Theorem~\ref{thm::structural::intro} already displays the right-hand side of the staircase Cauchy identity as a sum of tensor products of Demazure modules -- but it is a statement about {\it subspaces} of \( V_\lambda^{\gl_n}\otimes (V_\lambda^{\gl_m})^{\op} \), not yet a character identity, since the summands \( D_{\hor(A)}\otimes D_{\vrt(A)}^{\op} \) may overlap. Extracting an honest character formula from~\eqref{eq::subquot::hor::vrt::intro} requires understanding the lattice of intersections of Demazure submodules. This is governed by the following not widely known but rather beautiful Theorem~\ref{thm::Dem::lattice::intro}. We first learned the idea of the proof from M.\,Brion, whose argument we outline below, and later found a reference to a constructive proof by P.\,Littelmann (\cite{Littleman}).
	\begin{theorem}
		\label{thm::Dem::lattice::intro}
		(Theorem~\ref{thm::Demazure::intersect})
		The subspaces \( \{D_{w\lambda} \colon \omega\in W\} \) of the integrable representation \( V_{\lambda} \) form a distributive lattice, denoted by \( \Lattice_{D}(V_{\lambda}) \).
	\end{theorem}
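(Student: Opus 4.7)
The plan is to reduce the statement to a combinatorial claim about a well-chosen basis of $V_{\lambda}$, proceeding in three steps: first, identify the inclusion order on $\{D_{w\lambda}\}$ with a combinatorial poset; second, build a basis of $V_{\lambda}$ compatible with every Demazure submodule simultaneously; third, derive both closure under the lattice operations and distributivity from the basis structure.

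For the first step, the natural ordering of Demazure modules by inclusion should be controlled by the Bruhat order on $W/W_{\lambda}$: if $w=s_{i_1}\cdots s_{i_k}$ is a reduced expression, then $D_{w\lambda}=U(\mathfrak{sl}_2^{(i_k)})\cdots U(\mathfrak{sl}_2^{(i_1)})v_{\lambda}$, so $\bar u\le \bar w$ forces $D_{u\lambda}\subseteq D_{w\lambda}$; the reverse implication is obtained by comparing extremal weights, since the extremal weights of $D_{w\lambda}$ are exactly $\{v\lambda:\bar v\le\bar w\}$ by the Demazure character formula. This already fixes the underlying poset.

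The second step is the main technical point and the principal obstacle, precisely because (as the footnote warns) this lattice property is not visible from crystal basis theory alone. The approach I would take, in line with the combinatorial half of the paper, is to produce a basis $B$ of $V_{\lambda}$ consisting of weight vectors, indexed by an explicit combinatorial set (in the present setting, by a variant of $DL$-dense arrays built in the earlier sections), arranged so that every Demazure submodule $D_{w\lambda}$ is the linear span of a specified subset $B_w\subseteq B$. The correspondence $w\mapsto B_w$ should be an order-preserving embedding into the Boolean lattice $2^B$ whose image is closed under intersection and union, with $B_u\cap B_w=B_{u\wedge w}$ and $B_u\cup B_w=B_{u\vee w}$ for appropriate combinatorial meet and join operations on the indexing parameters. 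Producing this basis and verifying its compatibility is essentially equivalent to the Demazure-module version of inclusion-exclusion, $\ch(D_u)+\ch(D_w)=\ch(D_{u\vee w})+\ch(D_{u\wedge w})$, and the bubble-sort procedure developed earlier in the paper is the natural combinatorial tool to establish it, because bubble sort tracks precisely the interaction of two Demazure operators acting from incompatible sides.

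Once such a basis is constructed, the third step is immediate: the image of $\{D_{w\lambda}\}$ in $2^B$ is by design a sublattice, and every sublattice of a Boolean lattice is distributive, so $\Lattice_{D}(V_{\lambda})$ inherits both the lattice structure and distributivity. The entire argument therefore reduces to producing the compatible basis $B$ and verifying that intersections and sums stay within the Demazure class, which is the substantive content; everything else is formal. This is also where the staircase combinatorics of Part~I enters decisively and where any attempt that ignores it (for instance, a direct crystal-theoretic approach) breaks down.
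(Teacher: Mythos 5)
Your proposal correctly identifies the formal skeleton (distributivity of a lattice of subspaces is equivalent to the existence of a common basis, and a sublattice of a Boolean lattice is distributive), but the entire mathematical content of the theorem is concentrated in your ``second step,'' which you do not carry out: you assert that a basis $B$ with $D_{w\lambda}=\Span\langle B_w\rangle$ and $B_u\cap B_w=B_{u\wedge w}$, $B_u\cup B_w=B_{u\vee w}$ can be produced ``in line with the combinatorial half of the paper,'' but no construction or verification is given. This is precisely the hard point. Note also that the pairwise character identity $\ch(D_u)+\ch(D_w)=\ch(D_{u\vee w})+\ch(D_{u\wedge w})$ is weaker than what is needed: distributivity of a lattice of subspaces requires control of arbitrary finite intersections $D_{w_1\lambda}\cap\cdots\cap D_{w_k\lambda}$, i.e.\ the full inclusion--exclusion complex, not just the two-term case.

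The paper's proof is not combinatorial at all and does not use the staircase machinery: it works for a general semisimple $G$ and proceeds geometrically. One passes to annihilators $D_{w\lambda}^{\perp}\simeq H^{0}(F;\cL_{\lambda}\otimes I_{X_{w}})$, uses the fact that scheme-theoretic intersections of Schubert varieties are reduced (so ideal sheaves of intersections and unions are sums and intersections of ideal sheaves), and then invokes Frobenius splitting of the flag variety to kill the higher cohomology $H^{>0}(F;\cL\otimes I_{\cup X_{u_i}})$, which makes the inclusion--exclusion sequence of ideal sheaves exact on global sections. That cohomology vanishing is the nontrivial external input that replaces your unconstructed basis; the bubble-sort combinatorics of Part I is used only afterwards, to identify sublattices indexed by $\sS$-dominant weights and compute M\"obius functions, not to establish distributivity. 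As written, your argument has a genuine gap at its central step, and the suggested route for closing it (bubble sort on $DL$-dense arrays) is not the mechanism the paper uses and would at best handle the special sublattices, not the full lattice $\Lattice_{D}(V_{\lambda})$ for arbitrary $W$.
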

	In other words, there exists a {\it "common"} basis of \( V_\lambda \) such that its intersection with any Demazure submodule forms a basis for that submodule. Consequently, any submodule obtained through an iterative process of sums and intersections of Demazure submodules is itself a sum of Demazure submodules, and the character of any such \( \fb \)-submodule of \( V_{\lambda} \) is given by the sum of the characters of its minimal subquotients:
	\[
	K_{w\lambda}:=D_{w\lambda} \Big/ \sum_{\tau\lambda\prec w\lambda}  D_{\tau\lambda},
	\]
	which are known as {\it "van der Kallen modules"}, following~\cite{vdK}. Applying Theorem~\ref{thm::Dem::lattice::intro} to the right-hand side of~\eqref{eq::subquot::hor::vrt::intro} turns the structural isomorphism of Theorem~\ref{thm::structural::intro} into a genuine character identity: the sum \( \sum_{A} D_{\hor(A)}\otimes D_{\vrt(A)}^{\op} \) is itself an element of a distributive lattice of subspaces of \( V_\lambda^{\gl_n}\otimes (V_\lambda^{\gl_m})^{\op} \), and its character is computed by summing the characters of its minimal subquotients, the generalized van der Kallen modules attached to the poset \( \DL_{\ov{n}}(\lambda) \). Identifying this poset and its minimal subquotients explicitly is the combinatorial task outlined in \S\S\ref{intro::combinatorics}--\ref{intro::bruhat} below, and it is what ultimately produces the staircase Cauchy identities in the basis of key polynomials and Demazure atoms.
	
	\subsection{Combinatorics of the right-hand side of the Cauchy identities}
	\label{intro::combinatorics}
	Below, we outline the main combinatorial ingredients of the right-hand side of the Cauchy identities, called {\bf $\DL$-dense arrays}, which are of independent interest as a generalization of parabolic Bruhat graphs.
	
	To each staircase shape $\bar{n}$ we assign the poset \( \St_{\ov{n}} \) consisting of the {\it "down-left cells"} of the Young diagram \( \bbY_{\ov n} \) that forms a {\it "rook placement"} of \( \bbY_{\ov{n}} \). 
	The partial order on \( \St_{\ov{n}} \) is prescribed by the rule -- "the down and left cell is bigger" (see Definition~\ref{def::DL::dense}). An order-preserving ({\it "monotone"}) $\bZ_{\geq0}$-valued function on \( \St_{\ov{n}} \) is called a {\it "$\DL$-dense array"} (see Definition~\ref{def::DL::dense}), and the set of all such arrays with a given multiset of nonzero elements \( \lambda \) is denoted by \( \DL_{\ov{n}}(\lambda) \). Each array $A\in \DL_{\ov{n}}(\lambda)$ defines the following product of matrix units $E_{ij}\in\Mat_{\ov{n}}$: $$v^{A}:=\prod_{(ij)\in\St_{\ov{n}}} E_{ij}^{A_{ij}} \in \ S^{N}(\Mat_{\ov{n}}), $$ and the set $\{v^{A}\colon A\in\DL_{\ov{n}}(\lambda)\}$ generates the $\fb_n\ttt\fb_m$-bimodule $\gr\calF^{\lambda}$ of Theorem~\ref{thm::structural::intro}.
	\begin{example}
		\label{ex::staircase::poset}
		Let us consider a pictorial example for $\ov{n}:=(2\,3^3\,5^2)$.
		Bullets represent the elements of the poset \( \St_{\ov{n}} \) and arrows indicate the covering relations in this poset:
		\begin{equation}
			\label{pic::Y::ST::intro}
			\begin{array}{ccc}
				\begin{tikzpicture}[scale=0.3,shift={(0,2)}]
					\draw[step=1cm] (0,0) grid (6,-2);
					\draw[step=1cm] (1,0) grid (6,-3);
					\draw[step=1cm] (4,0) grid (6,-5);
				\end{tikzpicture}
				&
				\begin{tikzpicture}[scale=0.3,shift={(0,2)}]
					\fill[cyan!10] (0,0) -- (0,-2) -- (1,-2)--(1,0);
					\fill[cyan!20] (0,-1) -- (0,-2) -- (6,-2)--(6,-1);
					\fill[yellow] (1,0) -- (1,-3) -- (2,-3)--(2,0);
					\fill[yellow] (1,-2) -- (1,-3) -- (6,-3)--(6,-2);
					\fill[magenta!25] (4,0) -- (4,-5) -- (5,-5)--(5,0);
					\fill[magenta!25] (4,-4) -- (4,-5) -- (6,-5)--(6,-4);
					\fill[green!30] (2,0) -- (2,-1) -- (4,-1)--(4,0);
					\fill[lime!40] (5,0) -- (5,-4) -- (6,-4)--(6,0);
					\draw[step=1cm] (0,0) grid (6,-2);
					\draw[step=1cm] (1,0) grid (6,-3);
					\draw[step=1cm] (4,0) grid (6,-5);
					\node (v21) at (0.5,-1.5) {{ \color{blue}$\bullet$ }};
					\node (v32) at (1.5,-2.5) {{ \color{blue}$\bullet$ }};
					\node (v13) at (2.5,-.5) {{ \color{blue}$\bullet$ }};
					\node (v54) at (4.5,-4.5) {{ \color{blue}$\bullet$ }};
					\node (v46) at (5.5,-3.5) {{ \color{blue}$\bullet$ }};
				\end{tikzpicture}
				&
				\begin{tikzpicture}[scale=0.3,shift={(0,2)}]
					\node (v21) at (0.5,-1.5) {{ \color{blue}$\bullet$ }};
					\node (v32) at (1.5,-2.5) {{ \color{blue}$\bullet$ }};
					\node (v13) at (2.5,-.5) {{ \color{blue}$\bullet$ }};
					\node (v54) at (4.5,-4.5) {{ \color{blue}$\bullet$ }};
					\node (v46) at (5.5,-3.5) {{ \color{blue}$\bullet$ }};
					\draw [blue,line width=1.1pt,->] (2.5,-.5) -- (.5,-1.5);
					\draw [blue,line width=1.1pt,->] (2.5,-.5) -- (1.5,-2.5);
					\draw [blue,line width=1.1pt,->] (v54) -- (v46);
				\end{tikzpicture}
				\\
				\begin{array}{c}
					\text{ Young diagram }\\
					\bbY_{(2\,3^3\,5^2)}
				\end{array}
				&
				\begin{array}{c}
					\text{ "rook placement" of}\\
					\text{ staircase corners }
				\end{array}
				&
				\begin{array}{c}
					\text{ Hasse diagram}\\
					\text{ of } \St_{(2\,3^3\,5^2)}
				\end{array}
			\end{array}
		\end{equation}
		The set $\DL_{(2\,3^3\,5^2)}(2^3\,1^2)$ of order-preserving monotone functions on the poset $\St_{(2\,3^3\,5^2)}$ of staircase corners whose multiset of values is equal to $(2^3\,1^2)$ consists of four elements:
		$$
		\DL_{(2\,3^3\,5^2)}(2^3\,1^2):=\left\{
		\grDLex{2}{2}{2}{1}{1}, \
		\grDLex{2}{2}{1}{2}{1}, \
		\grDLex{2}{1}{1}{2}{2}, \
		\grDLex{1}{2}{1}{2}{2}\right\}.
		$$
		The corresponding list of monomial generators of the $\fb_n\ttt\fb_m$-sub-bimodule $\gr\calF^{(2^3\,1^2)}S^{8}(\Mat_{(2\,3^3\,5^2)})$ of $V_{(2^3\,1^2)}\otimes V_{(2^3\,1^2)}^{\op}$:
		$$
		E_{21}^{2} E_{32}^{2} E_{13}^{2} E_{55}^{} E_{46}^{}, \
		E_{21}^{2} E_{32}^{2} E_{13}^{} E_{55}^{2} E_{46}^{}, \
		E_{21}^{2} E_{32}^{} E_{13}^{} E_{55}^{2} E_{46}^{2}, \
		E_{21}^{} E_{32}^{2} E_{13}^{} E_{55}^{2} E_{46}^{2}.
		$$
	\end{example}
	We suggest below a pictorial illustration of \( \vrt \) and \( \hor \), and refer to the formal Definition~\ref{def:arrays} in the main body of the text:
	$$
	\begin{tikzcd}
		\bZ_{\geq 0}^{5}\ni (1,2,3,1,4) =
		\begin{tikzpicture}[scale=0.3,shift={(0,2)}]
			\draw[step=1cm] (0,0) grid (1,-5);
			\node (v21) at (0.5,-.8) {{ \color{blue}\tiny 1 }};
			\node (v32) at (0.5,-1.8) {{ \color{blue}\tiny 2 }};
			\node (v13) at (0.5,-2.8) {{ \color{blue} \tiny 3 }};
			\node (v54) at (0.5,-3.8) {{ \color{blue}\tiny 1 }};
			\node (v46) at (0.5,-4.8) {{ \color{blue}\tiny 4 }};
		\end{tikzpicture}
		&
		\begin{tikzpicture}[scale=0.3,shift={(0,2)}]
			\draw[step=1cm] (0,0) grid (6,-2);
			\draw[step=1cm] (1,0) grid (6,-3);
			\draw[step=1cm] (4,0) grid (6,-5);
			\node (v21) at (0.5,-1.8) {{\color{blue}{\tiny {2}}}};
			\node (v32) at (1.5,-2.8) {{\color{blue}{\tiny {3}}}};
			\node (v13) at (2.5,-.8) {{\color{blue}{\tiny {1}}}};
			\node (v54) at (4.5,-4.8) {{\color{blue}{\tiny {4}}}};
			\node (v46) at (5.5,-3.8) {{\color{blue}{\tiny {1}}}};
		\end{tikzpicture}
		\arrow[l,"\hor"'] \arrow[r,"\vrt"]
		&
		\begin{tikzpicture}[scale=0.3,shift={(0,-0.5)}]
			\draw[step=1cm] (0,0) grid (6,1);
			\node (v21) at (0.5,.3) {{ \color{blue}\tiny 2 }};
			\node (v32) at (1.5,.3) {{ \color{blue}\tiny 3 }};
			\node (v13) at (2.5,.3) {{ \color{blue} \tiny 1 }};
			\node (v54) at (4.5,.3) {{ \color{blue}\tiny 4 }};
			\node (v46) at (5.5,.3) {{ \color{blue}\tiny 1 }};
		\end{tikzpicture}
		= (2,3,1,0,4,1)\in\bZ_{\geq0}^{6}.
	\end{tikzcd}
	$$
	
	For \( A,B\in\DL_{\ov{n}}(\lambda) \) declare \( B\preceq_{\Bruhat} A \) whenever \( D_{\vrt(B)}^{\op}\subset D_{\vrt(A)}^{\op} \); by Theorem~\ref{thm::Dem::lattice::intro}, this endows \( \DL_{\ov{n}}(\lambda) \) with a (sub-)poset structure, whose minimal subquotients are the {\it generalized van der Kallen modules}
	\begin{gather*}
		\KK_{\St_{\ov{n}},\hor(A)}:=D_{\hor(A)} \Big/ \sum_{C\prec_{\Bruhat} A} D_{\hor(C)},
		\quad \KK_{\St_{\ov{n}},\vrt(B)}^{\op}:=D_{\vrt(B)}^{\op} \Big/ \sum_{C\succ_{\Bruhat} B} D_{\hor(C)}^{\op}, \\ \KK_{A,B}^{\ov{n}}:= \KK_{\St_{\ov{n}},\hor(A)}\otimes \KK_{\St_{\ov{n}},\vrt(B)}^{\op}.
	\end{gather*}
	Below, \( \kappa_{\ov{d}}(x) \) and \( \kappa^{\ov{d}}(y) \) denote the left and right key polynomials, and \( a_{\ov{d}}(x) \), \( a^{\ov{d}}(y) \) the left and right Demazure atoms, i.e.\ the characters of \( D_{\ov d} \), \( D^{\op}_{\ov d} \), \( K_{\ov d} \) and \( K^{\op}_{\ov d} \) respectively (see, e.g., \cite{Al,AGL,Mas} for their combinatorial definitions).
	We can now record all Cauchy identities for staircase matrices established to date. In~\cite{FKM::Cauchy} we proved two such identities: the {\it "right"} Cauchy identity
	$$
	\prod_{(i,j)\in\bbY_{\ov{n}}}\frac{1}{1-x_iy_j} = \sum_{\ov{d}\ \text{ is } \ov{n}\text{-admissible}} \kappa_{\hb(\ov{d})}(x)\,a^{\ov{d}}(y),
	$$
	and the {\it "left"} Cauchy identity
	$$
	\prod_{(i,j)\in\bbY_{\ov{n}}}\frac{1}{1-x_iy_j} = \sum_{\substack{A\in\DL_{\ov{n}}\\ \hb(\ov{d})=\hor(A)}} \kappa_{\hor(A)}(x)\,a^{\ov{d}}(y),
	$$
	where \( \hb \) is the {\it "half-bubble-sort"} algorithm and \( \ov d \) ranges over \( \ov{n} \)-admissible compositions; both identities were derived there from the Pieri rules for key polynomials of~\cite{AQ1,AQ2}.
	
	In the present paper we prove two further identities, valid for {\it arbitrary} staircase shapes \( \ov n \), both obtained by applying Theorem~\ref{thm::Dem::lattice::intro} to the isomorphism~\eqref{eq::subquot::hor::vrt::intro} of Theorem~\ref{thm::structural::intro}:
	\begin{corollary}(Corollary~\ref{cor::min::char} and Corollary~\ref{cor::Cauchy::identity})
		The following character identities hold for any staircase matrix shape:
		\begin{gather}
			\ch_{\fb_m}(\KK_{\St_{\ov{n}},\vrt(A)}^{\op}) = \sum_{\ov{d} \colon \bbs_{\ov{n}}(\ov{d})=\vrt(A)} a^{\ov{d}}(y) = \sum_{B\succ_{\Bruhat}A} \mu^{{\DL_{\ov{n}}({\lambda})}}(A,B)\, \kappa^{\vrt(B)}(y); \\
			\label{eq::Cauchy::bs::intro}
			\prod_{(i,j)\in \bbY_{\overline{n}}}\frac{1}{1-x_iy_j}=
			\sum_{N}
			\sum_{\substack{{\lambda\vdash N}\\ {l(\lambda)\leq \#\St_{\ov{n}}}} }
			\sum_{A\in\DL_{\ov{n}}(\lambda)} \kappa_{\hor(A)}(x) \cdot \left(\sum_{\ov{d} \colon \bbs_{\ov{n}}(\ov{d}) = \vrt(A)} a^{\ov{d}}(y)\right),\\
			\label{eq::Cauchy::Moebius::intro}
			\prod_{(i,j)\in \bbY_{\overline{n}}}\frac{1}{1-x_i y_j}= \sum_{N}
			\sum_{\substack{{\lambda\vdash N}\\ {l(\lambda)\leq \#\St_{\ov{n}}}} }
			\sum_{\substack{A\succeq B\\ A,B \in \DL_{\overline n}(\lambda)}} \mu^{{\DL_{\ov{n}}({\lambda})}}(A,B)\, \kappa_{\hor(A)}(x)\, \kappa^{\vrt(B)}(y).
		\end{gather}
	\end{corollary}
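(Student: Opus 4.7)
The plan is to compute the bigraded $\fb_n\times\fb_m$-character of the symmetric algebra $S^{\bullet}(\Mat_{\ov n})$ in two different ways. The upper-triangular Borel actions rescale each row by $x_i$ and each column by $y_j$, so evaluating directly on matrix-unit generators produces the generating function $\prod_{(i,j)\in\bbY_{\ov n}}(1-x_iy_j)^{-1}$ on the left-hand side of both~\eqref{eq::Cauchy::bs::intro} and~\eqref{eq::Cauchy::Moebius::intro}. On the other hand, the Howe filtration $\calF^{\bullet}$ and the isomorphism~\eqref{eq::subquot::hor::vrt::intro} identify each associated graded piece with the bi-Demazure subspace
\[
X_\lambda\;:=\;\sum_{A\in\DL_{\ov n}(\lambda)} D_{\hor(A)}\otimes D^{\op}_{\vrt(A)}\;\subset\; V_\lambda\otimes V_\lambda^{\op},
\]
so the problem reduces to evaluating $\ch(X_\lambda)$ for each partition $\lambda$ with $l(\lambda)\le\#\St_{\ov n}$ and summing over $N$ and $\lambda$.

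The central step is to exploit the distributive lattice structure supplied by Theorem~\ref{thm::Dem::lattice::intro} and by the third item of Corollary~\ref{cor::distr::dominant}, which together identify $X_\lambda$ as the top element of a distributive sublattice $L_{\mathrm{diag}}\subset \Lattice_{D\times D^{\op}}(V_\lambda\otimes V_\lambda^{\op})$ whose $\vee$-indecomposables are precisely the diagonal summands $D_{\hor(A)}\otimes D^{\op}_{\vrt(A)}$. The equivalence of horizontal and vertical Bruhat orders~\eqref{eq::hor::vrt::intro} implies that the minimal subquotient of $L_{\mathrm{diag}}$ attached to the diagonal indecomposable at $A$ factorises as a tensor product of one-sided generalised van der Kallen modules $\KK_{\St_{\ov n},\hor(A)}\otimes \KK^{\op}_{\St_{\ov n},\vrt(A)}$. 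Absorbing the left-hand factors into Demazure characters via the lattice identity $\kappa_{\hor(A)}(x)=\sum_{C\preceq_{\Bruhat} A}\ch(\KK_{\St_{\ov n},\hor(C)})(x)$ produces the asymmetric formula
\[
\ch(X_\lambda)\;=\;\sum_{A\in\DL_{\ov n}(\lambda)} \kappa_{\hor(A)}(x)\cdot \ch\bigl(\KK^{\op}_{\St_{\ov n},\vrt(A)}\bigr)(y).
\]

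To evaluate $\ch(\KK^{\op}_{\St_{\ov n},\vrt(A)})$ I combine the expansion of the opposite Demazure character $\kappa^{\vrt(B)}(y)$ as a sum of atomic characters $a^{\ov d}(y)$ indexed by compositions in a Bruhat interval with the monotonicity and idempotence of the bubble-sort retraction $\bbs_{\ov n}$ established in Corollary~\ref{cor::bbs::monotone}. The fibres $\bbs_{\ov n}^{-1}(\vrt(A))$ stratify the atoms according to their $\St_{\ov n}$-dominant destination, so an atom $a^{\ov d}(y)$ survives the quotient $D^{\op}_{\vrt(A)}/\sum_{B\succ_{\Bruhat}A}D^{\op}_{\vrt(B)}$ if and only if $\bbs_{\ov n}(\ov d)=\vrt(A)$, which gives the first equality
\[
\ch(\KK^{\op}_{\St_{\ov n},\vrt(A)})=\sum_{\ov d\colon \bbs_{\ov n}(\ov d)=\vrt(A)} a^{\ov d}(y).
\]
Substituting into the formula for $\ch(X_\lambda)$ and summing over $N$ and $\lambda$ produces the bubble-sort Cauchy identity~\eqref{eq::Cauchy::bs::intro}. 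Möbius inversion on the poset $\DL_{\ov n}(\lambda)$ converts the same character into $\sum_{B\succeq A}\mu^{\DL_{\ov n}(\lambda)}(A,B)\kappa^{\vrt(B)}(y)$, yielding the second equality and, after substitution, the Möbius form~\eqref{eq::Cauchy::Moebius::intro}.

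The main obstacle I expect is the factorisation of the minimal subquotients of $L_{\mathrm{diag}}$ as \emph{products} of one-sided van der Kallen modules: this is not automatic from distributivity and requires exhibiting a basis of $V_\lambda\otimes V_\lambda^{\op}$ that simultaneously diagonalises both one-sided Demazure lattices, so that the diagonal condition $A=B$ forced by~\eqref{eq::hor::vrt::intro} decouples the two tensor factors cleanly. Once this factorisation is in place, the remainder of the argument is character bookkeeping together with a single Möbius inversion on $\DL_{\ov n}(\lambda)$.
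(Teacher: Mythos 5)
Your overall plan is the paper's: compute the bigraded character of $S^{\bullet}(\Mat_{\ov n})$ directly to get the product on the left, and on the right expand $\gr\calF^{\lambda}$ via the distributive‐lattice structure of Demazure submodules, then evaluate the one‐sided generalized van der Kallen characters by bubble sort or M\"obius inversion. The computation of $\ch(\KK^{\op}_{\St_{\ov n},\vrt(A)})$ via fibres of $\bbs_{\ov n}$ and via M\"obius inversion on $\DL_{\ov n}(\lambda)$ is exactly Corollary~\ref{cor::min::char}/Corollary~\ref{cor::distr::dominant::op}, and the identification of the ``main obstacle'' --- the need for a basis of $V_\lambda\otimes V_\lambda^{\op}$ common to both one-sided Demazure lattices --- is the correct crux, resolved in the paper by Theorem~\ref{thm::D::Dop::distrib} (the Cartesian product of two common bases is a common basis for the tensor‐product lattice).

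However, your intermediate description of the lattice structure is wrong, and as stated it would produce the wrong character. You claim that $X_\lambda$ is the top of a sublattice $L_{\mathrm{diag}}$ whose $\vee$-irreducibles are \emph{precisely} the diagonal summands $D_{\hor(A)}\otimes D^{\op}_{\vrt(A)}$ and that the minimal subquotient attached to $A$ is $\KK_{\St_{\ov n},\hor(A)}\otimes\KK^{\op}_{\St_{\ov n},\vrt(A)}$. The diagonal summands are pairwise incomparable (since $D_{\hor(A)}\subset D_{\hor(A')}$ iff $A\preceq A'$ while $D^{\op}_{\vrt(A)}\subset D^{\op}_{\vrt(A')}$ iff $A\succeq A'$), so a lattice with only these as $\vee$-irreducibles would give minimal subquotients equal to the full tensor Demazure modules, not the van der Kallen products; and if you instead take the honest distributive lattice of Theorem~\ref{thm::D::Dop::distrib}, the $\vee$-irreducible constituents of $X_\lambda=\sum_A D_{\hor(A)}\otimes D^{\op}_{\vrt(A)}$ are all products $D_{\hor(B)}\otimes D^{\op}_{\vrt(C)}$ with $B\preceq C$ (not only $B=C$), because intersections of diagonal elements introduce off-diagonal $\vee$-irreducibles. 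Consequently $\gr\calF^{\lambda}$ has minimal subquotients $\KK_{\St_{\ov n},\hor(B)}\otimes\KK^{\op}_{\St_{\ov n},\vrt(C)}$ indexed by \emph{pairs} $B\preceq C$ (this is the content of~\eqref{eq::Howe::gr} in Theorem~\ref{thm::Howe::lattice}), and only after that does the resummation $\sum_{B\preceq C}\ch(\KK_{\hor(B)})=\kappa_{\hor(C)}(x)$ yield your asymmetric formula. With the diagonal-only premise, ``absorbing'' would be invalid --- you would get $\sum_A \ch(\KK_{\hor(A)})\ch(\KK^{\op}_{\vrt(A)})$, which is strictly smaller than $\sum_A\kappa_{\hor(A)}(x)\ch(\KK^{\op}_{\vrt(A)})(y)$. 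Once you replace the diagonal-only description with the correct enumeration of $\vee$-irreducibles inside $\gr\calF^{\lambda}$, the rest of your argument goes through and coincides with the paper's proof.
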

	Here \( \bbs_{\ov{n}} \) is the (full) bubble-sort algorithm of \S\ref{intro::bruhat} below, and \( \mu^{{\DL_{\ov{n}}({\lambda})}}(A,B) \) is the {\it "M\"obius function"} of the poset \( \DL_{\ov{n}}(\lambda) \), which has a detailed description for regular \( \lambda \) (Corollary~\ref{cor::Mobius::DL}) and is conjectured to take values in \( \{0,\pm1\} \) for all partitions \( \lambda \) (Conjectures~\ref{conj::Shell} and~\ref{conj::Moebius}). Identity~\eqref{eq::Cauchy::Moebius::intro} is precisely the alternating Cauchy identity conjectured in~\cite{FKM::Cauchy}, while~\eqref{eq::Cauchy::bs::intro} is a new, manifestly positive formula for the same product, obtained by summing the atoms \( a^{\ov d}(y) \) over an entire fiber of the bubble-sort map rather than by an inclusion--exclusion sum over the poset \( \DL_{\ov n}(\lambda) \).
	
	It is worth mentioning that our original proof of the generalized Cauchy identities~\eqref{eq::Cauchy::bs::intro} and~\eqref{eq::Cauchy::Moebius::intro}, presented in~\cite{FKM::Cauchy}, was based on the Pieri rules for key polynomials discovered in~\cite{AQ1, AQ2}. In this paper, we provide an independent proof of these identities. Moreover, our results naturally lead to a rediscovery of the generalized Pieri rules, much like how the classical Pieri rules follow from the classical Cauchy identity.
	
	\subsection{The Bruhat graph and its properties for an arborescent poset}
	\label{intro::bruhat}
	
	Note that the sets of \( \gl_n \) and \( \gl_m \) weights \( \hor(\DL_{\ov n}(\lambda)) \) and \( \vrt(\DL_{\ov n}(\lambda)) \) appearing in Theorem~\ref{thm::structural::intro} are very special subsets of \( \bS_n\lambda \) and \( \bS_m\lambda \). Moreover, in Corollary~\ref{cor::hor::vrt::Bruhat}, we show that the induced Bruhat partial orders \( \prec_{\Bruhat} \) on the horizontal and vertical weights of $DL$-dense arrays are equivalent:
	\begin{equation}
		\label{eq::hor::vrt::intro}
		\forall A,B \in\DL_{\ov{n}}(\lambda), \quad \left(\hor(A)\prec_{\Bruhat} \hor(B)\right) \in\bS_n\lambda \ \Leftrightarrow \ \left(\vrt(A)\prec_{\Bruhat}\vrt(B)\right)  \in \bS_m\lambda.
	\end{equation}
	
	Understanding this equivalence -- and thereby the poset structure on \( \DL_{\ov{n}}(\lambda) \) that underlies the character identities of \S\ref{intro::combinatorics} -- is the most combinatorial part of this paper. We show that the poset \( \St_{\ov{n}} \) is {\it "arborescent"} and that the maps \( \vrt \) and \( \hor \) define a {\it "consistent (anti)linearization"} (see Definition~\ref{def::poset::linear}). The subset \( \vrt(\DL_{\ov{n}}(\lambda))\subset \bS_m\lambda \) consists of compositions that define a nondecreasing function on the poset \( \St_{\ov{n}} \); we refer to such compositions as {\it "\(\St_{\ov{n}}\)-dominant"} (Definition~\ref{def::S::dominant}) and establish the following properties, valid for {\it any} arborescent poset with a consistent (anti)linearization:
	
	\begin{theorem}
		\label{thm::poset::Bruhat::intro}
		\begin{itemize}[itemsep=0pt,topsep=0pt]
			\item The set \( \DL_{\ov{n}}(\lambda) \) is empty if \( l(\lambda) > \#\St_{\ov{n}} \).
			\item (Theorem~\ref{thm::S:dom::outline}) If \( \lambda=(\lambda_1 > \ldots > \lambda_{\#\St_{\ov{n}}}) \) is a regular partition of length \( \#\St_{\ov{n}} \), then the poset \( \DL_{\ov{n}} \) is independent of \( \lambda \) and is shown to be bounded, graded, and \( \EL \)-shellable.
			\item If \( l(\lambda) \leq \#\St_{\ov{n}} \), then:
			\begin{itemize}
				\item (Theorem~\ref{thm::dominant::edge}) The covering relations for the Bruhat partial order on the set \( \DL_{\ov{n}}(\lambda) \) of \( \St_{\ov{n}} \)-dominant compositions are determined by transpositions in minimal disorders (see Definitions~\ref{def::disorder} and~\ref{def::min::DL::dis}).
				\item (\S\ref{sec::Bubble-sort}, Corollary~\ref{cor::bbs::monotone}) A natural generalization of the bubble-sort algorithm for the arborescent poset \( \St_{\ov{n}} \) defines a nonincreasing monotone idempotent on the corresponding interval in the Bruhat graph \( \bS_m\lambda \):
				\[
				\bbs_{\ov{n}}:[\lambda_+^{\ov{n}},\lambda_-]^{\bS_m\lambda}\twoheadrightarrow \DL_{\ov{n}}(\lambda).
				\]
				\item There exists a monotone idempotent \( \pi_\lambda:\DL_{\ov{n}} \twoheadrightarrow \DL_{\ov{n}}(\lambda) \) that accounts for the difference between regular and non-regular \( \lambda \).
			\end{itemize}
		\end{itemize}
	\end{theorem}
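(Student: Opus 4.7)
The plan is to tackle the statement clause-by-clause, exploiting throughout the arborescent structure of $\St_{\ov{n}}$ together with the consistent (anti)linearizations $\hor$ and $\vrt$, and the equivalence of Bruhat orders from~\eqref{eq::hor::vrt::intro}. For the emptiness claim, I would observe that the support of a $\St_{\ov{n}}$-dominant composition corresponds to an order-preserving injection from the indices of the nonzero parts of $\lambda$ into the poset $\St_{\ov{n}}$; if the number of parts exceeds $\#\St_{\ov{n}}$ no such injection exists.

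For the regular case $\lambda=(\lambda_1>\cdots>\lambda_{\#\St_{\ov{n}}})$, dominance forces the array to be a strict order-preserving bijection between $\St_{\ov{n}}$ and the parts of $\lambda$, so the combinatorial data depends only on the permutation realizing this bijection, not on the actual values, yielding independence of $\lambda$. The maximal and minimal elements are the greedy and anti-greedy placements, giving boundedness; a rank function arises from the inversion statistic pulled back from $\vrt$ (or, equivalently by~\eqref{eq::hor::vrt::intro}, from $\hor$). For $\EL$-shellability I would introduce an edge-labeling assigning to each cover the pair of positions involved in the associated transposition (which, by the covering-relation characterization stated next, is a minimal disorder), ordered lexicographically; uniqueness of the rising saturated chain on each interval then reduces, via the arborescent decomposition of $\St_{\ov{n}}$ into rooted subtrees, to the corresponding statement on Bruhat intervals in $\bS_m$.

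For the characterization of covers in the general case, I would argue in both directions. Given a cover $A\lessdot A'$ in $\DL_{\ov{n}}(\lambda)$, the equivalence~\eqref{eq::hor::vrt::intro} yields a cover $\vrt(A)\lessdot\vrt(A')$ in $\bS_m\lambda$, so a single inversion is resolved; arborescence forces this inversion to be a minimal disorder. Conversely, flipping a minimal disorder produces a new $\St_{\ov{n}}$-dominant array, and any would-be intermediate dominant array would contain a strictly smaller disorder, contradicting minimality.

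The hardest step, and the main obstacle, is the analysis of the generalized bubble-sort $\bbs_{\ov{n}}$. I would define it recursively: at each stage, locate a minimal disorder in the current composition and apply the corresponding transposition; termination is ensured because each swap strictly decreases a suitable statistic (total number of disorders, or a weighted inversion count). Three properties must then be verified independently: confluence (different legal sequences of elementary moves land at the same dominant array), monotonicity with respect to the induced Bruhat order, and the non-increasing property (the output lies weakly below the input). Confluence I would establish by a local diamond argument on pairs of simultaneously-available swaps, leveraging the arborescent hypothesis to control the "competing" transpositions that might otherwise interfere; monotonicity follows by induction on Bruhat length, reducing to the behavior on single covers; the non-increasing property is immediate once each elementary step is shown to be a Bruhat descent in $\bS_m\lambda$. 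Finally, the projector $\pi_\lambda:\DL_{\ov{n}}\twoheadrightarrow\DL_{\ov{n}}(\lambda)$ for non-regular $\lambda$ I would build by a perturbation-and-degeneration scheme: lift $\lambda$ to a nearby regular partition (e.g.\ $\lambda_i\mapsto\lambda_i+(\#\St_{\ov{n}}-i)\epsilon$), run the regular bubble-sort, and then collapse equal values; idempotence and monotonicity of $\pi_\lambda$ descend from the corresponding properties of $\bbs_{\ov{n}}$ already established.
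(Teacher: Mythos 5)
Your argument for the emptiness clause, the independence of $\lambda$ in the regular case, and the perturbation/collapse construction of $\pi_\lambda$ (which mirrors the paper's $\lambda\mapsto\lambda+\delta$ device) are all sound and aligned with the paper's approach. However, there are two substantive problems in the middle of your outline.

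First, your characterization of covers runs circular. You invoke the equivalence~\eqref{eq::hor::vrt::intro} (that the $\hor$- and $\vrt$-induced Bruhat orders on $\DL_{\ov{n}}(\lambda)$ coincide) in order to deduce that a cover $A\lessdot A'$ of $DL$-dense arrays yields a single-transposition step in $\bS_m\lambda$. But that equivalence is itself Corollary~\ref{cor::hor::vrt::Bruhat}, and its proof in the paper uses precisely the covering-relation characterization you are trying to establish. Worse, the claim that a cover of $\sS$-dominant compositions lifts to a cover in $\bS_m\lambda$ is exactly the nontrivial content that needs a proof; the paper supplies it (in the regular case) via Lemma~\ref{lem::order::dominant}, arguing by induction on $\#\sS$ through the decomposition of the arborescent poset into rooted trees and reduction to the classical double-coset case of Example~\ref{ex::Double::coset}, and then descends to the parabolic case through the pair $\pi_\lambda,\psi_\lambda^+$. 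Your sketch gives no substitute for that induction, and simply asserting ``arborescence forces this inversion to be a minimal disorder'' does not fill the gap, since you must also rule out longer Bruhat chains sneaking between two adjacent dominant compositions.

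Second, your bubble-sort is a genuinely different object from the paper's. You propose a nondeterministic rewriting system (``locate a minimal disorder anywhere and flip it'') and then appeal to a local diamond argument to obtain confluence. The paper instead fixes a canonical order of operations: $\bbs_\sS=\bbs_\sS^m\circ\cdots\circ\bbs_\sS^1$, where each step $\bbs_\sS^k$ is a concrete deterministic procedure placing the $k$-th entry (Algorithm~\ref{alg::bb:sort}), so well-definedness is automatic and no confluence argument is needed; the diamond lemma (Proposition~\ref{prp::Bruhat::square}) is used only for monotonicity. Your route is plausible, and the confluent-rewriting viewpoint is arguably conceptually cleaner, but the diamond argument you would need is more delicate than ``pairs of simultaneously-available swaps'': one must check that locally confluent rewrites involving three or four overlapping indices close up, which in effect reproduces Proposition~\ref{prp::Bruhat::square}, and one must further verify that the unique normal form of the rewriting agrees with $\sS$-dominance. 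If you wish to pursue this alternative, those verifications must be made explicit before the monotonicity and non-increasing claims can be concluded.
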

	
	It is worth noting that Theorem~\ref{thm::poset::Bruhat::intro} can be viewed as a generalization of a well-known special case.
	Indeed, when \( n=m \) and \( \Mat_{\ov{n}} \) is the parabolic Lie subalgebra of \( \gl_n \) with parabolic Weyl subgroup \( \bS_{\ov{n}} \), the poset of staircase corners \( \St_{\ov{n}} \) is a disjoint union of linearly ordered sets.
	In this case, all statements of Theorem~\ref{thm::poset::Bruhat::intro} are well known for regular \( \lambda \).
	In particular, \( \DL_{\ov{n}}(\lambda) \) corresponds to the parabolic Bruhat graph, and the bubble-sort process maps a permutation \( w \) to the permutation of minimal length in the coset \( w \bS_{\ov{n}} \subset \bS_n \).
	The monotonicity of the bubble-sort map $\bbs_{\ov{n}}$ plays a crucial role in the following description of the lattice structure used in \S\ref{intro::combinatorics}:
	
	\begin{corollary}(Corollary~\ref{cor::distr::dominant})
		The following sets consist of all \( \vee \)-indecomposable elements in the distributive sublattice of vector subspaces they generate:
		\begin{itemize}[itemsep=0pt,topsep=0pt]
			\item \( \{D_{\hor(A)} \colon A\in\DL_{\ov{n}}(\lambda)\} \), a sublattice of \( \Lattice_{D}(V_{\lambda}) \).
			\item \( \{D_{\vrt(A)}^{\op} \colon A\in\DL_{\ov{n}}(\lambda)\} \subset  \Lattice_{D^{\op}}(V_{\lambda}^{\op}) \).
			\item \( \{D_{\hor(A)}\otimes D_{\vrt(B)}^{\op} \colon A,B\in\DL_{\ov{n}}(\lambda)\} \subset  \Lattice_{D\times D^{\op}}(V_{\lambda}\otimes V_{\lambda}^{\op}) \).
		\end{itemize}
	\end{corollary}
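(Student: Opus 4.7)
The three bullets share a common framework, so I will treat the first in detail and sketch the modifications for the others. By Theorem~\ref{thm::Dem::lattice::intro}, there exists a basis $\mathcal{B}$ of $V_\lambda$ adapted to every Demazure submodule, so that each $D_{w\lambda}$ equals the span of a subset $\mathcal{B}_{w\lambda}\subseteq\mathcal{B}$. Under the assignment $D\mapsto\mathcal{B}\cap D$, the lattice $\Lattice_{D}(V_\lambda)$ embeds into the Boolean lattice $2^{\mathcal{B}}$; hence every sublattice is automatically distributive. The tensor product of common bases for $V_\lambda$ and $V_\lambda^{\op}$ similarly refines every element in the ambient lattice of the third bullet, so the distributivity claim is free in all three cases.

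Next I show that each generator is $\vee$-indecomposable in the sublattice it generates, via an extremal-vector argument. Suppose $D_{\hor(A)}=\bigvee_i D_{\hor(A_i)}$ with each $\hor(A_i)\preceq_{\Bruhat}\hor(A)$. The one-dimensional weight space of weight $\hor(A)\lambda$ in $V_\lambda$ is spanned by the extremal vector $v_{\hor(A)\lambda}$, which must therefore lie in some summand $D_{\hor(A_i)}$; this forces $\hor(A_i)\lambda\succeq_{\Bruhat}\hor(A)\lambda$, and combined with the reverse inequality yields $D_{\hor(A_i)}=D_{\hor(A)}$. The identical argument treats $D_{\vrt(B)}^{\op}$, while the bi-extremal vector $v_{\hor(A)\lambda}\otimes v_{\vrt(B)\lambda}$ handles $D_{\hor(A)}\otimes D_{\vrt(B)}^{\op}$.

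The hard part is to rule out additional $\vee$-indecomposable elements, i.e.\ to show that the sublattice equals the $\vee$-closure of its generators. For the first bullet this reduces to proving the intersection-closure identity
\[
D_{\hor(A)}\cap D_{\hor(A')}=\sum_{A''\in\DL_{\ov{n}}(\lambda):\,\hor(A'')\preceq_{\Bruhat}\hor(A),\,\hor(A')}D_{\hor(A'')}.
\]
I plan to derive this from the monotonicity and nonincreasing nature of the bubble-sort idempotent $\bbs_{\ov{n}}:[\lambda_+^{\ov{n}},\lambda_-]\twoheadrightarrow\DL_{\ov{n}}(\lambda)$ of Theorem~\ref{thm::poset::Bruhat::intro}: monotonicity sends common Bruhat lower bounds of $\hor(A),\hor(A')$ in the ambient interval to common lower bounds inside $\DL_{\ov{n}}(\lambda)$, and the arborescent structure of $\St_{\ov{n}}$ should then imply that these images jointly witness the full intersection on the level of the common basis $\mathcal{B}$. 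Once this identity is established, Birkhoff's representation theorem identifies the sublattice with the order-ideal lattice of the induced Bruhat poset on $\hor(\DL_{\ov{n}}(\lambda))$, whose principal ideals are precisely the generators.

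The second bullet follows from the first via the Bruhat-equivalence~\eqref{eq::hor::vrt::intro}, which provides an order-preserving bijection between the two generating sets and hence an isomorphism of the sublattices they generate. The third bullet reduces to the first two via the tensor-factor identity $(X_1\otimes Y_1)\cap(X_2\otimes Y_2)=(X_1\cap X_2)\otimes(Y_1\cap Y_2)$, valid on the common tensor basis, which transfers the intersection-closure property from each factor to the tensor product. The main obstacle throughout is the intersection-closure step for the first bullet, where the arborescent combinatorics of $\St_{\ov{n}}$ and the monotonicity of the bubble-sort procedure become indispensable.
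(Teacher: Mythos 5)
Your overall strategy does mirror the paper's: distributivity is inherited from the ambient lattice $\Lattice_D(V_\lambda)$ via the common basis of Theorem~\ref{thm::Demazure::intersect}, irreducibility of the generators follows from their irreducibility in the ambient lattice, and the absence of extra $\vee$-irreducibles rests on a monotone idempotent onto $\DL_{\ov{n}}(\lambda)$. The paper packages the last step into the general Proposition~\ref{prp::distributive::idempotent} (whose proof is Lemma~\ref{lem::poset::idempotent} plus Birkhoff), applied to the bubble-sort idempotent, and you are essentially trying to re-derive that proposition on the fly via your intersection-closure identity.

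The proposal is nevertheless incomplete and has a directional error at the crux. First, the intersection-closure step --- which you correctly single out as the main obstacle --- is left as a plan (``I plan to derive\ldots should then imply\ldots'') rather than an argument. The argument is short once set up correctly: for $\ov{\tau}\preceq\hor(A),\hor(A')$ in $\bS_{n_m}\lambda$, put $\ov{\tau}'=F(\ov{\tau})$ for the monotone idempotent $F$ fixing the generating set; monotonicity and $F(\hor(A))=\hor(A)$ give $\ov{\tau}'\preceq\hor(A),\hor(A')$, while $\ov{\tau}'\succeq\ov{\tau}$ gives $D_{\ov{\tau}}\subset D_{\ov{\tau}'}$, so $D_{\ov{\tau}}$ lies inside a generator appearing in the right-hand sum. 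Second, this last inequality requires $F$ to be \emph{nondecreasing}: if $F$ were nonincreasing you would only get $D_{\ov{\tau}'}\subset D_{\ov{\tau}}$, the wrong containment, and the inclusion would not be witnessed. For the first bullet one must therefore use the \emph{increasing} idempotent $\bbs_{\sT}^{\op}$ of Theorem~\ref{thm::bbs::op} acting on $\bS_{n_m}\lambda$, not the nonincreasing $\bbs_{\ov{n}}$ on $\bS_m\lambda$ that you cite from Theorem~\ref{thm::poset::Bruhat::intro}; your choice is the right tool only for the second bullet, where the order is reversed because $D_{\ov{a}}^{\op}\supset D_{\ov{b}}^{\op}$ iff $\ov{a}\preceq\ov{b}$. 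Relatedly, the domain $[\lambda_+^{\ov{n}},\lambda_-]^{\bS_m\lambda}$ you write down does not even contain the compositions $\hor(A)\in\bS_{n_m}\lambda$, so the sketch conflates the $\hor$ and $\vrt$ pictures. Once the increasing idempotent is substituted and the verification above written out, the argument coincides with the paper's application of Proposition~\ref{prp::distributive::idempotent}.
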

	Here, \( \Lattice_{D}(V_\lambda) \) (resp. \( \Lattice_{D^{\op}}(V_\lambda^{\op}) \)) denotes the distributive lattice of left (resp. right) Demazure submodules as described in Theorem~\ref{thm::Dem::lattice::intro}. It is this lattice-theoretic description, together with the \( \EL \)-shellability of \( \DL_{\ov{n}}(\lambda) \) from Theorem~\ref{thm::poset::Bruhat::intro}, that pins down the M\"obius function \( \mu^{\DL_{\ov{n}}(\lambda)} \) used in the Cauchy identity~\eqref{eq::Cauchy::Moebius::intro} of \S\ref{intro::combinatorics} above.

	\renewcommand{\theequation}{\thesection.\arabic{equation}}

	\subsection{Structure of the paper}
	
	Sections \S\ref{sec::Recollection}, \S\ref{sec::Bubble::sort::all}, and \S\ref{sec::Staircase} focus entirely on combinatorial aspects, while Sections \S\ref{sec::Demazure::all} and \S\ref{sec::Howe::duality} are dedicated to the representation-theoretic side of the paper.
	
	In \S\ref{sec::Recollection}, we recall key definitions from the theory of posets (\S\ref{sec::poset::terms}) and distributive lattices (\S\ref{sec::Distributive}) and review the combinatorial structure of the Bruhat graph in~\S\ref{sec::Bruhat}.  
	
	In \S\ref{sec::Bubble::sort::all}, we introduce a generalization of the bubble-sort algorithm for arborescent posets: \\
	$\bullet$ In~\S\ref{sec::S::dominant}, we define the main subposet of the Bruhat graph consisting of dominant compositions. 
	$\bullet$ In~\S\ref{sec::Hasse::dominant}, we describe its Hasse diagram. 
	$\bullet$ In~\S\ref{sec::Bubble-sort}, we present the bubble-sort algorithm and analyze its key properties. 
	$\bullet$ The desired properties of the poset of dominant compositions are proved in~\S\ref{sec::S::Regular} for regular \( \lambda \) and are conjectured in~\S\ref{sec::S::parabolic} for \( \lambda \) with repeating elements.
	
	Section~\ref{sec::Demazure::all} focuses on the structure theory of Demazure modules. We recall their definition in~\S\ref{sec::Demazure::def} and describe their intersections in~\S\ref{sec::Demazure::distributive}.  
	
	Starting from~\S\ref{sec::Staircase}, we begin working with staircase shapes and develop their combinatorics:  \\
	$\bullet$ The poset $\St_{\ov{n}}$ of staircase corners is defined in~\S\ref{sec::staircase::corners}.
	$\bullet$ $DL$-dense arrays are introduced in~\S\ref{sec::DL_dense}.
	$\bullet$ The poset structure of these arrays is described in~\S\ref{sec::Bruhat::DL-dense}, relying on the combinatorial results obtained in~\S\ref{sec::Bubble::sort::all}.
	
	The final section,~\S\ref{sec::Howe::duality}, builds upon all the combinatorial techniques developed so far to establish the {\it "Howe duality for staircase matrices"}.  
	
	Appendix~\ref{sec::Example::DL} contains pictorial examples of the Hasse graph of the set $\DL_{\ov{n}}(\lambda)$.

	\subsection*{Acknowledgements}
	We are grateful to Michel Brion for useful correspondence that leads to the main idea of Theorem~\ref{thm::Dem::lattice::intro} and Evgeny Feigin, Sergei Ivanov, Igor Makhlin and Alek Vainstein for useful conversations. The work of Ie. M. partially supported by the International Scientists Program of the Beijing Natural Sciences
	Foundation (grant number IS24002).

	\section{Recollection of (Bruhat) Posets and Distributive Lattices}
	\label{sec::Recollection}
	
	\subsection{Recollection of poset terminology}
	\label{sec::poset::terms}
	Let us recall some properties of partially ordered sets that are important for our purposes.
	We refer to~\cite{Wachs} and references therein for the introduction to the theory of posets. 
	All posets in this paper are finite and, in particular, all intervals
	$$(x,y):=\{z\colon x\prec z\prec y\}$$ 
	are finite.

	\begin{definition}
		A poset $(X,\preceq)$ is called 
		\begin{itemize}
			\item \emph{bounded} if there exists an
			element $\hat{1}$ greater or equal than every other element in $X$, and an element $\hat{0}$ less or equal to every other element in $X$. 
			\item \emph{graded} (also called \emph{ranked} or \emph{pure}) if all chains between two comparable elements have the same length.
		\end{itemize}    
		The length of the maximal chain from $\hat{0}$ to $x$ in a bounded graded poset is called \emph{the rank function} and is denoted by $\rk(x)$. 
	\end{definition}
	
	The \emph{Hasse graph} or \emph{Hasse diagram} $\Gamma_X$ of the poset $X$ is a directed graph whose vertices are elements of $X$ and edges represent the covering relations in the poset $X$. For the pictorial description, we orient the edges from the smaller element to the bigger one.
	
	\begin{definition}
		A graded poset $(X,\preceq)$ is called 
		\begin{itemize}
			\item \emph{thin} if all open intervals of length $2$ consist of two elements: 
			$$\forall x\prec y \ \& \ \rk(y)-\rk(x)=2 \ \Rightarrow \ (x,y):= x\prec z_1,z_2 \prec y.$$
			\item \emph{subthin} if all open intervals of length $2$ consist of at most two elements.
		\end{itemize} 
	\end{definition}

	\begin{definition}	
		An  $A$-\emph{labelling} of the poset $(X,\prec)$	is the map $\euE$ from the set of edges of the Hasse diagram $\Gamma_X$ to a given set $A$.
		
		The $A$-labelling is called \emph{$\EL$-labelling} (Edge-Lexicographical) if $A$ is a poset and for any pair of comparable elements $x<y\in X$ 
		there exists a unique maximal chain 
		$$\pth_{\euE}(x\prec y):=\left(x\prec z_1\prec \ldots\prec z_k\prec y\right)$$ going from $x$ to $y$ which has increasing $\euE$ labels (when reading the covering
		relations from minima to maxima)
		$$
		\euE(x\prec z_1) < \euE(z_1\prec z_2) < \ldots < \euE(z_k\prec y)
		$$
		and, moreover, this unique maximal chain is minimal for
		the lexicographic order on maximal chains (comparing the words given by the successive $\euE$ labels).
	\end{definition}
	
	\begin{definition}
		A poset $X$ is called \emph{$\EL$-shellable} if there exists an $\EL$-labelling of its Hasse diagram.    
	\end{definition}
	
	We refer to~\cite{Wachs} and to~\cite{BB, Bjoner_Shellable} for the details of the aforementioned notions and applications to Coxeter groups.
	
	\begin{definition}
		\label{def::idempotent}
		A map $F : X \rightarrow X$ on the poset $(X,\leq)$ is called 
		\begin{itemize}
			\item \emph{decreasing} if $F(x)\leq x$ (respectively \emph{increasing} if $F(x)\geq x$);
			\item \emph{monotone} if $\forall x\leq y$ we have $F(x) \leq F(y)$;
			\item \emph{idempotent} if $\forall x\in X \  F(x)=F(F(x))$. 
		\end{itemize}
	\end{definition}
	
	\begin{lemma}
		\label{lem::poset::idempotent}
		Suppose $F:X\to X$ is a monotone idempotent. Then
		\begin{itemize}
			\item if $F$ is decreasing then 
			$\forall x\in X$ the element $F(x)$ is the supremum element among the elements $y\in F(X)$ that are less or equal to $x$:
			$$\forall x\in X \ F(x)= \sup\{y\in F(X) \colon y \leq x\}.$$
			\item Similarly, if $F$ is an increasing monotone idempotent then 
			$$\forall x\in X \ F(x)= \inf\{y\in F(X) \colon y \geq x\}.$$
		\end{itemize}
	\end{lemma}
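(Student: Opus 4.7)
The plan is to verify the two formulas directly from the three hypotheses on $F$, using only the definitions of monotone, idempotent, decreasing/increasing. The proof is short: the only nontrivial input is the observation that elements of $F(X)$ are precisely the fixed points of $F$.

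For the decreasing case, I would first check that $F(x)$ belongs to the set $S_x := \{y \in F(X) \colon y \leq x\}$. It lies in $F(X)$ tautologically, and $F(x) \leq x$ because $F$ is decreasing. Hence it remains to show that $F(x)$ is an upper bound of $S_x$, which will make it the maximum (and therefore the supremum) of $S_x$. For this, I would use the key observation that every $y \in F(X)$ satisfies $F(y)=y$: writing $y = F(z)$, idempotency gives $F(y) = F(F(z)) = F(z) = y$. Now take any $y \in S_x$; then $y \leq x$, so by monotonicity $F(y) \leq F(x)$, and since $F(y)=y$ this reads $y \leq F(x)$, as desired.

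For the increasing case, the argument is entirely symmetric: $F(x) \in F(X)$ and $F(x) \geq x$, so $F(x) \in T_x := \{y \in F(X) \colon y \geq x\}$; and for any other $y \in T_x$ the same fixed-point remark together with monotonicity gives $F(x) \leq F(y) = y$, identifying $F(x)$ as the minimum of $T_x$. Alternatively, this case follows formally from the decreasing case by replacing $(X,\leq)$ with its opposite poset $(X,\geq)$, under which an increasing monotone idempotent becomes a decreasing monotone idempotent.

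I do not anticipate any real obstacle here; the statement is a standard fact about closure/kernel operators and the proof is a two-line unpacking of the definitions. The only mildly delicate point worth stating explicitly is the identification $F(X) = \{x \in X \colon F(x) = x\}$, which is what allows one to convert the inequality $F(y) \leq F(x)$ coming from monotonicity into the inequality $y \leq F(x)$ witnessing supremality.
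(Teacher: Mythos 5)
Your proof is correct and uses the same argument as the paper: note that $F(x)$ lies in the relevant set (since $F$ is decreasing and $F(x)\in F(X)$), and then for any $y\in F(X)$ with $y\le x$, idempotency gives $F(y)=y$ and monotonicity gives $y=F(y)\le F(x)$. The paper states this in two lines; you have just unpacked the same reasoning more explicitly and added the standard duality remark for the increasing case.
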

	\begin{proof} 
		Suppose that $F$ is decreasing and $F(x')\leq x$. Then since $F$ is an order-preserving map we have:
		\[F(x')=F(F(x'))\leq F(x).\]
	\end{proof}
	In particular, we see that if $x<x'$ and $x<x''$ is a pair of different edges in the Hasse diagram of $F(X)$ then the open intervals $(x,x')$ and $(x,x'')$ of the poset $X$ do not intersect.

	\begin{definition}
		Let \( X \) be a locally finite poset. The \emph{M\"obius function} \( \mu^{X}: X \times X \to \mathbb{Z} \) is defined recursively as follows:
		\begin{itemize}
			\item For all \( x \in X \), \( \mu(x, x) = 1 \).
			\item For all \( x, y \in X \) such that \( x < y \), the Möbius function is given by:
			\[
			\mu(x, y) = -\sum_{z\in [x, y)} \mu(x, z),
			\]
			where the sum is taken over all \( z \in X \) such that \( x \leq z < y \).
			\item If \( x \not\leq y \), then \( \mu(x, y) = 0 \).
		\end{itemize}
	\end{definition}
	The main application of the M\"obius function is the following 
	M\"obius Inversion Formula
	\begin{fact}
		\label{fact::Mobius}	
		Let \( f, g: X \to \mathbb{C} \) be two functions on a locally finite poset $X$ related by the following summation:
		\[
		\forall x\in X \quad
		g(x) = \sum_{y \leq x} f(y).
		\]
		Then, \( f \) can be expressed in terms of \( g \) using the M\"obius function \( \mu^{X} \) of the poset $X$:
		\[
		\forall y\in X \quad
		f(y) = \sum_{x \leq y} \mu^{X}(x, y) g(x).
		\]
	\end{fact}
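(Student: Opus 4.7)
The plan is to substitute the defining summation $g(x) = \sum_{z \leq x} f(z)$ into the right-hand side of the claimed inversion formula and swap the order of the (finite) double sum:
\begin{equation*}
\sum_{x \leq y} \mu^X(x, y)\, g(x) = \sum_{x \leq y} \sum_{z \leq x} \mu^X(x, y)\, f(z) = \sum_{z \leq y} f(z) \left( \sum_{z \leq x \leq y} \mu^X(x, y) \right).
\end{equation*}
To recover $f(y)$ on the left, it suffices to establish the \emph{dual cancellation identity}
\begin{equation*}
(\ast) \qquad \sum_{z \leq x \leq y} \mu^X(x, y) = \delta_{z, y} \qquad \text{for all } z \leq y.
\end{equation*}
Granted $(\ast)$, the double sum collapses to $\sum_{z \leq y} f(z)\, \delta_{z, y} = f(y)$, as required.

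The main obstacle is $(\ast)$, because the defining recursion given in the statement is equivalent to $\sum_{x \leq w \leq y} \mu^X(x, w) = \delta_{x, y}$, which is a cancellation summed over the \emph{second} argument, whereas $(\ast)$ sums over the \emph{first}. I would prove $(\ast)$ by strong induction on the length of the longest chain in the interval $[z, y]$. The base case $z = y$ is immediate from $\mu^X(y, y) = 1$. For the inductive step with $z < y$, separate the term $x = y$ and apply the defining recursion to each remaining $\mu^X(x, y)$ with $x < y$:
\begin{equation*}
\sum_{z \leq x \leq y} \mu^X(x, y) = 1 + \sum_{z \leq x < y} \mu^X(x, y) = 1 - \sum_{z \leq x < y} \sum_{x \leq w < y} \mu^X(x, w).
\end{equation*}
Swapping the order of summation in the last term gives $-\sum_{z \leq w < y} \sum_{z \leq x \leq w} \mu^X(x, w)$; by the inductive hypothesis applied to the shorter interval $[z, w]$, the inner sum equals $\delta_{z, w}$, and the outer sum collapses to $-1$ (the $w = z$ contribution is present because $z < y$). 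Thus the whole expression equals $1 - 1 = 0$, as required.

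As a more conceptual alternative for $(\ast)$, one may pass to the incidence algebra of $X$: with convolution $(\alpha * \beta)(x, y) := \sum_{x \leq w \leq y} \alpha(x, w)\beta(w, y)$ and zeta element $\zeta(x, y) := 1$ for $x \leq y$, the defining recursion reads $\mu^X * \zeta = \delta$, while $(\ast)$ reads $\zeta * \mu^X = \delta$. Associativity of the convolution (immediate from local finiteness) then forces the one-sided inverse $\mu^X$ of $\zeta$ to be two-sided, yielding both identities simultaneously.
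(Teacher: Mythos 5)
The paper states this as a \emph{Fact} and does not prove it; it is the standard M\"obius inversion theorem, cited as background. So there is no paper proof to compare against. Your argument is correct. The substitution-and-swap step is routine; the genuine content is reconciling the ``sum over the first argument'' identity $(\ast)$ with the defining recursion, which sums over the second argument, and your strong induction on chain length in $[z,y]$ handles this cleanly: for $z<y$ you isolate $\mu^X(y,y)=1$, expand each $\mu^X(x,y)$ with $x<y$ via the recursion, swap the finite double sum to get $-\sum_{z\leq w<y}\sum_{z\leq x\leq w}\mu^X(x,w)$, and the inductive hypothesis on the strictly shorter intervals $[z,w]$ collapses this to $-1$. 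One small stylistic note: the incidence-algebra argument you sketch at the end (a two-sided inverse in an associative algebra, forced from the one-sided inverse $\mu^X * \zeta = \delta$) is the cleanest way to see $(\ast)$ and makes the induction unnecessary; either route is fine, and both are standard.
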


	\begin{definition}
		The \emph{order complex $\Delta(X)$} of a poset $X$ is the simplicial complex whose vertices are indexed by elements of $X$ and simplices $\{x_0,\ldots,x_k\}$ are in one-to-one correspondence with chains $x_0<\ldots<x_k$ in the poset $X$. 
	\end{definition}
	
	\begin{fact}
		The M\"obius function $\mu^{X}(x,y)$ is equal to the reduced Euler characteristic of the geometric realization of the order complex of the open interval $(x,y)$:
		$$\mu^{X}(x,y) = \tilde\chi(|\Delta(x,y)|).$$
	\end{fact}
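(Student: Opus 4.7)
The plan is to identify both sides as the same alternating sum over chains in the open interval $(x,y)$, via the classical theorem of Philip Hall.

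First, I will establish \emph{Hall's chain formula}: for any pair $x \prec y$,
\[
\mu^{X}(x,y) \;=\; \sum_{k \geq 1} (-1)^k\, N_k(x,y),
\]
where $N_k(x,y)$ counts strict chains $x = z_0 \prec z_1 \prec \cdots \prec z_k = y$ with exactly $k$ steps. The cleanest route is through the incidence algebra of $X$: by definition $\mu^{X}$ is the convolution inverse of the zeta function $\zeta$, and writing $\zeta = \delta + \eta$ with $\eta(x,y)=1$ if $x \prec y$ and $0$ otherwise, the geometric series $\mu^{X} = \delta - \eta + \eta^{\ast 2} - \eta^{\ast 3} + \cdots$ makes sense as a finite sum on every interval (finiteness is given by hypothesis). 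Then the $k$-fold convolution $\eta^{\ast k}(x,y)$ is exactly $N_k(x,y)$, yielding the formula. Alternatively, one could induct on the length of $[x,y]$ using the defining recursion of $\mu^{X}$ and reindex chains by their penultimate element.

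Next, I will match these chain counts with the face numbers of the order complex. A $d$-dimensional simplex of $\Delta(x,y)$ is a chain of $d+1$ elements strictly between $x$ and $y$; augmenting it at both ends with $x$ and $y$ produces a chain of $d+2$ steps from $x$ to $y$, and this correspondence is a bijection. Hence the face number $f_d$ of $\Delta(x,y)$ equals $N_{d+2}(x,y)$, while $N_1(x,y) = 1$ is the single edge $x \prec y$. The reduced Euler characteristic therefore unpacks as
\[
\tilde\chi(|\Delta(x,y)|) \;=\; -1 + \sum_{d \geq 0} (-1)^d f_d \;=\; -N_1(x,y) + \sum_{d \geq 0} (-1)^d N_{d+2}(x,y),
\]
and the substitution $k = d+2$ together with the term $-N_1(x,y) = (-1)^1 N_1(x,y)$ recovers precisely the right-hand side of Hall's formula.

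The only genuine obstacle is bookkeeping: keeping straight whether chains are counted by vertices or by edges, whether they lie in the open or closed interval, and correctly identifying the $-1$ that distinguishes the reduced from the unreduced Euler characteristic. Once these conventions are aligned, the proof is purely formal; no input beyond the combinatorial definition of $\chi$ as an alternating sum of face numbers is required.
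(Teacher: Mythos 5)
Your argument is correct and is the standard classical proof (due to Philip Hall); the paper states this result as a \emph{Fact} and does not prove it, so there is no internal proof to compare against. The incidence-algebra derivation of Hall's chain formula, together with the observation that a $d$-simplex of $\Delta(x,y)$ corresponds bijectively to a chain from $x$ to $y$ with $d+2$ steps (and that the empty face accounts for the $-1$ in the reduced Euler characteristic), is exactly the textbook route; your bookkeeping of $f_d = N_{d+2}$ and $N_1 = 1$ is accurate and the indices match. Nothing is missing.
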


	The following facts follow from the seminal \emph{Quillen's Theorem A} (\cite{Quillen}, see also \cite[Th.2.3]{Walker} and \cite[Corollary 10.12 on p.1853]{Combinatorics_Book}):
	
	\begin{fact}
		If the poset $X$ has a minimal element then $\Delta(X)$ is contractible.
	\end{fact}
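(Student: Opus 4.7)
The plan is to realize $\Delta(X)$ explicitly as a combinatorial cone with apex at the minimum element $\hat{0}$, from which contractibility follows immediately by the standard straight-line deformation to the apex. First I would observe that since $\hat{0} \leq x$ for every $x \in X$, any chain $x_0 < x_1 < \cdots < x_k$ in $X \setminus \{\hat{0}\}$ extends to a chain $\hat{0} < x_0 < \cdots < x_k$ in $X$. Translated into the language of the order complex, this says that for every simplex $\sigma$ of $\Delta(X)$ the union $\{\hat{0}\} \cup \sigma$ is again a simplex of $\Delta(X)$. Equivalently, $\Delta(X)$ coincides with the simplicial cone over $\Delta(X \setminus \{\hat{0}\})$ with apex $\hat{0}$.

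Next I would invoke the standard fact that the geometric realization of a simplicial cone is contractible via the straight-line homotopy $H(p,t) = (1-t)\,p + t\,\hat{0}$. This map is well-defined on $|\Delta(X)|$ because every point $p$ lies in the geometric realization of some simplex $\sigma$, the enlarged simplex $\{\hat{0}\} \cup \sigma$ also belongs to $\Delta(X)$, and the entire segment from $p$ to $\hat{0}$ remains inside the geometric realization of that enlarged simplex. Since $H(\cdot,0) = \mathrm{id}$ and $H(\cdot,1)$ is the constant map at $\hat{0}$, the space $|\Delta(X)|$ is contractible.

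I expect no substantive obstacle; the argument is essentially a reformulation of the cone construction. The only tiny boundary case worth noting is $X = \{\hat{0}\}$, where $\Delta(X)$ is a single vertex and is trivially contractible. One could alternatively cite Quillen's Theorem A (referenced just before the statement) applied to the constant functor to a one-element poset, but the direct cone description is both shortest and most transparent.
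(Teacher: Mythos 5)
Your proof is correct. The paper itself offers no proof of this Fact: it merely points to Quillen's Theorem A (and the Walker/Graham references) as the source, presumably because that machinery is in any case needed for the immediately following Corollary~\ref{fact::subposet} about order-preserving idempotents. Your cone argument is a genuinely different and more elementary route: identifying $\Delta(X)$ with the cone $\hat{0}*\Delta(X\setminus\{\hat{0}\})$ and contracting by the straight-line homotopy to the apex is entirely self-contained, avoids any appeal to a comparison theorem for nerves, and is the standard textbook proof of this specific statement. What the paper's citation buys is uniformity — Quillen's Theorem A handles both this Fact and the fiber/idempotent statements in one stroke — whereas your argument buys transparency for this one claim at the cost of not generalizing to the corollary.

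One small point worth flagging: the Fact as stated says \emph{minimal} element, but what is actually needed (and what you correctly use) is a \emph{minimum}, i.e.\ a least element $\hat{0}\leq x$ for all $x$. A mere minimal element does not suffice — an antichain on two points has minimal elements but a disconnected, hence non-contractible, order complex. You implicitly made the right reading of the statement; it would be worth stating that interpretive step explicitly, since the literal wording is a (common) abuse of terminology.
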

	\begin{corollary}\label{fact::subposet}
		Let $F : X \rightarrow X$ be an order-preserving idempotent then
		\begin{itemize}
			\item $\forall y\in F(X)$ the simplicial complex $\Delta(F^{-1}(y))$  is contractible;
			\item 
			The order complexes $\Delta(X)$  and of $\Delta(F(X))$ are homotopy equivalent.
		\end{itemize}
	\end{corollary}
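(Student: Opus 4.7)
The plan is to deduce both statements from Quillen's Theorem A, using the preceding fact (that a poset with a minimum element has contractible order complex) to verify its hypotheses.

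A basic observation first: every $y\in F(X)$ is a fixed point of $F$. Indeed, writing $y=F(w)$, idempotence yields $F(y)=F(F(w))=F(w)=y$. Consequently $y\in F^{-1}(y)$, and $F$ restricts to the identity on the subposet $F(X)\subseteq X$.

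For the first bullet, I would establish that $y$ serves as a cone point of $\Delta(F^{-1}(y))$. The cleanest route is to show that $y$ is comparable to every $x\in F^{-1}(y)$. In the setting of Lemma~\ref{lem::poset::idempotent} --- the setting in which $F$ will be applied later in the paper --- this is automatic: if $F$ is decreasing then $y=F(x)\leq x$ for every $x\in F^{-1}(y)$, so $y$ is the minimum of the fiber and the preceding fact yields contractibility; the increasing case is dual. The main obstacle lies precisely in this comparability step for the full generality asserted: for an order-preserving idempotent that is neither uniformly decreasing nor uniformly increasing, $y$ need not be an extremum of $F^{-1}(y)$, and the contractibility must be extracted either by decomposing the fiber into pieces on which $F$ acts one-sidedly or by invoking a fibered refinement of Theorem~A that glues together the contractible pieces. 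I would then package the resulting argument uniformly using the two cited formulations (Walker and the Combinatorics Handbook), both of which record exactly this monotone-retraction scenario.

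For the second bullet, I would apply Quillen's Theorem~A to the order-preserving surjection $F:X\twoheadrightarrow F(X)$. Its standard hypothesis --- that the subposet $\{x\in X : F(x)\leq y\}$ has contractible order complex for every $y\in F(X)$ --- is verified by restricting the first bullet's argument to this subposet: its image under $F$ is $(F(X))_{\leq y}$, which has maximum $y$ and is therefore contractible by the preceding fact, while the fibers of this restriction coincide with the fibers of $F$ already shown to be contractible. Theorem~A then delivers the desired homotopy equivalence $\Delta(X)\simeq\Delta(F(X))$. Equivalently, one may describe the equivalence directly via the pair $(F,i)$, where $i:F(X)\hookrightarrow X$ is the inclusion: both are order-preserving, $F\circ i=\mathrm{id}_{F(X)}$, and contractibility of the fibers of $F$ makes $i$ a homotopy inverse of $F$ on order complexes.
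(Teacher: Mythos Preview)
The paper does not supply a proof of this corollary; it simply records it as a consequence of Quillen's Theorem~A together with the cited references of Walker and the Handbook chapter. So there is no detailed argument in the paper to compare against. Your approach --- reduce to the preceding fact by showing $y$ is an extremum of its fiber, then feed the fiber contractibility into Quillen's Theorem~A --- is exactly the standard one, and it works cleanly whenever $F$ is additionally increasing or decreasing, which is the only way the result is used later in the paper (cf.\ Lemma~\ref{lem::poset::idempotent} and Corollary~\ref{cor::bbs::monotone}).

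You are right to flag the general case as problematic, and in fact the worry is fatal: the statement as literally written is false. Take $X=\{a,b\}$ an antichain and $F(a)=F(b)=a$. This $F$ is order-preserving (vacuously) and idempotent, but $F^{-1}(a)=\{a,b\}$ has order complex $S^0$, not contractible; and $\Delta(X)\simeq S^0$ is not homotopy equivalent to $\Delta(F(X))=\Delta(\{a\})$, a point. So neither bullet holds without the extra hypothesis $F\leq\mathrm{id}$ or $F\geq\mathrm{id}$. Your attempt to rescue the general case by ``decomposing the fiber'' or invoking a ``fibered refinement'' cannot succeed, because there is nothing to rescue. The correct reading is that the corollary is stated loosely and should be understood with the increasing/decreasing hypothesis implicit; under that reading your proof is complete and matches the intended argument.
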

	
	The subsequent fact was stated by Danaraj-Klee for simplicial complexes (\cite{Shellable_Thin}) and later on was generalized to posets by Bjorner (\cite{Bjorner_Shellable}). 
	It has numerous applications for the parabolic BGG resolution:
	\begin{fact}
		\label{fact::Shell::ball}
		If $(X,\leq)$ is a shellable subthin graded poset then the geometric realization of an ordered complex $\Delta(X)$ is homeomorphic to
		\begin{itemize}
			\item a sphere, if the poset $X$ is thin;
			\item a ball, if the poset $X$ is subthin but not thin.
		\end{itemize}
	\end{fact}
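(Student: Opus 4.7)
The plan is to assemble two classical reductions. First, invoke Björner's theorem that an EL-labelling of $X$ transfers to a shelling of the order complex $\Delta(X)$. Since $X$ is graded of some rank $r$, every maximal chain contributes a simplex of dimension $r-1$, so $\Delta(X)$ is pure. The lexicographic order on maximal chains induced by reading off the $\euE$-labels along each chain is then a shelling order on the facets of $\Delta(X)$: the key point in Björner's argument is that the unique increasing chain in any closed interval $[x,y]$ is simultaneously the lex-minimum, which ensures that when a new facet $F$ is appended, $F \cap (F_1 \cup \cdots \cup F_{k-1})$ is a pure codimension-one subcomplex of $\partial F$, as required by the shellability axiom.

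Second, translate the thin/subthin condition into an incidence condition on codimension-one faces. A codimension-one simplex $\sigma$ of $\Delta(X)$ is a chain of length $r-1$ obtained from a maximal chain by deleting one interior element $y$, where $x_{i-1}\prec y\prec x_{i+1}$. The facets containing $\sigma$ are then in bijection with the elements $z$ in the open rank-two interval $(x_{i-1},x_{i+1})$. Consequently: if $X$ is thin, every interior codim-one face of $\Delta(X)$ is contained in exactly two facets, so $\Delta(X)$ is a pseudomanifold; if $X$ is subthin but not thin, every codim-one face lies in at most two facets, with strict inequality at some face, so $\Delta(X)$ is a pseudomanifold with non-empty boundary.

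The conclusion is then immediate from the Danaraj--Klee dichotomy: a shellable pseudomanifold is PL-homeomorphic to a sphere, while a shellable pure $d$-complex whose every codim-one face lies in one or two facets, with at least one facet incidence equal to one, is PL-homeomorphic to a $d$-ball. Matching the thin case to the sphere conclusion and the strictly subthin case to the ball conclusion completes the proof.

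The main technical obstacle is the bookkeeping for codim-one faces obtained by deleting the minimum or maximum element of a maximal chain (the ``boundary'' faces that lie at the ends of chains rather than in their interiors). In applications where $X$ is realised as the proper part of a bounded graded poset, these boundary faces uniformly lie in a single facet and the Danaraj--Klee criterion applies without alteration; for a generic graded $X$ one must verify case-by-case that the shelling order supplied by the EL-labelling still satisfies the local criterion at these endpoint faces. Once this is handled, the combination of Björner's shellability result with Danaraj--Klee's topological dichotomy gives the statement essentially verbatim.
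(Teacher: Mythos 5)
The paper does not prove this statement: it is presented as a cited \emph{Fact}, attributed to Danaraj--Klee for simplicial complexes and to Bj\"orner for posets, and it is used only once (in Corollary~\ref{cor::Mobius::thin}). So there is no internal proof to compare against. Your two ingredients — a shelling of $\Delta(X)$ and the Danaraj--Klee ball/sphere dichotomy — are exactly what those references use, and your first two paragraphs are the right skeleton.

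However, your third paragraph contains a claim that would break the argument in precisely the thin case. You assert that when $X$ is the proper part of a bounded graded poset $\hat{X}$, the ``boundary'' codimension-one faces (obtained by deleting the bottom or top element of a maximal chain of $X$) ``uniformly lie in a single facet.'' That is false when $\hat{X}$ is thin. If $z_1<\dots<z_{r-1}$ is a maximal chain of $X$, the face $z_2<\dots<z_{r-1}$ lies in the facets $z<z_2<\dots<z_{r-1}$ where $z$ ranges over the atoms of $\hat{X}$ below $z_2$, i.e.\ over the open rank-two interval $(\hat{0},z_2)$ of $\hat{X}$; thinness of $\hat{X}$ forces this set to have exactly two elements, so this face lies in exactly two facets, and symmetrically at the top. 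That is precisely what makes $\Delta(X)$ a closed pseudomanifold and hence a sphere. If those faces really lay in a single facet, as you claim, Danaraj--Klee would output a ball in both the thin and the subthin-but-not-thin cases, contradicting the statement. In the subthin case those boundary faces can lie in one or two facets, and the existence of at least one face (interior or boundary) in a single facet is what yields the ball. Your restriction of the codimension-one analysis to interior deletions means you cannot even detect the cases where subthinness-but-not-thinness fails only at intervals of the form $(\hat{0},c)$ or $(c,\hat{1})$.

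Two smaller remarks. First, the hypothesis is ``shellable,'' not ``$\EL$-shellable''; you import an $\EL$-labelling and Bj\"orner's lexicographic shelling, which is fine for the paper's applications but is a stronger input than the Fact actually assumes, and the Danaraj--Klee step works for any shelling. Second, the Fact as written cannot literally apply to a bounded $X$ (otherwise $\Delta(X)$ is a cone on $\hat{0}$, never a sphere); it has to be read with $X$ the proper part of a bounded graded poset $\hat{X}$ and ``thin/subthin'' referring to $\hat{X}$, as Corollary~\ref{cor::Mobius::thin} makes clear by applying it to open intervals. You gesture at this but should state it explicitly and consistently, because it is exactly what determines the facet count at the boundary faces where your argument currently goes wrong.
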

	
	\begin{corollary}
		\label{cor::Mobius::thin}
		If $X$ is a graded bounded subthin shellable poset then $\forall x<y \in X$ we have
		$$
		\mu^X(x,y) = \begin{cases}
			(-1)^{\rk(y)-\rk(x)}, \text{ if the interval $[x,y]$ is thin},\\
			0, \text{ if the interval $[x,y]$ is subthin, but not thin.}
		\end{cases}
		$$
	\end{corollary}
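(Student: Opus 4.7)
The plan is to combine the two topological facts stated just above: the identification $\mu^{X}(x,y) = \tilde\chi(|\Delta(x,y)|)$ with the Danaraj--Klee--Bj\"orner dichotomy of Fact~\ref{fact::Shell::ball}. Fix $x \prec y$ in $X$ and view the open interval $(x,y) = \{z \colon x \prec z \prec y\}$ as a poset in its own right. The first step is to verify that $(x,y)$ (equivalently, the closed interval $[x,y]$) inherits all hypotheses of Fact~\ref{fact::Shell::ball}: it is graded (with rank $\rk(z)-\rk(x)$ for $z \in [x,y]$), subthin (a length-two open subinterval of $[x,y]$ is a length-two open subinterval of $X$), and shellable (restriction of an $\EL$-labelling of $X$ to edges of the Hasse diagram of $[x,y]$ is an $\EL$-labelling, by the defining uniqueness of the rising maximal chain). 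Moreover $[x,y]$ is thin iff every length-two open subinterval of $[x,y]$ has exactly two elements, which is precisely the thin/subthin-but-not-thin dichotomy used below.

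Next I would apply Fact~\ref{fact::Shell::ball} directly to the poset $(x,y)$. Maximal chains in the open interval $(x,y)$ have $\rk(y)-\rk(x)-1$ elements, hence correspond to simplices of dimension $\rk(y)-\rk(x)-2$, so the relevant geometric realization is of this dimension. If $[x,y]$ is thin, Fact~\ref{fact::Shell::ball} gives
\[
|\Delta(x,y)| \cong S^{\rk(y)-\rk(x)-2}, \qquad \tilde\chi\bigl(|\Delta(x,y)|\bigr) = (-1)^{\rk(y)-\rk(x)-2} = (-1)^{\rk(y)-\rk(x)}.
\]
If $[x,y]$ is subthin but not thin, Fact~\ref{fact::Shell::ball} gives that $|\Delta(x,y)|$ is homeomorphic to a ball, which is contractible, so $\tilde\chi(|\Delta(x,y)|) = 0$. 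Plugging these Euler-characteristic values into the preceding fact $\mu^{X}(x,y) = \tilde\chi(|\Delta(x,y)|)$ yields the two cases of the corollary.

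The boundary case $\rk(y)-\rk(x) = 1$ (covering relations) must be checked by hand: then $(x,y) = \emptyset$, the order complex is the empty simplicial complex (the $(-1)$-sphere), and $\tilde\chi(\emptyset) = -1 = (-1)^{\rk(y)-\rk(x)}$, matching the thin branch. I expect no real obstacle here; the only point requiring a line of justification is the inheritance of shellability on closed intervals of an $\EL$-shellable poset, which follows immediately from the uniqueness clause in the definition of an $\EL$-labelling once one restricts the labelling to edges lying in $[x,y]$. All remaining manipulations are formal.
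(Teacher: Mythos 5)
Your argument is correct and is exactly the derivation the paper intends (the corollary is stated without proof as an immediate consequence of $\mu^X(x,y)=\tilde\chi(|\Delta(x,y)|)$ and Fact~\ref{fact::Shell::ball} applied to the proper part of $[x,y]$). The points you flag — inheritance of the graded/subthin/$\EL$-shellable hypotheses by closed intervals, the Euler characteristics $(-1)^{\rk(y)-\rk(x)-2}$ for the sphere and $0$ for the ball, and the degenerate case $\rk(y)-\rk(x)=1$ — are precisely the details that need checking, and you handle them correctly.
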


	\subsection{Bruhat Order and Bruhat Graph}
	\label{sec::Bruhat}
	
	Let $W$ be the Weyl group associated with a semisimple Lie algebra $\fg$ and the root system $\Phi$.  
	We assume that the Cartan decomposition $\fg=\fn_{-}\oplus\fh\oplus\fn_{+}$ is fixed, and that the set of simple reflections $S\subset W$ corresponds to this decomposition.  
	The group $W$ is naturally equipped with the \emph{length function}, which counts the number of simple reflections in a reduced decomposition.  
	
	Moreover, for any weight $\lambda\in P$ in the weight lattice, we define:
	\begin{itemize}
		\item the \emph{dominant weight} $\lambda_+ := W\lambda\cap P_{+}$ in the $W$-orbit of $\lambda$;
		\item the \emph{Weyl subgroup} $W_J := \mathrm{Stab}(\lambda_+)\subset W$, generated by reflections that fix $\lambda_+$;
		\item the element $\sigma_\lambda\in W$ of minimal length such that $\lambda = \sigma_{\lambda}\lambda_+$, which represents the left coset $W/W_J$.
	\end{itemize}
	
	\begin{definition}
		\label{def::Bruhat}
		The \emph{Bruhat partial order} on the $W$-orbit $W\lambda$ is defined as follows.  
		For $\lambda=\sigma_\lambda \lambda_+$ and $\mu=\sigma_\mu \lambda_+$, we set  
		\[
		\lambda \preceq \mu \quad \text{if and only if} \quad \Phi_-\cap \sigma_\lambda (\Phi_+) \subset  \Phi_-\cap \sigma_\mu (\Phi_+).
		\]
		The {\rm Hasse diagram} of the poset $W\lambda\simeq W/W_{J}$ is called the \emph{parabolic Bruhat graph}.  
		If $W_J$ is trivial, it is called the \emph{Bruhat graph}.
	\end{definition}
	
	For further details, we refer to the classical textbooks on this subject~\cite{Humphreys, BB}.
	
	\begin{fact}
		The (parabolic) Bruhat graph $W\lambda$ is {\rm bounded} and {\rm graded}, with  
		\[
		\hat{0}:=\lambda_+, \quad \hat{1}:=\lambda_{-}=\omega_0\lambda_+,
		\]
		where $\omega_0$ (respectively, $\omega_0^{J}$) is the {longest element} of the Weyl group $W$ (resp. the Weyl subgroup $W_J$).  
		The {\rm rank function} coincides with the length function:  
		\[
		\rk(\lambda) = l(\sigma_\lambda).
		\]
		
		Moreover, the Bruhat graph is {\rm thin} for regular $\lambda$ (i.e., when $W\lambda \simeq W$), while the parabolic Bruhat graph is {\rm subthin}.  
		Additionally, $W\lambda$ admits an \emph{\(\EL\)-labelling}.
	\end{fact}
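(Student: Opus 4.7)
The plan is to reduce every clause of the statement to classical results about the Bruhat order on Weyl groups, using the reformulation given in Definition~\ref{def::Bruhat}. Observe first that $|\Phi_- \cap \sigma(\Phi_+)| = l(\sigma)$ is the standard characterization of length via inversion sets. Under the bijection $W/W_J \leftrightarrow W\lambda$ sending a minimal-length coset representative $\sigma$ to $\sigma\lambda_+$, the order of Definition~\ref{def::Bruhat} coincides with the standard Bruhat order on the parabolic quotient $W/W_J$ (see, e.g., Bj\"orner--Brenti, Chapter~2). Once this identification is in place, each of the four claims translates into a well-known fact.

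For boundedness and the rank function: $\sigma_{\lambda_+} = e$ has empty inversion set, giving $\hat{0} = \lambda_+$; the minimal-length representative of $\omega_0 W_J$ has the maximal inversion set among all minimal-length coset representatives, giving $\hat{1} = \lambda_- = \omega_0\lambda_+$. Gradedness and the equality $\rk(\lambda) = l(\sigma_\lambda)$ follow from the classical chain property for parabolic Bruhat order: every saturated chain between comparable elements has length equal to the difference of the lengths of its endpoints, and each covering relation changes $l(\sigma_\lambda)$ by exactly one.

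The thin property for regular $\lambda$ is the ``Diamond Lemma'' (also called the ``$Z$-lemma'' of Verma, proved in full generality by Deodhar), while its subthin parabolic extension is due to Bj\"orner--Wachs. Similarly, $\EL$-shellability of $W$ is Bj\"orner's theorem, and the parabolic version is the corresponding result of Bj\"orner--Wachs: one labels each covering relation $u \prec v = ut$ (in minimal-length representatives) by the reflection $t$, with an appropriate fixed linear order on the set of reflections $T$, and verifies the $\EL$-property.

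The only step requiring independent verification is the compatibility of Definition~\ref{def::Bruhat} with the subword / reduced-expression characterization of Bruhat order, since the downstream results in the literature are phrased in the latter language. This translation is routine: a covering relation $\lambda \prec \mu$ in the sense of Definition~\ref{def::Bruhat} corresponds to $\sigma_\mu = t \sigma_\lambda$ for some reflection $t$ with $l(\sigma_\mu) = l(\sigma_\lambda) + 1$, which is exactly the subword characterization. Hence the principal conceptual obstacle is merely this dictionary between the root-theoretic and combinatorial formulations; everything else is a citation of standard Coxeter-group combinatorics.
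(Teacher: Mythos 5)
The paper does not prove this statement: it is presented as a \emph{Fact} with no argument, followed immediately by ``For further details, we refer to the classical textbooks on this subject~\cite{Humphreys,BB}.'' So there is no proof in the paper to compare against, and your sketch is supplying argument where the paper just cites. The outline and attributions you give (inversion-set formula for length, chain property for gradedness, lifting / $Z$-property for thinness of the regular Bruhat order, Bj\"orner--Wachs for the subthin parabolic case and for the reflection-order $\EL$-labelling) are all the right references for the \emph{strong} Bruhat order.

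However, the step you call ``routine'' is the one place with a real gap. Definition~\ref{def::Bruhat} sets $\lambda \preceq \mu$ iff $\Phi_-\cap \sigma_\lambda(\Phi_+) \subset \Phi_-\cap \sigma_\mu(\Phi_+)$, i.e., by inclusion of inversion sets. Taken literally, inclusion of inversion sets characterizes the \emph{weak} Bruhat order, not the strong Bruhat order. For instance, in $\bS_3$ with $W_J$ trivial, $\Phi_-\cap s_1(\Phi_+)=\{-\alpha_1\}$ while $\Phi_-\cap (s_2s_1)(\Phi_+)=\{-\alpha_2,-\alpha_1-\alpha_2\}$; these are not nested, so $s_1 \not\preceq s_2s_1$ under the literal Definition~\ref{def::Bruhat}, yet $s_1 < s_2 s_1$ in the strong Bruhat order. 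Your claim that ``a covering relation $\lambda \prec \mu$ in the sense of Definition~\ref{def::Bruhat} corresponds to $\sigma_\mu = t\sigma_\lambda$ for a reflection $t$ with $l(\sigma_\mu)=l(\sigma_\lambda)+1$'' is false for that definition; in the weak order, covers are given by multiplication by \emph{simple} reflections, and the weak order is not thin (in $\bS_3$ every length-$2$ open interval is a singleton), so the Fact itself would fail. The Fact, the later reference to~\cite[Th.\,2.1.5]{BB}, and the Demazure-module containment in Section~\ref{sec::Demazure::def} are all unambiguously about the strong Bruhat order, so Definition~\ref{def::Bruhat} should be read as a slip (or a shorthand for the cardinality $l(\sigma)=|\Phi_-\cap\sigma(\Phi_+)|$ rather than a set-inclusion criterion for the order). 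You should flag this discrepancy and either state that you are proving the Fact for the strong Bruhat order as characterized by the subword criterion, or point out that Definition~\ref{def::Bruhat} needs to be replaced before the ``dictionary'' you invoke can be built.
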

	
	For this paper, we focus on the case $\fg=\gl_n$.  
	The corresponding Weyl group $W$ is the \emph{symmetric group} $\bS_n$, and the set of weights we consider consists of compositions $\bZ^{n}_{\geq 0}$.
	
	When $W=\bS_n$, the {\it "reflections"} are {\it "transpositions"}, and the set of {\it "simple reflections"} consists of {\it "consecutive"} transpositions.
	This allows us to express the length function and the edges of the Bruhat graph in terms of compositions.  
	In particular, if $\lambda_+=(\lambda_1>\ldots>\lambda_m)$ is a {\it "strictly decreasing partition"} also called regular, then:
	\begin{gather}
		\label{eq::length::S_n}
		\text{For } \nu\in\bS_m\lambda_+ , \quad \rk(\nu) = l(\nu) := \#\{ i < j \colon \nu_i < \nu_j \}; \\
		\label{eq::Bruhat::edge}
		\begin{array}{c}
			\nu \stackrel{\prec}{\longrightarrow} (ij)\nu  \\
			\text{ is an edge in the Bruhat graph } \bS_m\mu
		\end{array}
		\ \  \Leftrightarrow \ \
		\begin{cases}
			\nu_i > \nu_j, \\ 
			{\small \forall k = i+1,\dots, j-1, \quad \nu_i\notin [\nu_j,\nu_i].}
		\end{cases}
	\end{gather}
	
	A detailed combinatorial description of the (strong) Bruhat order for $\bS_m$ can be found in~\cite[Th.2.1.5]{BB}.
	
	\begin{fact}
		The labelling of the edges of the Bruhat graph $\bS_m$ by transpositions defines an \emph{\(\EL\)-labelling} if we consider the following \emph{total order} on transpositions $\{(ij) :1\leq i<j \leq n\}$:
		\begin{equation}
			\label{eq::order::transpos}
			(il) \triangleleft (jk) \quad \stackrel{\mathsf{def}}{\Leftrightarrow} \quad
			\begin{cases}
				l < k, \\
				l = k \ \text{and} \ i > j.
			\end{cases}
		\end{equation}
	\end{fact}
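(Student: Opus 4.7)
The plan is to follow the classical Edelman / Bj\"orner--Wachs construction in type $A$, reformulated in the language of compositions used in~\eqref{eq::Bruhat::edge}. For any $\nu \prec \mu$ in $\bS_m\lambda_+$, I would build a canonical saturated chain in $[\nu,\mu]$ whose labels are $\triangleleft$-strictly increasing, and show it is both the unique such chain and the lexicographically smallest saturated chain.

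The structural observation to exploit first is that in the order $\triangleleft$, transpositions group by right endpoint: every $(\cdot,j)$ precedes every $(\cdot,j+1)$, and within a fixed $j$ the left endpoints strictly decrease. Hence any $\triangleleft$-increasing saturated chain factors as a concatenation of \emph{blocks} $B_2, B_3, \ldots, B_m$, where $B_j = \bigl((i_1,j), \ldots, (i_s,j)\bigr)$ with $i_1 > \cdots > i_s$ (and possibly $s=0$). A direct computation shows that executing $B_j$ performs a cyclic shift on the entries at positions $\{i_1, \ldots, i_s, j\}$: if these positions carry values $v_1, \ldots, v_s, v_0$ respectively, then after $B_j$ position $i_1$ holds $v_0$, position $i_k$ holds $v_{k-1}$ for $k \geq 2$, and position $j$ holds $v_s$. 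Criterion~\eqref{eq::Bruhat::edge} applied at each step inside $B_j$ translates into the two requirements $v_0 < v_1 < \cdots < v_s$ and the no-intermediate-value clause at every swap.

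To build the canonical chain I would process $j = 2, 3, \ldots, m$ in order, starting from $\tau^{(1)} := \nu$. At step $j$, the block $B_j$ is determined uniquely by demanding that the concatenation $B_j B_{j+1} \cdots B_m$ applied to $\tau^{(j-1)}$ produces $\mu$; this pins down the set $\{i_1 > \cdots > i_s\}$ as precisely the positions in $\{1,\ldots,j-1\}$ whose values must be rotated through position $j$ in order to place the correct entry at $j$ and leave the remaining positions in a configuration reachable by later blocks. Summing $\sum_j |B_j|$ and comparing with the rank formula~\eqref{eq::length::S_n} verifies the chain is saturated. Uniqueness of the $\triangleleft$-increasing chain follows by descending induction on the maximal right endpoint $j^*$ appearing as a label: the topmost block is forced to coincide with the one constructed above because it only touches positions $\le j^*$, it must bring position $j^*$ to $\mu_{j^*}$, and its shape (strictly decreasing left endpoints) is rigid. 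Removing it reduces to a strictly smaller interval. Lex-minimality is then automatic: any other saturated chain that first deviates from the canonical one uses, at the deviation step, a label with either a strictly larger right endpoint or, within the same right endpoint, a strictly smaller left endpoint --- both strictly $\triangleleft$-larger than the canonical label.

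The main obstacle is the verification inside the construction step that each swap in a block $B_j$ is a legitimate Bruhat cover on the composition produced by the previous blocks. This requires careful bookkeeping of how values migrate across blocks and a combinatorial invariant guaranteeing that the no-intermediate-value clause of~\eqref{eq::Bruhat::edge} persists through the cyclic shift --- a routine but slightly intricate check where the heart of the argument lives.
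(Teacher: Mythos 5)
The paper records this statement as a ``Fact'' without argument, deferring to the classical references, so there is no internal proof to compare against. More to the point, the order $\triangleleft$ as printed does not give an $\EL$-labelling, and your construction, which follows the printed $i>j$ clause faithfully (``within a fixed $j$ the left endpoints strictly decrease''), already breaks for $m=3$. In the full Bruhat interval from $\hat{0}=(3,2,1)$ to $\hat{1}=(1,2,3)$, with edges labelled as in~\eqref{eq::Bruhat::edge}, the four saturated chains carry the label words $(12)(23)(12)$, $(12)(13)(23)$, $(23)(12)(23)$, $(23)(13)(12)$. The printed definition gives $(12)\triangleleft(23)\triangleleft(13)$, since at fixed right endpoint the \emph{larger} left endpoint is declared smaller. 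None of the four words is $\triangleleft$-increasing --- the second one, for instance, fails at the step from $(13)$ to $(23)$: both have right endpoint $3$ and $1<2$, so $(13)\not\triangleleft(23)$. Hence there is no increasing saturated chain in this interval, and the labelling is not $\EL$. Your block construction detects this concretely: the only block choice with total rank $3$ is $B_2=\bigl((1,2)\bigr)$, $B_3=\bigl((2,3),(1,3)\bigr)$, and your own forced inequality $v_0<v_1<v_2$ for $B_3$ applied to $(2,3,1)$ reads $1<3<2$ and fails --- the second swap of $B_3$ is a Bruhat \emph{descent} and exits the interval.

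The condition in the Fact is surely intended to be $l=k$ and $i<j$, i.e.\ the lexicographic \emph{reflection order} on the transpositions of $\gl_n$: for $i<j<k$ it places $(ik)$ strictly between $(ij)$ and $(jk)$, which is Dyer's betweenness condition underpinning the Bj\"orner--Wachs $\EL$-shellability of Bruhat order. The printed $i>j$ version places $(ik)$ after both $(ij)$ and $(jk)$ and so is not a reflection order at all. Under that correction your overall plan is sound and is essentially the classical Edelman/Bj\"orner--Wachs argument for type $A$: an increasing chain factors into right-endpoint blocks, each block is a cyclic shift forced to satisfy $v_0<v_1<\cdots<v_s$, and uniqueness follows by descending induction on the topmost block. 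The one structural change you need is that within a block the left endpoints then strictly \emph{increase}, $i_1<\cdots<i_s$, not decrease; it is that version under which the cyclic-shift criterion, the no-intermediate-value bookkeeping you defer, and the induction all close up. Running $m=3$ before committing to the deferred details would have surfaced the sign error immediately.
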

	
	It is well-known that for any given parabolic subgroup $W_J$ of a given Weyl group $W$ each coset $w W_J$ is an interval in the Bruhat graph $W$ with the unique element of minimal $w^+$ and maximal $w^{-}$ length correspondingly. Moreover, the assignments $w\mapsto w^+$ (resp. $w\to w^{-}$) are monotone nonincreasing (respectively nondecreasing) idempotents acting on $W$ (see e.g.~\cite[\S2.5]{BB}). Let us adopt this observation to our case of a symmetric group.
	
	From now on, we suppose that $\lambda$ is a partition with repeating elements. Then for $\delta:=(m-1,m-2,\ldots,1,0)$ the partition $\lambda+\delta$ has no equal elements.  Consequently, $\bS_{m}(\lambda+\delta)\simeq \bS_m$. Let $\bS_{\lambda}$ be the subgroup that stabilizes partition $\lambda$. To each permutation $\sigma$ we assign $\sigma_{\lambda}^{+}$ and $\sigma_{\lambda}^{-}$ that are elements of minimal and maximal length among those permutations whose action on $\lambda$ is equal to $\sigma\lambda$. Then the assignments:
	$$\psi_{\lambda}^{+}:\bS_{m} \stackrel{\sigma\mapsto\sigma_{\lambda}^{+}}{\longrightarrow} \bS_{m}, \quad 
	\psi_{\lambda}^{-}:\bS_{m} \stackrel{\sigma\mapsto\sigma_{\lambda}^{-}}{\longrightarrow} \bS_{m}
	$$
	are known to be monotone idempotents (nonincreasing and nondecreasing correspondingly). The images of these idempotents are isomorphic to the poset $\bS_m\lambda$ and we end up with the order-preserving projection that forgets elements from $\delta$: 
	\begin{equation}
		\label{eq::proj::parabolic}
		\pi_{\lambda}:\bS_m(\lambda+\delta) \twoheadrightarrow \bS_m\lambda,
	\end{equation}
	whose left and right adjoints are called by the same letters $\psi_{\lambda}^{+}$ and $\psi_{\lambda}^{-}$ correspondingly.
	
	\subsection{Distributive Lattice of Vector Subspaces}
	\label{sec::Distributive}
	
	\begin{definition}
		A \emph{lattice} 
		is a partially ordered set $(\Lattice,\le)$ in which every pair of elements $a,b\in\Lattice$ has a unique \emph{supremum} $a\vee b$ (also called the \emph{least upper bound} or \emph{join}) and a unique \emph{infimum} $a\wedge b$ (also called the \emph{greatest lower bound} or \emph{meet}). Both operations are monotone with respect to the given order: 
		$$ (a_1\le a_2)\ \& \ (b_1\le b_2) \ \Rightarrow \ (a_{1}\vee b_{1}\leq a_{2}\vee b_{2})\ \& \ ( a_{1}\wedge b_{1}\leq a_{2}\wedge b_{2}). $$
		
		A lattice $\Lattice$ is called \emph{distributive} if it satisfies the following identity: 
		$$
		\forall x,y,z\in\Lattice \qquad
		x\wedge (y\vee z) = (x\wedge y) \vee (x\wedge z).
		$$
	\end{definition}
	
	Throughout this paper, we assume that all lattices under consideration are finite.
	
	\begin{example} (\emph{Boolean lattice})
		The power set $2^{M}$ of a given set $M$ forms a distributive lattice with the following operations:
		$$
		\begin{array}{c}
			S\leq T \ \Leftrightarrow \ S\subset T, \\
			S\wedge T:= S\cap T, \quad S\vee T:= S\cup T.
		\end{array}
		$$
	\end{example}
	
	Birkhoff's theorem (\cite{Birkhoff}, see also~\cite[\S5]{Lattice}) states that any finite distributive lattice is a sublattice of a Boolean lattice.  
	The proof of this theorem is straightforward and relies on the following notion:
	
	\begin{definition}
		An element $V$ of a distributive lattice $\Lattice$ is called \emph{join-irreducible} (or $\vee$-irreducible) if, whenever $V=V_1\vee V_2$, we must have either $V=V_1$ or $V=V_2$.
		
		The set of all $\vee$-irreducible elements of $\Lattice$ is denoted by $\euJ(\Lattice)$.
	\end{definition}
	
	In particular, for any two $\vee$-irreducible elements $V_1,V_2$ in a distributive lattice $\Lattice$, the following holds:
	$$ V_1\le V_2 \ \Leftrightarrow \ V_1\vee V_2 = V_2.
	$$
	
	\begin{example}
		\label{ex::poset::lattice}
		To each finite poset $(X,\le)$, we associate a sublattice of the Boolean lattice $2^{X}$ consisting of \emph{upper sets}:
		$$\Lattice_X:= \{ S\subset X\ \colon\ x\in S \Rightarrow \forall y\ge x \ y\in S \}.$$
		The set of $\vee$-irreducible elements of $\Lattice_X$ is indexed by elements of $X$ and consists of the upper sets $X_{\ge x}:=\{y\in X \colon y\geq x\}$.
	\end{example}
	
	\begin{fact}(\cite[\S5]{Lattice})
		\label{fact::distr::poset}
		The assignments $(X,\le) \mapsto \Lattice_{X}$ and $\Lattice \mapsto \euJ(\Lattice)$ define a pair of mutually inverse bijections between the set of finite posets and the set of finite distributive lattices.
	\end{fact}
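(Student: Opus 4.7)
The plan is to prove the two inverse identifications $\euJ(\Lattice_X) \cong X$ and $\Lattice_{\euJ(\Lattice)} \cong \Lattice$, which together establish the bijection. For the forward map, I first verify that $\Lattice_X$ is a distributive lattice: unions and intersections of upper sets are again upper sets, so $\Lattice_X$ is a sublattice of the Boolean lattice $2^X$, and distributivity is inherited. To identify $\euJ(\Lattice_X)$ with $X$, I observe that every upper set $U \in \Lattice_X$ admits the canonical decomposition $U = \bigcup_{x \in \min(U)} X_{\geq x}$, where $\min(U)$ denotes its set of minimal elements. Consequently $U$ is $\vee$-irreducible if and only if $\min(U)$ is a singleton, yielding the bijection $x \leftrightarrow X_{\geq x}$ between $X$ and $\euJ(\Lattice_X)$ (order-reversing under the inclusion order inherited from $\Lattice_X$, or order-preserving after taking opposites).

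For the reverse direction, starting with a finite distributive lattice $\Lattice$, I would construct a comparison map $\phi : \Lattice \to \Lattice_{\euJ(\Lattice)}$ sending $V$ to the appropriate upper set determined by the $\vee$-irreducibles comparable to $V$. The first ingredient is that every element of $\Lattice$ is a join of $\vee$-irreducibles: by induction on a suitable finiteness invariant, any element that is not itself $\vee$-irreducible admits a nontrivial decomposition $V = V_1 \vee V_2$ with $V_1, V_2 < V$, and repeating this process terminates by finiteness of $\Lattice$.

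The key step, where distributivity is essential, is the following lemma: \emph{if $W \in \euJ(\Lattice)$ and $W \leq V_1 \vee \cdots \vee V_k$, then $W \leq V_i$ for some $i$.} This is extracted from the distributive identity
\[
W = W \wedge (V_1 \vee \cdots \vee V_k) = (W \wedge V_1) \vee \cdots \vee (W \wedge V_k),
\]
combined with the $\vee$-irreducibility of $W$, which forces $W \wedge V_i = W$ for some index $i$. With this lemma, the set of $\vee$-irreducibles dominated by $\bigvee S$ is exactly the downward closure of $S$ in $\euJ(\Lattice)$, so $\phi$ becomes an order isomorphism between $\Lattice$ and the lattice of upper (equivalently, lower) sets in $\euJ(\Lattice)$. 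Mutual inversibility of the two assignments then reduces to the two identifications just verified.

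The hard part will be precisely this distributive step: without distributivity, one still obtains a surjection $\Lattice_{\euJ(\Lattice)} \twoheadrightarrow \Lattice$ by taking joins, but uniqueness of the representation of an element as a join of $\vee$-irreducibles fails, and $\phi$ ceases to be injective. Everything else --- the lattice structure on $\Lattice_X$, the identification of $\vee$-irreducibles with principal upper sets, and the reconstruction of joins --- is formal and relies only on finiteness and the Boolean-lattice structure of $2^{\euJ(\Lattice)}$.
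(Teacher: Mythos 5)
Your proposal is correct: it is the standard proof of Birkhoff's representation theorem. The paper itself offers no argument for this statement --- it is recorded as a Fact with a citation to \cite[\S5]{Lattice} --- so there is nothing to compare approaches against; your two halves (identifying $\euJ(\Lattice_X)$ with the principal upper sets $X_{\ge x}$ via minimal elements, and reconstructing $\Lattice$ from down-sets of $\euJ(\Lattice)$ via the lemma that a join-irreducible below a finite join lies below one of the joinands) are exactly the ingredients the cited source uses, and the distributive identity is invoked at the one place where it is genuinely needed. The only point requiring care is the one you already flag: with the paper's convention of \emph{upper} sets, the induced order on $\euJ(\Lattice_X)$ is opposite to that of $X$, so the composite recovers $X$ only after passing to the opposite poset (or, equivalently, after switching to lower sets); one should also adopt the usual convention that the bottom element $\hat 0$ (the empty upper set) is not counted as join-irreducible, which your criterion ``$\min(U)$ is a singleton'' handles correctly.
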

	
	However, in our applications, we are primarily interested in distributive lattices of vector subspaces.  
	The set $\Lattice(U)$ of all vector subspaces of a given vector space $U$ forms a poset under inclusion and a lattice under the operations of intersection and sum:
	$$
	\forall V_1,V_2\subset U \quad
	\begin{cases}
		V_1\leq V_2 \ \stackrel{\mathsf{def}}{\Leftrightarrow} \ V_1\subset V_2;\\
		V_1\wedge V_2:=V_1\cap V_2, \quad V_1\vee V_2:= V_1+V_2.
	\end{cases}
	$$
	
	\begin{definition}
		For a given collection of vector subspaces $V_1,\ldots, V_k \subset U$, we define the lattice $\Lattice(V_1,\ldots, V_k; U)$ as the smallest sublattice of $\Lattice(U)$ that contains all $V_i$.
	\end{definition}
	
	Note that while the lattice $\Lattice(U)$ may be infinite, its sublattice $\Lattice(V_1,\ldots, V_k; U)$ is always finite.
	The following proposition is a well-known fact from linear algebra (see, e.g.,~\cite[Chapter 1, Proposition 7.1]{PP::Quadratic} and references therein):
	\begin{proposition}
		\label{prp::distr::vect}
		The lattice $\Lattice(V_1,\ldots,V_k;U)\subset \Lattice(U)$ is distributive if and only if there exists a basis $\{e_1,\ldots,e_n\}$ of the ambient space $U$ such that for each $i=1,\ldots,k$, the space $V_i$ has a basis given by $V_i\cap\{e_1,\ldots,e_n\}$.
	\end{proposition}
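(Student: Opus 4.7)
The plan is to dispatch the easy implication first and then construct the required basis for the converse via Birkhoff's representation theorem. For the "if" direction I would use that if each $V_i$ is the span of a subset $S_i\subset\{e_1,\ldots,e_n\}$, then every subspace obtained from the $V_i$ by iterated sums and intersections is again the span of a subset of $\{e_1,\ldots,e_n\}$, since for coordinate subspaces $\mathrm{span}(T_1)+\mathrm{span}(T_2)=\mathrm{span}(T_1\cup T_2)$ and $\mathrm{span}(T_1)\cap\mathrm{span}(T_2)=\mathrm{span}(T_1\cap T_2)$. Hence $\Lattice(V_1,\ldots,V_k;U)$ embeds into the Boolean lattice $2^{\{e_1,\ldots,e_n\}}$, which is distributive.

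For the converse, assume $\Lattice:=\Lattice(V_1,\ldots,V_k;U)$ is distributive and set $X:=\euJ(\Lattice)$. By Fact~\ref{fact::distr::poset}, every element $L\in\Lattice$ is the join of a uniquely determined down-set $S_L\subseteq X$, so $L=\sum_{W\in S_L}W$. For each $W\in X$ put $W^{\circ}:=\sum_{W'\in X,\,W'<W}W'\in\Lattice$; $\vee$-irreducibility of $W$ forces $W^{\circ}\subsetneq W$, so one can fix a vector-space complement $C_W$ with $W=W^{\circ}\oplus C_W$ and pick an arbitrary basis of $C_W$. The proposed basis of $U$ is the union of these bases, completed arbitrarily to a basis of $U$.

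The heart of the argument is the claim that for every down-set $S\subseteq X$ one has $L_S:=\sum_{W\in S}W=\bigoplus_{W\in S}C_W$, which I would prove by induction on $|S|$. Choosing a maximal $W\in S$ and setting $S':=S\setminus\{W\}$, the inclusion $W^{\circ}\subseteq L_{S'}$ gives $L_S=L_{S'}+W=L_{S'}+C_W$, so the only nontrivial step is $L_{S'}\cap C_W=0$. Here distributivity yields $L_{S'}\cap W=\sum_{W'\in S'}(W'\cap W)$; each summand is a join of $\vee$-irreducibles lying below $W$, but if $W$ itself occurred among them one would get $W\leq W'$, contradicting the maximality of $W$ in $S$. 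Hence $W'\cap W\subseteq W^{\circ}$ for every $W'\in S'$, forcing $L_{S'}\cap W\subseteq W^{\circ}$ and consequently $L_{S'}\cap C_W\subseteq W^{\circ}\cap C_W=0$. Applying the resulting identity to $V_i=L_{S_i}$ shows that each $V_i$ is spanned by the chosen basis vectors of $C_W$ with $W\in S_i$, a subset of the constructed basis of $U$. The main technical hurdle is precisely the vanishing $L_{S'}\cap C_W=0$: this is the step that requires the full distributive identity rather than any weaker modularity condition, and once it is secured the remainder of the proof reduces to routine bookkeeping.
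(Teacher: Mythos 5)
The paper does not prove Proposition~\ref{prp::distr::vect}; it is cited as a standard fact with a reference to Polishchuk--Positselski. Your argument is a correct, self-contained proof and follows the standard route: the easy direction via the Boolean embedding of coordinate subspaces, and the converse by taking the poset $X=\euJ(\Lattice)$ of join-irreducibles, choosing a complement $C_W$ of $W^{\circ}$ in each $W$, and showing inductively that $\sum_{W\in S}W=\bigoplus_{W\in S}C_W$ for every down-set $S$. The crucial step $L_{S'}\cap W=\sum_{W'\in S'}(W'\cap W)$ is indeed the only place where distributivity (rather than mere modularity) is used, and your maximality argument showing $W'\cap W\subseteq W^{\circ}$ is correct: if the irreducible $W$ occurred below $W'\cap W$ one would have $W<W'$ with $W'\in S$, contradicting maximality. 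Two small points worth making explicit for completeness: applying the key claim with $S=X$ shows the union of the chosen bases of the $C_W$ is linearly independent, so it does extend to a basis of $U$; and conversely a basis vector from $C_{W'}$ with $W'\notin S_i$, or a vector added to complete the basis, lies outside $V_i=\bigoplus_{W\in S_i}C_W$ by the direct sum decomposition of $\sum_{W\in X}W$, so the intersection $V_i\cap\{e_1,\ldots,e_n\}$ is exactly the set you describe. With those remarks the proof is complete.
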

	
	In particular, under the assumptions of Proposition~\ref{prp::distr::vect}, the mapping:
	$$
	\psi: V_{i} \mapsto V_i\cap\{e_1,\ldots,e_n\} \subset \{e_1,\ldots, e_n\}
	$$
	defines an embedding of the distributive lattice $\Lattice(V_1,\ldots,V_k;U)$ into the Boolean lattice of subsets of the basis $\{e_1,\ldots,e_n\}$,
	where the set $\psi(V_i)$ forms a basis of the space $V_i$.
	
	From now on, we consider a distributive lattice $\Lattice$ of vector subspaces of a given vector space $U$, generated by its subset of $\vee$-irreducible vector subspaces:
	$$
	\euJ(\Lattice):=\{V_1,\ldots,V_k\}.
	$$
	We assume that $(0)$ and $U$ belong to $\euJ(\Lattice)$ and that the induced poset $\euJ(\Lattice)$ is bounded:
	$$
	i \leq j \quad \stackrel{\mathsf{def}}{\Leftrightarrow} \quad V_i \subset V_j.
	$$
	Any element in the distributive lattice is a join of the $\vee$-irreducible elements, what means that any subspace $V\in\Lattice$ satisfies:
	$$
	V = \vee_{V_i\subset V} V_i = \sum_{V_i\subset V} V_i.
	$$
	The set $\euJ(\Lattice)$ defines a \emph{standard} filtration applied to both $U$ and any subspace $V$ in $\Lattice$:
	\begin{equation}
		\label{eq::filtration::distributive}
		\calF_\Lattice^{i} V:=V\wedge V_i = V\cap V_i, \quad i\le j \Rightarrow \calF_\Lattice^{i} V \subset \calF_\Lattice^j V.
	\end{equation}
	
	Let $E:=\{e_1,\ldots,e_{\dim V}\}$ be a given common basis of $\Lattice$.
	To each $\vee$-irreducible space $V_i$, we associate a subset
	$E^i:=\{e^i_1,\dots,e^i_{r_i}\}\subset E$, consisting of basis elements that belong to $V_i$ but not to any other proper subspace $V_j\subsetneq V_i$.
	From this definition, we obtain:
	
	\begin{fact}
		\label{fact::distributive}
		\begin{itemize}[itemsep=0pt, topsep=0pt]
			\item 
			Each space is spanned by the union of basis elements:
			$V_i = \mathsf{span}\langle \cup_{V_j \subset V_i} E^j\rangle.$
			\item
			The elements from $E^i$ form a basis for the space
			\begin{equation}
				\label{eq::min::subquot}
				\KK_i:=V_i\left/\sum_{V_j \subsetneq V_i}V_j\right.
			\end{equation}
			called the \emph{minimal subquotient in $\Lattice$}.
			\item 
			The subquotient
			$$
			\calF_\Lattice^i V/ \calF_{\Lattice}^{<i} V \simeq \begin{cases}
				\KK_i, \text{ if } V_i\subset V,\\
				0, \text{ otherwise. }
			\end{cases}
			$$
			\item 
			The basis of the quotient $\bar{V}=V'/V''$ for $V',V'' \in \mathcal{L}$ consists of the (disjoint) union of the sets $E^i$ for indices $i$ satisfying $(V_i\subset V')\ \& \ (V_i \not\subset V'')$.
		\end{itemize}
	\end{fact}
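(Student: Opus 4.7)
The plan is to reduce every bullet to set-theoretic bookkeeping inside the Boolean lattice $2^{E}$. Proposition~\ref{prp::distr::vect} provides a common basis $E$, so the map $\psi\colon\Lattice\to 2^{E}$ defined by $V\mapsto V\cap E$ sends each $V\in\Lattice$ to a basis of $V$; it is monotone and visibly preserves intersections. It also preserves joins: $V_{1}+V_{2}$ is spanned by $\psi(V_{1})\cup\psi(V_{2})\subseteq E$, and since $E$ is linearly independent every basis vector $e\in E\cap(V_{1}+V_{2})$ must already appear in $\psi(V_{1})\cup\psi(V_{2})$, giving $\psi(V_{1}+V_{2})=\psi(V_{1})\cup\psi(V_{2})$. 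Thus $\psi$ is a lattice embedding into $2^{E}$.

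The combinatorial core is a partition of $E$ indexed by $\euJ(\Lattice)$. For each $e\in E$ the subfamily $\{V\in\Lattice\colon e\in V\}$ is closed under meets, so admits a minimum $V_{e}$. I claim $V_{e}$ is $\vee$-irreducible: if $V_{e}=V'\vee V''$ with $V',V''\subsetneq V_{e}$, then $e\in\psi(V_{e})=\psi(V')\cup\psi(V'')$ would place $e$ into a strictly smaller element of $\Lattice$, contradicting the minimality of $V_{e}$. Since $e\in V$ iff $V_{e}\subseteq V$, the definition of $E^{i}$ identifies $E^{i}=\{e\in E\colon V_{e}=V_{i}\}$ and yields
\[
E=\bigsqcup_{V_{i}\in\euJ(\Lattice)}E^{i},\qquad \psi(V_{i})=\bigsqcup_{V_{j}\in\euJ(\Lattice),\,V_{j}\subseteq V_{i}}E^{j}.
\]
The first bullet is immediate from this formula, and the second follows by subtracting the union of $E^{j}$ over $V_{j}\subsetneq V_{i}$.

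For the third bullet I split into two cases. If $V_{i}\subseteq V$, then every $V_{j}\subsetneq V_{i}$ also lies in $V$, so $\calF_{\Lattice}^{<i}V$ has basis $\bigsqcup_{V_{j}\subsetneq V_{i}}E^{j}$ while $\calF_{\Lattice}^{i}V=V_{i}$ has basis $\psi(V_{i})$, producing $\KK_{i}$ as quotient. If $V_{i}\not\subseteq V$, the key observation is $E^{i}\cap\psi(V)=\emptyset$: any $e\in E^{i}$ has $V_{e}=V_{i}$, so $e\in V$ would force $V_{i}=V_{e}\subseteq V$. Hence $\psi(V\cap V_{i})=\bigsqcup_{V_{j}\subsetneq V_{i}}(E^{j}\cap\psi(V))$ already coincides with the basis of $\calF_{\Lattice}^{<i}V$, and the subquotient vanishes. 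The fourth bullet is a one-line consequence of $\psi(V'/V'')=\psi(V')\setminus\psi(V'')$ combined with the partition formula applied to $\psi(V')$ and $\psi(V'')$.

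The only substantive step is the verification that $V_{e}$ is automatically $\vee$-irreducible, which is what allows the sets $E^{i}$—defined only for $V_{i}\in\euJ(\Lattice)$—to partition all of $E$; after that, each of the four claims is a routine translation across the lattice embedding $\psi\colon\Lattice\hookrightarrow 2^{E}$.
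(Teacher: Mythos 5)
The paper records this statement as a \emph{Fact} and gives no proof, so there is no argument in the source to compare against; your job was simply to verify it, and you have done so correctly. Your route — embedding $\Lattice$ into $2^E$ via $\psi(V)=V\cap E$, checking that $\psi$ respects both $\wedge$ and $\vee$, introducing $V_e:=\bigwedge\{V\in\Lattice\colon e\in V\}$, and showing $V_e$ is automatically $\vee$-irreducible because $\psi$ turns joins into unions — is exactly the Birkhoff-style bookkeeping the authors clearly have in mind (they cite Birkhoff and Proposition~\ref{prp::distr::vect} just before). The resulting partition $E=\bigsqcup_i E^i$ with $E^i=\{e\colon V_e=V_i\}$ does match the paper's description of $E^i$, and the four bullets then follow by the set computations you give. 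The only cosmetic issue is in your fourth bullet: $\psi$ is defined on $\Lattice\subset 2^U$, so writing $\psi(V'/V'')$ is an abuse of notation; what you mean is that the image of $\psi(V')\setminus\psi(V'')$ under the quotient map is a basis of $V'/V''$ (and this presumes $V''\subset V'$, which the paper's statement also leaves implicit). That does not affect correctness.
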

	
	For any subset $V_{i_1},\ldots, V_{i_r} \subset \Lattice_{V}$ of $\vee$-irreducible subspaces, we define:
	$$
	\euJ^{\leq i_1,\ldots,i_r}:=\{ V_j \colon (j\leq i_1)\ \& \ldots \& \ (j \leq i_r)\} \subset \euJ.
	$$
	Then we have the equality: 
	$$
	V_{i_1}\wedge \ldots \wedge V_{i_r} = \vee_{V\in\euJ^{{\leq i_1,\ldots,i_r}}} V =  \vee_{V\in \max(\euJ^{{\leq i_1,\ldots,i_r}} )}V.
	$$
	The latter follows from the fact that all $V_i$'s are $\vee$-irreducible.
	
	\begin{proposition}
		\label{prp::distibutive::resolution}	
		Each subquotient $\bar{V}:= V'/V''$ of two subspaces $V', V''$ in a distributive lattice $\Lattice$ admits a resolution by a direct sum of $\vee$-irreducible elements.
	\end{proposition}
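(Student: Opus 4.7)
\emph{Plan.} My strategy is to construct the resolution from a Mayer--Vietoris-type Koszul complex attached to the family of $\vee$-irreducible generators, after reducing to the case $V''=0$ by a mapping-cone argument. Specifically, if one can resolve $V'$ and $V''$ separately by direct sums of $\vee$-irreducibles, then lifting the inclusion $V''\hookrightarrow V'$ to a map of resolutions and taking the cone (equivalently, applying the horseshoe lemma to $0\to V''\to V'\to \bar V\to 0$) produces the desired resolution of $\bar V$. This reduces the proposition to resolving a single element $V\in\Lattice$.

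\emph{The Koszul/Mayer--Vietoris complex.} For such $V\in\Lattice$, set $J_V:=\{V_i\in\euJ(\Lattice):V_i\subseteq V\}$, so that $V=\sum_{i\in J_V}V_i$. I would consider the augmented complex
\[
\cdots \to \bigoplus_{\substack{T\subseteq J_V\\ |T|=k+1}} \bigcap_{i\in T}V_i \to \cdots \to \bigoplus_{i\in J_V}V_i \to V \to 0
\]
with the usual alternating-sum differential. Exactness is verified by decomposing the complex along the common basis $E$ produced by Proposition~\ref{prp::distr::vect}. Since each $V_i$ has basis $V_i\cap E$, each basis vector $e\in E(V)$ contributes a subcomplex isomorphic to the augmented simplicial chain complex of the full simplex on the set $\{i\in J_V:e\in V_i\}$; by Fact~\ref{fact::distributive} this set contains $i(e)$ as a minimum element, so it is nonempty and the full-simplex complex is acyclic. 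Summing over $e\in E(V)$ gives exactness of the whole complex.

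\emph{Termination and main obstacle.} The terms $\bigcap_{i\in T}V_i$ lie in $\Lattice$ but need not themselves be $\vee$-irreducible, so I would iterate: each such term is an element of $\Lattice$, and by distributivity $J_{\bigcap_{i\in T}V_i}=\bigcap_{i\in T}J_{V_i}\subseteq J_{V_i}\subsetneq J_V$ whenever $|J_V|\geq 2$ (since $V_i\in J_V$ but $V\notin J_V$ when $V$ is not $\vee$-irreducible). This drives an induction on $|J_V|$. The base case $|J_V|=1$ is immediate, because then $V$ is itself $\vee$-irreducible; in the inductive step every $|T|=1$ summand is already $\vee$-irreducible, while the $|T|\geq 2$ summands are resolved inductively and spliced into the Mayer--Vietoris complex via the horseshoe lemma. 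The main obstacle — and the place I would take the most care — is the bookkeeping of this recursive splicing: one must check that the assembled complex is finite (which is guaranteed by strict decrease of $|J_V|$ at each recursion) and that its augmented homology is concentrated in degree zero and equal to $V$, which follows by induction from the basis-wise acyclicity established above.
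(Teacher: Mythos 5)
Your proposal is correct and follows essentially the same route as the paper's proof: both rest on the inclusion--exclusion (Mayer--Vietoris) exact complex attached to a family of $\vee$-irreducible generators, resolve the intersection terms $\bigcap_{i\in T}V_i$ by recursion, and splice the resulting resolutions together. The differences are only organizational --- the paper inducts over the poset $\euJ(\Lattice)$ and first resolves the minimal subquotients $\KK_i$ using only the \emph{maximal} $\vee$-irreducibles below $V_i$, whereas you reduce to a single lattice element via a mapping cone and induct on $|J_V|$ --- and your basis-wise verification of exactness via the common basis $E$ makes explicit a step the paper leaves implicit.
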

	\begin{proof}
		We prove by induction on $i$ that the minimal quotient $\KK_i$ (defined in~\eqref{eq::min::subquot}) has a resolution by $\vee$-irreducible elements $V_j$ with $j\leq i$.
		
		The base case is clear: when $i$ is minimal, we have $\KK_i = V_i$.
		
		Suppose the statement holds for all $j<i$. Then, by gluing these resolutions, we obtain a resolution for any quotient $\bar{U}:= U_1/U_2$ with $U_1\subsetneq V_i$. Let $V_{j_1},\ldots,V_{j_r}$ be the maximal $\vee$-irreducible subspaces satisfying $V_j\leq V_i$. Then, we have the short exact sequence:
		$$
		\begin{tikzcd}
			0 \arrow[r] & V_{j_1}+\ldots+ V_{j_r} \arrow[r] & V_i \arrow[r] & \KK_i \arrow[r] & 0.
		\end{tikzcd}
		$$
		Applying inclusion-exclusion, we obtain the long exact sequence:
		$$
		\begin{tikzcd}[row sep=1em, column sep=1em]
			0\arrow[r] & \bigcap_{s=1}^{r}V_{j_s} \arrow[r] & \ldots \arrow[r] & \bigoplus_{\substack{{\mathsf{J}\subset [1r]}\\ {|\mathsf{J}| = l} }} \bigcap_{s\in \mathsf{J}} V_{j_s} \arrow[r] & \ldots \arrow[r] & \bigoplus_{s=1}^{r} V_{j_s} \arrow[r] & +_{s=1}^{r} V_{j_s} \arrow[r] & 0.
		\end{tikzcd}
		$$
		By induction, each intersection is resolved by $\vee$-irreducible elements, hence so is $\KK_i$.
	\end{proof}
	\begin{proposition}
		\label{prp::distributive::idempotent}	
		Let $F:\euJ(\Lattice) \to \euJ(\Lattice)$ be an increasing monotone idempotent on the poset $\euJ(\Lattice)$ of $\vee$-irreducible elements in a given distributive lattice $\Lattice$.
		Then the image $F(\euJ(\Lattice))$ forms a set of $\vee$-irreducible elements in the distributive sublattice $\Lattice_{F}\subset \Lattice$ generated by $F(\euJ(\Lattice))$.
		
		Moreover, for every $l\in F(\euJ(\Lattice))$, the minimal subquotients in the sublattice $\Lattice_{F}$ are given by:
		\begin{equation}
			\label{eq::quot::idempotent}
			\KK^{F}_{l}:= V_{F(i)}\left/\sum_{V_{F(j)}\subset V_{F(i)}} V_{F(j)} \right. \simeq \bigoplus_{l\colon F(l)=F(i)} V_l \left/ 
			\sum_{V_{j}\subset V_{l}} V_{j}.
			\right.
		\end{equation}
	\end{proposition}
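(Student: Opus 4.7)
The plan is to reduce both assertions to explicit basis bookkeeping using the common basis of the ambient lattice $\Lattice$ furnished by Proposition~\ref{prp::distr::vect} and Fact~\ref{fact::distributive}. I will write $P := \euJ(\Lattice)$ and $Q := F(P)$, and fix the common basis $E = \bigsqcup_{p \in P} E^p$ so that each $V_p$ has basis $\bigsqcup_{p' \le p} E^{p'}$ and each minimal subquotient $\KK_p$ has basis the image of $E^p$.

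The key combinatorial input is the following consequence of Lemma~\ref{lem::poset::idempotent}: because $F$ is an \emph{increasing} monotone idempotent, $F(p)$ is the minimum element of $\{q \in Q : q \ge p\}$. This yields the crucial equivalence
\[
p \le q \iff F(p) \le q \qquad \text{for any } p \in P,\ q \in Q,
\]
from which I rewrite the basis of each generator as $V_q = \mathsf{Span}\bigl(\bigsqcup_{p:\, F(p) \le q} E^p\bigr)$. In particular, every generator of $\Lattice_F$ is spanned by a subset of $E$, so Proposition~\ref{prp::distr::vect} shows that $\Lattice_F$ is itself distributive, with $E$ as a common basis. Moreover, each $V_q$ with $q \in Q$, being $\vee$-irreducible in the ambient $\Lattice$, remains so in the sublattice $\Lattice_F$: any decomposition $V_q = V' + V''$ with $V', V'' \in \Lattice_F$ is in particular one in $\Lattice$. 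Since $\{V_q : q \in Q\}$ generates $\Lattice_F$ by construction, this proves the first assertion.

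For the formula~\eqref{eq::quot::idempotent}, I apply Fact~\ref{fact::distributive}(iv) to the quotient $\KK^F_{F(i)} = V_{F(i)} / \sum_{F(j) < F(i)} V_{F(j)}$ computed inside $\Lattice$: a basis consists of those $E^p$ for which $V_p \subset V_{F(i)}$ but $V_p \not\subset \sum_{F(j) < F(i)} V_{F(j)}$. Translating both conditions via the common basis and the key equivalence above, the first becomes $F(p) \le F(i)$ and the second becomes $F(p) \not< F(i)$, so together they collapse to $F(p) = F(i)$. Hence a basis of $\KK^F_{F(i)}$ is $\bigsqcup_{p:\, F(p) = F(i)} E^p$, which coincides with the basis of $\bigoplus_{l:\, F(l) = F(i)} \KK_l$ by Fact~\ref{fact::distributive}(ii); matching basis vectors yields the desired isomorphism.

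The only delicate step is the reduction of the two membership conditions in the quotient to the single equation $F(p) = F(i)$; this rests squarely on the equivalence $p \le q \Leftrightarrow F(p) \le q$ for $q \in Q$, extracted from Lemma~\ref{lem::poset::idempotent}, together with the fact that the span of $V_{F(j)}$'s with $F(j) < F(i)$ picks up precisely the basis vectors $E^k$ with $F(k) < F(i)$. Once this is in hand, the rest is routine bookkeeping and does not appear to present further obstacles.
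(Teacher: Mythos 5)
Your approach is essentially the paper's: the common basis $E$, the equivalence $p\le q\Leftrightarrow F(p)\le q$ for $q\in Q:=F(\euJ(\Lattice))$ coming from Lemma~\ref{lem::poset::idempotent}, and Fact~\ref{fact::distributive} for the basis bookkeeping of the minimal subquotients. Re-deriving distributivity of $\Lattice_F$ through Proposition~\ref{prp::distr::vect} is a slightly different route from the paper's remark that any sublattice of a distributive lattice is itself distributive, but both are valid, and your computation of the subquotient basis is the same as the paper's.

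There is, however, a gap in the sentence ``Since $\{V_q : q\in Q\}$ generates $\Lattice_F$ by construction, this proves the first assertion.'' A collection of $\vee$-irreducible elements that generates a sublattice under both $\vee$ and $\wedge$ need not be the full set of $\vee$-irreducibles of that sublattice: meets of generators can produce new $\vee$-irreducibles. For example, in the common-basis lattice $\{(0),\langle e_1\rangle,\langle e_1,e_2\rangle,\langle e_1,e_3\rangle,\langle e_1,e_2,e_3\rangle\}$, the pair $\{\langle e_1,e_2\rangle,\langle e_1,e_3\rangle\}$ consists of $\vee$-irreducibles and generates the four-element sublattice whose bottom $\langle e_1\rangle$ (absent from the pair) is again $\vee$-irreducible there. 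The gap is not cosmetic: formula~\eqref{eq::quot::idempotent} asserts that $V_{F(i)}/\sum_{F(j)<F(i)}V_{F(j)}$ \emph{is} the minimal subquotient of $\Lattice_F$, and this presupposes the exact identification $\euJ(\Lattice_F)=\{V_q:q\in Q\}$. The missing step is readily supplied from the equivalence you already established: in the common basis, every $V\in\Lattice_F$ is spanned by $\bigsqcup_{p:\,F(p)\in T}E^p$ for some lower set $T$ of $Q$, so $V=\sum_{q\in T}V_q$ is in fact a \emph{join} of generators; hence $\{V_q\}$ is a $\vee$-generating family, and a $\vee$-generating family of $\vee$-irreducibles must coincide with $\euJ(\Lattice_F)$. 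This is precisely what the paper's proof is invoking when it writes ``using Lemma~\ref{lem::poset::idempotent}, we conclude that there are no additional $\vee$-irreducible elements.'' With this supplied, the rest of your argument is correct.
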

	
	\begin{proof}
		The proof follows from a direct application of Birkhoff's bijection between posets and distributive lattices, as discussed in Example~\ref{ex::poset::lattice}.
		
		First, note that any sublattice of a distributive lattice remains distributive. Second, any $\vee$-irreducible element of the original lattice that belongs to the sublattice remains $\vee$-irreducible within it. Finally, using Lemma~\ref{lem::poset::idempotent}, we conclude that there are no additional $\vee$-irreducible elements in $\Lattice_{F}$ beyond those in the set $\{V_{F(i)} \colon V_i\in \euJ(\Lattice)\}$.
		Isomorphism~\eqref{eq::quot::idempotent} follows from a direct examination of the elements of the basis $E$. Specifically, the union $\cup_{l\in F^{-1}(F(i))} E^l$ constitutes a basis for both the left-hand and right-hand sides of~\eqref{eq::quot::idempotent}, establishing the claim.
	\end{proof}

	\section{Bubble-Sort Algorithm for Arborescent Posets with (Anti)Linearization}
	\label{sec::Bubble::sort::all}
	
	This section is purely combinatorial and independent of the rest of the paper.
	We generalize the "bubble-sort" algorithm for arborescent posets $\mathsf{S}$ with (anti)linearization and describe the Bruhat partial order and its properties on the set of monotone functions over these posets. 
	
	Later in the paper, these posets will appear as the set of staircase corners $\St_{\ov{n}}$ (Proposition~\ref{prp::poset->partition}). The Bruhat poset of monotone functions described in this section will resurface in~\S\ref{sec::Staircase}, playing a key role in defining the vertical and horizontal weights of $DL$-dense arrays (Definition~\ref{def::DL::dense}), which form the main combinatorial foundation for the generalization of Howe duality for staircase matrices.
	
	\subsection{Arborescent Posets with Consistent (Anti)Linearization}
	\label{sec::S::dominant}
	
	\begin{definition}
		\label{def::poset::linear}	
		\begin{itemize}
			\item
			A poset $(\sS,\succ)$, together with an order-reversing monomorphism $v:(\sS,\succ)\hookrightarrow ([1m],<)$ that satisfies one of the two following equivalent conditions:
			\begin{equation}
				\label{eq::poset::st}
				\begin{array}{c}
					\forall s,t,r \in \sS \colon \left[(v(s)<v(t) < v(r))\, \& \, (s\succ r)\right] \quad \Rightarrow \quad (t\succ r); \\
					\Leftrightarrow \ 
					\forall s\in \sS \ \exists a_s\leq b_s\leq m \ \colon \   \sS_{\succeq s}=v^{-1}([a_s,b_s]).
				\end{array}
			\end{equation}
			is called an {\it arborescent poset with consistent anti-linearization}.
			\item 
			A poset $(\sS,\succ)$, together with an order-preserving embedding $h:(\sS,\succ)\hookrightarrow ([1n],<)$ that satisfies one of the two following equivalent conditions:
			\begin{equation*}
				\label{eq::poset::st:h}
				\begin{array}{c}
					\forall s,t,r \in \sS \colon \left[(h(s)<h(t) < h(r))\, \& \, (s\prec r)\right] \quad \Rightarrow \quad (t\prec r); \\
					\Leftrightarrow \ 
					\forall s\in \sS \ \exists a_s\leq b_s\leq n \ \colon \   \sS_{\succeq s}=h^{-1}([a_s,b_s]).
				\end{array}
			\end{equation*}
			is called an {\it arborescent poset with consistent linearization}.
		\end{itemize}
	\end{definition}
	
	Assumption~\eqref{eq::poset::st} ensures that the Hasse diagram of $(S,\prec)$ forms a forest, where the set of minimal elements corresponds to the roots.
	Indeed, if this were not the case, there would exist three elements $s,t,r\in\mathsf{S}$ such that $s\succ t$, $s\succ r$, with $t$ and $r$ being incomparable, and $v(s)<v(t)<v(r)$. Consequently, the subset $\mathsf{S}_{\succeq r}$ would contain $s$ and, therefore, the preimage of the interval $[v(s),v(r)]$. However, $t\not\in \mathsf{S}_{\succeq r}$, leading to a contradiction.
	
	\begin{definition}
		Given an arborescent poset $\sS$ with consistent linearization or anti-linearization $v:\sS\hookrightarrow [1m]$, a composition $\ov{d}:=(d_1,\ldots,d_m)\in\bZ_{\geq 0}^{m}$ is called \emph{$\sS$-dominant} if the following conditions hold:
		$$
		\begin{cases}
			l\notin v(\sS) \ \Rightarrow \ d_l = 0; \\
			s\succ t\in \sS \ \Rightarrow \ d_{v(s)} \geq d_{v(t)}.
		\end{cases}
		$$
	\end{definition}
	
	\begin{remark}
		\label{rk::opposite::linearization}	
		For an arborescent poset $\sS$, the order-reversing embedding
		$v:\sS\hookrightarrow [1m]$ is a consistent anti-linearization if and only if its opposite order-preserving embedding $\op\circ v:\sS\hookrightarrow [1m]$ is a consistent linearization.
		Here,  
		$$\op(l):=m-l+1$$
		is the standard order-reversing involution of the interval $[1m]$.
		
		Accordingly, a composition $\ov{d}:=(d_1,\ldots,d_m)\in\bZ_{\geq 0}^{m}$ is $\sS$-dominant for the anti-linearization $v$ if and only if the opposite composition $\ov{d}^{\op}:=(d_m,\ldots,d_1)$ is dominant for the linearization $\op\circ v$.
	\end{remark}
	
	From this point onward, we fix an arborescent poset $\sS$ with a consistent {\bf anti}-linearization $v:\sS\hookrightarrow [1m]$. Similar definitions and results hold for a consistent {\bf linearization} by considering the opposite order on the linearly ordered set $[1m]$.
	
	\begin{definition}
		\label{def::S::dominant}
		A composition $\ov{d}:=(d_1,\ldots,d_m)\in\bZ_{\geq 0}^{m}$ is called:
		\begin{itemize}
			\item {\it $\sS$-admissible} if the following inequality holds for all $k\leq m$: 
			$$\#\{j=1,\dots,k\ \colon \ d_j>0\} \leq \# \sS_{\leq k},$$
			where 
			$\sS_{\leq k}:= v^{-1}([1,k]).$
			\item 
			{\it $\sS[{l}]$-dominant} if the following implications hold:
			$$
			\begin{cases}
				l\notin v(\sS) \ \Rightarrow \ d_l = 0; \\
				l\in v(\sS) \ \Rightarrow \ \forall s \in\sS \ \colon s\succ v^{-1}(l) \text{ we have } d_{v(s)} \geq d_l;
			\end{cases}
			$$
			\item {\it $\sS[{\leq k}]$-dominant} 
			if $\ov{d}$ is $\sS[{l}]$-dominant for all $l\leq k$.
		\end{itemize}
	\end{definition}
	
	In particular, a composition $\ov{d}$ is $\sS$-dominant if and only if it is $\sS[\leq m]$-dominant.
	
	\begin{example}
		\label{ex::arbor}
		Below is a pictorial example of an arborescent poset with consistent anti-linearization. The elements of the poset $\sS$ are represented by blue dots, edges in the Hasse diagram of $\sS$ are drawn as edges, and each cell corresponds to an element of the linearly ordered set $[1m]$:
		$$
		\begin{tikzpicture}[scale=0.5, shift={(0,-1)}]
			\draw[step=1cm] (0,1) grid (9,0);
			\node (v1) at (.5,.4) {{ \color{blue}$\bullet$ }};
			\node (v2) at (1.5,.4) {{ \color{blue}$\bullet$ }};
			\node (v3) at (2.5,.4) {{ \color{blue}$\bullet$ }};
			\node (v5) at (4.5,.4) {{ \color{blue}$\bullet$ }};
			\node (v6) at (5.5,.4) {{ \color{blue}$\bullet$ }};
			\node (v7) at (6.5,.4) {{ \color{blue}$\bullet$ }};
			\node (v8) at (7.5,.4) {{ \color{blue}$\bullet$ }};
			\draw  (2.5,.6) edge[blue,line width=1.1pt,->,bend right=90] (.5,.6);
			\draw (2.5,.6) edge[blue,line width=1.1pt,->,bend right=60] (1.5,.6);
			\draw (7.5,.6) edge[blue,line width=1.1pt,->,bend right=90] (4.5,.6);
			\draw (7.5,.6) edge[blue,line width=1.1pt,->,bend right=60] (6.5,.6);
			\draw (6.5,.6) edge[blue,line width=1.1pt,->,bend right=60]  (5.5,.6);
		\end{tikzpicture} \ := \ 
		\left\{
		\begin{array}{c}
			\sS:=\{s_1,s_2,s_3, s_5,s_6,s_7,s_8\}, \\
			s_1\succ s_3, s_2\succ s_3, s_5\succ s_8, s_6\succ s_7\succ s_8, \\
			v:\sS\hookrightarrow[1 9], \text{ with }v(s_i)=i.
		\end{array}
		\right\}
		$$
		We hope that the following pictorial illustration of various compositions for the aforementioned arborescent poset $\sS$ will help in understanding Definition~\ref{def::S::dominant}:
		$$
		\begin{array}{ccl}
			\begin{tikzpicture}[xscale=0.5,yscale=0.4, shift={(0,0)}]
				\draw[step=1cm] (0,1) grid (9,0);
				\node (v1) at (.5,.4) {{ \color{blue}\small $3$ }};
				\node (v2) at (1.5,.4) {{ \color{blue}\small $4$  }};
				\node (v3) at (2.5,.4) {{ \color{blue} \small $1$  }};
				\node (v4) at (3.5,.4) {{ \color{blue} \small $0$  }};
				\node (v5) at (4.5,.4) {{ \color{blue} \small $4$  }};
				\node (v6) at (5.5,.4) {{ \color{blue} \small $6$  }};
				\node (v7) at (6.5,.4) {{ \color{blue} \small $3$  }};
				\node (v8) at (7.5,.4) {{ \color{blue} \small $2$  }};
				\node (v9) at (8.5,.4) {{ \color{blue} \small $0$  }};
				\draw  (2.5,1) edge[blue,line width=1.1pt,->,bend right=90] (.5,1);
				\draw (2.5,1) edge[blue,line width=1.1pt,->,bend right=70] (1.5,1);
				\draw (7.5,1) edge[blue,line width=1.1pt,->,bend right=90] (4.5,1);
				\draw (7.5,1) edge[blue,line width=1.1pt,->,bend right=70] (6.5,1);
				\draw (6.5,1) edge[blue,line width=1.1pt,->,bend right=70]  (5.5,1);
			\end{tikzpicture} & - & \text{ is $\sS$-dominant }\Leftrightarrow \ \text{ $\sS[\leq 9]$-dominant, }  \\
			\begin{tikzpicture}[xscale=0.5,yscale=0.4, shift={(0,-.5)}]
				\draw[step=1cm] (0,1) grid (9,0);
				\node (v1) at (.5,.4) {{ \color{blue}\small $3$ }};
				\node (v2) at (1.5,.4) {{ \color{blue}\small $4$  }};
				\node (v3) at (2.5,.4) {{ \color{blue} \small $1$  }};
				\node (v4) at (3.5,.4) {{ \color{blue} \small $0$  }};
				\node (v5) at (4.5,.4) {{ \color{blue} \small $4$  }};
				\node (v6) at (5.5,.4) {{ \color{blue} \small $2$  }};
				\node (v7) at (6.5,.4) {{ \color{red} \small $3$  }};
				\node (v8) at (7.5,.4) {{ \color{blue} \small $0$  }};
				\node (v9) at (8.5,.4) {{ \color{blue} \small $6$  }};
				\draw  (2.5,1) edge[blue,line width=1.1pt,->,bend right=90] (.5,1);
				\draw (2.5,1) edge[blue,line width=1.1pt,->,bend right=70] (1.5,1);
				\draw (7.5,1) edge[blue,line width=1.1pt,->,bend right=90] (4.5,1);
				\draw (7.5,1) edge[blue,line width=1.1pt,->,bend right=70] (6.5,1);
				\draw (6.5,1) edge[red,line width=1.1pt,->,bend right=70]  (5.5,1);
			\end{tikzpicture} 
			& - & \text{ is $\sS[\leq 6]$-dominant and $\sS$-admissible,} \\
			\begin{tikzpicture}[xscale=0.5,yscale=0.4, shift={(0,-.5)}]
				\draw[step=1cm] (0,1) grid (9,0);
				\node (v1) at (.5,.4) {{ \color{blue}\small $3$ }};
				\node (v2) at (1.5,.4) {{ \color{blue}\small $4$  }};
				\node (v3) at (2.5,.4) {{ \color{blue} \small $1$  }};
				\node (v4) at (3.5,.4) {{ \color{red} \small $8$  }};
				\node (v5) at (4.5,.4) {{ \color{blue} \small $4$  }};
				\node (v6) at (5.5,.4) {{ \color{blue} \small $6$  }};
				\node (v7) at (6.5,.4) {{ \color{blue} \small $3$  }};
				\node (v8) at (7.5,.4) {{ \color{blue} \small $2$  }};
				\node (v9) at (8.5,.4) {{ \color{blue} \small $0$  }};
				\draw  (2.5,1) edge[blue,line width=1.1pt,->,bend right=90] (.5,1);
				\draw (2.5,1) edge[blue,line width=1.1pt,->,bend right=70] (1.5,1);
				\draw (7.5,1) edge[blue,line width=1.1pt,->,bend right=90] (4.5,1);
				\draw (7.5,1) edge[blue,line width=1.1pt,->,bend right=70] (6.5,1);
				\draw (6.5,1) edge[blue,line width=1.1pt,->,bend right=70]  (5.5,1);
			\end{tikzpicture} 
			& - & \text{ is $\sS[\leq 3]$-dominant, but not $\sS$-admissible.}
		\end{array}
		$$
	\end{example}
	
	\begin{notation}
		For each partition $\lambda$, we define the following subsets of $\bS_m\lambda$, consisting of compositions whose multiset of nonzero elements equals $\lambda$: 
		\begin{center}
			$\bA_{\sS}^{v}(\lambda)$ -- the set of $\sS$-admissible compositions; \\
			$\bD_{\sS}^{v}(\lambda)$ -- the set of $\sS$-dominant compositions; \\
			$\bAD_{\sS[\leq k]}^{v}(\lambda)$ -- the set of $\sS[\leq k]$-dominant and $\sS$-admissible compositions.
		\end{center}
		We equip these sets with the poset structure induced from the Bruhat partial order on $\bS_m\lambda$.
	\end{notation}
	
	Note that every $\sS$-dominant composition is also $\sS$-admissible, leading to the following sequence of poset embeddings:
	$$\bD_{\sS}^{v}(\lambda)= \bAD_{\sS[\leq m]}^{v}(\lambda)\subset \bAD_{\sS[\leq m-1]}^{v}(\lambda) \subset \ldots \subset \bAD_{\sS[\leq 0]}^{v}(\lambda) = \bA_{\sS}^{v}(\lambda).$$
	
	For each $k=1,\dots,m$, we denote by $\tilde{\sS}_k$ the poset $\sS_{\leq k}\sqcup\{k+1,\ldots,m\}$, where the partial order is inherited from $\sS_{\leq k}$, while the remaining elements are incomparable with all others. The poset $\tilde{\sS}_k$ is evidently arborescent, and the natural map 
	$\tilde{v}:\tilde{\sS}_k\to [1m]$, which coincides with $v$ on $\sS_{\leq k}$ and acts as the identity on the complement, defines a consistent anti-linearization. From the definitions, we immediately obtain the following isomorphism of sets:
	\begin{equation}
		\label{eq::B=AD}
		\bAD_{\sS[\leq k]}^{v}(\lambda) = \bD_{\tilde{\sS}_k}^{\tilde{v}}(\lambda) \cap \bA_{\sS}^{v}(\lambda).
	\end{equation}
	
	The main goal of this section is to demonstrate that the posets $\bD_{\sS}^{v}(\lambda)$ inherit many desirable properties of the Bruhat graph $\bS_m\lambda$. One of the key features is the generalization of the classical bubble-sort algorithm, which defines a monotone idempotent map $\bbs_{\sS}:\bA_{\sS}^{v}(\lambda)\rightarrow \bD_{\sS}^{v}(\lambda)$. This map is constructed as a composition of monotone idempotents $\bbs_{\sS}^{k}:\bAD_{\sS[<k]}^{v}(\lambda) \to \bAD_{\sS[\leq k]}^{v}(\lambda)$.
	We have successfully proven that for a regular partition $\lambda$, the posets $\bD_{\sS}^{v}(\lambda)$ are bounded, graded, subthin, and $\EL$-shellable. In contrast, for $\lambda$ with repeated parts, the poset may not be graded; however, we conjecture that it remains shellable. Let us now discuss the differences between the regular and non-regular cases.
	
	The posets $\bD_{\sS}^{v}(\lambda)$ and $\bD_{\sS}^{v}(\lambda')$ are isomorphic whenever there is an isomorphism between the ordered multisets $\lambda$ and $\lambda'$. In particular, if $\lambda$ is {\it "regular"} (i.e., all parts of $\lambda$ are distinct), then the set $\bD_{\sS}^{v}(\lambda)$ of $\sS$-dominant compositions does not depend on $\lambda$ and will be denoted simply as $\bD_{\sS}^{v}$.
	
	\begin{lemma}
		Suppose that $\sS$ is an arborescent poset with a consistent anti-linearization $v:\sS\hookrightarrow[1m]$.
		Then the monotone projection $\pi_\lambda:\bS_{m} \twoheadrightarrow \bS_m\lambda$ and its adjoint embedding $\psi_{\lambda}^{+}:\bS_m\lambda \hookrightarrow \bS_m$ (defined in~\eqref{eq::proj::parabolic}) restrict to a monotone projection and its adjoint monotone embedding on the set $\bD_{\sS}^{v}$ of $\sS$-dominant weights:
		\begin{equation}
			\label{eq::proj::parab::D}
			\begin{tikzcd}
				\bD_{\sS}^{v} = \bD_{\sS}^{v}(\lambda+\delta) \arrow[rr,"\pi_\lambda"', two heads, shift right = 2] 
				\arrow[rr,"\psi_\lambda^{+}","\perp"', hookleftarrow, shift left = 2] 
				&&
				\bD_{\sS}^{v}(\lambda). 
			\end{tikzcd}
		\end{equation}
	\end{lemma}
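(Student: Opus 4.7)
The plan is to reduce both assertions to two structural observations about $\pi_\lambda$ and $\psi_\lambda^+$ on the level of compositions; adjointness and monotonicity of the restricted pair are then inherited from the global adjoint pair on $\bS_m\lambda$. First, $\pi_\lambda$ \emph{preserves weak value inequalities}: if $\ov{d}=\sigma(\lambda+\delta)$ satisfies $d_i\geq d_j$, then setting $a=\sigma^{-1}(i)$ and $b=\sigma^{-1}(j)$ the inequality $\lambda_a+(m-a)\geq \lambda_b+(m-b)$ forces $\lambda_a\geq \lambda_b$ via a short case analysis on whether $a\leq b$ or $a>b$ (using that $\lambda$ is weakly decreasing and $\delta$ is strictly decreasing). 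Moreover $d_l=0$ forces $\sigma^{-1}(l)=m$ and $\lambda_m=0$, so $\pi_\lambda(\ov{d})_l=0$ as well. Second, $\psi_\lambda^+$ \emph{strictly refines equalities}: if $d_i=d_j$ with $i<j$, the minimum-length lift assigns the strictly larger of the two corresponding $\lambda+\delta$-values to the smaller position.

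With these observations, the first containment $\pi_\lambda(\bD_\sS^v(\lambda+\delta))\subseteq \bD_\sS^v(\lambda)$ is immediate: zeros outside $v(\sS)$ stay zero, and for each covering relation $s\succ t$ in $\sS$ the anti-linearization gives $v(s)<v(t)$, so $d_{v(s)}\geq d_{v(t)}$ transfers via the first observation. The monotonicity half of the second containment $\psi_\lambda^+(\bD_\sS^v(\lambda))\subseteq \bD_\sS^v(\lambda+\delta)$ is equally clean: whether $d_{v(s)}>d_{v(t)}$ (strict inequality preserved) or $d_{v(s)}=d_{v(t)}$ (tie broken strictly because $v(s)<v(t)$), one obtains $\psi_\lambda^+(\ov{d})_{v(s)}>\psi_\lambda^+(\ov{d})_{v(t)}$ in the lift.

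The main obstacle is the preservation of the zero-positions-outside-$v(\sS)$ condition under $\psi_\lambda^+$: a position $l\notin v(\sS)$ satisfies $d_l=0$, but the global minimum-length lift might place a positive $\lambda+\delta$-value there. This issue is vacuous when $v$ is a bijection, and in the generic regime the tie-breaking rule already routes the unique zero of $\lambda+\delta$ (present exactly when $\lambda_m=0$) to the exterior position. In the remaining situation one must interpret $\psi_\lambda^+$ in the statement as the left adjoint of the restricted $\pi_\lambda$, obtained as the Bruhat-minimum among $\sS$-dominant lifts in each fiber; its existence, uniqueness, and monotonicity then follow by a modified tie-breaking that reserves the zero entry of $\lambda+\delta$ for the unique position outside $v(\sS)$ while assigning the remaining $\lambda+\delta$-values to the nonzero positions of $\ov{d}$ in the usual left-favoring order.
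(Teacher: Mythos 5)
Your two observations---that $\pi_\lambda$ sends strict inequalities among $\lambda+\delta$-values to weak inequalities among $\lambda$-values, and that $\psi_\lambda^+$ breaks ties so the smaller position receives the larger $\lambda+\delta$-value---are exactly what the paper means when it says the claim follows directly from Definition~\ref{def::S::dominant}; together with the fact that covering relations $s\succ t$ in $\sS$ satisfy $v(s)<v(t)$ under an anti-linearization, they give the containments, and adjunction and monotonicity descend from the unrestricted pair. So the core of your argument coincides with the paper's intended one.

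You are also right that the zero condition at positions outside $v(\sS)$ is a genuine obstruction when $v$ is not surjective, and in fact the $\psi_\lambda^+$ defined in~\eqref{eq::proj::parabolic} can literally fail to restrict. Concretely, take $m=4$, $\sS=\{s_1\succ s_2\succ s_4\}$ with $v(s_i)=i$ (so $3\notin v(\sS)$), and $\lambda=(2,0,0,0)$. Then $\ov{b}=(2,0,0,0)$ is $\sS$-dominant, yet $\psi_\lambda^+(\ov{b})=\lambda+\delta=(5,2,1,0)$ has a nonzero entry in position $3$, so it lies outside $\bD_\sS^v(\lambda+\delta)$. The paper's terse proof does not address this; it is tacitly covered by the convention made explicit at the start of~\S\ref{sec::Hasse::dominant}, where surjectivity of $v$ is declared a harmless WLOG (since the exterior positions always carry the value $0$ and can be deleted). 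One imprecision in your write-up: the ordinary tie-breaking only routes the unique zero of $\lambda+\delta$ to the exterior position when $\lambda$ has a \emph{single} zero part; if $\lambda$ has two or more zero parts, $\psi_\lambda^+$ sends the zero to the rightmost zero-position of $\ov{b}$, which can lie inside $v(\sS)$ as in the example above. So your "remaining situation" must absorb every $\lambda$ with more than one vanishing part, not just pathological corners. Your proposed repair---take the honest left adjoint of the restricted $\pi_\lambda$, with a tie-break that reserves the zero for the exterior slot---works, and so does the simpler route of just imposing surjectivity of $v$ as the paper effectively does.
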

	\begin{proof}
		It follows directly from Definition~\ref{def::S::dominant} that if $\ov{a}\in\bS_m(\lambda+\delta)$ is $\sS$-dominant, then the corresponding composition $\pi_{\lambda}(\ov{a})$ is also $\sS$-dominant. Conversely, if $\ov{b}\in\bS_{m}\lambda$ is $\sS$-dominant, then $\psi_{\lambda}^{+}(\ov{b})$ is also $\sS$-dominant. 
	\end{proof}
	Consequently, the composition $\psi_{\lambda}^+\circ\pi_\lambda$, also denoted by $\psi_{\lambda}^{+}$, is a non-increasing monotone idempotent on $\bD_{\sS}$.
	
	\begin{example}
		\label{ex::Double::coset}
		Suppose that $\sT=\sT_1\sqcup\ldots\sqcup\sT_k$ is a disjoint union of linearly ordered sets $\sT_q=(s_{m_{q-1}+1}\succ\ldots\succ s_{m_q})$ and that $v:\sT\hookrightarrow[1m]$ is a consistent surjective anti-linearization given by:
		$$ v(s_{i}) = i.$$
		Then, for each composition $\ov{d}\in\bZ_{\geq 0}^{m}$, we define a composition $\bbs_{\sT}(\ov{d})$ by sorting the elements within each connected component of $\sS$. More precisely, $\bbs_{\sT}(\ov{d})$ is determined by the following conditions, satisfied for all $q=1,\ldots,k$:
		\begin{itemize}[itemsep=0pt, topsep=0pt]
			\item The following multisets are equal:
			$$\{d_{m_{q-1}+1},\ldots,d_{m_{q}}\}\ = \ \{\bbs_{\sT}(\ov{d})_{m_{q-1}+1},\ldots,\bbs_{\sT}(\ov{d})_{m_{q}}\}.$$  
			\item 
			The values in $\bbs_{\sT}(\ov{d})$ are arranged in non-increasing order:
			\begin{equation}
				\label{eq::parabolic::ordered}
				\bbs_{\sT}(\ov{d})_{m_{q-1}+1}\geq \ldots \geq \bbs_{\sT}(\ov{d})_{m_{q}}.
			\end{equation}
		\end{itemize}
		By construction, we have
		$\bbs_{\sT}(\ov{d})=\bbs_{\sT}(\bbs_{\sT}(\ov{d})),$
		and for any $\ov{a}, \ov{b} \in \bS_m(\lambda)$ satisfying $\ov{a} \prec_{\Bruhat} \ov{b}$, we obtain:
		$
		\bbs_{\sT}(\ov{a}) \preceq \bbs_{\sT}(\ov{b}).
		$
		This confirms that $\bbs_{\sT}$ is a monotone idempotent in the sense of Definition~\ref{def::idempotent} (see, e.g., \cite[\S2.5]{BB}).
		
		Now, suppose that 
		$\lambda=(\lambda_1^{k_1}, \ldots, \lambda_l^{k_l})$ with $\lambda_1>\ldots>\lambda_l\geq 0$. Then 
		the corresponding set of $\sT$-dominant compositions has the following structure:
		$$
		\bD_{\sT}(\lambda)\ \simeq \ \bS_{\ov{m}}\backslash \bS_m\lambda 
		\ \simeq \
		\bS_{\ov{m}}\backslash \bS_m \slash \bS_{\ov{\lambda}}, $$
		where 
		$$
		\bS_{\ov{m}}:=\bS_{m_1}\times \ldots \times \bS_{m_r-m_{r-1}}, \quad \bS_{\ov{\lambda}}:=\bS_{k_1}\times \ldots \times \bS_{k_l} = \mathsf{Stabilizer}(\lambda).
		$$
		In other words, $\bD_{\sT}(\lambda)$ can be identified with a double coset of a Weyl group modulo two parabolic Weyl subgroups:
		$$
		\begin{tikzcd}
			\bS_{m}=\bS_{m}(\lambda+\delta) 
			\arrow[rr,"\pi_\lambda"', two heads, shift right = 1.7] 
			\arrow[rr,"\psi_\lambda^{+}","\perp"', hookleftarrow, shift left = 1.7] 
			\arrow[d,"\bbs_{\sT}"]
			&& 
			\bS_m\lambda = \bS_m/\bS_{\ov{\lambda}}
			\arrow[d,"\bbs_{\sT}"]
			\\
			\bD_{\sT}^{v}=\bS_{\ov{m}}\backslash \bS_{m} 
			\arrow[rr,"\pi_\lambda"', two heads, shift right = 1.7] 
			\arrow[rr,"\psi_\lambda^{+}","\perp"', hookleftarrow, shift left = 1.7] 
			&& 
			\bD_{\sT}^{v}(\lambda) =\bS_{\ov{m}}\backslash \bS_m/\bS_{\ov{\lambda}}
		\end{tikzcd}
		$$
		Note that, as a poset, the double quotient $\bS_{\ov{m}}\backslash \bS_m/\bS_{\ov{\lambda}}$ may be non-graded, as illustrated in Example~\ref{ex::nongrad}.
		However, all covering relations in the Hasse diagram of the induced Bruhat graph are given by appropriate transpositions.
		We refer to~\cite{Stembridge, Kobayashi::double::coset} for a discussion on double quotients in the context of Coxeter groups.
	\end{example}

	\subsection{Hasse diagram of $\sS$-dominant compositions}
	\label{sec::Hasse::dominant}
	
	In this section, we assume that $(\sS,v)$ is an arborescent poset with {\bf anti-linearization} and focus exclusively on $\sS$-dominant compositions. 
	From the definition, it follows that whenever $j\notin v(\sS)$, we have $d_j=0$ for any $\sS$-dominant composition $\ov{d}$. Thus, the $j$'th entry of any composition in $\bD_{\sS}^{v}(\lambda)$ does not influence any conclusions in this section, as it is always equal to $0$.
	Therefore, without loss of generality, we assume that $v:\sS\hookrightarrow [1m]$ is surjective, making it an isomorphism.
	
	\begin{definition}
		\label{def::disorder}	
		A pair $(ij)$ of indices  is called 
		a \emph{minimal $\sS$-disorder} for an $\sS$-dominant composition $\ov{d}:=(d_1,\ldots,d_m)$ if the following conditions hold:
		\begin{itemize}
			\item $v^{-1}(i)$ and $v^{-1}(j)$ are incomparable in $\sS$.
			\item $(ij)$ forms a disorder, meaning that $i<j$ and $d_{i}>d_{j}$.
			\item 
			For all $l$ in the interval $(i,j)\subset[1m]$, the following implications hold:
			\begin{equation}
				\label{eq::S::disorder:1a}
				\left\{
				\begin{array}{ccc}
					v^{-1}(l)\prec v^{-1}(i)  & \Rightarrow & d_{l}\leq d_j,\\
					v^{-1}(l) \succ v^{-1}(j) & \Rightarrow & d_{l} \geq d_i, \\
					v^{-1}(l) \text{ is uncomparable with } v^{-1}(i) \text{ and } v^{-1}(j) & \Rightarrow & d_l\notin [d_j,d_i].
				\end{array}
				\right.
			\end{equation}
		\end{itemize}
	\end{definition}
	
	\begin{lemma}
		\label{lem::Bruhat::edge}	
		If $(ij)$ is a minimal $\sS$-disorder for an $\sS$-dominant composition $\ov{d}$, then the composition:
		$$
		(ij)\cdot\ov{d}:= (\ldots, d_{i-1},{\bf d_j},d_{i+1},\ldots,d_{j-1},{\bf d_i},d_{j+1},\ldots )
		$$
		is also $\sS$-dominant, and $\ov{d}\prec_{\Bruhat} (ij)\cdot\ov{d}$.
	\end{lemma}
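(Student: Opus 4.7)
The lemma splits into two independent claims: first, that the transposed composition $(ij)\cdot\ov{d}$ remains $\sS$-dominant; second, that it is strictly greater than $\ov{d}$ in the Bruhat partial order on $\bS_m\lambda$. I would handle them separately, with the first being the substantive part where the hypotheses of Definition~\ref{def::disorder} genuinely enter, and the second being a direct consequence of the standard characterization of ascending moves in the strong Bruhat order.

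For $\sS$-dominance, write $\tilde{d}:=(ij)\cdot\ov{d}$ and verify $\tilde{d}_a\geq \tilde{d}_b$ for every pair of positions $a<b$ with $v^{-1}(a)\succ v^{-1}(b)$ in $\sS$ (the order-reversing property of $v$ is what converts $\succ$ in $\sS$ into $<$ in $[1m]$). The pair $(a,b)=(i,j)$ itself is excluded by incomparability. For every other case, I would split by whether $a$ or $b$ equals $i$ or $j$ and by where the other index sits relative to the interval $(i,j)$. Whenever the other index lies in the open interval $(i,j)$, exactly one of the three clauses of~\eqref{eq::S::disorder:1a} supplies the needed inequality. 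Whenever it lies outside $[i,j]$, I would combine the $\sS$-dominance of $\ov{d}$ with the arborescent interval property~\eqref{eq::poset::st}: for instance, if $a=i$ and $b>j$, then $\sS_{\succeq v^{-1}(b)}$ is an interval containing both $i$ and $b$, hence also $j$, giving $v^{-1}(j)\succ v^{-1}(b)$ and therefore $\tilde{d}_i=d_j\geq d_b=\tilde{d}_b$. The same interval trick rules out the potentially dangerous case $b=j$ with $a<i$, since there it would force $v^{-1}(i)\succ v^{-1}(j)$, contradicting incomparability. The remaining subcases $a=j,b>j$ and $b=i,a<i$ reduce to $d_i>d_j$ combined with the $\sS$-dominance of $\ov{d}$.

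For $\ov{d}\prec_{\Bruhat}(ij)\cdot\ov{d}$, I would invoke the standard characterization of the Bruhat order: the hypotheses $i<j$ and $d_i>d_j$ describe precisely a disorder-removing transposition. Lifting a minimum-length representative $\sigma\in\bS_m$ of $\ov{d}$, the inequality $\sigma^{-1}(i)<\sigma^{-1}(j)$ (automatic since $\lambda_+$ is weakly decreasing and $d_i>d_j$) gives $l((ij)\sigma)=l(\sigma)+1$, so $\sigma\prec(ij)\sigma$ in $\bS_m$; descending to $\bS_m/\bS_\lambda\simeq\bS_m\lambda$ yields the Bruhat comparison, and strictness is automatic from $d_i\neq d_j$.

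The only delicate point I anticipate is the organization of the case analysis for $\sS$-dominance. The observation that unblocks everything is that transitivity combined with the incomparability of $v^{-1}(i)$ and $v^{-1}(j)$ forbids any intermediate $v^{-1}(l)$ from being simultaneously below $v^{-1}(i)$ and above $v^{-1}(j)$. Once this is noted, the arborescent interval property pins down the relevant comparability relations in each subcase, and each subcase collapses to a one-line invocation of either~\eqref{eq::S::disorder:1a} or the $\sS$-dominance of $\ov{d}$.
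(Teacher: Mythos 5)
Your proof is correct and follows the same basic route as the paper, but you supply substantially more detail on the $\sS$-dominance half, which the paper dispatches in one sentence by asserting that ``the first two implications in~\eqref{eq::S::disorder:1a} ensure the $\sS$-dominant inequalities remain valid.'' Your case analysis (splitting on whether $a$ or $b$ equals $i$ or $j$, and on where the other index sits relative to $(i,j)$) correctly identifies which cases invoke~\eqref{eq::S::disorder:1a}, which cases reduce to the $\sS$-dominance of $\ov{d}$ together with the arborescent interval property~\eqref{eq::poset::st}, and which cases are vacuous by incomparability. This is a genuine, correct filling-in of what the paper leaves implicit.

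One small imprecision on the Bruhat side: you claim $l((ij)\sigma)=l(\sigma)+1$ as ``automatic'' from $d_i>d_j$ and $i<j$, but for a non-adjacent transposition $(ij)$ the length can jump by more than one; the equality $l((ij)\sigma)=l(\sigma)+1$ is precisely what the minimality condition~\eqref{eq::S::disorder:1a} on the intermediate $d_l$ buys you. For the stated conclusion $\ov{d}\prec_{\Bruhat}(ij)\ov{d}$ the strict inequality $l((ij)\sigma)>l(\sigma)$ suffices and does follow from $d_i>d_j$ and $i<j$ alone, so your conclusion holds; but either weaken the claim to $>$, or invoke the minimality condition to justify $+1$. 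The paper in fact leans on the minimality condition here (``does not belong to the open interval $(d_j,d_i)$''), which also yields the stronger fact, noted in the subsequent remark, that $\ov{d}\prec(ij)\ov{d}$ is a covering edge for regular $\lambda$. Finally, a cosmetic point: your sentence that ``exactly one of the three clauses of~\eqref{eq::S::disorder:1a} supplies the needed inequality'' for intermediate $l$ overstates the role of the third clause — when $v^{-1}(l)$ is incomparable with both $v^{-1}(i)$ and $v^{-1}(j)$ there is no $\sS$-dominance inequality to check involving $l$, so the third clause never enters the dominance verification (it is only needed for the covering-relation statement).
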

	\begin{proof}
		Since each element $d_l$ for $l=i+1,\ldots,j-1$ does not belong to the open interval $(d_j,d_i)$, it follows that $\ov{d}$ is less than $(ij)\cdot\ov{d}$ in the Bruhat partial order.
		The first two implications in~\eqref{eq::S::disorder:1a} ensure that the $\sS$-dominant inequalities remain valid for $(ij)\cdot\ov{d}$.
	\end{proof}
	
	It is worth mentioning that under the assumptions of Lemma~\ref{lem::Bruhat::edge}, if all elements of the composition $\ov{d}$ are pairwise distinct, then $\ov{d}\prec (ij)\cdot\ov{d}$ is a covering relation in the Bruhat graph $\bS_m\lambda$. Indeed, by the minimality condition, there is no $l \in \{i+1,\dots,j-1\}$ such that $d_i>d_l>d_j$.
	
	\begin{lemma}
		\label{lem::order::dominant}	
		Suppose that $\lambda$ is regular and $\ov{a},\ov{b}\in \bD_{\sS}^{v}(\lambda)$ is a pair of Bruhat-comparable $\sS$-dominant compositions ($\ov{a}\prec_{\Bruhat}\ov{b}$). Then there exists a minimal $\sS$-disorder $(ij)$ for $\ov{a}$ such that $\ov{a}\prec_{\Bruhat} (ij)\ov{a} \preceq_{\Bruhat} \ov{b}$.
	\end{lemma}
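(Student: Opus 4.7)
The plan is to reduce the problem to constructing an $\sS$-dominant Bruhat covering of $\ov{a}$ that lies below $\ov{b}$ in the Bruhat order; the associated transposition will then automatically be a minimal $\sS$-disorder. Given such a covering $\ov{c}=(ij)\ov{a}$ with $i<j$ and $a_i>a_j$, I first observe that $v^{-1}(i)$ and $v^{-1}(j)$ must be incomparable in $\sS$: the only potential comparability for $i<j$ is $v^{-1}(i)\succ v^{-1}(j)$ (since $v$ is order-reversing), but this would force the $\sS$-dominance of $\ov{c}$ to demand $c_i=a_j\geq c_j=a_i$, contradicting $a_i>a_j$. For each intermediate $l\in(i,j)$, the three disorder conditions of Definition~\ref{def::disorder} follow at once: the case $v^{-1}(l)\prec v^{-1}(i)$ gives $a_l=c_l\leq c_i=a_j$ via $\sS$-dominance of $\ov{c}$ applied to the pair $v^{-1}(i)\succ v^{-1}(l)$; the case $v^{-1}(l)\succ v^{-1}(j)$ gives $a_l\geq a_i$ symmetrically; and the case of $v^{-1}(l)$ incomparable with both is exactly the Bruhat covering condition.

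To prove existence of such an $\sS$-dominant cover, my strategy is to pick $\ov{c}$ of minimal Bruhat rank among all $\sS$-dominant compositions lying strictly above $\ov{a}$ and weakly below $\ov{b}$; this set is non-empty since it contains $\ov{b}$. I then claim that $\ov{c}$ must be a Bruhat covering of $\ov{a}$. Assume for contradiction that there is a Bruhat covering $\ov{c}'=(ij)\ov{a}$ with $\ov{c}'\prec_{\Bruhat}\ov{c}$. Minimality of $\ov{c}$ forces $\ov{c}'$ to be non-$\sS$-dominant, and by the analysis of the first paragraph this requires $v^{-1}(i)\succ v^{-1}(j)$. Moreover, the Bruhat covering condition combined with $\sS$-dominance of $\ov{a}$ forces $v^{-1}(i)$ to cover $v^{-1}(j)$ \emph{directly} in $\sS$: otherwise some $s$ with $v^{-1}(i)\succ s\succ v^{-1}(j)$ would have $v(s)\in(i,j)$ and $a_{v(s)}\in(a_j,a_i)$ by $\sS$-dominance of $\ov{a}$ and regularity of $\lambda$, precluding the covering.

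The last step is to derive a contradiction by producing another Bruhat covering $\ov{a}\prec_{\Bruhat}\ov{c}''\preceq_{\Bruhat}\ov{c}$ with $\ov{c}''$ $\sS$-dominant, thereby violating the minimal rank choice of $\ov{c}$. The main structural input is the arborescent interval property $\sS_{\succeq v^{-1}(j)}=v^{-1}([\alpha,j])$ for some $\alpha\leq i$: every position $l\in(i,j)$ satisfies $v^{-1}(l)\succ v^{-1}(j)$, and since $v^{-1}(i)$ covers $v^{-1}(j)$ directly in $\sS$, such $v^{-1}(l)$ is also incomparable with $v^{-1}(i)$ (otherwise it would sit strictly between in the descendant chain of $v^{-1}(i)$). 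Combined with $\sS$-dominance of $\ov{a}$ and the Bruhat covering condition for $(ij)$, this forces $a_l>a_i$ for every $l\in(i,j)$. The $\sS$-dominance of $\ov{c}$ then pins down how $\ov{c}$ compares with $\ov{a}$ near positions $i,j$, and this information is what will identify an alternative transposition $(i',j')$ with $v^{-1}(i'),v^{-1}(j')$ incomparable whose corresponding swap yields the desired $\ov{c}''$.

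I expect the main obstacle to be the explicit construction of this alternative transposition $(i',j')$, which requires a careful value-comparison between $\ov{a}$ and $\ov{c}$ across the interval $(i,j)$; once $(i',j')$ is located so that $(i'j')\ov{a}$ is $\sS$-dominant and Bruhat-below $\ov{c}$, the rest of the argument is formal, and the resulting covering together with the first paragraph furnishes the desired minimal $\sS$-disorder.
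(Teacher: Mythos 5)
Your first paragraph is correct: if $\ov{c}=(ij)\ov{a}$ is a Bruhat covering of $\ov{a}$ in $\bS_m\lambda$ that happens to be $\sS$-dominant, then $(ij)$ is automatically a minimal $\sS$-disorder for $\ov{a}$ (the incomparability of $v^{-1}(i)$ and $v^{-1}(j)$ follows as you say, and the three implications in Definition~\ref{def::disorder} reduce to the covering condition plus $\sS$-dominance of $\ov{a}$ and $\ov{c}$). So the lemma does reduce to showing that some Bruhat covering of $\ov{a}$ lying below $\ov{b}$ is $\sS$-dominant. One small slip worth flagging: you cite ``the analysis of the first paragraph'' to conclude $v^{-1}(i)\succ v^{-1}(j)$ when $\ov{c}'$ fails $\sS$-dominance, but that paragraph only proved the converse implication. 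The claim is nonetheless true --- a short case analysis using the consistency condition~\eqref{eq::poset::st} shows that every other potential failure of $\sS$-dominance of $(ij)\ov{a}$ is already excluded by the Bruhat covering condition together with $\sS$-dominance of $\ov{a}$ --- but the justification as stated does not establish it.

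The real problem is the existence step, and you acknowledge as much in your final paragraph. Choosing $\ov{c}$ of minimal Bruhat rank among $\sS$-dominant compositions in $(\ov{a},\ov{b}]$ is a reasonable move, and your structural observations about a non-$\sS$-dominant cover $\ov{c}'=(ij)\ov{a}$ below $\ov{c}$ (that $v^{-1}(i)$ must cover $v^{-1}(j)$ directly in $\sS$, and that $a_l>a_i$ for all $l\in(i,j)$) are sound. But the contradiction you plan to extract --- producing an $\sS$-dominant Bruhat cover $\ov{c}''\preceq_{\Bruhat}\ov{c}$ of $\ov{a}$ --- is never actually constructed; the argument ends with a statement of intent about ``an alternative transposition $(i',j')$''. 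That missing construction is not a tidying-up remark: it is the entire content of the lemma, and hence of Theorem~\ref{thm::dominant::edge} and Proposition~\ref{prp::Hasse::Ds}. As written, the proof has a genuine gap.

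For comparison, the paper avoids this local analysis entirely by inducting on $\#\sS$: if $\sS$ is a single tree, the $m$-th entry of any $\sS$-dominant composition is forced to equal $\lambda_m$ and one deletes the root; if $\sS$ has several components, one passes to the auxiliary poset $\sT$ (each tree replaced by the chain induced by $v$), applies the idempotent $\bbs_{\sT}$ of Example~\ref{ex::Double::coset} to land in the known parabolic double-coset situation, and then lifts the resulting minimal disorder back to $\sS$. If you want to finish your local route, you must exhibit $(i',j')$ explicitly from a value comparison of $\ov{a}$ and $\ov{c}$; otherwise the inductive reduction is the cleaner path.
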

	\begin{proof}
		First, observe that if $\sS$ is a disjoint union of linearly ordered sets, then the statement follows from Example~\ref{ex::Double::coset}.
		
		We proceed by induction on the number $m$ of elements in $\sS$.
		Recall that the Hasse diagram of $\sS$ is a forest of rooted trees, where smaller elements are closer to the roots. 
		If $\sS$ consists of a single tree, then $v^{-1}(m)$ is the unique minimal element of $\sS$. Thus, $a_{m}=b_{m}=\lambda_m$, where $\lambda_m$ is the smallest part of $\lambda$.
		Then we can remove the root of $\sS$ and apply induction on $m$ with the truncated anti-linearization $\sS_{<m}\to [1m-1]$ and the corresponding $\sS_{<m}$-dominant compositions $(a_1,\ldots,a_{m-1})$ and $(b_1,\ldots,b_{m-1})$.
		
		If the Hasse diagram of $\sS$ contains multiple connected components, we decompose $\sS$ according to $v$:
		\begin{equation}
			\label{eq::tree:decomp}
			\sS = \sT_1\sqcup \sT_2 \sqcup \ldots \sqcup \sT_k, \quad |\sT_q|=m_q.
		\end{equation}
		For $q<q'$ and $s\in \sT_q$, $s'\in \sT_{q'}$, we have $v(s)<v(s')$, and we define a linear order on each tree $\sT_q$ following $v$:
		$$
		t\prec t' \ \stackrel{\mathsf{def}}{\Leftrightarrow} \ \begin{cases}
			\exists q\colon t,t'\in \sT_q, \\
			v(t) > v(t').
		\end{cases}
		$$
		The poset $\sT=\sqcup_{q=1}^{k}\sT_k$ is a disjoint union of linearly ordered sets $\sT_q$ and thus satisfies Example~\ref{ex::Double::coset}. 
		Therefore, we have a monotone idempotent $\bbs_{\sT}$ that maps $\ov{d}$ to a $\sT$-dominant composition $\bbs_{\sT}(\ov{d})$, as described in~\eqref{eq::parabolic::ordered}.
		
		If $\bbs_{\sT}(\ov{a})=\bbs_{\sT}(\ov{b})$, then the multisets of values of $\ov{a}$ and $\ov{b}$ restricted to each $\sT_q$ coincide, allowing us to proceed separately on each $\sT_q$.
		
		Otherwise, there exists a transposition $(ll')$ in the parabolic Bruhat graph such that:
		$$\bbs_{\sT}(\ov{a})\prec (ll')\bbs_{\sT}(\ov{a}) \preceq \bbs_{\sT}(\ov{b}).$$
		Since all elements of $\lambda$ are distinct, there exists a unique pair $(i,j)$ such that:
		$$a_{i} = (\bbs_{\sT}(\ov{a}))_{l} \ > \ a_{j} = (\bbs_{\sT}(\ov{a}))_{l}.$$
		Note that $(ll')$ is a minimal disorder for $\sT$-dominant composition $\bbs_{\sT}(\ov{a})$ what follows that
		$$
		\forall p= m_{q-1}+1,\ldots, m_{q'} \text{ we have } \bbs_{\sT}(a)_p\notin [(\bbs_{\sT}(\ov{a}))_{l'},(\bbs_{\sT}(\ov{a}))_{l}]
		\ \Leftrightarrow\ a_p\notin [a_j,a_i].
		$$
		Consequently $(ij)$ is a minimal $\sS$-disorder for $\ov{a}$ and, moreover, $(ij)\ov{a}\preceq\ov{b}$.
	\end{proof}
	
	\begin{theorem}	
		\label{thm::dominant::edge}
		The restriction of the Bruhat partial order to the subset $\bD_{\sS}^{v}(\lambda) \subset \bS_{m}\lambda$ of $\sS$-dominant compositions is generated by the relations $\ov{a}\prec (ij) \ov{a}$, where $(ij)$ is a minimal $\sS$-disorder for $\ov{a}$.
	\end{theorem}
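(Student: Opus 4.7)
The plan is to prove both directions of the generation claim. One direction is immediate from Lemma~\ref{lem::Bruhat::edge}: each relation $\ov{a}\prec(ij)\ov{a}$ produced by a minimal $\sS$-disorder is a Bruhat relation between $\sS$-dominant compositions, so the partial order generated by such relations is contained in the restricted Bruhat order. The substantive direction is the reverse: every Bruhat relation $\ov{a}\preceq_{\Bruhat}\ov{b}$ with $\ov{a},\ov{b}\in\bD_{\sS}^{v}(\lambda)$ must be realized as a chain of minimal $\sS$-disorder moves.

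For regular $\lambda$, I would proceed by induction on the Bruhat rank difference $\rk(\ov{b})-\rk(\ov{a})$. Lemma~\ref{lem::order::dominant} directly supplies a minimal $\sS$-disorder $(ij)$ for $\ov{a}$ with $\ov{a}\prec_{\Bruhat}(ij)\ov{a}\preceq_{\Bruhat}\ov{b}$. Since $\rk((ij)\ov{a})>\rk(\ov{a})$, recursion on the pair $((ij)\ov{a},\ov{b})$ strictly decreases the inductive measure and must terminate in $\ov{b}$, producing the desired chain of minimal $\sS$-disorder moves.

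For general $\lambda$ with repeated parts, I would lift through the monotone embedding $\psi_{\lambda}^{+}:\bD_{\sS}^{v}(\lambda)\hookrightarrow\bD_{\sS}^{v}=\bD_{\sS}^{v}(\lambda+\delta)$ from~\eqref{eq::proj::parab::D}. Because $\psi_{\lambda}^{+}$ is monotone, the pair $\psi_{\lambda}^{+}(\ov{a})\preceq_{\Bruhat}\psi_{\lambda}^{+}(\ov{b})$ lies in the regular setting $\bD_{\sS}^{v}$, and the regular case furnishes a chain of minimal $\sS$-disorder moves connecting the two lifts. Applying the monotone retraction $\pi_{\lambda}$ (recall $\pi_{\lambda}\circ\psi_{\lambda}^{+}=\id$) produces a weakly-increasing sequence from $\ov{a}$ to $\ov{b}$ in $\bD_{\sS}^{v}(\lambda)$, and collapsing trivial steps yields a strictly increasing chain connecting $\ov{a}$ to $\ov{b}$.

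The main obstacle is showing that each surviving step of the projected chain is indeed a minimal $\sS$-disorder for its source composition in $\bD_{\sS}^{v}(\lambda)$. The first two conditions of Definition~\ref{def::disorder} (for $l$ with $v^{-1}(l)$ comparable to $v^{-1}(i)$ or $v^{-1}(j)$) transfer automatically since $\pi_{\lambda}$ preserves weak inequalities; the delicate point is the third condition $d_l'\notin[d_j',d_i']$ for incomparable $v^{-1}(l)$, which could fail if projection causes $d_l'$ to coincide with $d_i'$ or $d_j'$. To resolve this, I would either pre-process the lifted chain (refining the particular lift or reordering moves so that coalescences are absorbed into trivial steps that drop under $\pi_\lambda$), or, more robustly, directly extend the inductive argument of Lemma~\ref{lem::order::dominant} to non-regular $\lambda$ by mimicking the tree-decomposition induction of Example~\ref{ex::Double::coset} and choosing, at each step, \emph{any} admissible lift $(i,j)$ of the transposition $(ll')$ furnished by the parabolic bubble-sort map $\bbs_{\sT}$ (rather than the unique lift that regularity guarantees)—then verifying the minimality conditions hold for this choice.
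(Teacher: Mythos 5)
Your outline mirrors the paper's proof closely. The regular case is identical: Lemma~\ref{lem::order::dominant} furnishes the next minimal $\sS$-disorder step, and one iterates until the chain terminates (the paper phrases this as "the Bruhat graph is finite", you as induction on rank — same argument). For the parabolic case both you and the paper pass through the adjoint pair $(\pi_\lambda,\psi_\lambda^+)$ from~\eqref{eq::proj::parab::D}; the paper takes a slightly more economical route by working only with a single covering relation $\ov{a}\prec\ov{b}$ in $\bD_{\sS}^{v}(\lambda)$ (which, by monotonicity of $\pi_\lambda$, forces any lifted chain in $\bD_{\sS}^{v}$ to have exactly one step with non-trivial projection), whereas you lift an arbitrary comparable pair and then collapse trivial steps. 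Either framing reduces to the same delicate point.

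Your caution about that point is well placed, and it is worth stressing that the paper is itself terse there: the proof asserts that the surviving transposition "must be minimal since it arises from a minimal disorder in $\bD_{\sS}$", but this is exactly where the closed-interval clause of Definition~\ref{def::disorder} can a priori fail under $\pi_\lambda$ — if some incomparable $v^{-1}(l)$, $l\in(i,j)$, has $c_l<c_j$ in the regular lift but $a_l=a_j$ after projection, then $a_l\in[a_j,a_i]$ and the minimality condition is violated at $\ov{a}$ even though it held at $\ov{c}$. Neither the paper nor your proposal actually rules out this coalescence. Your two suggested fixes (pre-processing the lifted chain, or re-running the tree-decomposition induction of Lemma~\ref{lem::order::dominant} directly in the non-regular setting with arbitrary admissible lifts) are plausible, but you leave both as sketches, and the second one still ends with "then verifying the minimality conditions hold", which is the substance of the matter. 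The cleanest way to close the gap along the paper's line of argument is to show that such a coalescence would produce an $\sS$-dominant composition strictly between $\ov{a}$ and $\ov{b}$ (for instance $(il)\ov{a}$ when $a_l=a_j$), contradicting the covering assumption — but checking $\sS$-dominance of this candidate intermediate is itself a nontrivial case analysis that neither you nor the paper carries out. So: same method as the paper, correct identification of the crux, but the parabolic case remains an outline in both.
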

	\begin{proof}
		First, suppose that $\lambda$ is regular. 
		Lemma~\ref{lem::Bruhat::edge} implies that $\ov{a}$ is less than $(ij)\ov{a}$ in the Bruhat partial order. Lemma~\ref{lem::order::dominant} establishes that the comparison $\ov{a}\prec (ij)\ov{a}$ defines an edge in the Hasse diagram.
		Indeed, assume that 
		$\ov{a}\prec_{\Bruhat}\ov{b}\in\bD_{\sS}^{v}(\lambda)$. Then there exists a chain of $\sS$-dominant compositions:
		$$\ov{a}=\ov{c_0}\prec \ov{c_1} \prec \ov{c_2} \prec \ldots \preceq \ov{b}$$
		such that each $\ov{c_i}$ differs from $\ov{c_{i+1}}$ by a transposition associated with a minimal $\sS$-disorder and that $\ov{c_k}\preceq \ov{b}$. Since the Bruhat graph is finite, this chain cannot be infinite. Consequently, there exists an index $l$ such that $\ov{c_l}=\ov{b}$.
		
		If $\lambda$ has repeating elements, we use the monotone projection $\pi_\lambda$ and its adjoint $\psi_{\lambda}^+$ from~\eqref{eq::proj::parab::D}. Specifically, consider an edge $\ov{a}\prec \ov{b}$ in the Hasse diagram of $\bD_{\sS}^{v}(\lambda)$. Then $\psi_{\lambda}^+(\ov{a})\prec\psi_{\lambda}^+(\ov{b})$ in $\bD_{\sS}$, and the latter must be connected by a chain of edges in the Hasse diagram of $\bD_{\sS}$:
		$$
		\psi_{\lambda}^+(\ov{a})=\ov{c_0}\prec\ldots\prec\ov{c_{k-1}} \prec\ov{c_{k}}\prec\ldots\prec \ov{c_l}=\psi_{\lambda}^+(\ov{b}),
		$$
		such that for all $i\neq k$, we have $\pi_{\lambda}(\ov{c_{i-1}})=\pi_{\lambda}(\ov{c_i})$.
		Consequently, $\ov{a}$ differs from $\ov{b}$ by a transposition in a disorder, and the corresponding disorder must be minimal since it arises from a minimal disorder in $\bD_{\sS}$.
	\end{proof}

	\subsection{Bubble-sort}
	\label{sec::Bubble-sort}
	
	In this section, we continue working with an arborescent poset $\sS$ equipped with a consistent order-reversing anti-linearization $v:\sS\to[1m]$.
	The {\bf{bubble-sort algorithm}} described below is an inductive procedure for reordering the elements of an $\sS$-admissible composition $\ov{d}$ into an $\sS$-dominant composition $\bbs_{\sS}(\ov{d})$:
	\begin{equation}
		\label{eq::bbs::comp}
		\bbs_{\sS}:= \bbs_{\sS}^{m}\circ \ldots\circ \bbs_{\sS}^1.
	\end{equation}
	In each step $k$ (denoted by $\bbs_{\sS}^{k}$), only the first $k$ elements of the composition are rearranged. Thus, we explain the procedure for compositions of length $k$.
	The $k$'th step $\bbs_{\sS}^k$ follows this recursive procedure to ensure $\sS[{k}]$-dominance:
	
	\begin{algorithm}
		\label{alg::bb:sort}
		Suppose that the composition $\ov{d}=(d_1,\ldots,d_k)$ is $\sS[<k]$-dominant and $\sS$-admissible but not $\sS[k]$-dominant.
		Then:
		\begin{itemize}
			\item
			Find the index $j$ as the maximal element of $[1m]$ with respect to the linear order $\leq$, among the set of maximal elements of $\sS$ with respect to the partial order $\prec$, whose weight is less than $d_k$. That is,
			\begin{equation}
				\label{eq::bubl::index}
				j:= \max_{\leq}\left\{v\left(\max_{\preceq}\left\{s\in \sS_{< k} \ \colon  \ d_{v(s)} < d_k\right\}\right)\right\}.
			\end{equation}
			\item Repeat this procedure for the composition obtained by swapping $d_j$ and $d_k$ in $\ov{d}$:
			$$(jk)\cdot \ov{d}:=(d_1,\ldots, d_{j-1},{\bf d_k},d_{j+1},\ldots,d_{k-1},{\bf d_j})$$ 
			until the composition becomes $\sS[k]$-dominant.
		\end{itemize}
	\end{algorithm}

	\begin{example}
		Let us show an example of bubble-sort process $\bbs_{\sS}$ applied to an $\sS$-admissible composition 
		$\begin{tikzpicture}[xscale=0.5,yscale=0.4, shift={(0,-.5)}]
			\draw[step=1cm] (0,1) grid (9,0);
			\node (v1) at (.5,.4) {{ \color{blue}\small $1$ }};
			\node (v2) at (1.5,.4) {{ \color{blue}\small $0$  }};
			\node (v3) at (2.5,.4) {{ \color{blue} \small $3$  }};
			\node (v4) at (3.5,.4) {{ \color{blue} \small $0$  }};
			\node (v5) at (4.5,.4) {{ \color{blue} \small $2$  }};
			\node (v6) at (5.5,.4) {{ \color{blue} \small $6$  }};
			\node (v7) at (6.5,.4) {{ \color{blue} \small $2$  }};
			\node (v8) at (7.5,.4) {{ \color{blue} \small $4$  }};
			\node (v9) at (8.5,.4) {{ \color{blue} \small $5$  }};
			\draw  (2.5,1) edge[blue,line width=1.1pt,->,bend right=90] (.5,1);
			\draw (2.5,1) edge[blue,line width=1.1pt,->,bend right=70] (1.5,1);
			\draw (7.5,1) edge[blue,line width=1.1pt,->,bend right=90] (4.5,1);
			\draw (7.5,1) edge[blue,line width=1.1pt,->,bend right=70] (6.5,1);
			\draw (6.5,1) edge[blue,line width=1.1pt,->,bend right=70]  (5.5,1);
		\end{tikzpicture}$ for the arborescent poset $\sS$ with antilinearisation from Example~\ref{ex::arbor}.
		As suggested in~\ref{alg::bb:sort} we apply $\bbs_{\sS}^{k}$ on the truncated compositions. We draw in "red" the numbers $d_s$ such that $s$ is one of the maximums in $\sS_{<k}$ with $d_{v(s)}<d_k$ and we round the maximum of them with respect to the standard linear order.
		\begin{gather*}
			\bbs_{\sS}^{1}=\bbs_{\sS}^{2}=\Id; \quad \bbs_{\sS}^{3}: \
			\begin{tikzpicture}[xscale=0.5,yscale=0.4, shift={(0,-.5)}]
				\draw[step=1cm] (0,1) grid (9,0);
				\node (v1) at (.5,.4) {{ \color{red}\small $1$ }};
				\node[ext] (v2) at (1.5,.4) {{ \color{red} \small $0$  }};
				\node (v3) at (2.5,.4) {{ \color{blue} \small $3$  }};
				\draw  (2.5,1) edge[blue,line width=1.1pt,->,bend right=90] (.5,1);
				\draw (2.5,1) edge[blue,line width=1.1pt,->,bend right=70] (1.5,1);
			\end{tikzpicture} 
			\stackrel{(23)}{\rightarrow}
			\begin{tikzpicture}[xscale=0.5,yscale=0.4, shift={(0,-.5)}]
				\draw[step=1cm] (0,1) grid (9,0);
				\node (v1) at (.5,.4) {{ \color{blue}\small $1$ }};
				\node (v2) at (1.5,.4) {{ \color{blue} \small $3$  }};
				\node (v3) at (2.5,.4) {{ \color{blue} \small $0$  }};
				\draw  (2.5,1) edge[blue,line width=1.1pt,->,bend right=90] (.5,1);
				\draw (2.5,1) edge[blue,line width=1.1pt,->,bend right=70] (1.5,1);
			\end{tikzpicture}, \\
			\bbs_{\sS}^{4}=\bbs_{\sS}^{5}=\bbs_{\sS}^{6}=\bbs_{\sS}^{7}=\Id; \\
			\bbs_{\sS}^{8}:
			\begin{tikzpicture}[xscale=0.5,yscale=0.4, shift={(0,-.5)}]
				\draw[step=1cm] (0,1) grid (9,0);
				\node (v1) at (.5,.4) {{ \color{red}\small $1$ }};
				\node (v2) at (1.5,.4) {{ \color{red}\small $3$  }};
				\node (v3) at (2.5,.4) {{ \color{blue} \small $0$  }};
				\node (v4) at (3.5,.4) {{ \color{blue} \small $0$  }};
				\node (v5) at (4.5,.4) {{ \color{red} \small $2$  }};
				\node (v6) at (5.5,.4) {{ \color{blue} \small $6$  }};
				\node[ext] (v7) at (6.5,.4) {{ \color{red} \small $2$  }};
				\node (v8) at (7.5,.4) {{ \color{blue} \small $4$  }};
				\draw  (2.5,1) edge[blue,line width=1.1pt,->,bend right=90] (.5,1);
				\draw (2.5,1) edge[blue,line width=1.1pt,->,bend right=70] (1.5,1);
				\draw (7.5,1) edge[blue,line width=1.1pt,->,bend right=90] (4.5,1);
				\draw (7.5,1) edge[blue,line width=1.1pt,->,bend right=70] (6.5,1);
				\draw (6.5,1) edge[blue,line width=1.1pt,->,bend right=70]  (5.5,1);
			\end{tikzpicture}
			\stackrel{(78)}{\rightarrow}
			\begin{tikzpicture}[xscale=0.5,yscale=0.4, shift={(0,-.5)}]
				\draw[step=1cm] (0,1) grid (9,0);
				\node (v1) at (.5,.4) {{ \color{blue}\small $1$ }};
				\node (v2) at (1.5,.4) {{ \color{blue}\small $3$  }};
				\node (v3) at (2.5,.4) {{ \color{blue} \small $0$  }};
				\node (v4) at (3.5,.4) {{ \color{blue} \small $0$  }};
				\node (v5) at (4.5,.4) {{ \color{blue} \small $2$  }};
				\node (v6) at (5.5,.4) {{ \color{blue} \small $6$  }};
				\node (v7) at (6.5,.4) {{ \color{blue} \small $4$  }};
				\node (v8) at (7.5,.4) {{ \color{blue} \small $2$  }};
				\draw  (2.5,1) edge[blue,line width=1.1pt,->,bend right=90] (.5,1);
				\draw (2.5,1) edge[blue,line width=1.1pt,->,bend right=70] (1.5,1);
				\draw (7.5,1) edge[blue,line width=1.1pt,->,bend right=90] (4.5,1);
				\draw (7.5,1) edge[blue,line width=1.1pt,->,bend right=70] (6.5,1);
				\draw (6.5,1) edge[blue,line width=1.1pt,->,bend right=70]  (5.5,1);
			\end{tikzpicture};
			\\
			\bbs_{\sS}^{9}:
			\left\{
			\begin{array}{c}
				\begin{tikzpicture}[xscale=0.5,yscale=0.4, shift={(0,-.5)}]
					\draw[step=1cm] (0,1) grid (9,0);
					\node (v1) at (.5,.4) {{ \color{red}\small $1$ }};
					\node (v2) at (1.5,.4) {{ \color{red}\small $3$  }};
					\node (v3) at (2.5,.4) {{ \color{blue} \small $0$  }};
					\node (v4) at (3.5,.4) {{ \color{blue} \small $0$  }};
					\node (v5) at (4.5,.4) {{ \color{red} \small $2$  }};
					\node (v6) at (5.5,.4) {{ \color{blue} \small $6$  }};
					\node[ext] (v7) at (6.5,.4) {{ \color{red} \small $4$  }};
					\node (v8) at (7.5,.4) {{ \color{blue} \small $2$  }};
					\node (v9) at (8.5,.4) {{ \color{blue} \small $5$  }};
					\draw  (2.5,1) edge[blue,line width=1.1pt,->,bend right=90] (.5,1);
					\draw (2.5,1) edge[blue,line width=1.1pt,->,bend right=70] (1.5,1);
					\draw (7.5,1) edge[blue,line width=1.1pt,->,bend right=90] (4.5,1);
					\draw (7.5,1) edge[blue,line width=1.1pt,->,bend right=70] (6.5,1);
					\draw (6.5,1) edge[blue,line width=1.1pt,->,bend right=70]  (5.5,1);
				\end{tikzpicture}
				\stackrel{(79)}{\rightarrow}
				\begin{tikzpicture}[xscale=0.5,yscale=0.4, shift={(0,-.5)}]
					\draw[step=1cm] (0,1) grid (9,0);
					\node (v1) at (.5,.4) {{ \color{red}\small $1$ }};
					\node (v2) at (1.5,.4) {{ \color{red}\small $3$  }};
					\node (v3) at (2.5,.4) {{ \color{blue} \small $0$  }};
					\node (v4) at (3.5,.4) {{ \color{blue} \small $0$  }};
					\node[ext] (v5) at (4.5,.4) {{ \color{red} \small $2$  }};
					\node (v6) at (5.5,.4) {{ \color{blue} \small $6$  }};
					\node (v7) at (6.5,.4) {{ \color{blue} \small $5$  }};
					\node (v8) at (7.5,.4) {{ \color{blue} \small $2$  }};
					\node (v9) at (8.5,.4) {{ \color{blue} \small $4$  }};
					\draw  (2.5,1) edge[blue,line width=1.1pt,->,bend right=90] (.5,1);
					\draw (2.5,1) edge[blue,line width=1.1pt,->,bend right=70] (1.5,1);
					\draw (7.5,1) edge[blue,line width=1.1pt,->,bend right=90] (4.5,1);
					\draw (7.5,1) edge[blue,line width=1.1pt,->,bend right=70] (6.5,1);
					\draw (6.5,1) edge[blue,line width=1.1pt,->,bend right=70]  (5.5,1);
				\end{tikzpicture}
				\stackrel{(69)}{\rightarrow}\\
				\stackrel{(69)}{\rightarrow}
				\begin{tikzpicture}[xscale=0.5,yscale=0.4, shift={(0,-.5)}]
					\draw[step=1cm] (0,1) grid (9,0);
					\node[ext] (v1) at (.5,.4) {{ \color{red}\small $1$ }};
					\node (v2) at (1.5,.4) {{ \color{blue}\small $3$  }};
					\node (v3) at (2.5,.4) {{ \color{blue} \small $0$  }};
					\node (v4) at (3.5,.4) {{ \color{blue} \small $0$  }};
					\node (v5) at (4.5,.4) {{ \color{blue} \small $4$  }};
					\node (v6) at (5.5,.4) {{ \color{blue} \small $6$  }};
					\node (v7) at (6.5,.4) {{ \color{blue} \small $5$  }};
					\node (v8) at (7.5,.4) {{ \color{blue} \small $2$  }};
					\node (v9) at (8.5,.4) {{ \color{blue} \small $2$  }};
					\draw  (2.5,1) edge[blue,line width=1.1pt,->,bend right=90] (.5,1);
					\draw (2.5,1) edge[blue,line width=1.1pt,->,bend right=70] (1.5,1);
					\draw (7.5,1) edge[blue,line width=1.1pt,->,bend right=90] (4.5,1);
					\draw (7.5,1) edge[blue,line width=1.1pt,->,bend right=70] (6.5,1);
					\draw (6.5,1) edge[blue,line width=1.1pt,->,bend right=70]  (5.5,1);
				\end{tikzpicture} 
				\stackrel{(19)}{\rightarrow}
				\begin{tikzpicture}[xscale=0.5,yscale=0.4, shift={(0,-.5)}]
					\draw[step=1cm] (0,1) grid (9,0);
					\node (v1) at (.5,.4) {{ \color{blue}\small $2$ }};
					\node (v2) at (1.5,.4) {{ \color{blue}\small $3$  }};
					\node[ext] (v3) at (2.5,.4) {{ \color{red} \small $0$  }};
					\node (v4) at (3.5,.4) {{ \color{blue} \small $0$  }};
					\node (v5) at (4.5,.4) {{ \color{blue} \small $4$  }};
					\node (v6) at (5.5,.4) {{ \color{blue} \small $6$  }};
					\node (v7) at (6.5,.4) {{ \color{blue} \small $5$  }};
					\node (v8) at (7.5,.4) {{ \color{blue} \small $2$  }};
					\node (v9) at (8.5,.4) {{ \color{blue} \small $1$  }};
					\draw  (2.5,1) edge[blue,line width=1.1pt,->,bend right=90] (.5,1);
					\draw (2.5,1) edge[blue,line width=1.1pt,->,bend right=70] (1.5,1);
					\draw (7.5,1) edge[blue,line width=1.1pt,->,bend right=90] (4.5,1);
					\draw (7.5,1) edge[blue,line width=1.1pt,->,bend right=70] (6.5,1);
					\draw (6.5,1) edge[blue,line width=1.1pt,->,bend right=70]  (5.5,1);
				\end{tikzpicture} 
				\stackrel{(39)}{\rightarrow} \\
				\stackrel{(39)}{\rightarrow} 
				\begin{tikzpicture}[xscale=0.5,yscale=0.4, shift={(0,-.5)}]
					\draw[step=1cm] (0,1) grid (9,0);
					\node (v1) at (.5,.4) {{ \color{blue}\small $2$ }};
					\node (v2) at (1.5,.4) {{ \color{blue}\small $3$  }};
					\node (v3) at (2.5,.4) {{ \color{blue} \small $1$  }};
					\node (v4) at (3.5,.4) {{ \color{blue} \small $0$  }};
					\node (v5) at (4.5,.4) {{ \color{blue} \small $4$  }};
					\node (v6) at (5.5,.4) {{ \color{blue} \small $6$  }};
					\node (v7) at (6.5,.4) {{ \color{blue} \small $5$  }};
					\node (v8) at (7.5,.4) {{ \color{blue} \small $2$  }};
					\node (v9) at (8.5,.4) {{ \color{blue} \small $0$  }};
					\draw  (2.5,1) edge[blue,line width=1.1pt,->,bend right=90] (.5,1);
					\draw (2.5,1) edge[blue,line width=1.1pt,->,bend right=70] (1.5,1);
					\draw (7.5,1) edge[blue,line width=1.1pt,->,bend right=90] (4.5,1);
					\draw (7.5,1) edge[blue,line width=1.1pt,->,bend right=70] (6.5,1);
					\draw (6.5,1) edge[blue,line width=1.1pt,->,bend right=70]  (5.5,1);
				\end{tikzpicture} 
			\end{array}
			\right.
		\end{gather*}
	\end{example}

	\begin{lemma}
		The \emph{bubble-sort} operation $\bbs_{\sS}^k$ is well-defined, and the index $j$
		obtained by assumption~\eqref{eq::bubl::index} is chosen such that $(jk)$ is the minimal $\sS_{<k}$-disorder for the composition $\ov{d}$.
	\end{lemma}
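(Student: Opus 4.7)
The plan is to prove the two claims of the lemma simultaneously: that the index $j$ from~\eqref{eq::bubl::index} is always well-defined under the hypothesis that $\ov{d}$ is $\sS[<k]$-dominant and $\sS$-admissible but not $\sS[k]$-dominant, and that the pair $(jk)$ then satisfies the three conditions of Definition~\ref{def::disorder}, interpreted in the enlarged poset $\tilde{\sS}_{k-1}$ in which the element at $v$-value $k$ is treated as isolated (so incomparability of $v^{-1}(j)$ and $v^{-1}(k)$ is automatic). I begin with well-definedness: the failure of $\sS[k]$-dominance produces some $s \succ v^{-1}(k)$ in $\sS$ with $d_{v(s)} < d_{k}$; because $v$ is order-reversing, $v(s) < k$, so $s \in \sS_{<k}$. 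This set is therefore nonempty, and by finiteness of $\sS$ it has $\preceq$-maximal elements, among whose $v$-values the maximum $j$ is well-defined.

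The direction $d_{j} < d_{k}$ is built into the construction, so swapping $(jk)$ applied to $\ov{d}$ produces a covering edge in the Bruhat graph in the correct direction (cf.~\eqref{eq::Bruhat::edge}). The only nontrivial point remaining is the intermediate condition~\eqref{eq::S::disorder:1a}: for each $l$ with $j < l < k$, the value $d_{l}$ must avoid the open interval $(d_{j}, d_{k})$ in the appropriate subcase. I expect this verification to be the main obstacle.

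I carry out the case analysis on the $\sS$-relation between $v^{-1}(l)$ and $v^{-1}(j)$. The case $v^{-1}(l) \succ v^{-1}(j)$ is vacuous, since anti-linearization would then force $l < j$. If $v^{-1}(l) \prec v^{-1}(j)$, then $\sS[l]$-dominance (included in our $\sS[<k]$-dominance hypothesis, since $l < k$) gives $d_{l} \leq d_{j}$. The substantive case is when $v^{-1}(l)$ and $v^{-1}(j)$ are incomparable: I argue by contradiction, assuming $d_{j} < d_{l} < d_{k}$, so that $v^{-1}(l) \in \{s \in \sS_{<k} : d_{v(s)} < d_{k}\}$. Pick a $\preceq$-maximal $t$ in this set with $t \succeq v^{-1}(l)$; by the maximum-$v$ choice of $j$ we have $v(t) \leq j$. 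By the arborescent condition~\eqref{eq::poset::st}, $\sS_{\succeq v^{-1}(l)} = v^{-1}([a,b])$ for some interval $[a,b]$; the assumed incomparability of $v^{-1}(j)$ with $v^{-1}(l)$ forces $j \notin [a,b]$, while $l \in [a,b]$ together with $j < l$ gives $j < a$. But $v(t) \in [a,b]$ then yields $v(t) \geq a > j$, contradicting $v(t) \leq j$.

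Assembling these three cases gives the intermediate condition, and combined with the previous two steps shows that $(jk)$ is the minimal $\sS_{<k}$-disorder associated with $\ov{d}$, completing the proof. The crux is the incomparable subcase, where the interval characterization of upper sets provided by the arborescent hypothesis~\eqref{eq::poset::st} is precisely what pins $v(t)$ outside the admissible range and produces the contradiction.
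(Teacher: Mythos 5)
Your overall strategy is the same as the paper's: show the set $M:=\{s\in\sS_{<k}\colon d_{v(s)}<d_k\}$ is nonempty, then verify the intermediate conditions of Definition~\ref{def::disorder} for each $l$ with $j<l<k$. There is, however, a genuine gap in your well-definedness step: you write that failure of $\sS[k]$-dominance ``produces some $s\succ v^{-1}(k)$,'' which silently assumes $k\in v(\sS)$. When $k\notin v(\sS)$, failure of $\sS[k]$-dominance means only that $d_k\neq 0$, and there is no element $v^{-1}(k)$ to compare against; in this case one must invoke the $\sS$-admissibility of $\ov{d}$ (a hypothesis you state but never use) to produce some $s\in\sS_{<k}=\sS_{\leq k}$ with $d_{v(s)}=0<d_k$. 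The admissibility hypothesis is not decorative: for $\sS=\{s_1\}$, $v(s_1)=1$, $m=k=2$ and $\ov{d}=(5,3)$ the set $M$ is empty and $j$ is undefined, and $\ov{d}$ is exactly the kind of non-admissible composition the hypothesis excludes. The paper's proof splits into these two cases explicitly.

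On the minimality half you are more careful than the paper, which simply asserts $d_l\leq d_j$ for every intermediate $l$ — a claim that is false in general (e.g.\ for $\sS=\{s_1,\dots,s_4\}$, $v(s_i)=i$, with $s_4$ minimal under all other elements and $s_2\succ s_3$, the composition $\ov{d}=(0,6,5,3)$ at step $k=4$ gives $j=1$ and $d_2=6>d_1=0$). Your trichotomy is the right decomposition, and the use of the interval characterization $\sS_{\succeq v^{-1}(l)}=v^{-1}([a,b])$ from~\eqref{eq::poset::st} to derive the contradiction in the incomparable case is correct. One caveat applies to both your argument and the paper's: in the incomparable case you only exclude $d_j<d_l<d_k$ (in fact your argument yields $d_l\geq d_k$), whereas condition~\eqref{eq::S::disorder:1a} demands $d_l\notin[d_j,d_k]$ with a closed interval; the boundary case $d_l=d_k$ genuinely occurs when $\lambda$ has repeated parts and is not ruled out. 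For regular $\lambda$ this is immaterial, but in the parabolic case the clean route is the reduction through $\psi_\lambda^{+}$ and $\pi_\lambda$ as in Proposition~\ref{prp::parab::idem} rather than a direct verification.
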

	\begin{proof}
		Let us analyze the details of Algorithm~\ref{alg::bb:sort}.
		Suppose that $\ov{d}$ is not $\sS[k]$-dominant.
		If $k\notin v(\sS)$, then $\sS_{<k} = \sS_{\leq k}$. Since $\ov{d}$ is $\sS$-admissible, there exists $s\in\sS_{<k}$ such that $d_{v(s)}=0$. 
		If $k=v(t)$ for some $t\in \sS$, then since $\ov{d}$ is not $\sS_{\leq k}$-dominant, there exists an element $s\in\sS_{<k}$ such that $s\succ t$ and $d_{v(s)}<d_{v(t)}=d_k.$
		In both cases, we find an element $s\in\sS_{<k}$ such that $d_{v(s)}<d_k$.
		Consequently, the subset 
		\[
		\left\{s\in \sS_{< k} \ \colon  \ d_{v(s)} < d_k\right\}\subset \sS
		\]
		is nonempty, ensuring that the index $j$ defined by~\eqref{eq::bubl::index} is well-defined. Moreover, for all $l= j+1,\ldots, k-1$, we have either $l\notin v(\sS)$, in which case $d_l=0$, or $l\in v(\sS)$ and $d_l\leq d_j<d_k$. Therefore, by the description~\eqref{eq::Bruhat::edge}, we conclude that the pair $(jk)$ is a minimal $\sS_{<k}$-disorder for $\ov{d}$.
	\end{proof}
	
	\begin{corollary}
		\label{cor::bbs::idem}
		The assignment $\ov{d}\mapsto \bbs_{\sS}^{k}$ is a nonincreasing idempotent on the set $\bAD_{\sS[<k]}^{v}(\lambda)$ of $\sS_{<k}$-dominant $\sS$-admissible compositions, whose image is the set $\bAD_{\sS[\leq k]}^{v}(\lambda)$ of $\sS[k]$-dominant compositions:
		\begin{align}
			\label{eq::bbs::idemp}
			\forall \ov{d}\in \bAD_{\sS[<k]}^{v}(\lambda) 	&\qquad
			\ov{d}\succeq_{\Bruhat} \bbs_{\sS}^k(\ov{d}) = \bbs_{\sS}^k(\bbs_{\sS}^k(\ov{d}))\in\bAD_{\sS[\leq k]}^{v}(\lambda).  
		\end{align}
	\end{corollary}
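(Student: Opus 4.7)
The plan is to verify in order each claim in~\eqref{eq::bbs::idemp}---that $\bbs_{\sS}^k(\ov{d})\preceq_{\Bruhat}\ov{d}$, that $\bbs_{\sS}^k$ is idempotent, and that its image lies in $\bAD_{\sS[\leq k]}^{v}(\lambda)$---by analyzing Algorithm~\ref{alg::bb:sort} one loop iteration at a time. The pivotal input is the preceding lemma, which asserts that whenever the composition $\ov{d}$ entering the loop is $\sS[<k]$-dominant and $\sS$-admissible but not $\sS[k]$-dominant, the index $j$ produced by~\eqref{eq::bubl::index} is well-defined and that $(jk)$ is a minimal $\sS_{<k}$-disorder visible in the swapped composition $(jk)\cdot\ov{d}$.

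First I would show that each iteration $\ov{d}\mapsto (jk)\cdot\ov{d}$ preserves $\sS[<k]$-dominance while strictly decreasing the composition in the Bruhat order. To place the claim into the hypotheses of Lemma~\ref{lem::Bruhat::edge}, I would introduce the auxiliary arborescent poset $\tilde{\sS}_{k-1}=\sS_{\leq k-1}\sqcup\{k,\ldots,m\}$ with the consistent anti-linearization from~\eqref{eq::B=AD}; being $\sS[<k]$-dominant is then identical with being $\tilde{\sS}_{k-1}$-dominant, because the adjoined indices $\{k,\ldots,m\}$ are incomparable with everything else in $\tilde{\sS}_{k-1}$. Applying Lemma~\ref{lem::Bruhat::edge} to $\tilde{\sS}_{k-1}$ with the minimal disorder $(jk)$ on $(jk)\cdot\ov{d}$ singled out by the preceding lemma, I obtain both that $(jk)\cdot\ov{d}$ is $\tilde{\sS}_{k-1}$-dominant and that $\ov{d}\succ_{\Bruhat}(jk)\cdot\ov{d}$ is a Bruhat covering relation. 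Since $\bS_m\lambda$ is finite, the chain of strict descents must terminate, and the sole halting criterion is $\sS[k]$-dominance; hence the output is simultaneously $\sS[<k]$-dominant (preserved throughout) and $\sS[k]$-dominant, so it is $\sS[\leq k]$-dominant and $\preceq_{\Bruhat}\ov{d}$ as a telescope of Bruhat descents.

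Idempotency is then immediate from the control flow of the algorithm: if the input is already $\sS[k]$-dominant, the ``repeat until'' loop exits on iteration zero and the output equals the input. Applying this observation to $\bbs_{\sS}^k(\ov{d})\in\bAD_{\sS[\leq k]}^{v}(\lambda)$, which is in particular $\sS[k]$-dominant, yields $\bbs_{\sS}^k(\bbs_{\sS}^k(\ov{d}))=\bbs_{\sS}^k(\ov{d})$. The only subtle point is the directional interpretation of the preceding lemma: the algorithm introduces an inversion rather than removing one, so the phrase ``$(jk)$ is the minimal disorder'' must be read as a statement about $(jk)\cdot\ov{d}$, which is the composition to which Lemma~\ref{lem::Bruhat::edge} applies. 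Once this convention is fixed, no further combinatorial estimates are needed, and the whole proof reduces to a single invocation of Lemma~\ref{lem::Bruhat::edge} inside a standard termination argument.
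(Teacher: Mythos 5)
Your overall structure is right and you correctly notice the directional subtlety — that the preceding lemma's "minimal disorder" must be read as a property of the swapped composition $(jk)\cdot\ov{d}$, not of $\ov{d}$ itself. But the step where you invoke Lemma~\ref{lem::Bruhat::edge} to conclude that $(jk)\cdot\ov{d}$ is $\tilde{\sS}_{k-1}$-dominant is circular. Lemma~\ref{lem::Bruhat::edge} takes dominance of the smaller composition in the covering pair as its \emph{hypothesis}: applied to $(jk)\cdot\ov{d}$ with the disorder $(jk)$, it requires $(jk)\cdot\ov{d}$ to be $\tilde{\sS}_{k-1}$-dominant in order to deduce that $\ov{d}=(jk)\bigl((jk)\cdot\ov{d}\bigr)$ is dominant and that $(jk)\cdot\ov{d}\prec_{\Bruhat}\ov{d}$. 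It cannot deliver the dominance of $(jk)\cdot\ov{d}$ — that is exactly what you need to show. (And applying the lemma with $\ov{d}$ as the dominant input fails because $(jk)$ is not a disorder for $\ov{d}$: there $d_j<d_k$.) You also do not address preservation of $\sS$-admissibility, which is part of staying in $\bAD_{\sS[<k]}^{v}(\lambda)$ so the algorithm can be iterated.

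The preservation of $\sS[<k]$-dominance actually requires a short direct argument rather than a citation: only index $j<k$ is altered within $[1,k-1]$, and its value increases from $d_j$ to $d_k$. The choice of $j$ in~\eqref{eq::bubl::index} as (the image of) a $\preceq$-maximal element of $\{s\in\sS_{<k}:d_{v(s)}<d_k\}$ guarantees that every $s\succ v^{-1}(j)$ has $d_{v(s)}\geq d_k$, so the constraint at position $j$ survives; while for any $l$ with $v^{-1}(j)\succ v^{-1}(l)$ the old inequality $d_j\geq d_l$ strengthens to $d_k>d_j\geq d_l$. Admissibility is preserved because if $d_j=0$ then $\#\{l'\leq k':d_{l'}>0\}<\#\sS_{\leq k'}$ for all $k'$ with $j\leq k'<k$ (the position $j\in v(\sS_{\leq k'})$ is unoccupied), leaving room for the extra nonzero entry. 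The paper's own proof is equally terse on this point, but it does not commit your error of deducing the preservation from Lemma~\ref{lem::Bruhat::edge}; it simply appeals to the preceding lemma's claim of well-definedness. Once these two preservations are established directly, the rest of your argument — finite strict Bruhat descent forcing termination, the halting condition yielding $\sS[\leq k]$-dominance, and idempotency from the "do nothing if already $\sS[k]$-dominant" branch — goes through.
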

	\begin{proof}
		The map $\bbs_{\sS}^k$ is nonincreasing~\eqref{eq::bbs::idemp} because at each step we do the transposition in a minimal disorder and it is idempotent that fixes the set of $\sS[k]$-dominant compositions.    
	\end{proof}

	\subsection{The Desired Properties of the Posets $\bD_{\sS}(\lambda)$}
	\label{sec::S::Dom}
	\subsubsection{{\bf Case of Regular $\lambda$}}
	\label{sec::S::Regular}
	
	Let us analyze in detail the properties of $\bD_{\sS}^{v}(\lambda)$ for regular $\lambda$ (i.e., when all elements of the partition are distinct). In particular, we will sometimes omit the notation $\lambda$ since, for regular $\lambda$, the structure of $\bD_{\sS}^{v}(\lambda)$ does not depend on it.
	We may also assume, without loss of generality, that $v$ is surjective and thus an isomorphism. Consequently, we have $\bA_{\sS}(\lambda) = \bS_m\lambda \simeq \bS_m$, since every composition $\ov{d} \in \bS_m$ is $\sS$-admissible. Moreover, the minimal element $\lambda_{+}$ is always $\sS$-dominant.
	
	\begin{proposition}
		\label{prp::Hasse::Ds}
		The Hasse diagram of $\bD_{\sS}^{v}$ is a subgraph of the Bruhat graph $\bS_m$. In particular, $\bD_{\sS}^{v}$ is graded, and the rank function in $\bD_{\sS}^{v}$ is given by:
		$$\rk_{\bD_{\sS}^{v}}(\ov{d}):=\#\{s, t\in \sS \ \colon \ (s\not\succ t) \ \& \ (v(s)<v(t)) \ \& \ (d_{v(s)} < d_{v(t)})\}.$$
	\end{proposition}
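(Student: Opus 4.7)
The plan is to combine Theorem~\ref{thm::dominant::edge} with the covering-relation remark following Lemma~\ref{lem::Bruhat::edge}, and then rewrite the usual inversion count on $\bS_m$ in the language of $\sS$.

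First, I would argue that every Hasse edge of $\bD_{\sS}^{v}$ is also a Hasse edge of the ambient Bruhat graph $\bS_{m}\lambda\simeq\bS_{m}$. By Theorem~\ref{thm::dominant::edge}, every covering relation in $\bD_{\sS}^{v}(\lambda)$ has the form $\ov{a}\prec(ij)\ov{a}$, where $(ij)$ is a minimal $\sS$-disorder for $\ov{a}$. The remark immediately after Lemma~\ref{lem::Bruhat::edge} observes that, in the regular case (all entries distinct), minimality of the disorder means no intermediate index $l\in(i,j)$ satisfies $d_l\in(d_j,d_i)$; this is precisely the criterion~\eqref{eq::Bruhat::edge} for $\ov{a}\prec(ij)\ov{a}$ to be a cover in $\bS_m\lambda$. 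Hence the Hasse diagram of $\bD_{\sS}^{v}$ is a subgraph of that of $\bS_m$.

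Next, I would deduce gradedness and identify the rank. Since $v$ is order-reversing and $\lambda_{+}$ is strictly decreasing, $s\succ t$ forces $v(s)<v(t)$ and hence $(\lambda_{+})_{v(s)}>(\lambda_{+})_{v(t)}$; thus $\lambda_{+}$ is itself $\sS$-dominant, and since $\lambda_{+}=\hat{0}$ in $\bS_{m}\lambda$ it is also the minimum of $\bD_{\sS}^{v}$. Because every Hasse edge of $\bD_{\sS}^{v}$ is a Hasse edge of $\bS_m$, it raises the Bruhat length $\ell$ by exactly one. Consequently any two maximal chains in $\bD_{\sS}^{v}$ from $\ov{a}$ to $\ov{b}$ share length $\ell(\ov{b})-\ell(\ov{a})$, so $\bD_{\sS}^{v}$ is graded and $\rk_{\bD_{\sS}^{v}}(\ov{d})=\ell(\ov{d})=\#\{i<j\colon d_{i}<d_{j}\}$ by formula~\eqref{eq::length::S_n}.

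Finally, I would rewrite this inversion count. Under the standing assumption that $v$ is surjective (hence an isomorphism), every pair of indices $i<j$ corresponds uniquely to a pair $v(s)<v(t)$. If $s\succ t$, then $\sS$-dominance gives $d_{v(s)}\geq d_{v(t)}$, so this pair is never an inversion. Because $v$ is order-reversing, $v(s)<v(t)$ also rules out $s\prec t$; thus the surviving pairs are exactly those with $s\not\succ t$, yielding the claimed formula.

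The main obstacle is purely a bookkeeping one: making sure the order-reversing convention of $v$ is tracked consistently both when checking that $\lambda_{+}$ lies in $\bD_{\sS}^{v}$ and when discarding $\sS$-comparable pairs as non-inversions. All the combinatorial substance---that minimal $\sS$-disorders generate the covering relations, and that such transpositions are Bruhat covers---is already packaged in Theorem~\ref{thm::dominant::edge} and the remark after Lemma~\ref{lem::Bruhat::edge}.
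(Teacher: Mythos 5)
Your proof is correct and follows essentially the same route as the paper: both hinge on the observation that, for regular $\lambda$, a minimal $\sS$-disorder gives a Bruhat cover (the remark after Lemma~\ref{lem::Bruhat::edge}), so the Hasse diagram of $\bD_{\sS}^{v}$ embeds in that of $\bS_m$, from which gradedness and the rank formula follow by rewriting the inversion count $\ell(\ov{d})$. Your write-up is somewhat more explicit than the paper's (in particular in verifying $\lambda_{+}\in\bD_{\sS}^{v}$ and unpacking why $\sS$-comparable pairs never contribute inversions), but the substance is identical.
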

	\begin{proof}
		If all $\lambda_i$'s are distinct, then the non-strict inequalities in~\eqref{eq::S::disorder:1a} become strict. Consequently, a minimal $\sS$-disorder $(ij)$ for $\ov{a}$ defines an edge $\ov{a} \to (ij)\ov{a}$ in the Bruhat graph $\bS_m$. Thus, the rank function counts the number of (dis)orders among incomparable elements in $\sS$.    
	\end{proof}
	
	\begin{corollary}
		\label{cor::dominant::subthin}	
		The poset $\bD_{\sS}^{v}$ is subthin.
	\end{corollary}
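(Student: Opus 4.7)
The plan is to show subthinness of $\bD_{\sS}^{v}$ as an immediate consequence of Proposition~\ref{prp::Hasse::Ds} combined with the classical fact that the Bruhat graph $\bS_m\lambda$ is thin for regular $\lambda$ (recalled in \S\ref{sec::Bruhat}). The key observation is that the rank function computed on $\bD_{\sS}^{v}$ coincides with the restriction of the rank function on the ambient Bruhat graph, so that length-$2$ intervals in $\bD_{\sS}^{v}$ sit inside length-$2$ intervals of $\bS_m\lambda$.

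More precisely, I would first verify that for any $\sS$-dominant composition $\ov{d}$, every pair of indices $i<j$ with $d_i<d_j$ automatically satisfies $v^{-1}(i)\not\succ v^{-1}(j)$; otherwise $\sS$-dominance would force $d_i\ge d_j$, a contradiction. Assuming $v$ is surjective (as allowed by the discussion at the start of~\S\ref{sec::Hasse::dominant}) this gives the identity
\[
\rk_{\bD_{\sS}^{v}}(\ov{d})\;=\;\#\{i<j\colon d_i<d_j\}\;=\;\rk_{\bS_m\lambda}(\ov{d})
\]
for every $\ov{d}\in\bD_{\sS}^{v}$. Combined with Proposition~\ref{prp::Hasse::Ds}, which tells us that covering relations in $\bD_{\sS}^{v}$ are edges of $\bS_m\lambda$, this means that for every pair $\ov{a}\prec\ov{b}$ in $\bD_{\sS}^{v}$ with $\rk_{\bD_{\sS}^{v}}(\ov{b})-\rk_{\bD_{\sS}^{v}}(\ov{a})=2$, the same comparison with the same rank gap holds in $\bS_m\lambda$.

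Next, by the Bruhat-graph facts recalled in~\S\ref{sec::Bruhat}, regularity of $\lambda$ ensures $\bS_m\lambda$ is thin, so the open interval $(\ov{a},\ov{b})$ computed inside $\bS_m\lambda$ contains exactly two elements. Because the partial order on $\bD_{\sS}^{v}$ is the one induced from $\bS_m\lambda$, the open interval in $\bD_{\sS}^{v}$ is precisely
\[
(\ov{a},\ov{b})_{\bD_{\sS}^{v}}\;=\;\bD_{\sS}^{v}\cap (\ov{a},\ov{b})_{\bS_m\lambda},
\]
which therefore has cardinality at most $2$. This is exactly the subthin condition of Definition~\ref{def::poset::linear}, completing the proof.

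There is no serious obstacle; the only point requiring care is the matching of rank functions, which is settled by the brief observation above and is implicit in the proof of Proposition~\ref{prp::Hasse::Ds}. (Note that $\bD_{\sS}^{v}$ need not be thin: it is entirely possible that only one of the two candidates in $(\ov{a},\ov{b})_{\bS_m\lambda}$ happens to be $\sS$-dominant, which is the reason one can only claim subthin rather than thin in this generality.)
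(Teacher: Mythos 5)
Your proof is correct and follows essentially the same route as the paper's: use Proposition~\ref{prp::Hasse::Ds} to place the Hasse diagram of $\bD_{\sS}^{v}$ inside the Bruhat graph $\bS_m$, observe that rank functions agree (you make this explicit, the paper leaves it implicit in the statement that the Hasse diagram is a subgraph), and then invoke thinness of $\bS_m$ to conclude the interval in $\bD_{\sS}^{v}$ has at most two elements. One small slip: the subthin condition you invoke is not Definition~\ref{def::poset::linear} (which defines arborescent posets with (anti)linearization) but the unlabeled thin/subthin definition near the start of \S\ref{sec::poset::terms}.
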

	\begin{proof}
		Consider two compositions $\ov{a},\ov{b}\in\bD_{\sS}^{v}(\lambda)$ such that the rank difference between $\ov{a}$ and $\ov{b}$ is exactly $2$.
		Since the Hasse diagram of $\bD_{\sS}^{v}$ is a subgraph of $\bS_m$, we know that the open interval $(\ov{a},\ov{b})^{\bS_m}$ in $\bS_m$ consists of exactly two elements (since $\bS_m$ is thin). By assumption, at least one of these elements must belong to $\bD_{\sS}^{v}$, which implies that $\bD_{\sS}^{v}$ is subthin.
	\end{proof}

	\begin{proposition}
		\label{lem::bbs::idemp}	
		The bubble-sort operator $\bbs_{\sS}^k: \bAD_{\sS[<k]}^{v} \to \bAD_{\sS[\leq k]}^{v}$ is non-increasing, monotone, and idempotent (in the sense of Definition~\ref{def::idempotent}).
	\end{proposition}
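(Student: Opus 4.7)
Since Corollary~\ref{cor::bbs::idem} already establishes that $\bbs_{\sS}^k$ is nonincreasing and idempotent, only the monotonicity assertion remains. My plan is to reduce monotonicity to the case of a single Bruhat covering relation and then proceed by induction on the number of bubble-sort transpositions, invoking the diamond property from Proposition~\ref{prp::Bruhat::square}.

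First, the identification~\eqref{eq::B=AD} realizes $\bAD_{\sS[<k]}^{v}(\lambda)$ as a subposet of $\bD_{\tilde{\sS}_{k-1}}^{\tilde{v}}(\lambda)$, whose Bruhat partial order is generated, by Theorem~\ref{thm::dominant::edge}, by transpositions in minimal $\tilde{\sS}_{k-1}$-disorders. Hence it suffices to verify $\bbs_{\sS}^k(\ov{a})\preceq\bbs_{\sS}^k(\ov{b})$ for each covering relation $\ov{a}\prec\ov{b}=(pq)\cdot\ov{a}$ of this form.

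Fix such a covering and induct on the total number of bubble-sort swaps needed to compute $\bbs_{\sS}^k(\ov{b})$ from $\ov{b}$. In the base case, $\ov{b}$ is already $\sS[k]$-dominant, and $\bbs_{\sS}^k(\ov{b})=\ov{b}\succeq\ov{a}\succeq\bbs_{\sS}^k(\ov{a})$ gives the claim. For the inductive step, let $(jk)$ denote the first bubble-sort transposition applied to $\ov{b}$, producing a covering $\ov{b}'=(jk)\cdot\ov{b}\prec\ov{b}$ in the Bruhat order. The argument splits into three configurations according to $\{p,q\}\cap\{j,k\}$: if this intersection is empty, the transpositions $(pq)$ and $(jk)$ commute, $(jk)$ remains a valid first bubble-sort step for $\ov{a}$, and one obtains $\ov{a}'=(jk)\cdot\ov{a}\prec\ov{a}$ with $\ov{a}'\prec\ov{b}'$, so induction applied to the pair $\ov{a}'\prec\ov{b}'$ concludes via $\bbs_{\sS}^k(\ov{a}')=\bbs_{\sS}^k(\ov{a})$ and $\bbs_{\sS}^k(\ov{b}')=\bbs_{\sS}^k(\ov{b})$; if $\{p,q\}=\{j,k\}$ then $\ov{a}=(jk)\cdot\ov{b}=\ov{b}'$ and the conclusion is automatic; in the remaining case $|\{p,q\}\cap\{j,k\}|=1$, I invoke the direct and converse parts of Proposition~\ref{prp::Bruhat::square} to produce a composition $\ov{e}\in\bAD_{\sS[<k]}^{v}(\lambda)$ comparable to both $\ov{a}$ and $\ov{b}'$ via minimal $\tilde{\sS}_{k-1}$-disorders, thereby reducing to pairs of strictly shorter bubble-sort length, to which induction applies.

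The main technical obstacle is this last configuration, where the covering transposition and the first bubble-sort transposition share exactly one index. There one must exploit the maximality built into the specification~\eqref{eq::bubl::index} of the bubble-sort index $j$, together with the consistent anti-linearization condition~\eqref{eq::poset::st}, to verify that the witness $\ov{e}$ produced by the diamond lemma indeed lies in $\bAD_{\sS[<k]}^{v}(\lambda)$, that the connecting transpositions are minimal $\tilde{\sS}_{k-1}$-disorders, and that the number of bubble-sort swaps is genuinely reduced. This combinatorial bookkeeping of indices, governed by the shape of the tree rooted at $v^{-1}(k)$, is where the structural hypotheses on $\sS$ are essentially used.
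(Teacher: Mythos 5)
Your approach is essentially the paper's: reduce monotonicity to covering relations in the Hasse diagram via Theorem~\ref{thm::dominant::edge} and~\eqref{eq::B=AD}, then argue step by step through the bubble-sort using the diamond property of Proposition~\ref{prp::Bruhat::square}. The paper's proof is terser, splitting only into $(il)=(jk)$ versus $(il)\neq(jk)$ and applying the diamond lemma uniformly in the second case. Your further split on $\{p,q\}\cap\{j,k\}$ is a refinement, but be careful in the empty-intersection branch: swapping values at positions $p,q$ can change the index set $\{l<k : d_l<d_k\}$ whenever exactly one of $a_p,a_q$ lies below $a_k$, which can alter the maximum in~\eqref{eq::bubl::index}; so $(jk)$ need not remain the first bubble-sort step for $\ov{a}$, and $(pq)$ commuting with $(jk)$ as abstract transpositions does not by itself give $\ov{a}'\prec\ov{b}'$ with matching bubble-sort behaviour. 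This case is also covered, as in the paper, by simply invoking the diamond lemma, which already tells you that the images after one bubble-sort swap stay connected by an edge of the correct orientation. Apart from that overreach, your skeleton — including the correct identification of the size-one-intersection configuration as the place where the arborescent structure~\eqref{eq::poset::st} and the maximality in~\eqref{eq::bubl::index} genuinely enter — matches the paper's argument.
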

	
	\begin{proof}
		By Corollary~\ref{cor::bbs::idem}, it has already been established that $\bbs_{\sS}^k$ is a non-increasing idempotent. We now prove the monotonicity property by induction on the Bruhat partial order.
		According to Theorem~\ref{thm::dominant::edge}, the edges in the Hasse diagram of $\sS_{<k}$-dominant compositions correspond to minimal disorders of the poset $\tilde{\sS}_{k-1}$, as discussed in~\eqref{eq::B=AD}. Let $(il)\ov{a} \succeq_{\Bruhat} \ov{a}$ be an edge in the Bruhat subgraph of $\sS$-admissible, $\sS_{<k}$-dominant compositions. To establish monotonicity, it suffices to show that $\bbs_{\sS}^{k}((il)\ov{a}) \succeq_{\Bruhat} \bbs_{\sS}^{k}(\ov{a})$ in the Hasse diagram of $\sS_{\leq k}$-dominant compositions.

		Recall that the bubble-sort operator $\bbs_{\sS}^{k}(\ov{a})$ acts on $\ov{a}$ from the left by a sequence of transpositions $(j_1 k) \dots (j_l k)$ with $j_{\ldot} < k$. We analyze the difference between $\bbs_{\sS}^k(\ov{a})$ and $\bbs_{\sS}^k((il)\ov{a})$ by considering each transposition $(j k)$ in the expansion of $\bbs_{\sS}^k$ consecutively.
		
		Without loss of generality, suppose that for a given composition, both $(il)$ and $(jk)$ are minimal $\sS_{<k}$-disorders. The case where these transpositions are equal is trivial; thus, we assume that $(il)\ov{a}$ and $(jk)\ov{a}$ form a pair of incomparable $\sS_{<k}$-dominant compositions. We restrict the action of $\bbs_{\sS}^{k}$ to the subset of indices $\mathsf{J} := \{i, l, j, k\}$.
		In this restriction, the partial order $(\sS_{<k}, \prec)$ induced on $\mathsf{J}$ forms an arborescent poset of size $3$ or $4$. All arborescent posets of size at most $4$ and their corresponding posets of dominant weights are classified in Appendix~\ref{sec::Example::DL}. (Notably, there are only two arborescent posets whose dominant weights do not coincide with parabolic Bruhat graphs.) In each of these cases, the monotonicity of the bubble-sort operation can be verified directly, showing that $\bbs_{\sS}^k((il)\ov{a})$ is either equal to $\bbs_{\sS}^k(\ov{a})$ or is separated by one or two edges in the Bruhat order. Since the local order for the remaining elements remains unchanged, the disorders for the restriction of $\ov{a}$ to the subset of indices $\mathsf{J}$ remain disorders for the full composition $\ov{a}$.
	\end{proof}

	\begin{corollary}
		\label{cor::bbs::monotone}
		The bubble-sort process $\bbs_{\sS}:\bS_m\to\bD_{\sS}^{v}$ is a nonincreasing, monotone idempotent on $\bS_m$.
	\end{corollary}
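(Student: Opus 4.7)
The corollary should follow directly by composing the step-wise results established in Proposition~\ref{lem::bbs::idemp}. My plan is to verify each of the three properties --- nonincreasing, monotone, and idempotent --- by induction on the number of factors in the decomposition~\eqref{eq::bbs::comp}, $\bbs_{\sS}=\bbs_{\sS}^{m}\circ\cdots\circ\bbs_{\sS}^1$, exploiting the nested chain
\[
\bS_m\lambda=\bA_{\sS}^{v}(\lambda)=\bAD_{\sS[\leq 0]}^{v}(\lambda)\supset \bAD_{\sS[\leq 1]}^{v}(\lambda)\supset\cdots\supset \bAD_{\sS[\leq m]}^{v}(\lambda)=\bD_{\sS}^{v}(\lambda),
\]
which guarantees that each $\bbs_{\sS}^{k}$ is applied to a composition lying in its declared domain $\bAD_{\sS[<k]}^{v}$.

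The nonincreasing and monotone properties propagate through composition in a routine way. By induction on $k$, I would show that the partial composition $\bbs_{\sS}^{k}\circ\cdots\circ\bbs_{\sS}^1:\bS_m\lambda\to\bAD_{\sS[\leq k]}^{v}$ is both nonincreasing and monotone. For the nonincreasing step one notes that $\bbs_{\sS}^{k+1}(\bbs_{\sS}^{k}\circ\cdots\circ\bbs_{\sS}^1(\ov d))\preceq \bbs_{\sS}^{k}\circ\cdots\circ\bbs_{\sS}^1(\ov d)\preceq \ov d$ by the inductive hypothesis and Proposition~\ref{lem::bbs::idemp}. The monotonicity inductive step is similarly immediate: if $\ov a \preceq \ov b$ then applying the inductive hypothesis followed by the monotonicity of $\bbs_{\sS}^{k+1}$ preserves the inequality.

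Idempotence is the only property requiring a small extra observation. Given any $\ov{d}\in\bS_m\lambda$, the image $\bbs_{\sS}(\ov{d})$ lies in $\bD_{\sS}^{v}(\lambda)$ and is in particular $\sS[k]$-dominant for every $k$. Inspecting Algorithm~\ref{alg::bb:sort}, the step $\bbs_{\sS}^k$ performs no transposition when its input is already $\sS[k]$-dominant (the hypothesis of the algorithm fails at the outset), so each $\bbs_{\sS}^k$ fixes $\bD_{\sS}^{v}(\lambda)$ pointwise. Composing these identities for $k=1,\ldots,m$ yields $\bbs_{\sS}(\bbs_{\sS}(\ov d))=\bbs_{\sS}(\ov d)$.

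I do not anticipate a serious obstacle: the corollary is essentially a bookkeeping consequence of the step-wise results. The only subtle point worth stating explicitly is that the sequence of step-wise targets is nested --- so applying a later step $\bbs_{\sS}^{k+1}$ to an already-computed $\sS[\leq k]$-dominant composition is well-defined and preserves all earlier dominance conditions; this is exactly what allows the full composition to act as identity on $\bD_{\sS}^{v}(\lambda)$.
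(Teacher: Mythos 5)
Your proposal is correct and follows essentially the same route as the paper: the paper's proof of Corollary~\ref{cor::bbs::monotone} is precisely the observation that $\bbs_{\sS}$ is a sequential composition of the nonincreasing monotone idempotents $\bbs_{\sS}^{k}$, each applied to the image of the previous step. Your explicit treatment of idempotence (each $\bbs_{\sS}^{k}$ fixes $\bD_{\sS}^{v}(\lambda)$ pointwise) is a useful clarification of a point the paper leaves implicit, but it is not a different argument.
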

	\begin{proof}
		By Definition~\eqref{eq::bbs::comp}, $\bbs_{\sS}$ is constructed as a sequential composition of nonincreasing, monotone idempotents $\bbs_{\sS}^{k}$ for $k=1,\ldots,m$, where each $\bbs_{\sS}^{k}$ is applied to the image of the previous step $\bbs_{\sS}^{k-1}$.
	\end{proof}
	
	\begin{corollary}
		\label{cor::DS::bound}
		The poset $\bD_{\sS}^{v}$ is bounded.
	\end{corollary}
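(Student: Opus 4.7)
The plan is to exhibit an explicit minimum and maximum of $\bD_{\sS}^{v}$, using the tools just developed. For the minimum, I would take $\lambda_+=(\lambda_1,\ldots,\lambda_m)$, the unique weakly (in fact, strictly, since $\lambda$ is regular) decreasing composition in $\bS_m\lambda$, which is the minimum of the full Bruhat graph $\bS_m\lambda$. Since $v:\sS\hookrightarrow[1m]$ is order-reversing, every covering relation $s\succ t$ in $\sS$ satisfies $v(s)<v(t)$, hence $(\lambda_+)_{v(s)}\geq (\lambda_+)_{v(t)}$; this is exactly the $\sS$-dominance condition. Thus $\lambda_+\in\bD_{\sS}^{v}$, and because $\bD_{\sS}^{v}$ inherits its partial order from $\bS_m\lambda$, $\lambda_+$ is also the minimum of $\bD_{\sS}^{v}$.

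For the maximum, I would invoke Corollary~\ref{cor::bbs::monotone}, which says that $\bbs_{\sS}:\bS_m\lambda\to\bD_{\sS}^{v}$ is a nonincreasing monotone idempotent. Set $\hat{1}:=\bbs_{\sS}(\lambda_-)$, where $\lambda_-$ is the Bruhat maximum of $\bS_m\lambda$. For any $\ov{d}\in\bD_{\sS}^{v}$, monotonicity of $\bbs_{\sS}$ applied to $\ov{d}\preceq_{\Bruhat}\lambda_-$ in $\bS_m\lambda$ gives $\bbs_{\sS}(\ov{d})\preceq_{\Bruhat}\bbs_{\sS}(\lambda_-)$; since $\ov{d}\in\bD_{\sS}^{v}$ is a fixed point of the idempotent $\bbs_{\sS}$, the left-hand side is $\ov{d}$, so $\ov{d}\preceq_{\Bruhat}\hat{1}$. (Alternatively, one can cite Lemma~\ref{lem::poset::idempotent} directly: $\bbs_{\sS}(\lambda_-)=\sup\{y\in\bD_{\sS}^{v}:y\preceq\lambda_-\}=\sup\bD_{\sS}^{v}$, since every element of $\bS_m\lambda$ lies below $\lambda_-$.)

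The argument is essentially a formal consequence of results already in place, so I do not anticipate a substantive obstacle; the only point requiring mild care is confirming that $\lambda_+$ actually satisfies the $\sS$-dominance inequalities, which reduces to observing that an order-reversing embedding $v$ turns ``weakly decreasing in the $[1m]$ order'' into ``weakly increasing along $\succ$ in $\sS$''.
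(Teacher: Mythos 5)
Your proof is correct and is essentially the same argument as the paper's, which simply observes that the image of the bounded poset $\bS_m$ under the monotone idempotent $\bbs_{\sS}$ must be bounded. You merely unpack this one-liner by explicitly identifying $\hat{0}=\lambda_+$ (a fixed point of $\bbs_{\sS}$, being minimal) and $\hat{1}=\bbs_{\sS}(\lambda_-)$, both of which follow directly from Lemma~\ref{lem::poset::idempotent} and Corollary~\ref{cor::bbs::monotone}.
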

	\begin{proof}
		Since the Bruhat graph $\bS_m$ is bounded, its image under a monotone idempotent must also be bounded.
	\end{proof}

	Suppose we are given an $\EL$-labelling $\euE_{\bS_m}$ of the Bruhat graph $\bS_m$, as described in~\eqref{eq::order::transpos}.  
	Then, for each pair of comparable elements $\ov{a}\prec_{\Bruhat}\ov{b}$, we assign the path:
	\[
	\pth_{\bS_m}(\ov{a}\to\ov{b}):= \left(\ov{a} \prec \ov{y_1} \prec  \ldots \prec \ov{y_{k-1}} \prec \ov{b}\right),
	\]
	such that 
	\[
	\euE_{\bS_m}(\ov{a} \prec \ov{y_1}) < \euE_{\bS_m}(\ov{y_1} \prec \ov{y_2})< \ldots < \euE_{\bS_m}(\ov{y_{k-1}} \prec \ov{b}).
	\]
	This path is assumed to be unique and lexicographically minimal due to the $\EL$-property.
	
	\begin{lemma}
		\label{lm::EL::S}	
		Suppose that  
		$\ov{a}\prec\ov{b}\prec\ov{c}$ is a sequence of three adjacent elements in the Hasse diagram of $\bD_{\sS}^{v}(\lambda)$.  
		Then the maximal chain $\pth_{\bS_m}(\ov{a}\to\ov{c})$ in the Bruhat graph $\bS_m\lambda$ with increasing $\euE$-labellings contains an $\sS$-dominant composition $\ov{d}$ such that $\ov{a}\prec \ov{d} \prec \ov{c}$. Moreover, if $\ov{d}\neq \ov{b}$, then the induced $\euE_{\bS_m}$-labeling along the alternative path is decreasing:
		\[
		\euE_{\bS_m}(\ov{a}\prec\ov{b}) > \euE_{\bS_m}(\ov{b}\prec\ov{c}).
		\]
	\end{lemma}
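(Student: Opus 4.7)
The plan is to combine the thinness of $\bS_m\lambda$ (for regular $\lambda$), the subthinness of $\bD_{\sS}^{v}(\lambda)$ established in Corollary~\ref{cor::dominant::subthin}, the $\EL$-shellability of $\bS_m\lambda$, and the diamond-completion of Proposition~\ref{prp::Bruhat::square}. First, by Proposition~\ref{prp::Hasse::Ds}, the Hasse diagram of $\bD_{\sS}^{v}(\lambda)$ is a subgraph of the Bruhat graph $\bS_m\lambda$, so the chain $\ov{a}\prec\ov{b}\prec\ov{c}$ also has rank difference $2$ in $\bS_m\lambda$. By thinness of $\bS_m\lambda$, the open interval $(\ov{a},\ov{c})$ in $\bS_m\lambda$ contains exactly two elements: $\ov{b}$ and a second element $\ov{d}^{\star}$. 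The $\EL$-property~\eqref{eq::order::transpos} singles out a unique length-$2$ chain with strictly increasing labels, whose middle element I denote $\ov{d}\in\{\ov{b},\ov{d}^{\star}\}$.

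If $\ov{d}=\ov{b}$, the principal claim is immediate and the ``Moreover'' clause is vacuous. If instead $\ov{d}=\ov{d}^{\star}\neq\ov{b}$, then $\ov{b}$ lies on the other length-$2$ chain, which by the uniqueness of the increasing chain in an $\EL$-labeling must have strictly decreasing labels; this yields $\euE_{\bS_m}(\ov{a}\prec\ov{b})>\euE_{\bS_m}(\ov{b}\prec\ov{c})$, the ``Moreover'' statement. What remains is to prove that $\ov{d}^{\star}$ is $\sS$-dominant in this second case.

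For $\sS$-dominance of $\ov{d}^{\star}$, I would use Theorem~\ref{thm::dominant::edge} to realize the covers $\ov{a}\prec\ov{b}$ and $\ov{b}\prec\ov{c}$ as transpositions in minimal $\sS$-disorders $(il)$ and $(jk)$, respectively, and then translate the decreasing-label hypothesis via~\eqref{eq::order::transpos} into a combinatorial constraint on $(i,l,j,k)$. The argument splits according to the two shapes of thin length-$2$ intervals in $\bS_m$: in the commuting sub-case $\{i,l\}\cap\{j,k\}=\emptyset$ one has $\ov{d}^{\star}=(jk)\ov{a}$, and the goal is to verify directly that $(jk)$ remains a minimal $\sS$-disorder for $\ov{a}$ (and that $(il)$ is one for $\ov{d}^{\star}$) by comparing the two compositions outside the swapped positions and using regularity of $\lambda$; in the braid sub-case, where the transpositions share an index and an elementary braid relation holds, I would invoke Proposition~\ref{prp::Bruhat::square} after matching the ordering hypothesis there with the condition induced by the $\EL$-decreasing path through $\ov{b}$, so that $\ov{d}^{\star}$ arises as the second vertex of the diamond in~\eqref{eq::Bruhat::square}.

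The main obstacle is this last step. In the commuting sub-case one must carefully handle the possibility that $i$ or $l$ lies inside the interval $(j,k)$, tracking how the minimality inequalities~\eqref{eq::S::disorder:1a} for $(jk)$ transfer between $\ov{a}$ and $\ov{b}$; in the braid sub-case one must pinpoint which of the directional conventions of Proposition~\ref{prp::Bruhat::square} is activated by the specific ordering $(il)\triangleright(jk)$, and check that the resulting $\sS$-dominant diamond vertex indeed coincides with the $\bS_m$-interval's second element.
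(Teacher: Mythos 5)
Your high-level reduction is essentially the one the paper uses: the paper's proof consists of the single remark that since the covers are transpositions one can reduce to a finite combinatorial verification in $\bS_3$ and $\bS_4$ (with assumption~\eqref{eq::poset::st} essential), and you make the same reduction more explicit by invoking thinness of $\bS_m\lambda$, identifying the second middle vertex $\ov{d}^{\star}$, and using uniqueness of the increasing chain in an $\EL$-labeling. Your handling of the ``Moreover'' clause is correct: in a thin length-two interval of $\bS_m$ the two edge labels on any chain are distinct transpositions, so ``not increasing'' forces strictly decreasing.

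The one place your outline goes astray is the braid sub-case, where you propose to deduce $\sS$-dominance of $\ov{d}^{\star}$ from Proposition~\ref{prp::Bruhat::square}. That proposition (and its converse clause) completes a \emph{branching} diamond: it takes two incomparable covers $\ov{b},\ov{c}$ of the same $\sS$-dominant $\ov{a}$ (or dually two elements covered by the same $\ov{a}$) and produces a fourth $\sS$-dominant vertex. Here the data is a \emph{chain} $\ov{a}\prec\ov{b}\prec\ov{c}$, and the whole point is that $\bD_{\sS}^{v}$ is only subthin, so a second $\sS$-dominant middle need not exist; there is no second cover of $\ov{a}$ to feed into Proposition~\ref{prp::Bruhat::square}, and trying to read $\ov{d}^{\star}$ off the diamond in~\eqref{eq::Bruhat::square} is circular (you would already be assuming $\ov{d}^{\star}\in\bD_{\sS}^{v}$). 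In the braid sub-case the claim has to be checked directly: the decreasing-label hypothesis $\euE(\ov{a}\prec\ov{b})\triangleright\euE(\ov{b}\prec\ov{c})$, translated via~\eqref{eq::order::transpos}, restricts the shared-index configurations to $l=k$ with $i<j$, $i=j$ with $l>k$, and $i=k$ with $l>i$, and for each of these the second factorization of the resulting $3$-cycle must be verified to be an $\sS$-dominant minimal-disorder cover, using assumption~\eqref{eq::poset::st} to rule out the configurations where minimality fails. Your commuting sub-case plan, by contrast, is sound: one checks that $(jk)$ remains a minimal $\sS$-disorder for $\ov{a}$, and here the decreasing-label condition forces $l>k$ (so $l\notin(j,k)$), while the only threat $i\in(j,k)$ is resolved by playing the minimality inequalities for $(il)$ on $\ov{a}$ against those for $(jk)$ on $\ov{b}$ together with~\eqref{eq::poset::st} and regularity of $\lambda$. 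In summary, your route is the same as the paper's; replace the appeal to Proposition~\ref{prp::Bruhat::square} in the braid sub-case by a direct finite check, and the argument closes.
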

	\begin{proof}
		Since the edges in the Hasse diagram of $\bD_{\sS}^{v}$ correspond to transpositions, we can restrict the problem to the cases where $\#\sS\leq 4$ and analyze the combinatorics of the $\euE_{\bS_m}$-labeling in $\bS_3$ and $\bS_4$.  
		Note that assumption~\eqref{eq::poset::st} plays a crucial role in this argument.
	\end{proof}
	
	\begin{proposition}
		\label{prp::EL::S::domninant}
		The induced $\euE_{\bS_m}$-labeling of the Hasse diagram of $\bD_{\sS}^{v}$ is an $\EL$-labeling.
	\end{proposition}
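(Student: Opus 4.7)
The plan is to leverage the $\EL$-labeling of the ambient Bruhat graph $\bS_m$ together with the local Lemma~\ref{lm::EL::S}, using the crucial fact (from Proposition~\ref{prp::Hasse::Ds}) that the Hasse diagram of $\bD_{\sS}^{v}$ is a subgraph of the Bruhat graph. This means the restriction of $\euE_{\bS_m}$ to covering edges of $\bD_{\sS}^{v}$ is well-defined, and any maximal chain in $\bD_{\sS}^{v}$ from $\ov{a}$ to $\ov{c}$ is also a maximal chain in $\bS_m$ carrying the same label sequence.

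First I would establish existence of an increasing chain in $\bD_{\sS}^{v}$ by a standard swap argument. Starting from any maximal chain $\gamma:\ov{a}=\ov{z_0}\prec\cdots\prec\ov{z_k}=\ov{c}$ in $\bD_{\sS}^{v}$, if $\gamma$ has a descent at some position $i$, i.e.\ $\euE_{\bS_m}(\ov{z_{i-1}}\prec\ov{z_i})>\euE_{\bS_m}(\ov{z_i}\prec\ov{z_{i+1}})$, then Lemma~\ref{lm::EL::S} applied to the triple $\ov{z_{i-1}}\prec\ov{z_i}\prec\ov{z_{i+1}}$ yields an $\sS$-dominant $\ov{d}\neq \ov{z_i}$ on the increasing $\bS_m$-path, and replacing $\ov{z_i}$ by $\ov{d}$ produces a new maximal chain in $\bD_{\sS}^{v}$ whose label sequence is strictly smaller in lexicographic order. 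Since the set of maximal chains is finite, this process terminates at a chain with no descents.

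Next I would deduce uniqueness and coincidence with $\pth_{\bS_m}(\ov{a}\to\ov{c})$ directly from the $\EL$-property of $\bS_m$. Any increasing maximal chain in $\bD_{\sS}^{v}$ is, verbatim, an increasing maximal chain in $\bS_m$ with the same label word; by $\EL$-shellability of $\bS_m$ there is exactly one such chain, namely $\pth_{\bS_m}(\ov{a}\to\ov{c})$. Hence the increasing chain produced in the previous step is unique and equals $\pth_{\bS_m}(\ov{a}\to\ov{c})$.

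Finally, lexicographic minimality in $\bD_{\sS}^{v}$ is immediate: every maximal chain in $\bD_{\sS}^{v}$ from $\ov{a}$ to $\ov{c}$ is also a maximal chain in $\bS_m$, and the unique increasing chain $\pth_{\bS_m}(\ov{a}\to\ov{c})$ is lex-minimal among all maximal chains in $\bS_m$, so it remains lex-minimal when we restrict the comparison to chains inside $\bD_{\sS}^{v}$. The only non-trivial input is Lemma~\ref{lm::EL::S}, which is precisely what enables the descent-swap step to stay inside $\bD_{\sS}^{v}$; this is the main obstacle, and it has already been handled in the preceding lemma by the small-rank analysis in $\bS_3$ and $\bS_4$ together with the arborescence condition~\eqref{eq::poset::st}.
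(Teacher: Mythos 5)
Your proof is correct and essentially the same as the paper's: both rely on the fact that the Hasse diagram of $\bD_{\sS}^{v}$ is a subgraph of the Bruhat graph (Proposition~\ref{prp::Hasse::Ds}), both use Lemma~\ref{lm::EL::S} to perform descent-swaps that stay inside $\bD_{\sS}^{v}$ while strictly decreasing the lex-label, and both conclude by the finiteness of the set of chains. Your write-up is arguably a bit cleaner than the paper's: where the paper phrases the termination argument as an "induction on $k$" without really invoking the induction hypothesis, you separate the three things that need checking (existence of an increasing chain in $\bD_{\sS}^{v}$, its uniqueness, its lex-minimality) and observe that the latter two follow immediately from the $\EL$-property of the ambient Bruhat graph $\bS_m$ once the increasing chain is known to exist inside $\bD_{\sS}^{v}$.

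One small point worth making explicit in your descent-swap step: Lemma~\ref{lm::EL::S} as stated asserts that the increasing $\bS_m$-chain from $\ov{z_{i-1}}$ to $\ov{z_{i+1}}$ contains an $\sS$-dominant $\ov{d}$, and that $\ov{d}\neq\ov{z_i}$ \emph{implies} the three-term chain has a descent; you need the converse direction (descent $\Rightarrow$ $\ov{d}\neq\ov{z_i}$), which is not the contrapositive. It does hold, for the simple reason that the interval $[\ov{z_{i-1}},\ov{z_{i+1}}]$ has rank $2$ in $\bS_m$, so the increasing $\bS_m$-chain has a unique intermediate element; if that element were $\ov{z_i}$, the original chain would already be increasing, contradicting the assumed descent. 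With that one-line observation included, your argument is airtight.
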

	\begin{proof}
		Let $\ov{a}\prec\ov{b}$ be a pair of comparable $\sS$-dominant compositions. Suppose that the length of the maximal chain between $\ov{a}$ and $\ov{b}$ in $\bD_{\sS}$ is $k$.  
		By Proposition~\ref{prp::Hasse::Ds}, the distance between $\ov{a}$ and $\ov{b}$ in $\bS_{m}$ is also $k$.  
		We prove by induction on $k$ that the path $\pth_{\bS_m}(\ov{a}\to \ov{b})$ with the minimal $\euE_{\bS_m}$-labeling consists of $\sS$-dominant compositions.  
		
		The base case ($k=1$) is trivial.  
		For the induction step, assume that the claim holds for chains of length less than $k$.  
		By Lemma~\ref{lm::EL::S}, starting with any maximal chain from $\ov{a}$ to $\ov{b}$ that passes through $\sS$-dominant elements, if the labeling is not increasing, then there exists a subsequence of three adjacent elements in the chain where the labeling is decreasing.  
		Applying Lemma~\ref{lm::EL::S}, we can replace this subsequence with another maximal chain in $\bD_{\sS}$ from $\ov{a}$ to $\ov{b}$ with a lexicographically smaller labeling.  
		This completes the induction step.
	\end{proof}
	
	Let us summarize in one theorem the key properties obtained for the set of $\sS$-dominant compositions in the case of regular $\lambda$.
	
	\begin{theorem}
		\label{thm::poset::AD}
		\label{thm::S:dom::outline}
		The following properties hold for the poset $(\bD_{\sS}^{v},\prec_{\Bruhat})$ of $\sS$-dominant compositions:
		\begin{itemize}[itemsep=0pt,topsep=0pt]
			\item The covering relations are given by transpositions associated with minimal disorders.
			\item The poset $\bD_{\sS}^{v}$ is bounded, graded, subthin, and $\EL$-shellable.
			\item The bubble-sort map $\bbs_{\sS}:\bS_m\twoheadrightarrow \bD_{\sS}^{v}$ is a monotone, nonincreasing idempotent.
		\end{itemize}
	\end{theorem}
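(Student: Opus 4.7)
The plan is to recognize that Theorem~\ref{thm::poset::AD} is a summary statement whose individual claims have already been established throughout \S\ref{sec::S::Regular}, so the proof consists of assembling the relevant references rather than developing new arguments. I would organize the proof by handling the three bullets of the theorem in turn, citing the precise earlier result each time.

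For the first bullet, the description of covering relations by transpositions at minimal $\sS$-disorders follows immediately from Theorem~\ref{thm::dominant::edge}, applied here in the regular case where Lemma~\ref{lem::Bruhat::edge} and Lemma~\ref{lem::order::dominant} combine to give that a minimal $\sS$-disorder not only produces a Bruhat-comparable $\sS$-dominant composition, but in fact produces an actual covering relation (since $\lambda$ regular forces the intermediate strict inequalities used in Proposition~\ref{prp::Hasse::Ds}).

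For the second bullet, I would invoke the four results individually: Corollary~\ref{cor::DS::bound} gives boundedness (as the image of the bounded poset $\bS_m$ under a monotone idempotent), Proposition~\ref{prp::Hasse::Ds} gives gradedness together with the explicit rank function counting disorders among incomparable pairs in $\sS$, Corollary~\ref{cor::dominant::subthin} gives subthinness (since the Hasse diagram embeds into the thin Bruhat graph $\bS_m$), and Proposition~\ref{prp::EL::S::domninant} gives $\EL$-shellability via the restriction of the standard $\EL$-labelling~\eqref{eq::order::transpos} on $\bS_m$.

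For the third bullet, Corollary~\ref{cor::bbs::monotone} already establishes that $\bbs_{\sS}$ is a nonincreasing monotone idempotent, obtained as a composition~\eqref{eq::bbs::comp} of the maps $\bbs_{\sS}^k$ whose monotone-idempotent property was proved in Proposition~\ref{lem::bbs::idemp}, with the key input being Proposition~\ref{prp::Bruhat::square} on the diamond lemma for pairs of minimal $\sS$-disorders. Since every step has already been verified, there is no genuine obstacle here; the only thing to be careful about is to state clearly that the assembly is valid under the standing hypothesis that $\lambda$ is regular, since this hypothesis was essential in Proposition~\ref{prp::Hasse::Ds} (strict inequalities) and Proposition~\ref{prp::Bruhat::square} (the diamond property, which fails in general by Example~\ref{ex::nongrad}).
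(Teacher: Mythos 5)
Your proposal is correct and follows essentially the same route as the paper's own proof, which likewise just assembles Theorem~\ref{thm::dominant::edge}, Corollary~\ref{cor::DS::bound}, Corollary~\ref{cor::dominant::subthin}, Proposition~\ref{prp::EL::S::domninant}, and Corollary~\ref{cor::bbs::monotone}. Your explicit citation of Proposition~\ref{prp::Hasse::Ds} for gradedness and your remark on where regularity of $\lambda$ enters are welcome additions but do not change the argument.
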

	\begin{proof}
		The covering relations were described in Theorem~\ref{thm::dominant::edge}. Corollary~\ref{cor::DS::bound} ensures that $\bD_{\sS}$ is bounded, while Corollary~\ref{cor::dominant::subthin} establishes the subthin property. The $\EL$-shellability is proved in Proposition~\ref{prp::EL::S::domninant}.  
		Finally, from Corollary~\ref{cor::bbs::monotone}, we know that $\bbs_{\sS}$ is a nonincreasing, monotone idempotent.
	\end{proof}
	
	\begin{corollary}
		\label{cor::Mobius::DL}
		If $\lambda$ is regular, then the following identity holds for the M\"obius function on the poset $\bD_{\sS}=\bD_{\sS}(\lambda)$:
		\[
		\mu^{\bD_{\sS}}(\ov{a},\ov{b}) = \begin{cases}
			(-1)^{\rk(\ov{b})-\rk(\ov{a})}, & \text{if the interval } (\ov{a},\ov{b})^{\bS_m} = (\ov{a},\ov{b})^{\bD_{\sS}^{v}}, \\
			0, & \text{otherwise.}
		\end{cases}
		\]
		In other words, the M\"obius function is equal to $\pm1$ if every intermediate composition $\ov{c}\in\bS_m\lambda$ satisfying $\ov{a}\prec_{\Bruhat} \ov{c} \prec_{\Bruhat}\ov{b}$ is $\sS$-dominant, and it is zero otherwise.
	\end{corollary}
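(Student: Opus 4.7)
The strategy is to deduce the corollary from Corollary~\ref{cor::Mobius::thin} applied to the closed interval $[\ov{a},\ov{b}]^{\bD_{\sS}^{v}}$. Theorem~\ref{thm::poset::AD} establishes that $\bD_{\sS}^{v}$ is bounded, graded, subthin, and $\EL$-shellable, and each of these four properties descends to every closed interval. Corollary~\ref{cor::Mobius::thin} then evaluates $\mu^{\bD_{\sS}}(\ov{a},\ov{b})$ as $(-1)^{\rk(\ov{b})-\rk(\ov{a})}$ when $[\ov{a},\ov{b}]^{\bD_{\sS}^{v}}$ is thin and as $0$ otherwise. It therefore remains to prove the combinatorial equivalence: the closed interval $[\ov{a},\ov{b}]^{\bD_{\sS}^{v}}$ is thin if and only if every composition $\ov{c}\in(\ov{a},\ov{b})^{\bS_m}$ is $\sS$-dominant.

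The direction $(\Leftarrow)$ is immediate, since in that case the two open intervals coincide and thinness is inherited from the ambient thin poset $\bS_m$. For the converse I argue the contrapositive: given a non-dominant $\ov{c}\in(\ov{a},\ov{b})^{\bS_m}$, I plan to exhibit a rank-two sub-interval of $[\ov{a},\ov{b}]^{\bD_{\sS}^{v}}$ whose open part has only one element. I choose $\ov{c}$ of minimal rank, so that a saturated $\bS_m$-chain from $\ov{a}$ to $\ov{c}$ yields a $\sS$-dominant cover $\ov{c}'\prec\ov{c}$ in $\bS_m$. Since $\ov{c}\prec\ov{b}$ strictly and the rank functions on $\bD_{\sS}^{v}$ and $\bS_m$ agree by Proposition~\ref{prp::Hasse::Ds}, there exists a cover $\ov{d}$ of $\ov{c}'$ in $\bD_{\sS}^{v}$ with $\ov{d}\preceq\ov{b}$. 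Applying the lifting lemma for the strong Bruhat order on $\bS_m$ to the two distinct $\bS_m$-covers $\ov{c}$ and $\ov{d}$ of $\ov{c}'$ produces a unique common upper cover $\ov{e}\in\bS_m$, which lies in $[\ov{c}',\ov{b}]^{\bS_m}$ by the thinness of that ambient interval. If $\ov{e}$ is $\sS$-dominant, then $(\ov{c}',\ov{e})^{\bD_{\sS}^{v}}=\{\ov{d}\}$, since the other $\bS_m$-middle element $\ov{c}$ is excluded, and this yields the desired non-thin rank-two sub-interval. Otherwise I iterate with $\ov{c}\leadsto\ov{e}$ and $\ov{c}'\leadsto\ov{d}$; since $\rk(\ov{c}')$ strictly increases at each step while remaining strictly below $\rk(\ov{b})$, the procedure terminates with $\ov{e}\in\bD_{\sS}^{v}$.

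The main technical obstacle is verifying that the common cover $\ov{e}$ furnished by the lifting lemma stays inside $[\ov{a},\ov{b}]^{\bS_m}$ throughout the iteration. I plan to handle this by maintaining the rank inequality $\rk(\ov{c}')+2\le\rk(\ov{b})$ at every stage, which simultaneously guarantees the existence of the dominant cover $\ov{d}$ and places $\ov{e}$ below $\ov{b}$ by the thinness of $[\ov{c}',\ov{b}]^{\bS_m}$.
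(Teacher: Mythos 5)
Your overall reduction---apply Corollary~\ref{cor::Mobius::thin} to $\bD_{\sS}^{v}$ (bounded, graded, subthin, $\EL$-shellable by Theorem~\ref{thm::S:dom::outline}) and then characterize when $[\ov{a},\ov{b}]^{\bD_{\sS}^{v}}$ is thin---is exactly the paper's strategy; the paper asserts the thinness characterization without proof, and you are right to try to justify it. The $(\Leftarrow)$ implication is fine, but your $(\Rightarrow)$ argument has a genuine gap at the central step.

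The appeal to the ``lifting lemma for the strong Bruhat order'' does not deliver what you need. The lifting (Z-) property of Coxeter groups concerns a simple reflection $s$ that is a descent of $w$ but not of $u$, and gives $u\leq sw$, $su\leq w$; it says nothing about two elements covering a common element having a common upper cover, and the uniqueness you invoke is false outright: already in $\bS_3$, both $s_1s_2$ and $s_2s_1$ cover each of $s_1$ and $s_2$, which both cover $e$. Worse, even if some common cover $\ov{e}$ of $\ov{c}$ and $\ov{d}$ exists in $\bS_m$, the containment $\ov{e}\leq\ov{b}$ is not a consequence of ``thinness of the ambient interval'': thinness bounds rank-two open intervals but does not produce a common cover of two prescribed atoms \emph{inside} $[\ov{c}',\ov{b}]$. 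A cleaner route that matches what Fact~\ref{fact::Shell::ball} already supplies is topological: if $[\ov{a},\ov{b}]^{\bD_{\sS}^{v}}$ were thin, the order complex of $(\ov{a},\ov{b})^{\bD_{\sS}^{v}}$ would be a sphere of dimension $\rk(\ov{b})-\rk(\ov{a})-2$, sitting as a full subcomplex inside the sphere $\Delta\bigl((\ov{a},\ov{b})^{\bS_m}\bigr)$ of the same dimension; by invariance of domain a closed $n$-manifold cannot be a proper subcomplex of a connected $n$-manifold, so the two open intervals coincide. This bypasses the diamond-chasing entirely.
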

	\begin{proof}
		The poset $\bD_{\sS}(\lambda)$ is graded, $\EL$-shellable, and subthin, which implies that each interval in this poset is either a ball or a sphere (Fact~\ref{fact::Shell::ball}). As mentioned in Corollary~\ref{cor::Mobius::thin}, computing the M\"obius function requires determining when an interval is not thin. This happens precisely when no elements are lost when restricting the interval $(\ov{a},\ov{b})^{\bS_m\lambda} \subset \bS_m\lambda$ to the subposet $\bD_{\sS}^{v}(\lambda)$ of $\sS$-dominant weights.
	\end{proof}

	\subsubsection{{\bf Parabolic case}}
	\label{sec::S::parabolic}
	\label{sec::Parabolic::bbs}
	
	From now on we suppose that partition $\lambda=(\lambda_1^{k_1}, \ldots, \lambda_l^{k_l})$ may have repeating elements and the anti-liniarization $v:\sS\hookrightarrow [1m]$ may be nonsurjective.
	
	\begin{lemma}
		\label{lem::Admissible::poset}	
		The set $\bA_{\sS}(\lambda)$ of $\bS$-admissible compositions is the interval $[\lambda_+^{\sS},\lambda_{-}]$ in the Bruhat poset $\bS_m\lambda$ if $l(\lambda)\leq \#\sS$. On the other hand, if $l(\lambda)> \#\sS$, then $\bA_{\sS}(\lambda) =\emptyset$.
		
		Here, $\lambda_+^{\sS}$ denotes the unique $\sS$-dominant composition whose $l$'th nonzero element is equal to $\lambda_l$, while the remaining elements are zero.
	\end{lemma}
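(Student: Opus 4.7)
The case $l(\lambda)>\#\sS$ is immediate: every $\ov{d}\in\bS_m\lambda$ has exactly $l(\lambda)$ positive entries, so admissibility at $k=m$ demands $l(\lambda)\leq\#\sS_{\leq m}=\#\sS$, a contradiction, hence $\bA_\sS(\lambda)=\emptyset$. For the remainder assume $l(\lambda)\leq\#\sS$; I would prove $\bA_\sS(\lambda)=[\lambda_+^{\sS},\lambda_-]$ in three steps.

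First, both endpoints lie in $\bA_\sS(\lambda)$. The nonzero entries of $\lambda_-$ sit in the last $l(\lambda)$ positions, so its prefix count is $\max(0,k+l(\lambda)-m)$, and the elementary bound $\#\sS_{\leq k}\geq\#\sS-(m-k)\geq l(\lambda)+k-m$ yields the admissibility inequality. For $\lambda_+^{\sS}$ I would first pin down a concrete description: letting $p_1<\cdots<p_{l(\lambda)}$ be the $l(\lambda)$ smallest elements of $v(\sS)$, set $(\lambda_+^{\sS})_{p_i}:=\lambda_i$ with every other entry zero. The subset $\{v^{-1}(p_i)\}\subset\sS$ is closed under $\succ$ because the anti-linearization forces $s'\succ s\Rightarrow v(s')<v(s)$, so placing $\lambda_1\geq\cdots\geq\lambda_{l(\lambda)}$ in $v$-increasing order makes $\lambda_+^{\sS}$ an $\sS$-dominant composition, hence $\sS$-admissible.

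Second, $\bA_\sS(\lambda)$ is closed upward: a covering $\nu\prec(ij)\nu$ (with $i<j$ and $\nu_i>\nu_j$) affects prefix counts only for indices $i\leq p<j$, where the count weakly decreases since position $i$ passes from the value $\nu_i>0$ to the value $\nu_j\geq 0$. Third, and the crux, I would show $\lambda_+^{\sS}$ is a lower bound for $\bA_\sS(\lambda)$ via the tableau criterion for parabolic Bruhat order, which says $\ov{a}\preceq_{\Bruhat}\ov{b}$ iff $\#\{j\leq k\colon a_j\geq c\}\geq\#\{j\leq k\colon b_j\geq c\}$ for all $k$ and $c$. Setting $r_c:=\#\{i\colon\lambda_i\geq c\}$, the explicit description of $\lambda_+^{\sS}$ yields $\#\{j\leq k\colon(\lambda_+^{\sS})_j\geq c\}=\min(r_c,\#\sS_{\leq k})$. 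For any admissible $\ov{d}$ the same prefix count is trivially at most $r_c$ (total count of values $\geq c$) and at most $\#\sS_{\leq k}$ (admissibility), hence at most the minimum; the tableau inequality then gives $\lambda_+^{\sS}\preceq_{\Bruhat}\ov{d}$, and combined with upward closure this identifies $\bA_\sS(\lambda)$ as the interval.

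The principal obstacle is pinning down the concrete description of $\lambda_+^{\sS}$ in the first step, since the lemma statement alone does not obviously distinguish a canonical upper set of $\sS$; once this is in hand the Bruhat comparison is essentially a two-line calculation. A minor additional subtlety is justifying the tableau criterion for non-regular orbits $\bS_m\lambda$, which I would reduce to the regular case by pulling back along the projection $\pi_\lambda$ of~\eqref{eq::proj::parabolic}.
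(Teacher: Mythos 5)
Your proof is correct and complete, but it takes a genuinely different route from the paper. The paper's proof is a two-sentence sketch: it argues that from any $\sS$-admissible composition one can decrease in the Bruhat order by ``shifting zeros to the right'' while preserving admissibility, eventually reaching a composition supported on a fixed set of positions, and then sorting the nonzero entries reduces the problem to the Bruhat graph of $\bS_{\#\sS}\lambda$, which has a unique minimum. That argument identifies the bottom element but leaves two things implicit: (i) the exact positions where the nonzeros end up (your set $\{p_1,\ldots,p_{l(\lambda)}\}$), and (ii) upward closure of $\bA_\sS(\lambda)$, which is needed to conclude that the full interval $[\lambda_+^\sS,\lambda_-]$ --- rather than merely a subset of it containing both endpoints --- coincides with $\bA_\sS(\lambda)$. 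You make both of these explicit. Your use of the Ehresmann tableau criterion (in the count form $\#\{j\leq k\colon a_j\geq c\}\geq\#\{j\leq k\colon b_j\geq c\}$) replaces the paper's iterative descent with a single inequality verification, and the identity $\#\{j\leq k\colon(\lambda_+^{\sS})_j\geq c\}=\min(r_c,\#\sS_{\leq k})$ is exactly the right formula to make this work. The only small gap you yourself flag --- justifying the tableau criterion on $\bS_m\lambda$ for non-regular $\lambda$ --- is a standard fact; your reduction to the regular case via $\psi_\lambda^{+}$ is one valid route, and it can also be proved directly (the count form is visibly stable under merging equal values).

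One cosmetic mismatch worth noting: the lemma's phrase ``the unique $\sS$-dominant composition whose $l$'th nonzero element is equal to $\lambda_l$'' is not literally a defining characterization (for instance when $\sS$ is an antichain several $\sS$-dominant compositions share this property), so the canonical choice must be inferred; your pinning it down as placing $\lambda_i$ at the $i$'th smallest element of $v(\sS)$ is the correct reading, and the argument that this set of positions is downward-closed in $\sS$ (hence $\sS$-dominant) is the right way to justify it.
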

	\begin{proof}
		First, notice that the total number of nonzero elements in an $\sS$-admissible composition cannot exceed the size of $\sS$. Thus, any $\sS$-admissible composition belongs to $\bS_m\lambda$ with $l(\lambda)\leq k$. 
		Second, while decreasing the Bruhat order of an $\sS$-admissible composition $\nu\in\bS_m\lambda$, one can attempt to shift all zero elements to the right while preserving $\sS$-admissibility. What remains is determining the position of the zeros. Ultimately, this reduces to analyzing the Bruhat graph $\bS_k\lambda$ where $k=\# \sS$. The latter has a unique minimal element, $\lambda_+$.
	\end{proof}
	
	\begin{proposition}
		\label{prp::parab::idem}
		For any partition $\lambda$ with repeating elements, the idempotent $\bbs_{\sS}^k$ commutes with the parabolic projection $\pi_\lambda$ and its adjoint embedding $\psi_{\lambda}^{+}$:
		\begin{equation}
			\label{eq::idemp::square}
			\begin{tikzcd}
				\bAD_{\sS[<k]}=\bAD_{\sS[<k]}(\lambda+\delta) 
				\arrow[rr,"\pi_\lambda"', two heads, shift right = 1.7] 
				\arrow[rr,"\psi_\lambda^{+}","\perp"', hookleftarrow, shift left = 1.7] 
				\arrow[d,"\bbs_{\sS}^{k}"]
				&& 
				\bAD_{\sS[<k]}(\lambda)
				\arrow[d,"\bbs_{\sS}^{k}"]
				\\
				\bAD_{\sS[\leq k]}=\bAD_{\sS[\leq k]}(\lambda+\delta)
				\arrow[rr,"\pi_\lambda"', two heads, shift right = 1.7] 
				\arrow[rr,"\psi_\lambda^{+}","\perp"', hookleftarrow, shift left = 1.7] 
				&& 
				\bAD_{\sS[\leq k]}(\lambda)
			\end{tikzcd}
		\end{equation}       
	\end{proposition}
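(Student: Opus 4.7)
The plan is to verify both squares in the diagram by tracing the action of one bubble-sort swap through the parabolic projection and showing it commutes step-by-step. The key is a combinatorial characterization of $\psi_\lambda^+$: for a composition $\ov{d}\in \bS_m\lambda$, the lift $\tilde{d}:=\psi_\lambda^+(\ov{d})\in \bS_m(\lambda+\delta)$ is uniquely determined by the condition that for all positions $p<q$,
\[
\tilde{d}_p<\tilde{d}_q \;\Longleftrightarrow\; d_p<d_q, \qquad \text{and} \qquad d_p=d_q \;\Longrightarrow\; \tilde{d}_p>\tilde{d}_q.
\]
In other words, the $\delta$-shift breaks ties within each $\bS_\lambda$-block in favor of the earlier position getting the larger $(\lambda+\delta)$-value (this is the minimal-length coset representative).

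Given this characterization, I would analyze a single pass of Algorithm~\ref{alg::bb:sort}. Write $\tilde{d}=\psi_\lambda^+(\ov{d})$ and consider the subsets
\[
\Sigma(\ov{d}):=\{s\in \sS_{<k}\ \colon d_{v(s)}<d_k\}, \qquad \Sigma(\tilde{d}):=\{s\in \sS_{<k}\ \colon \tilde{d}_{v(s)}<\tilde{d}_k\}
\]
from which the index $j$ is selected via~\eqref{eq::bubl::index}. The tie-breaking rule above implies that an equality $d_{v(s)}=d_k$ with $v(s)<k$ always lifts to $\tilde{d}_{v(s)}>\tilde{d}_k$, so such $s$ lie in neither $\Sigma$; meanwhile strict inequalities lift to strict inequalities of the same sign. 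Hence $\Sigma(\ov{d})=\Sigma(\tilde{d})$, and therefore the same index $j$ is chosen in both bubble-sort steps.

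Next, I would check that the swap preserves the invariant $\tilde{d}=\psi_\lambda^+(\ov{d})$. Because $j$ satisfies $d_{j}<d_k$ strictly (so $d_j$ and $d_k$ lie in different $\bS_\lambda$-blocks), swapping $d_j\leftrightarrow d_k$ in $\ov{d}$ and $\tilde{d}_j\leftrightarrow \tilde{d}_k$ in $\tilde{d}$ does not alter the within-block decreasing arrangement of $\tilde{d}$ relative to the new $\ov{d}$; the characterization of $\psi_\lambda^+$ still holds for the pair of new compositions. Iterating this argument over all swaps in the $k$-th step of bubble-sort yields
\[
\psi_\lambda^+\bigl(\bbs_{\sS}^k(\ov{d})\bigr)=\bbs_{\sS}^k\bigl(\psi_\lambda^+(\ov{d})\bigr),
\]
which is the commutativity of the right half of~\eqref{eq::idemp::square}. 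Applying $\pi_\lambda$ on both sides and using $\pi_\lambda\circ\psi_\lambda^+=\id$ yields $\pi_\lambda\circ \bbs_{\sS}^k=\bbs_{\sS}^k\circ\pi_\lambda$ on the image of $\psi_\lambda^+$; for a general element of $\bAD_{\sS[<k]}(\lambda+\delta)$, one notes that two elements of the same $\pi_\lambda$-fibre differ by transpositions within $\bS_\lambda$, all of which involve only positions carrying equal values in $\ov{d}$, and such positions play symmetric roles in the definition~\eqref{eq::bubl::index} (the set $\Sigma(\tilde{d})$ sees only strict inequalities of $\tilde{d}$-values, hence is invariant under such intra-block rearrangements).

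The only delicate point is ensuring that the index $j$ selected in~\eqref{eq::bubl::index}, which is the $[1,m]$-maximum among the $\prec$-maxima of $\Sigma$, is genuinely independent of the chosen $\pi_\lambda$-fibre representative; since $\Sigma\subset \sS$ is defined entirely by the strict comparison $d_{v(s)}<d_k$ and the two maps (max in $\prec$, then max in $\leq$) depend only on which elements of $\sS$ lie in $\Sigma$, this follows from the equality of the $\Sigma$'s established above. I expect this to be the main bookkeeping obstacle: checking that after each intermediate swap the remaining iterations of Algorithm~\ref{alg::bb:sort} continue to pick matching indices, which is handled cleanly by the invariance of the characterization of $\psi_\lambda^+$ under a swap of distinct-block values.
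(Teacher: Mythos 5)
Your characterization of $\psi_\lambda^+$ (ties within a $\bS_\lambda$-block break in favor of earlier positions receiving the larger $(\lambda+\delta)$-value) is correct, and the deduction that $\Sigma(\ov{d})=\Sigma(\tilde{d})$, hence that the first chosen index $j$ agrees, is also correct. The gap is in the inductive step: the claim that $\psi_\lambda^+\bigl((jk)\ov{d}\bigr)=(jk)\tilde{d}$, so that the characterization persists through the iterations, is false. Algorithm~\ref{alg::bb:sort} guarantees only $d_l\le d_j$ for positions $l\in(j,k)$ — equality is permitted. If such an $l$ with $d_l=d_j$ exists, then after the swap the two positions $l<k$ both carry the value $d_j$ in $(jk)\ov{d}$, and the minimal lift would require $\tilde{d}'_l>\tilde{d}'_k=\tilde{d}_j$; but from the original characterization applied to the pair $j<l$ with $d_j=d_l$ one has $\tilde{d}_j>\tilde{d}_l=\tilde{d}'_l$, a contradiction.

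A concrete counterexample to the stepwise invariant: take $\sS=\{s_1\succ s_2,\ s_3\succ s_4\succ s_5\}$ with $v(s_i)=i$, $\lambda=(2,1,1,0,0)$, so $\lambda+\delta=(6,4,3,1,0)$, and $\ov{d}=(1,1,0,0,2)$, which is $\sS$-admissible and $\sS[<5]$-dominant. Then $\tilde{d}=\psi_\lambda^+(\ov{d})=(4,3,1,0,6)$. At $k=5$ both sides select $j=3$, giving $(jk)\ov{d}=(1,1,2,0,0)$ and $(jk)\tilde{d}=(4,3,6,0,1)$; but $\psi_\lambda^+\bigl((1,1,2,0,0)\bigr)=(4,3,6,1,0)\ne (jk)\tilde{d}$. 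Indeed $\bbs_{\sS}^5$ terminates on the parabolic side after this one swap, whereas on the regular side it needs one more swap $(4,5)$ — an intra-block swap, invisible to $\pi_\lambda$ — to reach $(4,3,6,1,0)$. The proposition is true in this example, but your step-by-step invariant is not.

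So your argument needs an extra ingredient: you must track and discharge the catch-up intra-block swaps that appear on the $(\lambda+\delta)$-side. These always involve two positions carrying equal $\lambda$-values and hence project to the identity under $\pi_\lambda$; but you cannot conclude this from the invariance of $\Sigma$ alone, because once the minimal-lift characterization fails, the sets $\Sigma(\ov{d}')$ and $\Sigma(\tilde{d}')$ in fact differ (in the example, $\Sigma(\ov{d}')=\emptyset$ while $\Sigma(\tilde{d}')=\{s_4\}$). A clean fix is to prove a weaker invariant — that after any number of swaps one still has $\pi_\lambda(\tilde{d}^{(i)})=\ov{d}^{(i')}$ where $i'\le i$ counts only those swaps of $\tilde{d}$ that cross $\bS_\lambda$-blocks — and verify that the regular-side algorithm interleaves the same cross-block swaps with harmless intra-block ones. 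The paper's own proof is a terse version of exactly this picture: it invokes the observation $\lambda_i\ge\lambda_j\Rightarrow(\lambda+\delta)_i>(\lambda+\delta)_j$ to say the regular-side bubble-sort refines the parabolic one by extra intra-block reorderings that do not change $\pi_\lambda$.
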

	\begin{proof}
		The proof follows from a straightforward comparison of these two idempotents based on the following simple observation:
		\[
		(\lambda_i\geq\lambda_j) \Rightarrow (\lambda+\delta)_i >(\lambda+\delta)_j.
		\]
		This ensures that while performing bubble-sort in $\bAD_{\sS[<k]}$, we first reorder the elements of the composition $\ov{d}$ that become equal in $\bAD_{\sS[<k]}(\lambda)$ and do other reorderings afterward.
	\end{proof}
	
	\begin{corollary}
		\label{cor::bbs::monotone:1}
		The bubble-sort idempotent $\bbs_{\sS}^{k}$ is monotone for all $\lambda$.
	\end{corollary}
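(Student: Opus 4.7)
The plan is to bootstrap monotonicity for an arbitrary partition $\lambda$ from the already-established regular case (Proposition~\ref{lem::bbs::idemp}) by exploiting the adjunction $\pi_\lambda\dashv\psi_\lambda^+$ of~\eqref{eq::proj::parab::D} together with the commuting square of Proposition~\ref{prp::parab::idem}. All combinatorial content already sits inside those two results; the corollary itself is essentially a diagram chase in~\eqref{eq::idemp::square}.

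Concretely, I would take any pair $\ov{a}\preceq_{\Bruhat}\ov{b}$ in $\bAD_{\sS[<k]}(\lambda)$ and lift it via the right adjoint: since $\psi_\lambda^+$ is an order-preserving embedding into $\bAD_{\sS[<k]}(\lambda+\delta)$, we obtain $\psi_\lambda^+(\ov{a})\preceq_{\Bruhat}\psi_\lambda^+(\ov{b})$. Because $\lambda+\delta$ is regular, Proposition~\ref{lem::bbs::idemp} applies in the upper row of~\eqref{eq::idemp::square} and yields
$$\bbs_{\sS}^k\bigl(\psi_\lambda^+(\ov{a})\bigr)\ \preceq_{\Bruhat}\ \bbs_{\sS}^k\bigl(\psi_\lambda^+(\ov{b})\bigr)$$
inside $\bAD_{\sS[\leq k]}(\lambda+\delta)$.

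Then I would push the inequality back down to $\bAD_{\sS[\leq k]}(\lambda)$ by applying the order-preserving projection $\pi_\lambda$. The commutativity of~\eqref{eq::idemp::square} gives $\pi_\lambda\circ\bbs_{\sS}^k=\bbs_{\sS}^k\circ\pi_\lambda$, and combining this with the retraction identity $\pi_\lambda\circ\psi_\lambda^+=\id$ collapses both sides to $\bbs_{\sS}^k(\ov{a})$ and $\bbs_{\sS}^k(\ov{b})$ respectively, which is precisely the monotonicity claim. The only point worth double-checking is that $\psi_\lambda^+$ indeed lands in the $\sS_{<k}$-dominant $\sS$-admissible subset on which the regular-case monotonicity applies, but this is exactly the content of the left vertical arrow of~\eqref{eq::idemp::square}, so no additional verification is required and no genuine obstacle arises.
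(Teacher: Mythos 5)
Your proof is correct and takes essentially the same approach as the paper: both express $\bbs_{\sS}^{k}$ on $\bAD_{\sS[<k]}(\lambda)$ as the composition $\pi_\lambda\circ\bbs_{\sS}^{k}\circ\psi_\lambda^{+}$ of three monotone maps, invoking the commuting square of Proposition~\ref{prp::parab::idem} together with $\pi_\lambda\circ\psi_\lambda^{+}=\id$. You merely unfold the diagram chase that the paper states in a single line.
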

	\begin{proof}
		The idempotent $\bbs_{\sS}^{k}:\bAD_{\sS[<k]}(\lambda)\twoheadrightarrow \bAD_{\sS[\leq k]}$ is the composition of three monotone maps $\pi_{\lambda}\circ \bbs_{\sS}^{k} \circ \psi_{\lambda}^{+}$ and is consequently also monotone.
	\end{proof}
	
	\begin{corollary}
		The poset $\bD_{\sS}^{v}(\lambda)$ is bounded.
	\end{corollary}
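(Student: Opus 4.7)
The plan is to exhibit an explicit minimum and maximum of $\bD_{\sS}^{v}(\lambda)$ using the combinatorial tools already built in this section. I may assume $l(\lambda)\leq \#\sS$, since otherwise Lemma~\ref{lem::Admissible::poset} gives $\bA_{\sS}^{v}(\lambda)=\emptyset$ and hence $\bD_{\sS}^{v}(\lambda)=\emptyset$. Under this assumption, the same lemma identifies $\bA_{\sS}^{v}(\lambda)$ with the bounded interval $[\lambda_+^{\sS},\lambda_-]$ in the parabolic Bruhat graph $\bS_m\lambda$. The minimum $\lambda_+^{\sS}$ is, by its very definition, $\sS$-dominant, so it lies in $\bD_{\sS}^{v}(\lambda)\subset \bA_{\sS}^{v}(\lambda)$ and is automatically the minimum of the sub-poset.

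For the maximum, the idea is to apply the bubble-sort composition $\bbs_{\sS}=\bbs_{\sS}^{m}\circ\cdots\circ\bbs_{\sS}^{1}$ from~\eqref{eq::bbs::comp} to the top element $\lambda_-$. By Corollary~\ref{cor::bbs::monotone:1} each factor $\bbs_{\sS}^{k}$ is a non-increasing monotone idempotent from $\bAD_{\sS[<k]}^{v}(\lambda)$ onto $\bAD_{\sS[\leq k]}^{v}(\lambda)$; composing them yields a non-increasing monotone idempotent $\bA_{\sS}^{v}(\lambda)\twoheadrightarrow \bD_{\sS}^{v}(\lambda)$. Lemma~\ref{lem::poset::idempotent} then characterises $\bbs_{\sS}(\lambda_-)$ as $\sup\{y\in \bD_{\sS}^{v}(\lambda)\colon y\le \lambda_-\}$, and since every element of $\bD_{\sS}^{v}(\lambda)$ lies in $[\lambda_+^{\sS},\lambda_-]$, this supremum is precisely the maximum of $\bD_{\sS}^{v}(\lambda)$.

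I do not foresee a genuine obstacle here: the substantive work — constructing bubble-sort, verifying that it is a non-increasing monotone idempotent even in the parabolic case via the commuting square~\eqref{eq::idemp::square}, and describing $\bA_{\sS}^{v}(\lambda)$ as a Bruhat interval — has already been carried out in the preceding subsections. The only routine verification remaining is that composing non-increasing monotone idempotents along the chain $\bA_{\sS}^{v}(\lambda)\supset \bAD_{\sS[\leq 1]}^{v}(\lambda)\supset\ldots\supset \bD_{\sS}^{v}(\lambda)$ produces again such an idempotent with image $\bD_{\sS}^{v}(\lambda)$, which is immediate from the definitions.
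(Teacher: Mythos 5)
Your proof is correct and takes essentially the same route as the paper's: the paper simply notes that $\bA_{\sS}^{v}(\lambda)$ is a bounded Bruhat interval and that the image of a bounded poset under a monotone idempotent is again bounded, which is precisely the argument you give, only you spell out the minimum as $\lambda_+^{\sS}$ (a fixed point of $\bbs_{\sS}$) and the maximum as $\bbs_{\sS}(\lambda_-)$ via Lemma~\ref{lem::poset::idempotent}. The extra detail — checking that the composition $\bbs_{\sS}^m\circ\cdots\circ\bbs_{\sS}^1$ is again a non-increasing monotone idempotent because the images are nested and each factor fixes the final image pointwise — is a harmless expansion of what the paper leaves implicit.
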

	\begin{proof}
		The poset $\bA_{\sS}^{v}$ is bounded (since it is an interval in the Bruhat graph), which implies that its image under the monotone idempotent is also bounded.
	\end{proof}
	
	Unfortunately, not all properties of the Bruhat graph carry over to the parabolic case, as seen in the following example.
	
	\begin{example}
		\label{ex::nongrad}
		The poset $\bS_{2}\backslash \bS_4 \slash \bS_2$ is not graded:
		$$
		\begin{array}{c}
			\sS:=
			\begin{tikzpicture}[scale=0.4, shift={(0,-.5)}]
				\draw[step=1cm] (0,0) grid (4,1);
				\node (v1) at (.5,0.5) {{ \color{blue}$\bullet$ }};
				\node (v2) at (1.5,0.5) {{ \color{blue}$\bullet$ }};
				\node (v3) at (2.5,0.5) {{ \color{blue}$\bullet$ }};
				\node (v4) at (3.5,0.5) {{ \color{blue}$\bullet$ }}; 
				\draw  (1.5,.8) edge[blue,line width=1.1pt,->,bend right=60] (.5,.8);
			\end{tikzpicture} = \\
			=\{s_1\succ s_2,s_3,s_4\}
		\end{array},
		\qquad
		\bD_{\sS}(3\, 2^2\, 1)=
		\begin{tikzpicture}[scale=0.4, shift={(0,-4)}]
			\draw[step=1cm] (-2,0) grid (2,1);
			\node (v01) at (-1.5,0.4) {\tiny \color{blue} 2};
			\node (v02) at (-.5,0.4) {\tiny \color{blue} 1};
			\node (v03) at (.5,0.4) {\tiny \color{blue} 2};
			\node (v04) at (1.5,0.4) {\tiny \color{blue} 3};
			\draw[step=1cm] (-5,2) grid (-1,3);
			\node (v01) at (-1.5-3,0.4+2) {\tiny \color{blue} 2};
			\node (v02) at (-.5-3,0.4+2) {\tiny \color{blue} 1};
			\node (v03) at (.5-3,0.4+2) {\tiny \color{blue} 3};
			\node (v04) at (1.5-3,0.4+2) {\tiny \color{blue} 2};
			\draw[step=1cm] (5,3) grid (1,4);
			\node (v01) at (-1.5+3,0.4+3) {\tiny \color{blue} 2};
			\node (v02) at (-.5+3,0.4+3) {\tiny \color{blue} 2};
			\node (v03) at (.5+3,0.4+3) {\tiny \color{blue} 1};
			\node (v04) at (1.5+3,0.4+3) {\tiny \color{blue} 3};
			\draw (-.5,1) -- (-3.5,2);
			\draw (.5,1) -- (3.5,3);
			\draw[step=1cm] (-5,4) grid (-1,5);
			\node (v01) at (-1.5-3,0.4+4) {\tiny \color{blue} 3};
			\node (v02) at (-.5-3,0.4+4) {\tiny \color{blue} 1};
			\node (v03) at (.5-3,0.4+4) {\tiny \color{blue} 2};
			\node (v04) at (1.5-3,0.4+4) {\tiny \color{blue} 2};
			\draw[step=1cm] (5,5) grid (1,6);
			\node (v01) at (-1.5+3,0.4+5) {\tiny \color{blue} 2};
			\node (v02) at (-.5+3,0.4+5) {\tiny \color{blue} 2};
			\node (v03) at (.5+3,0.4+5) {\tiny \color{blue} 3};
			\node (v04) at (1.5+3,0.4+5) {\tiny \color{blue} 1};
			\draw (-3.5,3) -- (-3.5,4);
			\draw (3.5,4) -- (3.5,5);
			\draw (-2.5,3) -- (2.5,5);
			\draw[step=1cm] (-5,6) grid (-1,7);
			\node (v01) at (-1.5-3,0.4+6) {\tiny \color{blue} 3};
			\node (v02) at (-.5-3,0.4+6) {\tiny \color{blue} 2};
			\node (v03) at (.5-3,0.4+6) {\tiny \color{blue} 1};
			\node (v04) at (1.5-3,0.4+6) {\tiny \color{blue} 2};
			\draw[step=1cm] (-2,8) grid (2,9);
			\node (v01) at (-1.5,0.4+8) {\tiny \color{blue} 3};
			\node (v02) at (-.5,0.4+8) {\tiny \color{blue} 2};
			\node (v03) at (.5,0.4+8) {\tiny \color{blue} 2};
			\node (v04) at (1.5,0.4+8) {\tiny \color{blue} 1};
			\draw (-3.5,5) -- (-3.5,6);
			\draw (-3.5,7) -- (-.5,8);
			\draw (3.5,6) -- (.5,8);
			\draw (2.5,4) -- (-2.5,6);
		\end{tikzpicture}.
		$$
	\end{example}

	The following conjectures have been verified in several specific cases. Notably, Conjecture~\ref{conj::Shell} implies Conjecture~\ref{conj::Moebius}, which is essential for determining coefficients in the Cauchy identity~\eqref{eq::Cauchy::Moebius} for staircase matrices.
	
	\begin{conjecture}
		\label{conj::Shell}
		For any partition $\lambda$ with repeating elements, the poset $\bD_{\sS}(\lambda)$ is shellable.
	\end{conjecture}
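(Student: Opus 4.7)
The plan is to extend the $\EL$-shelling of the regular case $\bD_{\sS}^{v}=\bD_{\sS}^{v}(\lambda+\delta)$ obtained in Proposition~\ref{prp::EL::S::domninant} to parabolic $\lambda$ by means of the adjoint pair $(\pi_\lambda,\psi_\lambda^+)$ from~\eqref{eq::proj::parab::D}. Since $\bD_{\sS}^{v}(\lambda)$ is bounded but need not be graded (Example~\ref{ex::nongrad}), I would work in the Bj\"orner--Wachs framework of shellability for non-pure bounded posets, where an $\EL$-labeling requires only that every closed interval admit a unique strictly increasing maximal chain, and that this chain be lexicographically minimal among all maximal chains of the interval.

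Concretely, I would label each cover $\ov{a}\prec \ov{b}=(ij)\ov{a}$ of $\bD_{\sS}^{v}(\lambda)$ by the uniquely determined minimal $\sS$-disorder $(ij)$ furnished by Theorem~\ref{thm::dominant::edge}, ordering the labels by the total order~\eqref{eq::order::transpos} already used in the regular case. Given an interval $[\ov{a},\ov{b}]\subset \bD_{\sS}^{v}(\lambda)$, I would lift it via $\psi_\lambda^+$ into $\bD_{\sS}^{v}$, extract from Proposition~\ref{prp::EL::S::domninant} the unique increasing maximal chain $\gamma$ of $[\psi_\lambda^+(\ov{a}),\psi_\lambda^+(\ov{b})]$, and push $\gamma$ back down by $\pi_\lambda$. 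The commutative diagram~\eqref{eq::idemp::square}, together with the monotonicity of $\bbs_{\sS}^k$ (Corollary~\ref{cor::bbs::monotone:1}), is what makes it possible to show that $\pi_\lambda$ collapses precisely those consecutive covers of $\gamma$ whose labeling transpositions act trivially on $\lambda$, while keeping the surviving labels strictly increasing with respect to~\eqref{eq::order::transpos}.

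Uniqueness of the resulting increasing chain in $[\ov{a},\ov{b}]$ would then be established by running the argument in reverse: any other strictly increasing maximal chain downstairs would lift via $\psi_\lambda^+$, after replacing each parabolic coset by its longest representative, to a saturated chain upstairs whose labels are still strictly increasing, contradicting uniqueness in $\bD_{\sS}^{v}$. Lex-minimality would be obtained by the same ``square completion'' technique as in the proof of Lemma~\ref{lm::EL::S}, now powered by Proposition~\ref{prp::Bruhat::square}: any locally decreasing pair of consecutive labels in $\bD_{\sS}^{v}(\lambda)$ can be replaced by a lexicographically smaller increasing pair inherited from the upstairs shelling.

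The hard part is the compatibility between the combinatorics of longest coset representatives for $\bS_\lambda$ and the total order~\eqref{eq::order::transpos} on transpositions: one must verify that when a minimal $\sS$-disorder of $\ov{a}\in\bD_{\sS}^{v}(\lambda)$ is lifted via $\psi_\lambda^+$ to a saturated block in $\bD_{\sS}^{v}$, the transpositions in that block appear in strictly increasing order relative to~\eqref{eq::order::transpos}, so that concatenating several such blocks preserves monotonicity of labels. This is a finite check internal to the parabolic Bruhat graph of $\bS_m$, but it is precisely where the non-graded pathology of Example~\ref{ex::nongrad} makes itself felt and is the delicate step of the argument. Once it is settled, $\EL$-shellability of $\bD_{\sS}^{v}(\lambda)$ follows uniformly, and the non-pure Bj\"orner--Wachs theory then forces the M\"obius function of $\bD_{\sS}^{v}(\lambda)$ to take values in $\{0,\pm 1\}$, which is exactly Conjecture~\ref{conj::Moebius}.
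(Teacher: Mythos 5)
This statement is a \emph{conjecture} in the paper (Conjecture~\ref{conj::Shell}); the authors offer no proof and only report that it has been verified in specific cases. So there is no ``paper proof'' to compare against; what you have written is an attack plan for an open problem, and the relevant question is whether the plan actually closes.

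It does not, for at least three concrete reasons. First, the central move --- lift an interval $[\ov{a},\ov{b}]\subset\bD_{\sS}^{v}(\lambda)$ via $\psi_\lambda^{+}$, take the unique increasing maximal chain upstairs in $\bD_{\sS}^{v}$, and push it back down by $\pi_\lambda$ --- does not obviously produce a \emph{maximal} chain downstairs. The diagram~\eqref{eq::idemp::square} says that bubble sort commutes with the adjunction $(\pi_\lambda,\psi_\lambda^{+})$; it says nothing about cover relations being taken to cover relations, and Example~\ref{ex::nongrad} is precisely the witness that they are not: there the poset $\bD_{\sS}^{v}(\lambda)$ has maximal chains of different lengths, so $\pi_\lambda$ applied to a saturated chain upstairs can skip cover relations or fail to be saturated. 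You would need a separate lemma controlling how $\pi_\lambda$ acts on the Hasse diagram, and that is exactly the content that is missing. Second, your uniqueness argument runs the lift in the other direction by ``replacing each parabolic coset by its longest representative,'' i.e.\ using $\psi_\lambda^{-}$; but the paper only establishes~\eqref{eq::proj::parab::D} for $\psi_\lambda^{+}$, and it is not clear (and I believe generally false) that $\psi_\lambda^{-}$ maps $\bD_{\sS}^{v}(\lambda)$ into $\bD_{\sS}^{v}$ at all --- the longest representative of a coset of an $\sS$-dominant composition need not itself be $\sS$-dominant. So the reverse direction of your bijection between increasing chains is unsupported. Third, the closing sentence overreaches: EL-shellability of a non-pure bounded poset does \emph{not} by itself force the M\"obius function into $\{0,\pm1\}$. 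In the non-pure Bj\"orner--Wachs framework, $\mu(x,y)$ is an alternating sum over decreasing maximal chains of each length, and without subthinness (which the paper establishes only in the regular, graded case via Corollary~\ref{cor::dominant::subthin} and Fact~\ref{fact::Shell::ball}) nothing prevents more than one sphere from appearing. So even granting the shelling, Conjecture~\ref{conj::Moebius} would not yet follow by your argument; it would require a separate subthinness-type control on the non-graded intervals.

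In short: the strategy of transporting the regular EL-shelling through $(\pi_\lambda,\psi_\lambda^{+})$ is the natural one, and your identification of the delicate step (compatibility of coset combinatorics with the order~\eqref{eq::order::transpos}) is apt, but as written the argument has unfilled gaps at exactly the places where the non-gradedness of Example~\ref{ex::nongrad} bites, and the final deduction of Conjecture~\ref{conj::Moebius} from Conjecture~\ref{conj::Shell} is not justified without an additional subthinness input.
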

	
	\begin{conjecture}
		\label{conj::Moebius}
		The M\"obius function $\mu^{\bD_{\sS}(\lambda)}(\ttt,\ttt)$ on the poset $\bD_{\sS}(\lambda)$ takes values in the set $\{-1,0,1\}$.
	\end{conjecture}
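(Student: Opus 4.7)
The plan is to reduce Conjecture~\ref{conj::Moebius} to the regular case, where Corollary~\ref{cor::Mobius::DL} already establishes $\mu\in\{-1,0,1\}$. The key structural tool is Proposition~\ref{prp::parab::idem}: bubble-sort commutes with the parabolic projection $\pi_\lambda:\bD_\sS\twoheadrightarrow\bD_\sS(\lambda)$, with monotone adjoints $\psi_\lambda^{+}$ and $\psi_\lambda^{-}$ selecting minimal and maximal length lifts. The first step is to observe that, for each $\overline{c}\in\bD_\sS(\lambda)$, the fiber $\pi_\lambda^{-1}(\overline{c})\subset\bD_\sS$ is a closed Bruhat interval with minimum $\psi_\lambda^{+}(\overline{c})$ and maximum $\psi_\lambda^{-}(\overline{c})$; in particular its order complex is contractible since it has extremal elements (Corollary~\ref{fact::subposet}).

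Fix $\overline{a}\prec_{\Bruhat}\overline{b}$ in $\bD_\sS(\lambda)$. I would apply Quillen's Theorem A in the form recalled in \S\ref{sec::poset::terms} to the monotone surjection $\pi_\lambda:\pi_\lambda^{-1}((\overline{a},\overline{b})^{\bD_\sS(\lambda)})\twoheadrightarrow(\overline{a},\overline{b})^{\bD_\sS(\lambda)}$; the (upward) fibers are the closed intervals $[\psi_\lambda^{+}(\overline{c}),\psi_\lambda^{-}(\overline{c})]^{\bD_\sS}$, hence contractible, so the map induces a homotopy equivalence on order complexes. Consequently
$$\mu^{\bD_\sS(\lambda)}(\overline{a},\overline{b})=\tilde{\chi}\bigl(\Delta(\pi_\lambda^{-1}((\overline{a},\overline{b})))\bigr).$$

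Next, I set $\overline{a}^{+}:=\psi_\lambda^{+}(\overline{a})$ and $\overline{b}^{-}:=\psi_\lambda^{-}(\overline{b})$, both lying in the regular poset $\bD_\sS$. The open interval $(\overline{a}^{+},\overline{b}^{-})^{\bD_\sS}$ decomposes set-theoretically as a disjoint union of $\pi_\lambda^{-1}((\overline{a},\overline{b}))$ together with the two boundary slices $B_a:=(\overline{a}^{+},\psi_\lambda^{-}(\overline{a})]^{\bD_\sS}$ and $B_b:=[\psi_\lambda^{+}(\overline{b}),\overline{b}^{-})^{\bD_\sS}$, each of which is contractible since it possesses a maximum or minimum element. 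I would then run a Mayer--Vietoris / inclusion-exclusion argument on the order complex $\Delta((\overline{a}^{+},\overline{b}^{-})^{\bD_\sS})$, using that chains mixing $B_a$ (resp.\ $B_b$) with $\pi_\lambda^{-1}((\overline{a},\overline{b}))$ correspond to chains ending just above $\overline{a}$ (resp.\ starting just below $\overline{b}$) in the regular Bruhat interval. This should express $\mu^{\bD_\sS(\lambda)}(\overline{a},\overline{b})$ in terms of $\mu^{\bD_\sS}(\overline{a}^{+},\overline{b}^{-})$ plus corrections from contractible subcomplexes, and since $\mu^{\bD_\sS}(\overline{a}^{+},\overline{b}^{-})\in\{-1,0,1\}$ by Corollary~\ref{cor::Mobius::DL}, the bound transfers.

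The main obstacle will be the precise bookkeeping in the last step: the three pieces partitioning $(\overline{a}^{+},\overline{b}^{-})^{\bD_\sS}$ are not order-disjoint, so controlling the comparabilities between $B_a$, $B_b$ and $\pi_\lambda^{-1}((\overline{a},\overline{b}))$ requires a careful analysis of the covering relations given by Theorem~\ref{thm::dominant::edge} in the cosets of $\overline{a}$ and $\overline{b}$, in order to realize the two boundary slices as deformation retracts of suitable open subcomplexes. An alternative route is to establish Conjecture~\ref{conj::Shell} directly, which by an appropriate extension of Corollary~\ref{cor::Mobius::thin} would immediately imply the present conjecture; since the poset need not be graded (Example~\ref{ex::nongrad}), one would have to work in the CL-shellable framework, pulling back the $\EL$-labelling of Proposition~\ref{prp::EL::S::domninant} along $\bbs_\sS^{-1}$, but verifying the chain-lexicographic condition on non-graded intervals is notoriously delicate (cf.~\cite{Stembridge, Kobayashi::double::coset} for related results on double quotients of Coxeter groups).
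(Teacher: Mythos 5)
This is stated in the paper as \emph{Conjecture}~\ref{conj::Moebius}, not as a theorem: the authors explicitly leave it open (they note Conjecture~\ref{conj::Shell} would imply it, and that they have verified cases). So there is no paper proof to compare against, and you should be aware that what you are trying to supply is a new argument for an open statement.

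Your sketch is honest about being incomplete, but there is also a concrete error before the ``bookkeeping'' you flag. You use $\psi_\lambda^{-}(\overline{c})$ as if it lies in $\bD_\sS$, both when claiming the fiber $\pi_\lambda^{-1}(\overline{c})$ is the closed interval $[\psi_\lambda^{+}(\overline{c}),\psi_\lambda^{-}(\overline{c})]^{\bD_\sS}$ and when forming the slices $B_a$, $B_b$ and the ambient interval $(\overline{a}^{+},\overline{b}^{-})^{\bD_\sS}$. But the paper's lemma around~\eqref{eq::proj::parab::D} only asserts that $\pi_\lambda$ and $\psi_\lambda^{+}$ restrict to $\bD_\sS$; the maximal-length lift $\psi_\lambda^{-}$ does not preserve $\sS$-dominance in general. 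For instance, take $\sS=\{s_1\succ s_2\}$ with $v(s_i)=i$ and $\lambda=(2,2)$: the only $\sS$-dominant composition is $\overline{c}=(2,2)$, and with $\lambda+\delta=(3,2)$ one has $\psi_\lambda^{-}(\overline{c})=(2,3)$, which violates $d_{v(s_1)}\ge d_{v(s_2)}$. So the fiber over $\overline{c}$ inside $\bD_\sS$ need not have a maximum, and $\overline{b}^{-}$ may not even be an element of $\bD_\sS$, which makes the three-piece partition of $(\overline{a}^{+},\overline{b}^{-})^{\bD_\sS}$ ill-posed as written.

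The first half of the plan is sound: $\psi_\lambda^{+}\circ\pi_\lambda$ is a decreasing monotone idempotent on $\bD_\sS$ (Corollary~\ref{cor::bbs::monotone:1} and Lemma~\ref{lem::poset::idempotent}), so by Corollary~\ref{fact::subposet} the order complex of $\pi_\lambda^{-1}((\overline{a},\overline{b})^{\bD_\sS(\lambda)})$ is homotopy equivalent to that of $(\overline{a},\overline{b})^{\bD_\sS(\lambda)}$, and hence $\mu^{\bD_\sS(\lambda)}(\overline{a},\overline{b})$ equals the reduced Euler characteristic of the preimage poset. The genuine gap is the second half: you would need to show that this preimage, which is \emph{not} a Bruhat interval of $\bD_\sS$ and may not even be contained in an interval with a top element inside $\bD_\sS$, still has reduced Euler characteristic in $\{-1,0,1\}$. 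The Mayer--Vietoris/inclusion-exclusion step is precisely where this has to be established, and nothing in your argument controls the homotopy type of that preimage; the boundary slices being ``contractible'' is not enough, because one also needs control of how they glue (the chains that cross the three regions). The alternative route you mention (prove Conjecture~\ref{conj::Shell} and deduce the Möbius bound) is the one the paper itself highlights, and as you note, CL-shellability of a non-graded double quotient is exactly the hard open problem here. In short, the proposal is a reasonable research direction but not a proof; the appeal to $\psi_\lambda^{-}$ must be removed, and the reduction to the regular case is not complete.
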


	\subsubsection{{\bf Arborescent Poset with Consistent Linearization}}
	\label{sec::linearization}
	
	We conclude this section by considering the opposite case of an arborescent poset $\sT$ enhanced with an consistent {\bf linearization} (Definition~\ref{def::poset::linear}).
	
	\begin{theorem}
		\label{thm::bbs::op}	
		Suppose that $(\sT,\prec)$ is an arborescent poset with an order-preserving consistent linearization $h:\sT\hookrightarrow [1n]$. Then, if $l(\lambda)\leq \# \sT$, the following properties hold for the set $\bD_{\sT}^{h}(\lambda)$ of $\sT$-dominant compositions:
		\begin{itemize}
			\item The set $\bD_{\sT}^{h}(\lambda)$ is empty if $l(\lambda)>\#\sT$.
			\item If $l(\lambda)\leq\#\sT$, then:
			\begin{itemize}
				\item The covering relations in the Bruhat partial order on the set $\bD_{\sT}^{h}(\lambda)$ of $\sT$-dominant compositions are given by transpositions in minimal disorders.
				\item There exists a nondecreasing monotone projection $\bbs_{\sT}^{\op}:[\lambda_+,\lambda_-^{\sT}] \to  \bD_{\sT}^{h}(\lambda)$ from the interval in the Bruhat graph $\bS_n\lambda$ to the set of $\sT$-dominant compositions.
				\item The poset $\bD_{\sT}^{h}(\lambda)$ is bounded.        
			\end{itemize}
			\item If $l(\lambda)=\#\sT$, then the poset $\bD_{\sT}^{h}$ is graded, subthin, and $\EL$-shellable.
		\end{itemize}
	\end{theorem}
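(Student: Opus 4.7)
The plan is to reduce the theorem entirely to the anti-linearization case, which has already been established in Theorem~\ref{thm::S:dom::outline}, Corollary~\ref{cor::bbs::monotone}, and the discussion of~\S\ref{sec::S::parabolic}. The reduction is carried out using Remark~\ref{rk::opposite::linearization}: the order-reversing involution $\op:[1n]\to[1n]$, $l\mapsto n-l+1$, converts a consistent linearization $h$ into a consistent anti-linearization $v:=\op\circ h$, and conversely. At the level of compositions, $\op$ acts by reversal $\ov{d}\mapsto \ov{d}^{\op}=(d_n,\ldots,d_1)$, and this map interchanges $\sT$-dominance for $h$ with $\sT$-dominance for $v$. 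Crucially, reversal of a composition is order-reversing on the Bruhat poset $\bS_n\lambda$: an edge $\ov{a}\prec_{\Bruhat}(ij)\ov{a}$ corresponds, after applying $\op$, to an edge $(i',j')\ov{a}^{\op}\succ_{\Bruhat}\ov{a}^{\op}$ with $(i',j'):=(n-j+1,n-i+1)$, and the extremal compositions are swapped: $\lambda_+^{\sT}\leftrightarrow \lambda_-^{\sT,v}$, $\lambda_-\leftrightarrow\lambda_+$ (in the sense that the interval $[\lambda_+^{\sT,v},\lambda_-]$ for $v$ becomes $[\lambda_+,\lambda_-^{\sT,h}]$ for $h$).

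First I would verify the emptiness statement: an $\sT$-dominant composition has at most $\#\sT$ nonzero entries, and the admissibility condition in Lemma~\ref{lem::Admissible::poset} (transported via $\op$) shows $\bD_{\sT}^{h}(\lambda)=\emptyset$ whenever $l(\lambda)>\#\sT$. Next, the covering relations: since reversal is an anti-isomorphism of $\bS_n\lambda$ sending the set $\bD_{\sT}^{v}(\lambda)$ to $\bD_{\sT}^{h}(\lambda)$, Theorem~\ref{thm::dominant::edge} immediately gives that covering relations in $\bD_{\sT}^{h}(\lambda)$ are transpositions in minimal disorders (where the notion of ``minimal $\sT$-disorder'' for a linearization is the obvious analogue obtained from Definition~\ref{def::disorder} by reversing inequalities and exchanging the roles of $\succ$ and $\prec$).

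For the monotone projection, I would define $\bbs_{\sT}^{\op}$ as the conjugate of $\bbs_{\sT}$ by reversal:
\[
\bbs_{\sT}^{\op}(\ov{d}):=\bigl(\bbs_{\sT}^{v}(\ov{d}^{\op})\bigr)^{\op}.
\]
Since $\bbs_{\sT}^{v}:\bA_{\sT}^{v}(\lambda)\twoheadrightarrow \bD_{\sT}^{v}(\lambda)$ is a nonincreasing monotone idempotent with image $\bD_{\sT}^{v}(\lambda)$ (Corollary~\ref{cor::bbs::monotone} and Corollary~\ref{cor::bbs::monotone:1}), and reversal is an anti-isomorphism of the Bruhat poset, the map $\bbs_{\sT}^{\op}$ is automatically a nondecreasing monotone idempotent onto $\bD_{\sT}^{h}(\lambda)$. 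Its domain $\bA_{\sT}^{h}(\lambda)$ is precisely the interval $[\lambda_+,\lambda_-^{\sT}]$ by the reversed form of Lemma~\ref{lem::Admissible::poset}. Boundedness of $\bD_{\sT}^{h}(\lambda)$ is then immediate: it is the image of a bounded poset (an interval) under a monotone idempotent.

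Finally, when $l(\lambda)=\#\sT$ and $\lambda$ is regular we invoke Theorem~\ref{thm::poset::AD} for $(\sT,v)$: the poset $\bD_{\sT}^{v}$ is graded, subthin, and $\EL$-shellable. Gradedness and the subthin property are self-dual under order-reversal, so they transfer directly to $\bD_{\sT}^{h}$. For $\EL$-shellability, I would take the $\EL$-labelling from Proposition~\ref{prp::EL::S::domninant} (induced from~\eqref{eq::order::transpos}) and pull it back along the reversal isomorphism using the opposite total order on transpositions; the existence and lexicographic uniqueness of increasing maximal chains are preserved under this dualization. The one point that requires slightly more care --- and is the only nontrivial part of the argument --- is checking that the reversed total order on transpositions still yields a valid $\EL$-labelling of the ambient Bruhat graph $\bS_n\lambda$, but this is a well-known self-duality of the Bruhat order on the symmetric group. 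Assembling these pieces completes the proof.
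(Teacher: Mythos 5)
Your proposal is correct and follows essentially the same route as the paper, whose entire proof is a one-line appeal to Remark~\ref{rk::opposite::linearization} together with the results already established for consistent anti-linearizations; you have simply written out the details of that reduction (conjugating $\bbs_{\sT}$ by the reversal involution, transferring covering relations, boundedness, gradedness, subthinness, and the $\EL$-labelling). The extra care you take with the $\EL$-shellability transfer is a reasonable elaboration of a point the paper leaves implicit, but it is not a different argument.
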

	\begin{proof}
		This follows from Remark~\ref{rk::opposite::linearization} and the results already established for the consistent order-reversing anti-linearization of $\sT$.
	\end{proof}

	\section{Distributive lattice of Demazure submodules}
	\label{sec::Demazure::all}
	
	\subsection{Demazure Modules}
	\label{sec::Demazure::def}
	
	Let $\fg$ be a semi-simple (or reductive) Lie algebra, and let $\fb=\fb^{+},\fb^{-}$ be its positive and negative Borel subalgebras, with $\fh=\fb^{+}\cap\fb^{-}$ being the Cartan subalgebra. 
	Let $\Phi=\Phi_{+}\sqcup\Phi_{-}$ denote the corresponding root system, and let $P$ be the weight lattice of $\fg$ with $P_{+}$ as the subset of dominant weights. 
	The Weyl group $W$ acts naturally on the weight lattice $P$ and  for any given weight $\nu\in P$, there exists a unique element $\sigma_\nu\in W$ of minimal length such that $\nu=\sigma_{\nu}(\lambda)$ for some $\lambda\in P_+$. 
	
	Let $V_{\lambda}$ be the irreducible finite-dimensional $\fg$-module with highest weight $\lambda$. The $\nu$-weight subspace of $V_{\lambda}$ is one-dimensional, and we denote by $v_{\nu}$ a generator of this space (in particular, $v_{\nu}\in V_{\lambda}$ is an extremal weight vector).
	M.~Demazure introduced (\cite{Dem1}) the submodule 
	$$D_{\nu}:=\U(\fb^+)v_{\nu}\subset V_{\lambda},$$ 
	defined as the submodule generated by the extremal vector $v_{\nu}$, and established the following key observation:
	
	\begin{fact}
		For all $\mu,\nu\in W\lambda_+$, the following equivalence holds:
		$$\nu\prec_{\Bruhat}\mu \ {\Longleftrightarrow} \ D_{\nu} \subset D_{\mu}.$$
	\end{fact}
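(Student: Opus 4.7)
The plan is to prove both implications by induction on the length $\ell(\sigma_\mu)$, using the inductive construction of Demazure modules via minimal parabolic enveloping algebras. For a simple reflection $s_i$, set $\fp_i := \fb^+ + \bC f_i$. The key lemma I would establish first is: whenever $\ell(s_i\sigma) = \ell(\sigma)+1$, one has $D_{s_i\sigma\lambda_+} = U(\fp_i)\,D_{\sigma\lambda_+}$. The proof is an $\msl_2$-computation on the triple $\{e_i,f_i,h_i\}$: the length condition forces $\sigma^{-1}(\alpha_i)\in\Phi_+$, hence $\langle\sigma\lambda_+,\alpha_i^\vee\rangle\geq 0$, so $v_{s_i\sigma\lambda_+}$ is obtained from $v_{\sigma\lambda_+}$ by applying a power of $f_i$, while conversely $v_{\sigma\lambda_+}$ lies in the $\msl_2$-submodule generated by $v_{s_i\sigma\lambda_+}$ under the action of $e_i$ (which lies in $\fb^+$). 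Iterating along a reduced decomposition $\sigma_\mu = s_{i_1}\cdots s_{i_k}$ gives the standard formula
\[
D_\mu = U(\fp_{i_1})\,U(\fp_{i_2})\cdots U(\fp_{i_k})\,v_{\lambda_+}.
\]

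For the forward direction ($\nu\preceq_{\Bruhat}\mu \Rightarrow D_\nu\subseteq D_\mu$), I would invoke the classical subword characterization of the Bruhat order: $\sigma_\nu\preceq\sigma_\mu$ iff $\sigma_\nu$ appears as a reduced subword of some (equivalently, every) reduced expression of $\sigma_\mu$. Fixing a reduced decomposition of $\sigma_\mu$ and the matching subword for $\sigma_\nu$, the description above writes $D_\nu$ as a product of a subset of the $U(\fp_{i_j})$ applied to $v_{\lambda_+}$. Since each $U(\fp_{i_j})$ contains the identity, one may insert the missing factors without shrinking the space, which yields the inclusion.

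The main obstacle is the reverse direction ($D_\nu\subseteq D_\mu \Rightarrow \nu\preceq_{\Bruhat}\mu$), since it requires identifying precisely which extremal weight vectors lie in $D_\mu$. My plan is to prove by induction on $\ell(\sigma_\mu)$ the sharper statement that
\[
\{\nu'\in W\lambda_+ : v_{\nu'}\in D_\mu\} = \{\sigma'\lambda_+ : \sigma'\preceq_{\Bruhat}\sigma_\mu\}.
\]
The inductive step uses $D_{s_i\mu} = U(\fp_i)D_\mu$: if $v_{\nu'}\in D_{s_i\mu}$ is extremal, decompose the $\msl_2$-action to see that either $v_{\nu'}\in D_\mu$ or $v_{\nu'}$ is obtained from an extremal vector of $D_\mu$ by a power of $f_i$ lowering within a single $\msl_2$-string; in the first case induction gives $\sigma_{\nu'}\preceq\sigma_\mu\preceq s_i\sigma_\mu$, while in the second case the weight analysis forces $\sigma_{\nu'} = s_i\sigma'$ for some $\sigma'\preceq\sigma_\mu$ with $\ell(s_i\sigma')>\ell(\sigma')$, which is exactly the "lifting property" characterization of the interval $[e,s_i\sigma_\mu]$ in the Bruhat order. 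Alternatively, one can bypass the weight-combinatorial step by invoking the Demazure character formula $\ch D_\mu = \mathscr{D}_{i_1}\cdots\mathscr{D}_{i_k}(e^{\lambda_+})$ and reading off its extremal support directly.
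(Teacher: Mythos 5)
The paper states this result as a \emph{Fact} and cites Demazure~\cite{Dem1} without supplying a proof, so there is no argument in the text to compare against; your task is therefore to reconstruct a classical result. Your proposal is the standard Demazure--Joseph argument and is essentially correct. The key lemma $D_{s_i\sigma\lambda_+} = U(\fp_i)\,D_{\sigma\lambda_+}$ when $\ell(s_i\sigma)=\ell(\sigma)+1$, the iterated description of $D_\mu$ along a reduced word, the forward inclusion via the subword characterization (inserting unit factors where letters of $\sigma_\nu$ are missing from $\sigma_\mu$), and the converse via the lifting/$Z$-property of the Bruhat order together with the $\msl_2$-string analysis of extremal vectors in $U(\fp_i)D_\mu$ --- this is exactly the well-trodden path, and your alternative via the Demazure character formula $\ch D_\mu = \mathscr{D}_{i_1}\cdots\mathscr{D}_{i_k}e^{\lambda_+}$ also works and is often how the extremal support is read off cleanly.

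Two details deserve an explicit sentence when you write this up. First, since $\lambda_+$ may be singular, $\nu,\mu$ live in $W\lambda_+\simeq W/W_J$; one must either work throughout with minimal coset representatives $\sigma_\nu,\sigma_\mu\in W^J$ and the induced (parabolic) Bruhat order, or note that the key lemma remains valid (yielding $D_{s_i\sigma\lambda_+}=D_{\sigma\lambda_+}$) when $\langle\sigma\lambda_+,\alpha_i^\vee\rangle=0$, so the induction along a reduced word of $\sigma_\mu\in W^J$ never leaves $W^J$. Second, in the converse direction the step ``$v_{\nu'}$ extremal in $D_{s_i\mu}\setminus D_\mu$ implies $v_{s_i\nu'}\in D_\mu$'' is where the real content lies: you need to argue that the top $v_{s_i\nu'}$ of the $\msl_2^{(i)}$-string through $v_{\nu'}$ already belongs to $D_\mu$, using that $U(\fp_i)=U(\bC f_i)\,U(\fb^+)$ and that the weight space $V_{\lambda_+}(s_i\nu')$ is one-dimensional. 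Once that is pinned down, the lifting property converts this into $\sigma_{\nu'}\preceq s_i\sigma_\mu$, completing the induction.
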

	
	These modules were later named \emph{Demazure modules} and were described in terms of generators and relations:
	
	\begin{theorem}[\cite{Joseph}]
		\label{thm::Joseph}
		For any $\nu\in P$, the defining relations for the Demazure $\fb$-module $D_\nu$ are given by:
		\begin{equation}
			\label{thmJoseph}
			e_\alpha^{\max\{-\langle \alpha^\vee,\lambda \rangle,0\}+1}v_\nu=0, \quad \forall \alpha \in \Phi_+,
		\end{equation}
		where $e_\alpha\in\fb_n$ is the Chevalley generator corresponding to the root $\alpha$.
	\end{theorem}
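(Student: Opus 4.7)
The plan is to establish the presentation in two directions: first, verify that the listed relations hold in $D_\nu$, yielding a surjection $\pi\colon\widetilde{D}_\nu\twoheadrightarrow D_\nu$ from the $\fb$-module $\widetilde{D}_\nu$ cyclically generated by an element of weight $\nu$ modulo those relations; second, show that $\pi$ is an isomorphism by induction on $\ell(\sigma_\nu)$. For the first step, I would fix $\alpha\in\Phi_+$ and apply $\mathfrak{sl}_2$-theory to the subalgebra $\langle e_\alpha,f_\alpha,h_\alpha\rangle$ acting on the finite-dimensional module $V_\lambda$: the $\mathfrak{sl}_2^\alpha$-submodule of $V_\lambda$ generated by $v_\nu$ decomposes as a sum of finite-dimensional irreducibles, and the bound on the length of the $\alpha$-string through $v_\nu$ forces $e_\alpha^{m_\alpha(\nu)+1}v_\nu=0$ with $m_\alpha(\nu):=\max\{-\langle\alpha^\vee,\nu\rangle,0\}$. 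The base case $\nu=\lambda$ of the induction is immediate: all $m_\alpha(\lambda)=0$, and both $\widetilde{D}_\lambda$ and $D_\lambda=\mathbb{C}v_\lambda$ are the one-dimensional $\fb$-module of weight $\lambda$ on which $\fn$ acts trivially.

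For the inductive step, I would write $\nu=s_i\nu'$ with $\ell(\sigma_{\nu'})=\ell(\sigma_\nu)-1$ and $n:=\langle\alpha_i^\vee,\nu'\rangle\geq 0$, so that $v_\nu$ is proportional to $f_i^n v_{\nu'}$ by $\mathfrak{sl}_2^i$-theory. The key algebraic input is the recursive construction $D_\nu=\U(\mathfrak{p}_i)v_{\nu'}=\U(\fb)f_i^n v_{\nu'}$, where $\mathfrak{p}_i=\fb+\mathbb{C}f_i$ is the minimal parabolic, together with the Demazure character formula $\ch(D_\nu)=\Lambda_i\,\ch(D_{\nu'})$, which pins down $\dim D_\nu$. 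By the induction hypothesis, $D_{\nu'}\simeq\widetilde{D}_{\nu'}$ is presented by its Joseph relations. I would then exhibit a PBW-style spanning set of $\widetilde{D}_\nu$ of cardinality at most $\dim D_\nu$, by reducing every PBW monomial in positive root vectors applied to the generator to a normal form using the stated relations together with the Chevalley commutation rules, and lifting the known basis of $\widetilde{D}_{\nu'}$ through the identification $D_\nu=\U(\fb)f_i^n v_{\nu'}$.

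The main obstacle is controlling this reduction to normal form. Because $s_i$ permutes $\Phi_+\setminus\{\alpha_i\}$ and negates $\alpha_i$, the nilpotency indices $m_\alpha(\nu)$ and $m_{s_i\alpha}(\nu')$ shift in a controlled but nontrivial way under the transition $\nu'\rightsquigarrow\nu$, and the truncations these impose on PBW monomials in $\widetilde{D}_\nu$ must agree, weight by weight, with the weight multiplicities given by $\Lambda_i\,\ch(D_{\nu'})$. Reconciling these two counts, organized through the $\mathfrak{sl}_2^i$-action on $v_{\nu'}$ and the commutators $[e_\beta,f_i]$ for $\beta\in\Phi_+$, is the technical heart of the argument, and has been addressed both in Joseph's original treatment and in subsequent simplifications by Polo, Mathieu, and Kumar.
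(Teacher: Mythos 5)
The paper does not prove this statement; it records it as a known theorem and cites Joseph's 1985 paper, so there is no ``paper's proof'' to compare against. Your sketch is therefore best judged on its own merits as a reconstruction of the standard argument.

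Your outline is structurally faithful to the way this theorem is actually proved (Joseph, with later simplifications by Polo, Mathieu, and Kumar): the easy inclusion via $\mathfrak{sl}_2^{\alpha}$-theory, the induction on $\ell(\sigma_\nu)$ via a simple reflection lowering the length, the recursion $D_\nu = \U(\mathfrak{p}_i)v_{\nu'} = \U(\fb)f_i^{n}v_{\nu'}$, and the Demazure operator giving $\ch(D_\nu) = \Lambda_i\,\ch(D_{\nu'})$ as the dimension control. One small but worth flagging point: the paper's displayed relation contains a typo (it writes $\langle\alpha^\vee,\lambda\rangle$ where $\langle\alpha^\vee,\nu\rangle$ is meant, as is consistent with the van der Kallen relations stated immediately after); you silently corrected this by using $m_\alpha(\nu)$, which is the right reading.

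The genuine gap is exactly the one you name: the argument that $\widetilde{D}_\nu$ has a spanning set of cardinality at most $\dim D_\nu$ is not an automatic PBW reduction. The issue is that $s_i$ does not simply ``shift'' the nilpotency bounds: one needs to track, for each $\beta\in\Phi_+$, how $m_\beta(\nu)$ relates to $m_{s_i\beta}(\nu')$, and then verify weight-by-weight that the truncated monomial basis of the abstractly presented module matches the multiplicities produced by $\Lambda_i\,\ch(D_{\nu'})$. In Joseph's treatment this is done with considerable care (using properties of the Demazure operators and a filtration argument on $\U(\fn^+)$-modules); Polo and Kumar replace parts of it with Frobenius-splitting or crystal-combinatorial input. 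Asserting that the reduction ``must agree'' with the character formula is a restatement of what needs to be shown, not a proof of it. So as an honest roadmap your proposal is sound, but as a proof it stops at the hard step; to close it you would need to either carry out the explicit normal-form count or appeal to one of the existing rigorous treatments of that step.
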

	
	We will also need the \emph{van der Kallen modules} $K_\lambda$ introduced by van der Kallen in~\cite{vdK}, which are $\fb$-modules defined as:
	\[
	K_\nu = D_\nu\Bigl/\Bigr.\sum_{\mu\prec\nu} D_\mu = 
	D_\nu\Bigl/\Bigr.\sum_{\substack{\mu\in W\lambda\\ D_{\mu}\subsetneq D_{\lambda}}} D_\mu.
	\]
	and shown to be cyclic $\fb$-modules defined by the following relations:
	\begin{equation}
		e_\alpha^{\max(1,\max\{-\langle \alpha^\vee,\nu \rangle,0\})}v_\nu=0, \quad \forall \alpha \in \Phi_+.
	\end{equation}
	
	\begin{remark}
		The main example for us is $\fg=\mgl_n$, where $\fb^+=\fb_n^+$ and $\fb^-=\fb_n^{-}$ are the Lie subalgebras of upper-triangular and lower-triangular matrices, respectively, and $\fh_n=\fb^{+}_n\cap\fb^{-}_n$ is the diagonal Cartan subalgebra.
		The space $\fh_n$ is spanned by the diagonal matrix units, and its dual basis is denoted by $\{\varepsilon_i \colon i=1,\ldots,n\}$.
		In this case, $P_+\subset P$ consists of weights $\lambda = \sum_{i=1}^n \lambda_i \varepsilon_i$ such that $\lambda_i\geq \lambda_{i+1}$ for all $i$. 
		Thus, weights can be identified with compositions, and dominant weights are in bijection with partitions of length at most $n$. 
		The Weyl group $W$ is isomorphic to the symmetric group $\bS_n$. 
		
		The weight $\lambda = \sum_{i=1}^n \lambda_i \varepsilon_i$ can be considered as a weight of $\mgl_m$ for any $m \geq n$. In this setting, the corresponding composition is given by $(\lambda_1,\dots,\lambda_n,0,\dots,0)$.
	\end{remark}
	
	For an $\fh_n$-module $M$ and $\mu\in P$, let $M(\mu)\subset V$ be the
	weight $\mu$ subspace. We define the character as follows:
	\begin{equation}
		\label{eq::character}
		\ch_{\fh_n} M := \sum_{\mu=(\mu_1,\dots,\mu_n)} x_1^{\mu_1}\dots x_n^{\mu_n} \dim M(\mu). 
	\end{equation}
	In particular, for a partition $\lambda_+$, the character of the corresponding irreducible highest weight $\mgl_n$ module $V_{\lambda_+}$ is given by the Schur function $s_{\lambda_+}(x_1,\dots,x_n)$.
	The Demazure modules $D_\nu$ and the van der Kallen modules $K_\nu$ are labeled by compositions (i.e., arbitrary weights $\nu\in P$).
	One has 
	\[
	\kappa_\nu(x_1,\dots,x_n)= \ch_{\fh} D_\nu(x_1,\dots,x_n), \qquad
	a_\nu(x_1,\dots,x_n)=\ch_{\fh} K_\nu(x_1,\dots,x_n),
	\]
	where $a_\nu(x)$ and $\kappa_\nu(x)$ are the Demazure atoms and key polynomials (see \cite{Al,P} and references therein). 
	
	In this paper, we deal with left and right modules over the Borel subalgebra $\fb_m\subset \gl_m$.
	To emphasize the distinction, we use the upper index $\op$ for right modules.
	Note that the right $\fb_m$-action decreases the weight, whereas the left action increases it. In particular, the right $\gl_m$-module $V_{\lambda}^{\op}$, is generated by a cyclic vector $v_{\lambda}$ as a $\fb_m$-module.
	
	\begin{notation}
		For each weight $\nu$, along with its corresponding dominant weight $\lambda_+$ and antidominant weight $\lambda_-=w_0\lambda_+$, we define the right Demazure $\fb_m$-module $D_\nu^{\op}$ as the submodule of the right $\fg_m$-module $V_{\lambda_+}^{\op}$ generated by the extremal vector of right weight $\nu$.
	\end{notation}
	
	Note that we have an opposite inclusion for opposite (right) Demazure $\fb_n^+$-modules:
	$$\nu \preceq_{\Bruhat} \mu  \ {\Longleftrightarrow} \ D_{\nu}^{\op} \supset D_{\mu}^{\op}.
	$$
	In particular, the opposite van der Kallen modules are described as follows:
	$$
	K_{\nu}^{\op} = D_{\nu}^{\op}\left/ \sum_{\mu\succ_{\Bruhat}\nu} D_{\mu}^{\op}\right..
	$$
	
	Moreover, the Cartan subalgebra $\fh_m\subset\fb_{m}$ acts on the opposite (right) Demazure and van der Kallen modules, yielding the opposite key polynomials and opposite Demazure atoms:
	\[
	\kappa^{\la}(y_1,\dots,y_m)=\ch_{\fh_m} D_\la^{\op},\ 
	a^{\la}(y_1,\dots,y_m)=\ch_{\fh_m} K_\la^{\op}.
	\]
	Directly from the definition we have 
	\begin{equation}\label{eq:left-right}
		\kappa^{\la}(y_1,\dots,y_m)=\kappa_{w_0\la}(y_m,\dots,y_1),\quad a^{\la}(y_1,\dots,y_m)=a_{w_0\la}(y_m,\dots,y_1).
	\end{equation}
	Here, $w_0$ is the longest element in the Weyl group. In the case of the symmetric group, it reverses the composition:
	$$w_0(\lambda_1,\ldots,\lambda_n)=(\lambda_n,\ldots,\lambda_1).$$
	Note that both $\kappa^{\la}(y_1,\dots,y_m)$ and $a^{\la}(y_1,\dots,y_m)$ contain the term $y^\la$, which serves as the leading term in the Bruhat order.
	
	\begin{example}
		Let $\lambda=\lambda_+$ be a partition (i.e., a dominant composition), and let
		$\lambda_-=w_0\lambda_+$ be the corresponding antidominant composition.
		Then we have the following isomorphisms:
		$$D_{\lambda_{-}} \simeq V_{\lambda_+}, \quad \dim(D_{\lambda_+} )= \dim( D_{\lambda_{-}}^{\op})=1, \quad D_{\lambda_{+}}^{\op} \simeq V_{\lambda_{+}}. $$
		Consequently, we obtain the following equalities for the corresponding polynomials:
		$$
		\kappa_{\lambda_{-}}(x) = s_{\lambda}(x) = \kappa^{\lambda_+}(x), \quad \kappa_{\lambda_+}(x) = a_{\lambda_+}(x) = x^{\lambda_+}; \quad \kappa^{\lambda_-}(x) = a^{\lambda_-}(x) = x^{\lambda_-}.
		$$
	\end{example}

	\subsection{Demazure Modules Form a Distributive Lattice}
	\label{sec::Demazure::distributive}
	In this section, we provide a proof of Theorem~\ref{thm::Demazure::intersect} below. 
	The statement of the theorem appears to be known to experts.  
	We are grateful to Michel Brion for explaining to us the key idea of the proof, which is based on the concept of Frobenius splitting.  
	A constructive proof of the theorem was provided by P.~Littelmann in~\cite[\S8]{Littleman} in the case of finite-dimensional Lie algebras, and the extension of Frobenius splitting to affine Lie algebras was carried out by S.~Kato in~\cite[Corollary 2.22]{Kato::Frobenius}.
	
	Let $G$ be a simple Lie group, and let $V_\la$ be its irreducible finite-dimensional highest weight representation.
	Demazure modules naturally arise in the geometry of the flag variety $F=G/B$. 
	In particular, it is known that there exists a natural line bundle $\cL_\lambda$ associated with $\la\in P_+$ such that 
	$$ H^{0}(F;\cL_{\lambda}) \simeq V_\lambda^*, \quad H^{0}(X_{w};\cL_{\lambda}) \simeq D_{w\lambda}^*.
	$$
	Here, $X_w\subset F$ is the Schubert variety corresponding to $w$. (See, e.g.,~\cite{Kum} for an introduction to the geometry of flag varieties).
	Let $D_{w\lambda}^{\perp}\subset H^{0}(F;\cL_{\lambda})$ denote the annihilator of $D_{w\lambda}$:
	$$
	D_{w\lambda}^{\perp}:= \ker( H^0(F,\cL_{\lambda}) \twoheadrightarrow H^0(X_w,\cL_{\lambda})) \simeq \ker \left(V_\lambda^* \twoheadrightarrow D_{w\lambda}^*\right).
	$$
	
	\begin{theorem}
		\label{thm::Demazure::intersect}
		The subspaces $\{D_{w\lambda} \colon w\in W\}$ of the integrable representation $V_{\lambda}$ generate a distributive lattice, denoted $\Lattice_{D}(V_{\lambda})$.
		Equivalently, the dual lattice generated by $\{D_{w\lambda}^{\perp}\colon w\in W\}\subset L(V_{\lambda}^*)$ is distributive.
		
		Moreover, Demazure modules constitute a set of $\vee$-irreducible elements of this lattice $\Lattice_{D}(V_{\lambda})$.
	\end{theorem}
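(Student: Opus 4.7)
My plan is to prove the theorem geometrically, via the duality in the statement between the lattice generated by $\{D_{w\lambda}\}\subset V_\lambda$ and the lattice generated by $\{D_{w\lambda}^{\perp}\}\subset V_\lambda^{*}=H^0(F,\cL_\lambda)$. Under this duality, sums and intersections are swapped, and $D_{w\lambda}^{\perp}$ is identified with the ideal of sections vanishing on the Schubert variety $X_w\subset F=G/B$. Thus sums and intersections of these ideals correspond exactly to the scheme-theoretic intersections and unions of the Schubert varieties, and it is enough to prove distributivity on the geometric side.

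The essential geometric input, due to Ramanathan and Mehta--Ramanathan via Frobenius splitting (and available in characteristic zero via standard monomial theory / Lakshmibai--Seshadri paths), is that any collection of Schubert subvarieties of $F$ is compatibly Frobenius split. Consequently the scheme-theoretic intersection $\bigcap_i X_{w_i}$ is reduced, agrees with the set-theoretic intersection, and is itself a union of Schubert varieties. Unwinding this through the duality yields, for any family $w_1,\ldots,w_k$,
\[
\sum_{i=1}^{k} D_{w_i\lambda}^{\perp} \;=\; \bigcap_{v\in M} D_{v\lambda}^{\perp}, \qquad \bigcap_{i=1}^{k} D_{w_i\lambda} \;=\; \sum_{v\in M} D_{v\lambda},
\]
where $M$ is the set of maximal $v\in W$ with $X_v\subseteq X_{w_i}$ for every $i$. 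In particular, an intersection of Demazure modules is always a sum of Demazure modules, so every element of $\Lattice_D(V_\lambda)$ has the form $V_S:=\sum_{w\in S}D_{w\lambda}$ for some lower set $S$ in the Bruhat order on $W\lambda$.

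To finish, I would argue that the assignment $S\mapsto V_S$ is a lattice isomorphism from the distributive lattice of lower sets of the Bruhat poset (distributive by Fact~\ref{fact::distr::poset}) onto $\Lattice_D(V_\lambda)$. Surjectivity was established in the previous paragraph; for injectivity, I would use the fact that the extremal weight vector $v_{w\lambda}$ lies in $D_{u\lambda}$ exactly when $w\preceq_{\Bruhat} u$, so $S$ is recovered from $V_S$ as $\{w : v_{w\lambda}\in V_S\}$. The $\vee$-irreducible lower sets are the principal ones $\{u : u\preceq_{\Bruhat} w\}$, and these correspond under the isomorphism precisely to the individual Demazure modules $D_{w\lambda}$, which yields the final assertion. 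The main obstacle is the geometric input of the second paragraph: reducedness of scheme-theoretic intersections of Schubert varieties and their Bruhat-type decomposition. This is the only genuinely non-combinatorial ingredient of the argument, and, as the footnote emphasises, it is not accessible through crystal basis techniques.
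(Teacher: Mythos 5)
Your proposal follows essentially the same geometric route as the paper's proof: dualize so that $D_{w\lambda}^{\perp}$ is identified with sections of $\cL_\lambda$ vanishing on the Schubert variety $X_w$, invoke Frobenius splitting, and reduce distributivity to a statement about intersections of Schubert varieties in $F$. The final step (identifying the lattice with lower sets of the Bruhat poset via extremal weight vectors) is also the paper's argument for $\vee$-irreducibility.

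However, there is a genuine gap at the ``unwinding through the duality'' step. Reducedness of the scheme-theoretic intersection $\bigcap_i X_{w_i}$ yields only the \emph{sheaf-level} identity $I_{\bigcap_i X_{w_i}} = \sum_i I_{X_{w_i}}$; but the equality you assert, $\sum_i D_{w_i\lambda}^{\perp} = \bigcap_{v\in M} D_{v\lambda}^{\perp}$, lives at the level of global sections, and $H^0(F;\cL\otimes -)$ is only \emph{left} exact. The nontrivial content is the surjectivity of $\bigoplus_i H^0(F;\cL\otimes I_{X_{w_i}}) \to H^0\bigl(F;\cL\otimes I_{\bigcap_i X_{w_i}}\bigr)$, i.e.\ that a section vanishing on the intersection decomposes as a sum of sections each vanishing on a single $X_{w_i}$. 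This does not follow from reducedness; it needs the vanishing $H^{>0}\bigl(F;\cL\otimes I_{\bigcup_j X_{u_j}}\bigr)=0$ for unions of Schubert varieties — a separate consequence of Frobenius splitting (Brion--Kumar, \cite{Brion_Kumar::Frobenius}). The paper supplies exactly this, by writing the inclusion--exclusion resolution of $I_{\bigcap_i X_{w_i}}$ by the sheaves $I_{\bigcup_j X_{u_j}}$ and applying the vanishing theorem term by term. You invoked only the reducedness half of the Frobenius-splitting package; the cohomological-vanishing half is what actually bridges from ideal sheaves to vector subspaces of $V_\lambda^*$, and must be added.
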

	\begin{proof}
		From the definition of the Bruhat order, we know that for any dominant weight $\lambda$ and any pair of comparable elements in the Weyl group $W$, we have:
		$$
		w_1\prec_{\Bruhat} w_2 \in W \ \Rightarrow \ D_{w_1\lambda} \subset D_{w_2\lambda} \ \Leftrightarrow \ D_{w_1\lambda}^{\perp} \supset D_{w_2\lambda}^{\perp}.
		$$
		Consequently, for any collection of pairwise uncomparable elements $w_1,\ldots,w_k$ in the Weyl group $W$, we obtain the inclusion
		\begin{equation}\label{conjgeom}
			D_{{w_1\lambda}}^{\perp} + \ldots + D_{{w_k\lambda}}^{\perp}\ \subset \ \bigcap_{w \colon \forall i \ w\preceq w_i} D_{w\lambda}^{\perp}.
		\end{equation}
		We now show that this inclusion is actually an equality.
		In other words, we demonstrate that if a section $s\in H^0(F,\cL_{\lambda})$ vanishes when restricted to the intersection of 
		Schubert varieties $\cap_{i=1}^{k}X_{w_k}$, then it can be written as a sum $s=s_1+\ldots+s_k$ of $k$ sections, where each $s_i$ vanishes on $X_{w_i}$.
		Denote by $I_{Y}$ the ideal sheaf of the structure sheaf of $F$ consisting of functions vanishing on a subvariety $Y\subset F$. In particular, for Schubert varieties $X_{w}$, we have
		$$
		D_{w\lambda}^{\perp}\simeq H^{0}(F;\cL_{\lambda}\otimes I_{X_{w}}).$$
		Since the schematic intersection of Schubert varieties is reduced, we obtain
		\begin{gather*}
			{ I_{X_{w_1}\cap\ldots\cap X_{w_k}} = I_{X_{w_1}}+\ldots+I_{X_{w_k}}; } \\
			{I_{X_{w_1}\cup\ldots\cup X_{w_k}} = I_{X_{w_1}}\cap\ldots\cap I_{X_{w_k}} }.
		\end{gather*}
		Thus, we have an exact sequence of ideal sheaves (originating from the inclusion-exclusion principle):
		$$
		\begin{tikzcd}
			0 \arrow[r]  &
			I_{\bigcup_{i=1}^{k}X_{w_i}} \arrow[r] &
			\oplus_{s=1}^{k} I_{\bigcup_{\substack{{i=1}\\ {i\neq s}}}^{k} X_{w_i}} \arrow[r] & \ldots \arrow[r] & \oplus_{s=1}^{k} I_{X_{w_s}} \arrow[r,two heads] & I_{\bigcap_{i=1}^{k} X_{w_i}} \arrow[r] & 0.
		\end{tikzcd} 
		$$
		Due to the Frobenius splitting of the flag variety $F$, we know that for any subset $\{u_1,\ldots, u_m\}\subset W$, the higher cohomology vanishes (see \cite[Theorems 1.4.8 and 2.3.1]{Brion_Kumar::Frobenius}): 
		$$H^{>0}(F;\cL\otimes I_{\cup_{i=1}^{m}X_{u_i}}) = 0.$$
		This leads to an exact sequence of sections of the line bundle $\cL$:
		\begin{multline*}
			0 \to
			H^0(F;\cL\otimes I_{\bigcup_{i=1}^{k}X_{w_i}}) \to
			\oplus_{s=1}^{k} H^{0}(F;\cL\otimes I_{\bigcup_{\substack{{i=1}\\ {i\neq s}}}^{k} X_{w_i}}) \to \ldots \to \\
			\to \oplus_{s=1}^{k} H^{0}(F;\cL\otimes I_{X_{w_s}}) \to H^{0}(F; \cL\otimes I_{\bigcap_{i=1}^{k} X_{w_i}} ) \to 0.
		\end{multline*}
		Thus, we obtain the inclusion-exclusion formula for the intersection of Demazure modules and a surjective map
		$$
		\oplus_{i=1}^{k} D_{w_i\lambda}^{\perp}\twoheadrightarrow H^{0}(F;\cL\otimes I_{\bigcap_{i=1}^{k} X_{w_i}}) = \bigcap_{w\colon\forall i w\preceq w_i} D_{w\lambda}^{\perp}.
		$$
		From this, it follows that the inclusion~\eqref{conjgeom} is actually an equality, establishing that the lattice of submodules $D_{w\lambda}^{\perp}$ of the integrable module $V(\lambda)^{*}$ is distributive.
		
		It remains to show that Demazure submodules are $\vee$-irreducible elements of the lattice $\Lattice_{D}(V_\lambda)$.
		This follows from the observation:
		$$\forall U\in \Lattice_{D}(V_\lambda), \quad D_{w\lambda}\subset U	\Leftrightarrow v_{w\lambda}\in U.$$
	\end{proof}
	
	It is worth mentioning that all subspaces of the lattice $\Lattice_{D}(V_\lambda)$ are $\fb$-submodules of $V_{\lambda}$ and, in particular, $\fh$-modules.
	Consequently, each quotient $U_1/U_2$ has a well-defined character $\ch$ (see~\eqref{eq::character}).
	Let us outline the corollaries of~\S\ref{sec::Distributive}, which apply naturally to our situation:
	
	\begin{corollary}
		Let $U\in \Lattice_{D}(V_\lambda)$, and let $X_U\subset W\lambda$ be the subset of the parabolic Bruhat graph consisting of those $w\lambda$ such that 
		$$U\supset D_{w\lambda} \ \Leftrightarrow \ \text{the extremal vector } v_{w\lambda} \in U.$$
		Then we have:
		\begin{itemize}[itemsep=0pt,topsep=0pt]
			\item $U\simeq \sum_{\mu\in X_U} D_{\mu}$;
			\item For the filtration $\calF_{\mu}U:= U\cap D_{\mu}$, the associated graded space has the following description:
			$$\calF_{\mu} U/\sum_{\nu\prec \mu}\calF_{\nu} U \simeq \begin{cases}
				\KK_{\mu}, & \text{if } \mu\in X_U; \\
				0, & \text{if } \mu\notin X_U.
			\end{cases} $$
			where $\KK_{\mu}$ is the van der Kallen module;
			\item The character of $U$ is equal to the sum of Demazure atoms:
			$$\ch(U) = \sum_{\mu\in X_U} a_{\mu}(x).$$
			\item The character of $U$ is a linear combination of key polynomials $\{\kappa_{\nu}\colon \nu\in W\lambda\}$ with integer coefficients.
		\end{itemize}
	\end{corollary}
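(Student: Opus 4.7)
The plan is to deduce all four items as direct consequences of Theorem~\ref{thm::Demazure::intersect} combined with the general lattice-theoretic framework of~\S\ref{sec::Distributive}. By Theorem~\ref{thm::Demazure::intersect}, the set of $\vee$-irreducible elements of $\Lattice_{D}(V_{\lambda})$ is exactly $\{D_{w\lambda}\colon w\lambda\in W\lambda\}$, and $D_{w\lambda}\subset U$ if and only if the extremal vector $v_{w\lambda}$ lies in $U$; hence $X_U$ is precisely the index set of those $\vee$-irreducibles contained in $U$.

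For item (i), I would invoke Birkhoff's correspondence (Fact~\ref{fact::distr::poset}): in any finite distributive lattice each element equals the join of the $\vee$-irreducibles it contains, and here the join is the sum of subspaces, which gives $U=\sum_{\mu\in X_U}D_{\mu}$. For item (ii), I would apply Fact~\ref{fact::distributive} to the standard filtration $\calF^{\mu}U=U\cap D_{\mu}$ indexed along the Bruhat partial order: the induced subquotient at $\mu$ is isomorphic to the minimal subquotient $\KK_{\mu}=D_{\mu}/\sum_{D_{\nu}\subsetneq D_{\mu}}D_{\nu}$ when $D_{\mu}\subset U$ and vanishes otherwise. By the very definition in~\S\ref{sec::Demazure::def}, this $\KK_{\mu}$ coincides with the van der Kallen module $K_{\mu}$. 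The only bookkeeping point is to note that $\mu\preceq_{\Bruhat}\nu\Rightarrow D_{\mu}\subset D_{\nu}\Rightarrow\calF^{\mu}U\subset\calF^{\nu}U$, so the Bruhat-indexed family is a bona fide filtration.

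Item (iii) is then immediate: all subspaces in $\Lattice_{D}(V_{\lambda})$ are $\fb$-stable and in particular $\fh$-stable, characters are additive across the associated graded of the filtration from item (ii), and $\ch K_{\mu}=a_{\mu}(x)$ by definition of Demazure atoms, so $\ch U=\sum_{\mu\in X_U}a_{\mu}(x)$. For item (iv), I would specialize item (iii) to $U=D_{\nu}$, for which $X_{D_{\nu}}=\{\mu\in W\lambda\colon\mu\preceq_{\Bruhat}\nu\}$; this yields the triangular identity $\kappa_{\nu}(x)=\sum_{\mu\preceq_{\Bruhat}\nu}a_{\mu}(x)$. M\"obius inversion on the parabolic Bruhat poset $W\lambda$ (Fact~\ref{fact::Mobius}) then expresses each $a_{\mu}$ as an integer linear combination of key polynomials $\{\kappa_{\nu}\colon\nu\preceq_{\Bruhat}\mu\}$, and substituting back into the formula of item (iii) completes the proof. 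Since the substantive work has been absorbed into Theorem~\ref{thm::Demazure::intersect} and the general lattice facts, no step here presents a genuine obstacle.
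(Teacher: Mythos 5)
Your proof is correct and follows essentially the same route as the paper: the first three items are extracted from the distributive-lattice framework (Fact~\ref{fact::distributive} and the identification of the minimal subquotients $\KK_{\mu}$ with van der Kallen modules), and the last item is obtained via the M\"obius inversion formula (Fact~\ref{fact::Mobius}), which is one of the two options the paper explicitly offers (the other being the resolution from Proposition~\ref{prp::distibutive::resolution}).
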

	
	\begin{proof}
		The van der Kallen module $K_{w \lambda}$ coincides with the one described in~\eqref{eq::min::subquot}. Each van der Kallen module contains a unique nonzero image of an extremal vector from $V_{\lambda}$. Thus, the first three items of the corollary follow directly from the general theory outlined in Fact~\ref{fact::distributive}. The last item can be deduced either from Proposition~\ref{prp::distibutive::resolution} or from the M\"obius inversion formula (Fact~\ref{fact::Mobius}).
	\end{proof}
	
	\subsection{Sublattice Indexed by $\sS$-Dominant Weights}
	Let us now apply the combinatorial results from~\S\ref{sec::Bubble::sort::all} and the findings of the preceding section~\S\ref{sec::Demazure::distributive} to the specific case of $\fg=\gl_n$, $W=\bS_n$, and an arborescent poset with (anti)linearization.
	
	\begin{corollary}
		\label{cor::distr::dominant}	
		Suppose that $(\sT,h)$ is an arborescent poset with linearization (Definition~\ref{def::poset::linear}) and that $\lambda$ is a partition of length at most $\#\sT$.
		Then the set 
		$$\{D_{\ov{a}}\subset V_{\lambda}\ \colon \  \ov{a}\in\bD_{\sT}(\lambda)\}$$
		forms the collection of $\vee$-irreducible elements of the distributive sublattice $\Lattice_{D}^{\sT}(\lambda)$ of $\Lattice_{D}(\lambda)$ generated by these elements. 
		
		Moreover, the minimal subquotients:
		\begin{equation}
			\label{eq::vdK::general}
			\KK_{\sT,\ov{a}} := D_{\ov{a}}\left/ \sum_{\ov{c}\in\bD_{\sT}(\lambda) \colon \ov{c}\prec \ov{a}} D_{\ov{c}} \right.
		\end{equation}
		admit a filtration induced by $\Lattice_{D}(\lambda)$, indexed by the elements of the Bruhat graph $W\lambda$, such that the associated graded module satisfies:
		\begin{equation}
			\label{eq::vdK::a}
			\gr^{\calF}\KK_{\sT,\ov{a}} \simeq \bigoplus_{\substack{{\ov{b}\in\bS_{n}\lambda} \colon {\bbs_{\sT}^{\op}(\ov{b})= \ov{a} }}} \KK_{\ov{b}}. 
		\end{equation} 
		The modules $\KK_{\sT,\ov{a}}$ are referred to as \emph{generalized van der Kallen modules}.
	\end{corollary}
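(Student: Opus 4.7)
The plan is to reduce the statement to Proposition 2.18 applied to the distributive lattice $\Lattice_{D}(V_\lambda)$ of Theorem 3.12, with the idempotent taken to be the bubble-sort map $\bbs_{\sT}^{\op}$ of Theorem 3.10. The only real work is a careful translation between the combinatorial language of $\sT$-dominant compositions and the lattice-theoretic language of $\vee$-irreducible subspaces, plus one application of join-primeness to recognize the associated-graded pieces as classical van der Kallen modules.

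First, I would identify the poset $\euJ(\Lattice_{D}(V_\lambda))$ of $\vee$-irreducible elements with $\bS_n\lambda$ via $\ov{b}\mapsto D_{\ov{b}}$; this is an isomorphism of posets by the defining property $\ov{b}_1\preceq_{\Bruhat}\ov{b}_2 \Leftrightarrow D_{\ov{b}_1}\subset D_{\ov{b}_2}$ recalled in Section 3.1. Under this identification, Theorem 3.10 provides the nondecreasing monotone idempotent $\bbs_{\sT}^{\op}:\bS_n\lambda\to \bD_{\sT}(\lambda)$. Applying Proposition 2.18 directly yields the first assertion: the images $\{D_{\ov{a}}:\ov{a}\in\bD_{\sT}(\lambda)\}$ are exactly the $\vee$-irreducible elements of the sublattice $\Lattice_{D}^{\sT}(\lambda)\subset \Lattice_{D}(V_\lambda)$ they generate, and the isomorphism~(2.19) specializes to
\[
\KK_{\sT,\ov{a}}\;\simeq\; \bigoplus_{\ov{b}\,:\,\bbs_{\sT}^{\op}(\ov{b})=\ov{a}} \KK_{\ov{b}},
\]
where each summand $\KK_{\ov{b}}$ is the classical van der Kallen quotient from Section 3.1 (this identification uses Fact 2.13 for the full lattice).

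To upgrade this direct-sum decomposition to the filtered statement~(3.21), I would introduce on $D_{\ov{a}}$ the standard filtration from the ambient lattice, $\calF^{\ov{b}}D_{\ov{a}}:=D_{\ov{a}}\cap D_{\ov{b}}$, indexed by $\ov{b}\in\bS_n\lambda=W\lambda$, and push it down to the quotient $\KK_{\sT,\ov{a}}$. By Fact 2.13 the associated graded of $D_{\ov{a}}$ has $\KK_{\ov{b}}$ in position $\ov{b}$ when $D_{\ov{b}}\subset D_{\ov{a}}$ and zero otherwise; the graded piece $\KK_{\ov{b}}$ then survives in $\KK_{\sT,\ov{a}}$ iff $D_{\ov{b}}\not\subset \sum_{\ov{c}\in\bD_{\sT}(\lambda),\,\ov{c}\prec\ov{a}}D_{\ov{c}}$. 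Here I would invoke join-primeness (equivalent to $\vee$-irreducibility in a distributive lattice) to conclude that this non-containment is equivalent to $D_{\ov{b}}\not\subset D_{\ov{c}}$ for every such $\ov{c}$, i.e.\ that no $\sT$-dominant composition strictly below $\ov{a}$ dominates $\ov{b}$. By the infimum characterization of increasing monotone idempotents (Lemma 2.7), this last condition is exactly $\bbs_{\sT}^{\op}(\ov{b})=\ov{a}$, recovering~(3.21).

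The only potentially delicate step is the last equivalence, which is why join-primeness is needed to replace the sum of Demazure modules by the individual ones. This is entirely parallel to the computation already carried out inside the proof of Proposition 2.18, so no genuinely new obstacle appears; the argument is essentially a bookkeeping specialization of the abstract lattice result to the Bruhat-indexed setting of $\Lattice_{D}(V_\lambda)$.
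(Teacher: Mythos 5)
Your proposal is correct and follows essentially the same route as the paper: the paper's proof also consists of invoking the increasing monotone idempotent $\bbs_{\sT}^{\op}$ from Theorem~\ref{thm::bbs::op} and then applying Proposition~\ref{prp::distributive::idempotent} to the distributive lattice of Theorem~\ref{thm::Demazure::intersect}. The filtration bookkeeping via Fact~\ref{fact::distributive}, join-primeness, and the infimum characterization of Lemma~\ref{lem::poset::idempotent} that you spell out is exactly the "careful application" the paper leaves implicit.
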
	
	\begin{proof}
		Recall that the opposite bubble-sort operation $\bbs_{\sT}^{\op}$ defines an increasing monotone idempotent on the interval of Bruhat graph, whose image consists of $\sT$-antidominant compositions (Theorem~\ref{thm::bbs::op}). 
		Consequently, the assumptions of Proposition~\ref{prp::distributive::idempotent} are satisfied, and it remains to carefully apply these results in our setting.
	\end{proof}
	
	\begin{corollary}
		\label{cor::min::char}	
		We have the following two equivalent descriptions of the $\fh_n$-character of the minimal subquotients:	
		\begin{equation}
			\label{eq::gen::vdK::character}
			\ch_{\fh_n}(\KK_{\sT,\ov{a}})(x) = 
			\sum_{\substack{\ov{c} \in \bD_{\sT}(\lambda)\\  \ov{c}\preceq_{\Bruhat} \ov{a}}} \mu^{\bD_{\sT}(\lambda)}(\ov{c},\ov{a}) \kappa_{\ov{c}}(x)
			= \sum_{\substack{\ov{b} \colon {\bbs_{\sT}^{\op}(\ov{b})= \ov{a} }}} a_{\ov{b}}(x).
		\end{equation}
		Here, $\mu^{\bD_{\sT}(\lambda)}(\ov{c},\ov{a})$ is the M\"obius function on the poset $\bD_{\sT}(\lambda)$ discussed in Subsection~\ref{sec::poset::terms}, $\kappa_{\ov{a}}(x)$ is the key polynomial, and $a_{\ov{a}}(x)$ is the Demazure atom associated with the composition $\ov{a}$.
	\end{corollary}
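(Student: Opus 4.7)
The proof of both equalities should be a short deduction from the distributive lattice structure established in Corollary~\ref{cor::distr::dominant}, combined with the elementary fact that characters are additive on short exact sequences of $\fh_n$-modules. The two equalities correspond to two different ways of "evaluating" the same subquotient $\KK_{\sT,\ov{a}}$: from above (via Demazure modules $D_{\ov{c}}$ with $\ov{c}\preceq\ov{a}$, giving key polynomials) and from below (via van der Kallen modules $\KK_{\ov{b}}$ with $\bbs_{\sT}^{\op}(\ov{b})=\ov{a}$, giving Demazure atoms).

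For the second equality, I would simply invoke the isomorphism~\eqref{eq::vdK::a} provided by Corollary~\ref{cor::distr::dominant}: the filtration induced on $\KK_{\sT,\ov{a}}$ by $\Lattice_{D}(\lambda)$ has associated graded isomorphic to $\bigoplus_{\bbs_{\sT}^{\op}(\ov{b})=\ov{a}} \KK_{\ov{b}}$. Since passing to the associated graded of a filtration of $\fh_n$-modules preserves the character, and since by definition $\ch_{\fh_n}(\KK_{\ov{b}}) = a_{\ov{b}}(x)$, summing over all $\ov{b}$ in the fiber of $\bbs_{\sT}^{\op}$ above $\ov{a}$ yields the right-hand expression immediately.

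For the first equality, I would apply Fact~\ref{fact::distributive} (specialized to our sublattice $\Lattice_{D}^{\sT}(\lambda)$) to the Demazure module $D_{\ov{a}}$ itself: the standard filtration~\eqref{eq::filtration::distributive} on $D_{\ov{a}}$ indexed by the lower set $\{\ov{c}\in \bD_{\sT}(\lambda)\colon \ov{c}\preceq_{\Bruhat}\ov{a}\}$ has associated graded $\bigoplus_{\ov{c}\preceq \ov{a}} \KK_{\sT,\ov{c}}$. Taking $\fh_n$-characters produces the summation identity
\[
\kappa_{\ov{a}}(x) \;=\; \ch_{\fh_n}(D_{\ov{a}})(x) \;=\; \sum_{\ov{c}\in\bD_{\sT}(\lambda)\colon \ov{c}\preceq \ov{a}} \ch_{\fh_n}(\KK_{\sT,\ov{c}})(x).
\]
This is precisely the hypothesis of the M\"obius Inversion Formula (Fact~\ref{fact::Mobius}) applied to the locally finite poset $\bD_{\sT}(\lambda)$, with $g(\ov{a})=\kappa_{\ov{a}}(x)$ and $f(\ov{a}) = \ch_{\fh_n}(\KK_{\sT,\ov{a}})(x)$. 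Inversion then yields the desired identity involving $\mu^{\bD_{\sT}(\lambda)}$.

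There is no substantial obstacle: all the heavy lifting — distributivity of the lattice (Theorem~\ref{thm::Demazure::intersect}), identification of $\vee$-irreducibles with $\sT$-dominant Demazure modules, and the explicit form of the associated graded in terms of $\bbs_{\sT}^{\op}$-fibers — is already encoded in Corollary~\ref{cor::distr::dominant}. The only points worth spelling out carefully are (i) that $\ch_{\fh_n}(D_{\ov{c}}) = \kappa_{\ov{c}}(x)$ by the very definition of key polynomials recalled in \S\ref{sec::Demazure::def}, and (ii) that the sums in both M\"obius inversion and the $\bbs_{\sT}^{\op}$-fiber decomposition are finite (automatic from $\bD_{\sT}(\lambda)$ and $\bS_n\lambda$ being finite), so there is no convergence issue.
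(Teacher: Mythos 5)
Your proposal is correct and matches the paper's own (very terse) proof: the second equality is read off from the associated-graded isomorphism~\eqref{eq::vdK::a} of Corollary~\ref{cor::distr::dominant} together with additivity of characters, while the first equality comes from the filtration $\kappa_{\ov{a}}(x)=\sum_{\ov{c}\preceq\ov{a}}\ch_{\fh_n}(\KK_{\sT,\ov{c}})(x)$ supplied by Fact~\ref{fact::distributive} applied to the sublattice $\Lattice_D^{\sT}(\lambda)$, followed by M\"obius inversion (Fact~\ref{fact::Mobius}). You have simply spelled out the two steps that the paper summarizes in a single sentence.
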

	\begin{proof}
		The first equality follows from the description~\eqref{eq::vdK::general} of the generalized van der Kallen module, while the second follows from the associated graded structure given in~\eqref{eq::vdK::a}.
	\end{proof}
	
	Let us also formulate a similar statement for the right Demazure modules: 
	\begin{corollary}
		\label{cor::distr::dominant::op}	
		Suppose that $(\sS,v)$ is an arborescent poset with a linearization and that $\lambda$ is a partition of length at most $\#\sS$.
		Then the right Demazure submodules 
		$$\{D_{\ov{a}}^{\op}\subset V_{\lambda}^{\op} \simeq V_{-w_0\lambda}\ \colon \  \ov{a}\in \bS_{n}\lambda\}$$
		form the set of $\vee$-irreducible elements of the distributive lattice $\Lattice_{D}^{\op}(\lambda)$. The subset of Demazure submodules indexed by $\sS$-dominant compositions constitutes a subset of $\vee$-irreducible elements of the distributive sublattice generated by them:
		$$\Lattice_{D}^{\sS}(\lambda)^{\op}:= \left\langle D_{\ov{a}}^{\op} \ \colon \ \ov{a}\in\bD_{\sS}(\lambda)\right\rangle \ \subset \ \left\langle D_{\ov{a}}^{\op} \ \colon \ \ov{a}\in\bS_n\lambda\right\rangle =: \Lattice_{D}(\lambda)^{\op}. $$ 
		
		The minimal subquotients:
		$$\KK_{\sS,\ov{a}}^{\op} := D_{\ov{a}}^{\op}\left/ \sum_{\mu\in\bD_{\sS}(\lambda) \colon \mu\succ \ov{a}} D_{\mu}^{\op} \right.$$
		have the following two descriptions of their characters:
		\begin{equation}
			\label{eq::vdK::op::char}
			\ch_{\fh_m}\left(\KK_{\sS,\ov{a}}^{\op}\right) = 
			\sum_{\substack{\ov{c} \in \bD_{\sS}(\lambda)\\  \ov{c}\succeq_{\Bruhat} \ov{a}}} \mu^{\bD_{\sS}(\lambda)}(\ov{c},\ov{a})\, \kappa^{\ov{c}}(y)  
			= \sum_{\ov{a}\colon \bbs_{\sS}(\ov{a})= \ov{a} } a^{\ov{a}}(y).
		\end{equation}
	\end{corollary}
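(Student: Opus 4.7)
The plan is to mirror the proof of Corollary~\ref{cor::distr::dominant}, keeping careful track of the fact that the inclusion order on right Demazure modules is the \emph{reverse} of the Bruhat order on indices, so the direction of the monotone idempotent flips accordingly. First, I apply Theorem~\ref{thm::Demazure::intersect} to the right integrable module $V_{\lambda}^{\op}\simeq V_{-w_0\lambda}$, obtaining the distributive lattice $\Lattice_{D}(\lambda)^{\op}$ of right $\fb_m$-submodules with $\vee$-irreducibles $\{D_{\ov{a}}^{\op}\}_{\ov{a}\in\bS_m\lambda}$. Here one has $D_{\ov{a}}^{\op}\subseteq D_{\ov{b}}^{\op}\Leftrightarrow \ov{a}\succeq_{\Bruhat}\ov{b}$, which is the reversal that must be tracked throughout.

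Next, Corollary~\ref{cor::bbs::monotone} (together with its parabolic extension Corollary~\ref{cor::bbs::monotone:1} and the adjunction~\eqref{eq::proj::parab::D}) asserts that $\bbs_{\sS}:\bA_{\sS}^{v}(\lambda)\twoheadrightarrow \bD_{\sS}^{v}(\lambda)$ is a nonincreasing monotone idempotent in the Bruhat order. Via the reversal above, $\bbs_{\sS}$ becomes an \emph{increasing} monotone idempotent on the poset of right Demazure $\vee$-irreducibles ordered by inclusion. This places us in the hypothesis of Proposition~\ref{prp::distributive::idempotent}, whose conclusion yields both the $\vee$-irreducibility of $\{D_{\ov{a}}^{\op}:\ov{a}\in\bD_{\sS}(\lambda)\}$ in the sublattice $\Lattice_{D}^{\sS}(\lambda)^{\op}$ they generate, together with the isomorphism
\[
\KK_{\sS,\ov{a}}^{\op}\simeq\bigoplus_{\ov{b}\colon \bbs_{\sS}(\ov{b})=\ov{a}}\KK_{\ov{b}}^{\op}.
\]
Taking $\fh_m$-characters and invoking $\ch_{\fh_m}\KK_{\ov{b}}^{\op}=a^{\ov{b}}(y)$ gives the second equality of~\eqref{eq::vdK::op::char}.

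For the first equality, I use the standard filtration of Fact~\ref{fact::distributive} applied inside $\Lattice_{D}^{\sS}(\lambda)^{\op}$: the $\vee$-irreducibles contained in $D_{\ov{a}}^{\op}$ are precisely the $D_{\ov{c}}^{\op}$ with $\ov{c}\in\bD_{\sS}(\lambda)$ and $\ov{c}\succeq_{\Bruhat}\ov{a}$, which gives the triangular identity
\[
\kappa^{\ov{a}}(y)=\sum_{\substack{\ov{c}\in\bD_{\sS}(\lambda)\\ \ov{c}\succeq_{\Bruhat}\ov{a}}}\ch_{\fh_m}\KK_{\sS,\ov{c}}^{\op}.
\]
M\"obius inversion (Fact~\ref{fact::Mobius}) on the poset $\bD_{\sS}(\lambda)$, read with its Bruhat order reversed to align with inclusion, inverts this identity to produce the first expression in~\eqref{eq::vdK::op::char}.

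The principal obstacle is consistent direction bookkeeping. The bubble-sort $\bbs_{\sS}$ was designed for antilinearizations and is Bruhat-\emph{decreasing}, whereas Proposition~\ref{prp::distributive::idempotent} demands an increasing idempotent on the $\vee$-irreducible poset. For left Demazures in Corollary~\ref{cor::distr::dominant} the two orientations coincide and one applies $\bbs_{\sT}^{\op}$; here they disagree, which is precisely why $\bbs_{\sS}$ (rather than its opposite) is the correct map to use, why the indexing set of the direct sum above is defined via $\bbs_{\sS}(\ov{b})=\ov{a}$, and why the M\"obius sum ranges over $\ov{c}\succeq_{\Bruhat}\ov{a}$. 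Once this correspondence is firmly pinned down, the remaining work is purely formal.
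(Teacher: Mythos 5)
Your proof is correct and takes essentially the same route the paper intends: apply Theorem~\ref{thm::Demazure::intersect} to $V_\lambda^{\op}$, feed the bubble-sort idempotent into Proposition~\ref{prp::distributive::idempotent}, and read off the two character descriptions from Fact~\ref{fact::distributive} plus M\"obius inversion. The paper gives this corollary without a written proof (it is the ``careful application'' left implicit after the proof of Corollary~\ref{cor::distr::dominant}), and your argument supplies exactly the omitted orientation bookkeeping. In particular, you correctly note that $D_{\ov{a}}^{\op}\subseteq D_{\ov{b}}^{\op}\Leftrightarrow \ov{a}\succeq_{\Bruhat}\ov{b}$ reverses the roles, so the Bruhat-\emph{nonincreasing} $\bbs_{\sS}$ (the antilinearization version) is what becomes the \emph{increasing} idempotent on the inclusion poset of right Demazure $\vee$-irreducibles, and you correctly diagnose that $\mu^{\bD_{\sS}(\lambda)}(\ov{c},\ov{a})$ with $\ov{c}\succeq_{\Bruhat}\ov{a}$ must be read with the poset order reversed (equivalently, $\mu(\ov{c},\ov{a})$ in $\bD_{\sS}(\lambda)^{\op}$ equals $\mu(\ov{a},\ov{c})$ in $\bD_{\sS}(\lambda)$). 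You also tacitly correct the statement's typo replacing the bound variable $\ov{a}$ in the rightmost sum of~\eqref{eq::vdK::op::char} by a fresh $\ov{b}$, which is the intended reading. A minor stylistic point: the corollary's hypothesis reads ``with a linearization,'' but as you infer from the notation $v$ and the appearance of $\bbs_{\sS}$ rather than $\bbs_{\sS}^{\op}$, it must be an anti-linearization (as in the application via $\vrt$ in Theorem~\ref{thm::D::Dop::distrib}); you handle this silently but correctly. No gaps.
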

	
	It is worth mentioning that, on one hand, any subspace $D\in\Lattice_{D}^{\sT}(\lambda)$ admits a resolution by a direct sum of Demazure modules (Proposition~\ref{prp::distibutive::resolution}). 
	Consequently,  whenever we are able to show that the M\"obius function $\mu^{\bD_{\sT}(\lambda)}(\ov{c},\ov{a})$ takes values in $\{0,1,-1\}$ (Corollary~\ref{cor::Mobius::DL} and Conjecture~\ref{conj::Moebius}) we will be able to extract an explicit description of this resolution for the minimal subquotients $\KK_{\sT,\ov{a}}$, generalizing the classical BGG resolution.

	\section{Staircase shapes, corners and $DL$-dense arrays}
	\label{sec::Staircase}
	In the first two subsections,~\S\ref{sec::Young::St}--\S\ref{sec::staircase::corners}, we recall the main combinatorial objects and notations introduced in~\cite{FKM::Cauchy}. In~\S\ref{sec::Bruhat::DL-dense}, we develop new combinatorial structures, relate them to those introduced in~\S\ref{sec::Bubble::sort::all}, and apply them in~\S\ref{sec::DL::Distr}.

	\subsection{Young Diagrams in Staircase Shape}
	\label{sec::Young::St}
	
	The main focus of this note is the combinatorics surrounding staircase matrices, which arise as an output of the Gaussian elimination process. The shapes of these staircase matrices are indexed by Young diagrams, drawn in a reversed form.
	
	Let us fix a positive integer $m$ and a sequence of integers $\ov{n}:=(n_1,\ldots,n_m)$ satisfying
	$$0<n_1\leq n_2 \leq \ldots \leq n_m.$$
	The corresponding Young diagram (or \emph{staircase shape}) is defined as
	\[
	\bbY_{\overline{n}} = \{(i,j): 1\le j\le m,\ 1\le i\le n_j\}\subset \bZ_{>0}^2.
	\]
	This diagram is visualized as a collection of $m$ columns of heights $n_1,\dots,n_m$, arranged from left to right.
	In particular, the first index $i$ corresponds to the row, while the second index $j$ denotes the column number.
	For example, the cells in the top row have the form $(1,j)$ for 
	$1\le j\le m$.
	
	\begin{example}
		Here is an example of a Young diagram $\bbY_{\ov{n}}$ 
		associated with $\overline n =(1^3 3^3 4^2)$: 
		\[\bbY_{(1^3\,3^3\,4^2)} = \begin{tikzpicture}[scale=0.4]
			\draw[step=1cm] (-1,1) grid (7,0);
			\draw[step=1cm] (2,0) grid (7,-1);
			\draw[step=1cm] (2,-1) grid (7,-2);
			\draw[step=1cm] (5,-2) grid (7,-3);
		\end{tikzpicture}. \]
	\end{example}

	The reason for rotating the visualization of the Young diagram is that our primary focus is on staircase matrices, which we want to be acted upon by upper-triangular matrices from both the left and the right.
	
	\begin{notation}
		The subspace of rectangular 
		$n_m\times m$ matrices consisting of matrices with zero entries outside the cells belonging to 
		the Young diagram $\bbY_{\ov{n}}$ is called \emph{the set of staircase matrices of shape $\bbY_{\ov{n}}$} and is denoted by $\Mat_{\ov{n}}$.
	\end{notation}
	
	\subsection{Poset of Staircase Corners}
	\label{sec::staircase::corners}
	
	In this subsection, we associate each partition $\ov{n}$ with a subset of indices $\St_{\ov{n}}\subset \bbY_{\ov{n}}$, called the \emph{staircase corners}. We define a partial order on this set and describe some fundamental properties of the resulting poset.
	
	For a cell $(i,j)\in \bbY_{\overline{n}}$, we define the partition $\overline{n^{ij}}=(n^{ij}_1\le\ldots\le n^{ij}_{m-1})$ obtained from $\overline{n}$ by removing the $i$-th row and the $j$-th column:
	\begin{equation}
		\label{eq::erased::diagram}
		n^{ij}_k:=\begin{cases}
			n_k, \text{ if } k< j \ \& \ n_k < i, \\
			n_k-1, \text{ if } k < j \ \& \ n_k \geq i, \\
			n_{k+1}-1, \text{ if } k\geq j. 
		\end{cases}
	\end{equation}
	
	There is a natural bijection between the cells of $\bbY_{\ov{n}}$ with the $i$-th row and $j$-th column removed and the cells of $\bbY_{\ov{n^{ij}}}$, denoted by $\pi_{i,j}$:
	\begin{equation}
		\label{eq::erased::pi}    
		\pi_{ij}(s,t):=\begin{cases}
			(s,t) \text{ if } s<i\ \& \ j<t,\\
			(s-1,t) \text{ if } s>i\ \& \ j<t,\\
			(s,t-1) \text{ if } s<i\ \& \ j>t,\\
			(s-1,t-1) \text{ if } s>i\ \& \ j>t.
		\end{cases}
	\end{equation}
	
	\begin{definition}
		\label{def::DL::indices}
		The subset $\St_{\ov{n}}\subset \bbY_{\overline{n}}$ of \emph{staircase corners} is defined inductively by the following properties:
		\begin{itemize}
			\item In each row and each column, at most one cell (staircase corner) belongs to $\St_{\ov{n}}$.
			\item If $n_j>n_{j-1}$, then the corner cell $(n_j,j)$ of the Young diagram $\bbY_{\ov{n}}$ belongs to $\St_{\ov{n}}$.
			\item 
			For any $(i,j)\in\St_{\ov{n}}$, another cell $(s,t)$ belongs to $\St_{\ov{n}}\setminus\{(i,j)\}$ if and only if $\pi_{ij}(s,t)\in \St_{\ov{n^{ij}}}$.
			In other words, for any staircase corner $(i,j)\in\St_{\ov{n}}$, any other staircase corner is also a staircase corner of the diagram obtained by removing the $i$-th row and $j$-th column.
		\end{itemize} 
	\end{definition}
	
	\begin{remark}
		\label{rem::row::deletion}\label{rem::row::erase} 
		Suppose that the $j$'th column of the Young diagram $\bbY_{\ov{n}}$
		does not contain a staircase corner. Then by erasing the $j$-th column we get a bijection between $\St_{\ov{n}}$ and $\St_{\ov{n}_j''}$, where $\ov{n}_j'':=(n_1\leq \ldots\leq \widehat{n_j}\leq\ldots\leq n_m).$
		Similarly, if the $i$'th row of the Young diagram $\bbY_{\ov{n}}$ does not contain a staircase corner and $\ov{n}'_i$ is the partition then there is a bijection between $\St_{\ov{n}}$ and $\St_{\ov{n}'_i}$ where $\ov{n}'_i$ -- is a partition with $i$'th row ommited.
	\end{remark}

	It was shown in~\cite[Lemma 1.24]{FKM::Cauchy} that for any partition $\ov{n}$, the set $\St_{\ov{n}}$ of staircase corners is well-defined. Moreover, $\St_{\ov{n}}$ forms a "rook placement" in the Young diagram $\bbY_{\overline{n}}$, meaning that for any cell $(i,j)\in \St_{\overline{n}}$, there are no other staircase corners in the same row or column.
	Evgeny Smirnov explained to us in private communication that the "rook placement" of staircase corners $\St_{\ov{n}}$ is maximal under an appropriate order.
	
	We equip the set $\St_{\overline{n}}$ of staircase corners with the following partial order:
	\begin{equation}
		\label{eq::DL::order}
		(i,j) \succeq (i',j')\ \stackrel{\mathsf{def}}{\Leftrightarrow} \ \begin{cases}
			i\geq i', \\
			j\leq j'.
		\end{cases} 
	\end{equation}
	In other words, an element is enlarged by moving in the down-left direction.
	
	Below, we provide a few examples of the set $\St_{\ov{n}}$ of staircase corners along with the Hasse diagram of the corresponding partial order.
	We represent the elements of $\St_{\ov{n}}$ as blue dots and draw edges in the Hasse diagram oriented from smaller to larger elements.
	The first examples include rectangular shapes, triangular shapes, and a small mixture of both:
	\begin{equation}
		\label{pic::rectangular::corners}
		\begin{array}{ccc}
			{\begin{tikzpicture}[scale=0.5]
					\draw[step=1cm] (0,-3) grid (7,0);
					\node (v00) at (.5,-2.5) { {{\color{blue}$\bullet$}}};
					\node (v10) at  (.5+1,-1.5) {{ \color{blue}$\bullet$ }};
					\node (v20) at (.5+2,-0.5) {{ \color{blue}$\bullet$ }};
					\draw [blue,line width=1.1pt,->] (.5+2,-0.5) -- (.5+1,-1.5);
					\draw [blue,line width=1.1pt,->] (.5+1,-1.5) -- (.5,-2.5);
				\end{tikzpicture},
			}
			&
			{
				\begin{tikzpicture}[scale=0.5]
					\draw[step=1cm] (0,-1) grid (7,0);
					\draw[step=1cm] (1,-2) grid (7,0);
					\draw[step=1cm] (2,-3) grid (7,0);
					\draw[step=1cm] (3,-4) grid (7,0);
					\node (v00) at (.5,-0.5) { {{\color{blue}$\bullet$}}};
					\node (v10) at  (.5+1,-1.5) {{ \color{blue}$\bullet$ }};
					\node (v20) at (.5+2,-2.5) {{ \color{blue}$\bullet$ }};
					\node (v20) at (.5+3,-3.5) {{ \color{blue}$\bullet$ }};
			\end{tikzpicture} },
			& 
			{
				\begin{tikzpicture}[scale=0.35]
					\draw[step=1cm] (0,-1) grid (7,0);
					\draw[step=1cm] (0,-2) grid (7,-1);
					\draw[step=1cm] (0,-3) grid (7,-2);
					\draw[step=1cm] (0,-4) grid (7,-3);
					\draw[step=1cm] (0,-5) grid (7,-4);
					\draw[step=1cm] (0,-6) grid (7,-5);
					\draw[step=1cm] (1,-7) grid (7,-6);
					\draw[step=1cm] (2,-8) grid (7,-7);
					\draw[step=1cm] (3,-9) grid (7,-8); 
					\node (v0) at (.5,-5.5) { {{\color{blue}$\bullet$}}};
					\node (v1) at  (.5+1,-6.5) {{ \color{blue}$\bullet$ }};
					\node (v2) at (.5+2,-7.5) {{ \color{blue}$\bullet$ }};
					\node (v3) at (.5+3,-8.5) {{ \color{blue}$\bullet$ }};
					\node (v4) at (.5+4,-4.5) {{ \color{blue}$\bullet$ }};
					\node (v5) at (.5+5,-3.5) {{ \color{blue}$\bullet$ }};
					\node (v6) at (.5+6,-2.5) {{ \color{blue}$\bullet$ }};
					%
					%
					\draw [blue,line width=1.1pt,->] (.5+6,-2.5) -- (.5+5,-3.5);
					\draw [blue,line width=1.1pt,->] (.5+5,-3.5) -- (.5+4,-4.5);
					\draw [blue,line width=1.1pt,<-] (.5,-5.5) -- (.5+4,-4.5);
					\draw [blue,line width=1.1pt,<-] (.5+1,-6.5) -- (.5+4,-4.5);
					\draw [blue,line width=1.1pt,<-] (.5+2,-7.5) -- (.5+4,-4.5);
					\draw [blue,line width=1.1pt,<-] (.5+3,-8.5) -- (.5+4,-4.5);
				\end{tikzpicture}. 
			}
			\\
			\ov{n}=(3^7), & \ov{n}=(1,2,3,4^4), & \ov{n}=(6,7,8,9^4).
		\end{array}
	\end{equation}
	For a larger example {\small $\ov{n}=(3^4\,5\,9^2\,13^{10}\,16^2)$}, we attempt to visualize the algorithm for identifying the staircase corners. Specifically, we begin by coloring in lime the cells that belong to the rows and columns of the corner cells of the Young diagram. Next, we examine the remaining cells and mark in yellow the hooks associated with the corners of the Young diagram obtained after removing the lime cells. We continue this process, using a new color at each step, and mark the centers of the identified staircase corner cells with blue dots. Finally, we overlay a Hasse diagram on top of the visualization.
	\[
	\begin{array}{ccc}
		\begin{tikzpicture}[scale=0.3,shift={(0,2)}]
			\fill[lightgray] (14,-2) -- (14,-1) -- (20,-1)--(20,-2);
			\fill[cyan!10] (13,-4) -- (13,2) -- (14,2)--(14,-4);
			\fill[cyan!10] (13,-4) -- (13,-3) -- (20,-3)--(20,-4);
			\fill[magenta!25] (12,-5) -- (12,2) -- (13,2)--(13,-5);
			\fill[magenta!25] (12,-5) -- (12,-4) -- (20,-4)--(20,-5);
			\fill[brown!30] (11,-8) -- (11,2) -- (12,2)--(12,-8);
			\fill[brown!30] (11,-8) -- (11,-7) -- (20,-7)--(20,-8);
			\fill[pink!20] (2,1) -- (2,2) -- (3,2)--(3,1);
			\fill[pink!20] (2,1) -- (2,2) -- (20,2)--(20,1);
			\fill[pink!20] (10,-9) -- (10,2) -- (11,2)--(11,-9);
			\fill[pink!20] (10,-9) -- (10,-8) -- (20,-8)--(20,-9);
			\fill[yellow] (19,-13) -- (19,2) -- (20,2)--(20,-13);
			\fill[yellow] (19,-13) -- (19,-12) -- (20,-12)--(20,-13);
			\fill[yellow] (9,-10) -- (9,2) -- (10,2)--(10,-10);
			\fill[yellow] (9,-10) -- (9,-9) -- (20,-9)--(20,-10);
			\fill[yellow] (7,-6) -- (7,2) -- (8,2)--(8,-6);
			\fill[yellow] (7,-6) -- (7,-5) -- (20,-5)--(20,-6);
			\fill[yellow] (1,0) -- (1,2) -- (2,2)--(2,0);
			\fill[yellow] (1,0) -- (1,1) -- (20,1)--(20,0);
			\fill[lime!40] (6,-7) -- (6,2) -- (7,2)--(7,-7);
			\fill[lime!40] (7,-7) -- (7,-6) -- (20,-6)--(20,-7);
			\fill[lime!40] (5,-3) -- (5,2) -- (6,2)--(6,-3);
			\fill[lime!40] (7,-3) -- (7,-2) -- (20,-2)--(20,-3);
			\fill[lime!40] (0,-1) -- (0,2) -- (1,2)--(1,-1);
			\fill[lime!40] (1,-1) -- (1,-0) -- (20,-0)--(20,-1);
			\fill[lime!40] (8,-11) -- (8,2) -- (9,2)--(9,-11);
			\fill[lime!40] (9,-11) -- (9,-10) -- (20,-10)--(20,-11);
			\fill[lime!40] (18,-14) -- (18,2) -- (19,2)--(19,-14);
			\fill[lime!40] (19,-14) -- (19,-13) -- (20,-13)--(20,-14);
			\draw[step=1cm] (0,-1) grid (20,2);
			\draw[step=1cm] (5,-3) grid (20,-1);
			\draw[step=1cm] (6,-7) grid (20,-1);
			\draw[step=1cm] (8,-11) grid (20,-1);
			\draw[step=1cm] (18,-14) grid (20,-1);
			\node (v00) at (.5,-0.5) { {{\color{blue}$\bullet$}}};
			\node (v10) at  (.5+1,0.5) {{ \color{blue}$\bullet$ }};
			\node (v20) at (.5+2,1.5) {{ \color{blue}$\bullet$ }};
			\node (v00) at (5.5,-2.5) { {{\color{blue}$\bullet$}}};
			\node (v10) at  (5.5+9,-1.5) {{ \color{blue}$\bullet$ }};
			\node (v00) at (6.5,-6.5) { {{\color{blue}$\bullet$}}};
			\node (v10) at  (6.5+1,-5.5) {{ \color{blue}$\bullet$ }};
			\node (v00) at (8.5,-10.5) { {{\color{blue}$\bullet$}}};
			\node (v10) at  (8.5+1,-9.5) {{ \color{blue}$\bullet$ }};
			\node (v00) at (8.5+2,-8.5) { {{\color{blue}$\bullet$}}};
			\node (v10) at  (8.5+3,-7.5) {{ \color{blue}$\bullet$ }};
			\node (v00) at (12.5,-4.5) { {{\color{blue}$\bullet$}}};
			\node (v10) at  (13.5,-3.5) {{ \color{blue}$\bullet$ }};
			\node (v10) at  (18.5,-13.5) {{ \color{blue}$\bullet$ }};
			\node (v10) at  (19.5,-12.5) {{ \color{blue}$\bullet$ }};
			\node (v00) at (.5,-0.5) { {{\color{blue}$\bullet$}}};
			\node (v10) at  (.5+1,0.5) {{ \color{blue}$\bullet$ }};
			\node (v20) at (.5+2,1.5) {{ \color{blue}$\bullet$ }};
			\draw[blue,line width=1.1pt,<-] (.5,-0.5) -- (.5+1,0.5);
			\draw[blue,line width=1.1pt,<-] (.5+1,0.5) -- (.5+2,1.5);
			\node (v00) at (5.5,-2.5) { {{\color{blue}$\bullet$}}};
			\node (v10) at  (5.5+9,-1.5) {{ \color{blue}$\bullet$ }};
			
			\draw[blue,line width=1.1pt,<-] (5.5,-2.5) -- (5.5+9,-1.5);

			\node (v00) at (6.5,-6.5) { {{\color{blue}$\bullet$}}};
			\node (v10) at  (6.5+1,-5.5) {{ \color{blue}$\bullet$ }};
			
			\draw[blue,line width=1.1pt,<-] (6.5,-6.5) -- (6.5+1,-5.5);

			\node (v00) at (8.5,-10.5) { {{\color{blue}$\bullet$}}};
			\node (v10) at  (8.5+1,-9.5) {{ \color{blue}$\bullet$ }};
			
			\draw[blue,line width=1.1pt,<-] (8.5,-10.5) -- (8.5+1,-9.5);
			\draw[blue,line width=1.1pt,<-] (8.5+1,-9.5)-- (8.5+2,-8.5);
			\draw[blue,line width=1.1pt,<-] (8.5+2,-8.5)--(8.5+3,-7.5);
			\draw[blue,line width=1.1pt,<-] (8.5+3,-7.5)-- (12.5,-4.5); 
			\draw[blue,line width=1.1pt,<-] (6.5+1,-5.5) -- (12.5,-4.5);

			\node (v00) at (8.5+2,-8.5) { {{\color{blue}$\bullet$}}};
			\node (v10) at  (8.5+3,-7.5) {{ \color{blue}$\bullet$ }};
			
			\node (v00) at (12.5,-4.5) { {{\color{blue}$\bullet$}}};
			\node (v10) at  (13.5,-3.5) {{ \color{blue}$\bullet$ }};
			
			\draw[blue,line width=1.1pt,<-] (12.5,-4.5) -- (13.5,-3.5);
			\draw[blue,line width=1.1pt,<-] (13.5,-3.5) -- (5.5+9,-1.5);
			
			\node (v10) at  (18.5,-13.5) {{ \color{blue}$\bullet$ }};
			\node (v10) at  (19.5,-12.5) {{ \color{blue}$\bullet$ }};
			\draw[blue,line width=1.1pt,->] (19.5,-12.5) -- (18.5,-13.5);
		\end{tikzpicture};
		& &
		\begin{tikzpicture}[scale=0.3,shift={(0,2)}]
			\node (v00) at (.5,-0.5) { {{\color{blue}$\bullet$}}};
			\node (v10) at  (.5+1,0.5) {{ \color{blue}$\bullet$ }};
			\node (v20) at (.5+2,1.5) {{ \color{blue}$\bullet$ }};
			\node (v00) at (5.5,-2.5) { {{\color{blue}$\bullet$}}};
			\node (v10) at  (5.5+9,-1.5) {{ \color{blue}$\bullet$ }};
			\node (v00) at (6.5,-6.5) { {{\color{blue}$\bullet$}}};
			\node (v10) at  (6.5+1,-5.5) {{ \color{blue}$\bullet$ }};
			\node (v00) at (8.5,-10.5) { {{\color{blue}$\bullet$}}};
			\node (v10) at  (8.5+1,-9.5) {{ \color{blue}$\bullet$ }};
			\node (v00) at (8.5+2,-8.5) { {{\color{blue}$\bullet$}}};
			\node (v10) at  (8.5+3,-7.5) {{ \color{blue}$\bullet$ }};
			\node (v00) at (12.5,-4.5) { {{\color{blue}$\bullet$}}};
			\node (v10) at  (13.5,-3.5) {{ \color{blue}$\bullet$ }};
			\node (v10) at  (18.5,-13.5) {{ \color{blue}$\bullet$ }};
			\node (v10) at  (19.5,-12.5) {{ \color{blue}$\bullet$ }};
			\node (v00) at (.5,-0.5) { {{\color{blue}$\bullet$}}};
			\node (v10) at  (.5+1,0.5) {{ \color{blue}$\bullet$ }};
			\node (v20) at (.5+2,1.5) {{ \color{blue}$\bullet$ }};
			\draw[blue,line width=1.1pt,<-] (.5,-0.5) -- (.5+1,0.5);
			\draw[blue,line width=1.1pt,<-] (.5+1,0.5) -- (.5+2,1.5);
			\node (v00) at (5.5,-2.5) { {{\color{blue}$\bullet$}}};
			\node (v10) at  (5.5+9,-1.5) {{ \color{blue}$\bullet$ }};
			
			\draw[blue,line width=1.1pt,<-] (5.5,-2.5) -- (5.5+9,-1.5);

			\node (v00) at (6.5,-6.5) { {{\color{blue}$\bullet$}}};
			\node (v10) at  (6.5+1,-5.5) {{ \color{blue}$\bullet$ }};
			
			\draw[blue,line width=1.1pt,<-] (6.5,-6.5) -- (6.5+1,-5.5);

			\node (v00) at (8.5,-10.5) { {{\color{blue}$\bullet$}}};
			\node (v10) at  (8.5+1,-9.5) {{ \color{blue}$\bullet$ }};
			
			\draw[blue,line width=1.1pt,<-] (8.5,-10.5) -- (8.5+1,-9.5);
			\draw[blue,line width=1.1pt,<-] (8.5+1,-9.5)-- (8.5+2,-8.5);
			\draw[blue,line width=1.1pt,<-] (8.5+2,-8.5)--(8.5+3,-7.5);
			\draw[blue,line width=1.1pt,<-] (8.5+3,-7.5)-- (12.5,-4.5); 
			\draw[blue,line width=1.1pt,<-] (6.5+1,-5.5) -- (12.5,-4.5);

			\node (v00) at (8.5+2,-8.5) { {{\color{blue}$\bullet$}}};
			\node (v10) at  (8.5+3,-7.5) {{ \color{blue}$\bullet$ }};
			
			\node (v00) at (12.5,-4.5) { {{\color{blue}$\bullet$}}};
			\node (v10) at  (13.5,-3.5) {{ \color{blue}$\bullet$ }};
			
			\draw[blue,line width=1.1pt,<-] (12.5,-4.5) -- (13.5,-3.5);
			\draw[blue,line width=1.1pt,<-] (13.5,-3.5) -- (5.5+9,-1.5);
			
			\node (v10) at  (18.5,-13.5) {{ \color{blue}$\bullet$ }};
			\node (v10) at  (19.5,-12.5) {{ \color{blue}$\bullet$ }};
			\draw[blue,line width=1.1pt,->] (19.5,-12.5) -- (18.5,-13.5);
		\end{tikzpicture}.
		\\
		\ov{n}={{(3^5\, 5\, 9^2\, 13^{10}\, 16^2)}} & & \St_{{\small(3^5\, 5\, 9^2\, 13^{10}\, 16^2)}} 
	\end{array}
	\]
	
	Let us introduce special notations for the  row index and the column number of a cell of the Young diagram $\bbY_{\ov{n}}$:
	$$
	\hor((i,j)):= i, \quad \vrt((i,j)):=j.
	$$
	Since each row and each column contains at most one staircase corner, it follows that the row index and the column number define a linearization $\hor:\St_{\overline n}\hookrightarrow [1,n_m]$ and an anti-linearization $\vrt:\St_{\overline n}\hookrightarrow [1,m]$:
	$$
	(i,j)\prec (i',j') \in \St_{\ov{n}} \Leftrightarrow
	\begin{cases}
		\hor((i,j))< \hor((i',j')), \\
		\vrt((i,j))> \vrt ((i',j')).
	\end{cases}    
	$$
	
	The following lemma (proven in~\cite[Lemma 1.24]{FKM::Cauchy}) describes the fundamental properties of the poset $(\St_{\ov{n}},\prec)$ of staircase corners.
	\begin{lemma}
		\label{lem::DL::posets}
		\label{rem::ST::linearizations}
		\begin{enumerate}
			\item
			\label{item::DL::poset}
			The Hasse diagram of the poset $(\St_{\ov{n}},\preceq)$ forms a forest, with smaller elements positioned closer to the root of a tree. In other words, $\St_{\ov{n}}$ is an arborescent poset, where the map $\hor:\St_{\ov{n}}\hookrightarrow [1,n_m]$ provides a consistent linearization, and the map $\vrt:\St_{\ov{n}}\hookrightarrow [1,m]$ provides a consistent anti-linearization.
			\item
			\label{item::DL::linear}
			For any $s\in \St_{\ov{n}}$, the subset $\St_{\ov{n}}\{{\succeq s}\}$ of elements greater than or equal to $s$ forms an interval with respect to the linear order on $\vrt(\St_{\ov{n}})$:
			$$
			\forall s\in \St_{\ov{n}}\quad  \exists j_s\in [1,m] \  \colon \  \vrt(\St_{\ov{n}}\{\succeq s \}) = [j_s,\vrt(s)]. 
			$$
			Similarly, for the map $\hor$:
			$$
			\forall s\in \St_{\ov{n}} \quad \exists i_s\in [1,n_m] \  \colon \  \hor(\St_{\ov{n}}\{\succeq s \}) = [\hor(s),i_s]. 
			$$
			\item \label{item::DL::uncom}
			For any two incomparable elements $s,t\in \St_{\ov{n}}$, the vertical and horizontal comparisons are equivalent:
			$$\vrt(s)<\vrt(t) \in [1, m] \phantom{\frac{1}{2}} {\Leftrightarrow} \phantom{\frac{1}{2}} \hor(s)<\hor(t) \in [1, n_m].
			$$
		\end{enumerate}
	\end{lemma}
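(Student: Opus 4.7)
The plan is to establish the three claims by simultaneous induction on the size $|\bbY_{\overline n}|$ of the Young diagram, treating claim (iii) first as an essentially immediate consequence of the rook placement already supplied by \cite[Lemma 1.24]{FKM::Cauchy}. For distinct incomparable corners $s = (i,j)$ and $t = (i',j')$, the rook placement forces $i \neq i'$ and $j \neq j'$, and the definition \eqref{eq::DL::order} of the partial order rules out the two opposite-direction combinations $(i<i',\,j>j')$ and $(i>i',\,j<j')$, leaving only $(i<i',\,j<j')$ or $(i>i',\,j>j')$, in both of which $\vrt(s)<\vrt(t) \Leftrightarrow \hor(s)<\hor(t)$. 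This disposes of claim (iii).

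For claims (i) and (ii) I would set up the induction as follows. By Remark~\ref{rem::row::deletion} I may assume that every row and every column of $\bbY_{\overline n}$ contains a staircase corner, since deleting empty rows and columns does not alter $\St_{\overline n}$. I then pick an initial corner $s_0 = (n_{j_0}, j_0)$ guaranteed by rule~2 of Definition~\ref{def::DL::indices}. By rule~3, the map $\pi_{i_0, j_0}$ of \eqref{eq::erased::pi} restricts to a bijection $\St_{\overline n} \setminus \{s_0\} \to \St_{\overline{n^{i_0, j_0}}}$; a short case check on whether each coordinate exceeds $i_0$ or $j_0$ shows this bijection is an isomorphism of posets (since \eqref{eq::DL::order} is preserved by uniform integer shifts on either coordinate). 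The inductive hypothesis applied to the smaller diagram gives the arborescent structure, consistent (anti)linearizations, and the interval property on the complement, so what remains is to re-insert $s_0$ and verify the three properties for the full $\St_{\overline n}$.

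The key geometric observation controlling the re-insertion is that $s_0 = (n_{j_0}, j_0)$ sits at the very bottom of column $j_0$: any corner $(i,j)$ with $j < j_0$ automatically satisfies $i \leq n_j \leq n_{j_0 - 1} < n_{j_0} = i_0$ and is therefore incomparable with $s_0$, while corners $(i,j)$ with $j > j_0$ are $\prec s_0$ precisely when $i \leq i_0$. From this description claim~(ii) for $s = s_0$ is immediate (the cone $\St_{\overline n}\{\succeq s_0\}$ is the singleton $\{s_0\}$, so $s_0$ is a maximal element, $j_{s_0} = j_0$, $i_{s_0} = i_0$), and for $s \neq s_0$ the cone $\St_{\overline n}\{\succeq s\}$ is obtained from the inductive interval $\vrt(\St_{\overline{n^{i_0, j_0}}}\{\succeq \pi(s)\})$ either unchanged (when $s_0 \not\succeq s$) or by extending it to include $j_0$ at the unique appropriate endpoint (when $s_0 \succeq s$, so the interval already abutted $j_0-1$ after the shift). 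The consistency condition \eqref{eq::poset::st} for $\vrt$, and the analogous one for $\hor$, then follow directly from claim~(ii).

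The main obstacle is the arborescent part of claim~(i) for the newly inserted $s_0$, namely that $s_0$ has a unique immediate predecessor in $\St_{\overline n}$. The plan is to argue by contradiction: if $(i_1, j_1)$ and $(i_2, j_2)$ were two distinct incomparable immediate predecessors of $s_0$, claim~(iii) (already proved) would force the same-direction configuration $i_1 < i_2 < i_0$ and $j_0 < j_1 < j_2$; applying the inductive interval property to the upper cone of $\pi(i_1, j_1)$ inside $\St_{\overline{n^{i_0, j_0}}}$ produces an intermediate corner in some column strictly between $j_1$ and $j_2$, which lifts to a corner strictly between $(i_1, j_1)$ and $s_0$ and contradicts immediacy. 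Arborescence at the other non-root elements is inherited directly via the order-isomorphism $\pi_{i_0, j_0}$, which closes the induction.
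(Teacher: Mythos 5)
First, a point of comparison: the paper does not supply its own proof of this lemma --- it is imported wholesale from \cite[Lemma 1.24]{FKM::Cauchy} --- so your argument has to stand entirely on its own. Your proof of claim (iii) is correct and complete, and the inductive skeleton (peel an outer corner $s_0=(n_{j_0},j_0)$, use that $\pi_{i_0,j_0}$ is an order isomorphism onto $\St_{\ov{n}}\setminus\{s_0\}$) is a reasonable way to attack (i) and (ii).

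The genuine gap sits exactly where all the combinatorial content lives: re-inserting $s_0$ into the interval of claim (ii). Let $U$ be the up-set of $s$ in $\St_{\ov{n}}\setminus\{s_0\}$, so that by induction $\vrt(\pi(U))=[a,b]$ in the coordinates of the smaller diagram. Unshifting the column labels turns this into an interval of $[1,m]$ with a possible gap at $j_0$, and adjoining $\{j_0\}$ (when $s_0\succ s$) produces an interval only if $a\leq j_0$, i.e.\ only if $U$ already contains a corner in column $j_0+1$ of the original diagram (the interval must touch the seam). Your parenthetical ``the interval already abutted $j_0-1$ after the shift'' is an assertion, not an argument, and under its natural reading it is false: for $\ov{n}=(2\,3^3\,5^2)$, $s_0=(2,1)$ and $s=(1,3)$, the set $U$ occupies columns $\{2,3\}$, nowhere near $j_0-1=0$; what saves the interval property is that $U$ happens to start at column $j_0+1$. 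Showing that for every $s\prec s_0$ the up-set of $s$ must meet column $j_0+1$ (e.g.\ that the corner of column $j_0+1$ lies in a row $\geq\hor(s)$) is precisely where the recursive structure of $\St_{\ov{n}}$ has to be exploited, and you give no argument for it; the same issue recurs for the $\hor$-version. Separately, your ``main obstacle'' paragraph is both unnecessary and incorrect as written: unnecessary because, as the paper observes immediately after Definition~\ref{def::poset::linear}, condition~\eqref{eq::poset::st} --- which is exactly claim (ii) together with the endpoint $b_s=\vrt(s)$ --- already forces the Hasse diagram to be a forest, so no separate uniqueness-of-lower-cover argument is required; incorrect because a corner in a column strictly between $j_1$ and $j_2$ cannot lie strictly between $(i_1,j_1)$ and $s_0$ in the order (elements of that open interval must occupy columns strictly between $j_0$ and $j_1$), and the upper cone of $(i_1,j_1)$ only involves columns $\leq j_1$ in any case, so it cannot produce such a corner. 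Finally, note that claim (ii) asserts a full integer interval $[j_s,\vrt(s)]$, so the reduction ``assume every column contains a corner'' is not entirely free: you must also check that corner-free columns never fall strictly inside the claimed range.
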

	
	\begin{example}
		In the following pictorial description, we consider the Young diagram $\bbY_{(2\ 3^3\ 5\ 7^4)}$. The bullets denote the elements of the arborescent poset $\St_{\ov{n}}$ and edges represent covering relations in this poset.
		$$
		\begin{tikzcd}
			\begin{tikzpicture}[scale=0.4]
				\draw[step=1cm] (0,2) grid (1,-5);    
				\node (v1) at (.5,.3) {{ \color{blue}$\bullet$ }};
				\node (v2) at (.5,-.7) {{ \color{blue}$\bullet$ }};
				\node (v3) at (.5,1.3) {{ \color{blue}$\bullet$ }};
				\node (v5) at (.5,-2.7) {{ \color{blue}$\bullet$ }};
				\node (v6) at (.5,-4.7) {{ \color{blue}$\bullet$ }};
				\node (v7) at (.5,-3.7) {{ \color{blue}$\bullet$ }};
				\node (v8) at (.5,-1.7) {{ \color{blue}$\bullet$ }};
				\draw  (.1,1.5) edge[blue,line width=1.1pt,->,bend right=90] (.1,-.5);
				\draw (.1,1.5) edge[blue,line width=1.1pt,->,bend right=70] 
				(.1,.5);
				\draw (.1,-1.5) edge[blue,line width=1.1pt,->,bend right=70] (.1,-2.5);
				\draw (.1,-1.5) edge[blue,line width=1.1pt,->,bend right=90] (.1,-3.5);
				\draw (.1,-3.5) edge[blue,line width=1.1pt,->,bend right=90]  (.1,-4.5);
			\end{tikzpicture}
			\arrow[r,hookleftarrow,"\hor"]
			&    
			\begin{tikzpicture}[scale=0.4]
				\draw[step=1cm] (0,2) grid (9,0);
				\draw[step=1cm] (1,0) grid (9,-1);
				\draw[step=1cm] (4,-1) grid (9,-3);
				\draw[step=1cm] (5,-2) grid (9,-5);
				\node (v1) at (.5,.3) {{ \color{blue}$\bullet$ }};
				\node (v2) at (1.5,-.7) {{ \color{blue}$\bullet$ }};
				\node (v3) at (2.5,1.3) {{ \color{blue}$\bullet$ }};
				\node (v5) at (4.5,-2.7) {{ \color{blue}$\bullet$ }};
				\node (v6) at (5.5,-4.7) {{ \color{blue}$\bullet$ }};
				\node (v7) at (6.5,-3.7) {{ \color{blue}$\bullet$ }};
				\node (v8) at (7.5,-1.7) {{ \color{blue}$\bullet$ }};
				\draw [blue,line width=1.1pt,->] (2.5,1.5) -- (.5,.5);
				\draw [blue,line width=1.1pt,->] (2.5,1.5) -- (1.5,-.5);
				\draw [blue,line width=1.1pt,->] (7.5,-1.5) -- (4.5,-2.5);
				\draw [blue,line width=1.1pt,->] (7.5,-1.5) -- (6.5,-3.5);
				\draw [blue,line width=1.1pt,->] (6.5,-3.5) -- (5.5,-4.5);
			\end{tikzpicture}
			\arrow[r,hookrightarrow,"\vrt"]
			&
			\begin{tikzpicture}[scale=0.5]
				\draw[step=1cm] (0,1) grid (9,0);
				\node (v1) at (.5,.3) {{ \color{blue}$\bullet$ }};
				\node (v2) at (1.5,.3) {{ \color{blue}$\bullet$ }};
				\node (v3) at (2.5,.3) {{ \color{blue}$\bullet$ }};
				\node (v5) at (4.5,.3) {{ \color{blue}$\bullet$ }};
				\node (v6) at (5.5,.3) {{ \color{blue}$\bullet$ }};
				\node (v7) at (6.5,.3) {{ \color{blue}$\bullet$ }};
				\node (v8) at (7.5,.3) {{ \color{blue}$\bullet$ }};
				\draw  (2.5,.6) edge[blue,line width=1.1pt,->,bend right=90] (.5,.6);
				\draw (2.5,.6) edge[blue,line width=1.1pt,->,bend right=60] (1.5,.6);
				\draw (7.5,.6) edge[blue,line width=1.1pt,->,bend right=90] (4.5,.6);
				\draw (7.5,.6) edge[blue,line width=1.1pt,->,bend right=60] (6.5,.6);
				\draw (6.5,.6) edge[blue,line width=1.1pt,->,bend right=60]  (5.5,.6);
			\end{tikzpicture}
		\end{tikzcd}
		$$    
	\end{example}
	
	The following proposition describes the inverse procedure. Specifically, we seek a necessary and sufficient condition for a poset equipped with a consistent (anti)linearization to correspond to a staircase shape $\bbY_{\ov{n}}$.
	
	\begin{proposition}
		\label{prp::poset->partition}
		For any given arborescent poset $(\mathsf{S},\prec)$ with a consistent order-preserving linearization 
		$h:(\mathsf{S},\prec)\to ([1, n_m],<)$ satisfying the following properties: 
		$$
		\begin{cases}
			\forall s\in \mathsf{S}, \ \exists a(s), b(s)  \text{ such that } h(\mathsf{S}_{\succeq s})=[a(s), b(s)] \subset [1, n_m], \\
			h^{-1}(n_m)\neq \emptyset,
		\end{cases}
		$$
		there exists a partition $\ov{n}:=(n_1\le\ldots\le n_m)$ and an order-preserving bijection $\psi:\mathsf{S}\stackrel{\simeq}{\rightarrow} \St_{\ov{n}}$, such that $h = \hor\circ\psi$.
		
		Similarly, for any given arborescent poset $(\mathsf{S},\prec)$ with a consistent antilinear injection $v:\mathsf{S}\to [1, n_m]$
		that maps the subsets $\mathsf{S}_{\succeq s}$ to subintervals of $[1, m]$, there exists a partition $\ov{n}:=(n_1\le\ldots\le n_m)$ and an order-preserving bijection $\psi:\mathsf{S}\stackrel{\simeq}{\rightarrow} \St_{\ov{n}}$, such that $v = \vrt\circ\psi$ (provided that $v^{-1}(1)\neq\emptyset$).
	\end{proposition}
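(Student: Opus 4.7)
I plan to prove the proposition in the antilinearization case by an explicit construction; the linearization case is handled by an analogous (``dual'') construction, with $h$ and $v$ and the roles of $\prec$ and $\succ$ appropriately swapped. The key idea is to extend the partial order on $\mathsf{S}$ to a linear order using $v$, and then read off the row coordinate of each staircase corner from this linear order.

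I would first define a relation $\triangleleft$ on $\mathsf{S}$ by declaring $s \triangleleft t$ whenever either $s \prec t$, or $s$ and $t$ are incomparable with $v(s) < v(t)$. The consistency of the antilinearization is precisely what is needed to make $\triangleleft$ a linear order: the nontrivial step is transitivity, and a hypothetical triple $s \triangleleft t \triangleleft u$ with $s \not\triangleleft u$ forces, via a case analysis on the $v$-values, some element of $\mathsf{S}$ to violate the interval condition $v(\mathsf{S}_{\succeq x}) = [a_x, b_x]$ for some $x$. I then set $h(s) := 1 + |\{t \in \mathsf{S} : t \triangleleft s\}|$, giving a bijection $h : \mathsf{S} \to [1, |\mathsf{S}|]$ refining $\prec$, and define
\[
n_j := \max\bigl(\{0\} \cup \{h(s) : s \in \mathsf{S},\, v(s) \le j\}\bigr), \qquad \psi(s) := (h(s), v(s)).
\]
The partition $\ov{n} = (n_1 \le \cdots \le n_m)$ is nondecreasing by construction, and $\psi(s) \in \bbY_{\ov{n}}$ follows from $h(s) \le n_{v(s)}$.

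Order-preservation in both directions is straightforward: for $s \preceq t$ in $\mathsf{S}$, the inequality $h(s) \le h(t)$ follows since $h$ refines $\prec$, and $v(s) \ge v(t)$ from antilinearity; conversely, if $\psi(s) \preceq \psi(t)$ in $\St_{\ov{n}}$, the definition of $\triangleleft$ shows $s \preceq t$. The essential remaining check is that $\psi(\mathsf{S}) = \St_{\ov{n}}$ rather than merely a subset, which I would establish by induction on $|\mathsf{S}|$, mirroring the recursive Definition~\ref{def::DL::indices}. In the inductive step, the maximal element $s^* := v^{-1}(1)$ maps to $\psi(s^*) = (n_1, 1)$, a step-bottom corner of $\bbY_{\ov{n}}$; removing its row and column produces the smaller instance $(\mathsf{S}', v')$ with $\mathsf{S}' := \mathsf{S} \setminus \{s^*\}$ and $v'(s) := v(s) - 1$, still satisfying the hypotheses on codomain $[1, m-1]$. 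When $v$ is not surjective onto $[1,m]$, a preliminary reduction using Remark~\ref{rem::row::deletion} removes the columns $j \notin v(\mathsf{S})$ from the target staircase, reducing to the surjective case.

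The main obstacle is matching the recursive Definition~\ref{def::DL::indices} with the linear order $\triangleleft$: one must verify that at each inductive stage, the corner $\psi(s^*) = (n_1, 1)$ is indeed step-bottom in the current $\bbY_{\ov{n}}$, and that after its removal, the renumbered construction on $(\mathsf{S}', v')$ agrees with what the induction produces. Once this correspondence is secured, Lemma~\ref{lem::DL::posets}(iii) ensures that the induced $\hor$ and $\vrt$ on $\St_{\ov{n}}$ agree with $h$ and $v$, completing the construction.
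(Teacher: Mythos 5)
Your proposal is correct in its essential outline, and it takes a genuinely different route from the paper's proof. The paper's argument is a single line: it just displays the formula
\[
n_k \ :=\ \#\{t \colon v(t)<k\} \ + \ \#\{t \colon t\preceq v^{-1}(k)\}
\quad\text{(or $\#\{t : v(t)<k\}$ if $v^{-1}(k)=\emptyset$)}
\]
and leaves all verification implicit. You instead construct the bijection explicitly: you build a linear extension $\triangleleft$ of $\prec$ (breaking ties among incomparable pairs by $v$), set $h(s)=1+\#\{t : t\triangleleft s\}$, define $\psi(s)=(h(s),v(s))$, and prove $\psi(\mathsf{S})=\St_{\ov{n}}$ by induction on $|\mathsf{S}|$, mirroring Definition~\ref{def::DL::indices}. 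I checked transitivity of $\triangleleft$: it really does need the consistency hypothesis (your sketched case analysis reduces to applying condition~\eqref{eq::poset::st} to the four $v$-order triples that could violate transitivity). Order-preservation of $\psi$ in both directions is exactly as you say, and I confirmed on several examples that your $\ov{n}$ agrees with the paper's (unsurprisingly, since in the surjective case the shape is determined by the rook placement of corners). The advantage of your route is that it actually verifies the claim, and it makes visible that $h$ is the unique linear extension of $\prec$ compatible with Lemma~\ref{lem::DL::posets}\eqref{item::DL::uncom}; the paper's formula is more self-contained but opaque.

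Two points in your sketch hide real work. First, the inductive step ``after removal, the renumbered construction agrees'' splits into cases depending on whether the $\triangleleft$-maximum of $v^{-1}([1,k+1])$ is $s^*=v^{-1}(1)$ or not; in the former case you need that all of $v^{-1}([1,k+1])\setminus\{s^*\}$ lies in the chain $\mathsf{S}_{\prec s^*}$ (this uses arborescence) and that the $\triangleleft$-predecessor of $s^*$ already has $v$-value $\le k+1$ (this uses the interval hypothesis), and only then does $n'_k=n_{k+1}-1$ hold. It works, but it is not automatic. Second, the reduction for non-surjective $v$ via Remark~\ref{rem::row::deletion}: note that the remark is stated in the erasing direction, and $n_j=n_{j-1}$ alone does \emph{not} in general prevent column $j$ from acquiring a staircase corner through the recursion (e.g.\ $\St_{(2,2,3)}$ has a corner in column $2$). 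What saves you is precisely the stronger hypothesis that $v(\mathsf{S}_{\succeq s})$ is an interval of $[1,m]$: it forces the set $\{s : v(s)<j\}$ to be a union of whole trees of the forest, and hence $\{h(s) : v(s)<j\}=[1,n_{j-1}]$, which is exactly what keeps the inserted column corner-free after the recursion. Spelling that out is where the nontrivial content of the non-surjective case lives.
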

	
	\begin{proof}
		It suffices to prove the second statement since the first follows by transposing the Young diagram $\bbY_{\ov{n}}$. 
		
		The partition $\ov{n}$ is constructed as follows:
		$$
		\begin{cases}
			\text{If }\exists s \text{ such that } v(s)=k, \text{ then } n_k:=\#\{t\in \mathsf{S} \colon v(t)< k \} + \#\{t\in \mathsf{S} \colon t\preceq s\}; \\
			\text{If } v^{-1}(k)=\emptyset, \text{ then } n_k:=\#\{t\in \mathsf{S} \colon v(t)< k \}.
		\end{cases}
		$$
	\end{proof}
	
	Note that every arborescent poset $\sS$ with an isomorphic (anti)linearization admits a realization as a subset of staircase corners for an appropriate $\ov{n}$. However, if the (anti)linearization $v:\sS\hookrightarrow [1,m]$ is not surjective, it is necessary that each connected component of the Hasse graph of $\sS$ maps onto a closed interval of $[1,m]$.

	\subsection{DL-dense arrays}
	\label{sec::DL_dense}
	
	The $DL$-dense arrays defined below play a central role in this paper as well as in our previous work~\cite{FKM::Cauchy}. 
	First, we recall the definition, introduce the Bruhat partial order on this set and establish several key properties of this partial order in Section~\ref{sec::Bruhat::DL-dense}. 
	
	\begin{definition}\label{def:arrays}
		\begin{itemize}
			\item 
			A map $A:\bbY_{\ov{n}}\to \bZ_{\geq 0}$ that assigns a nonnegative integer $A_{i,j}$ to each cell
			$(i,j)\in \bbY_{\ov{n}}$ is called an \emph{array} of shape $\bbY_{\ov{n}}$.
			\item The total sum $|A|:=\sum_{(i,j)\in\bbY_{\ov{n}}} A_{i,j}$ is called the \emph{degree} of the array $A$.
			\item The sum of the entries in each row is called the \emph{horizontal weight} of the array:
			$$\hor(A):=\left(\sum_{j=1}^{m}A_{1j}, \ldots, \sum_{j=1}^{m} A_{n_mj} \right) \in \bZ^{n_m}.$$
			\item 
			The \emph{vertical weight} of an array is defined as
			$$\vrt(A):=\left(\sum_{i=1}^{n_1} A_{i1}, \ldots, \sum_{i=1}^{n_m}A_{im} \right)\in \bZ^{m},$$    
			i.e., it is the collection of column sums.
		\end{itemize}      
	\end{definition}
	
	\begin{definition}
		\label{def::DL::dense}
		An array $A$ of shape $\bbY_{\ov{n}}$ is called \emph{$DL$-dense} if 
		$$
		\begin{cases}
			(i,j)\notin \St_{\ov{n}} \ \Rightarrow \ A_{i,j}=0; \\
			(i,j)\prec (i',j')\in \St_{\ov{n}} \ \Rightarrow \ A_{i,j}\leq A_{i',j'}.
		\end{cases}
		$$
	\end{definition}
	
	\begin{remark}
		The set of $DL$-dense arrays is in one-to-one correspondence with the set of order-preserving $\bZ_{\geq0}$-valued functions on the poset $(\St_{\ov{n}},\prec)$.
	\end{remark}
	
	\begin{dfn}
		\label{def::DL::dense::set}
		For each partition $\lambda=\{\lambda_1\geq\ldots\geq \lambda_k\geq 0\}$ (whose length is at most $\#\St_{\ov{n}}$), we denote by $\DL_{\ov{n}}({\lambda})$ the set of $DL$-dense arrays $A$ such that the multiset $\{A_{s}\ \colon\ s\in\St_{\ov{n}}\}$ coincides
		with the multiset $\{\la_i\}_{i=1}^k$.
	\end{dfn}
	
	\begin{remark}
		We have the following decomposition of the set of all $DL$-dense arrays of shape $\bbY_{\ov{n}}$ with respect to degree and partition:	
		$$\DL_{\ov{n}}=\bigsqcup_{N\geq 0}\bigsqcup_{\substack{{\lambda\vdash N}\\ {l(\lambda)\leq \#\St_{\ov{n}}}}}\DL_{\ov{n}}(\lambda).$$
	\end{remark}
	
	\begin{example}
		Here is an example of a $DL$-dense array of shape $\bbY_{(2,3,3,4)}$:
		$$
		\begin{tikzcd}
			\gridV{2}{2}{3}{1}
			& \gridDL{2}{3}2{1}
			\arrow[l,"\hor"'] \arrow[r,"\vrt"]
			& \gridH2321
		\end{tikzcd}
		$$    
	\end{example}
	
	\begin{remark}
		The term \emph{array} was introduced by Danilov and Koshevoy for rectangular shapes (see, e.g.,~\cite{DK1, DK2}). They consider arrays as collections of balls placed in the cells of the diagram \( \bbY_{\ov{n}} \). One of the key features of their construction is the elegant bi-crystal structure, where balls can move up and down but only within consecutive columns or rows. The $D$-dense property means that moving a ball downward is not allowed, while the $L$-dense property prohibits moving any ball to the left.
		
		The main difference between our approach and the one proposed in~\cite{DK1} is that the crystal structure alone is insufficient for our purposes. Instead, we must consider the action of all roots, not just the simple ones, as in the case of crystals.
		
		Nevertheless, it is possible to define $D$-dense and $L$-dense arrays separately, describe the combinatorial densification maps, and relate this process to the bubble-sort map discussed in~\S\ref{sec::Bubble-sort}. We will explore these ideas in a separate paper.
	\end{remark}

	\begin{proposition}
		\label{prp::DL::hor::vrt}	
		The horizontal (vertical) weight of a $DL$-dense array is an $\St_{\ov{n}}$-dominant composition of length $n_m$ (resp. $m$), and the maps $\hor$ and $\vrt$ define bijections:
		$$
		\begin{tikzcd}
			\bS_{n_m}\lambda \arrow[r,hookleftarrow] &  \bD_{\St_{\ov{n}}}^{\hor}(\lambda) & \arrow[l,"\simeq"',"\hor"] \DL_{\ov{n}}(\lambda) \arrow[r,"\simeq", "\vrt"'] & \bD_{\St_{\ov{n}}}^{\vrt}(\lambda) \arrow[r,hook] & \bS_{m}\lambda.
		\end{tikzcd}	
		$$
		Here, $\bD_{\St_{\ov{n}}}^{\hor}(\lambda)$ denotes the set of $\St_{\ov{n}}$-dominant compositions associated with the arborescent poset $\St_{\ov{n}}$ equipped with the consistent linearization $\hor:\St_{\ov{n}}\hookrightarrow [1, n_m]$.
		Similarly, $\bD_{\St_{\ov{n}}}^{\vrt}(\lambda)$ consists of $\St_{\ov{n}}$-dominant compositions associated with the consistent antilinearization $\vrt:\St_{\ov{n}}\hookrightarrow [1, m]$.
	\end{proposition}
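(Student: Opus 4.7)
The proposition is essentially a bookkeeping exercise that translates the definitions of $DL$-dense arrays and $\St_{\ov{n}}$-dominant compositions into each other. My plan is to verify the three assertions (well-defined target, $\hor$-bijection, $\vrt$-bijection) in sequence, relying entirely on the ``rook placement'' property of $\St_{\ov{n}}$ established in Lemma~\ref{rem::ST::linearizations} and on the definition of $DL$-density.

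First, I would observe that for a $DL$-dense array $A$, every non-zero entry lies in a staircase corner. Since each row $i\in[1,n_m]$ (respectively each column $j\in[1,m]$) of $\bbY_{\ov{n}}$ contains at most one element of $\St_{\ov{n}}$, the row sum $\hor(A)_i$ (respectively the column sum $\vrt(A)_j$) reduces to a single term: $\hor(A)_i=A_{\hor^{-1}(i)}$ if $i\in\hor(\St_{\ov{n}})$ and $0$ otherwise, and analogously for $\vrt$. In particular the multisets of nonzero values of $\hor(A)$, of $\vrt(A)$, and of the restriction of $A$ to $\St_{\ov{n}}$ all coincide with $\lambda$.

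Next I verify the dominance property. By Lemma~\ref{rem::ST::linearizations}\eqref{item::DL::poset}, the map $\hor:\St_{\ov{n}}\hookrightarrow[1,n_m]$ is a consistent linearization and $\vrt:\St_{\ov{n}}\hookrightarrow[1,m]$ is a consistent anti-linearization. The implication $s\succ t\in\St_{\ov{n}}\Rightarrow A_s\geq A_t$ built into Definition~\ref{def::DL::dense} translates under $\hor$ (using that $\hor$ is order-preserving) into $\hor(A)_{\hor(s)}\geq\hor(A)_{\hor(t)}$, which together with the vanishing outside $\hor(\St_{\ov{n}})$ is precisely the defining condition of $\St_{\ov{n}}$-dominance for the linearization. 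The same implication translates under $\vrt$ (which is order-reversing) into the corresponding condition for the anti-linearization. Thus $\hor(A)\in\bD_{\St_{\ov{n}}}^{\hor}(\lambda)$ and $\vrt(A)\in\bD_{\St_{\ov{n}}}^{\vrt}(\lambda)$.

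Finally, to establish the bijectivity of $\hor$ and $\vrt$ on $\DL_{\ov{n}}(\lambda)$, I construct explicit inverses. Given an $\St_{\ov{n}}$-dominant composition $\ov{d}\in\bD_{\St_{\ov{n}}}^{\hor}(\lambda)$, define an array $A^{\ov{d}}$ by $A^{\ov{d}}_s:=d_{\hor(s)}$ for $s\in\St_{\ov{n}}$ and $A^{\ov{d}}_{(i,j)}:=0$ for $(i,j)\in\bbY_{\ov{n}}\setminus\St_{\ov{n}}$; the vanishing of $d_i$ outside $\hor(\St_{\ov{n}})$ guarantees that the multiset of values of $A^{\ov{d}}$ on $\St_{\ov{n}}$ equals $\lambda$, and reading the dominance implication backwards yields the $DL$-dense monotonicity of $A^{\ov{d}}$. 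The assignments $A\mapsto\hor(A)$ and $\ov{d}\mapsto A^{\ov{d}}$ are visibly mutually inverse, and the same construction with $\vrt$ in place of $\hor$ handles the vertical weight. No step here presents an obstacle; the entire content is the combination of the rook-placement property with the match of the two monotonicity conditions under the (anti-)linearizations, which is why I would write the proof in just a few lines.
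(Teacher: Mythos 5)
Your proof is correct and follows essentially the same route as the paper's: both rest on the rook-placement property (at most one staircase corner per row/column) to identify $\hor(A)_i$ and $\vrt(A)_j$ with the single corner entry, and on the observation that the monotonicity clause of $DL$-density matches the $\St_{\ov{n}}$-dominance condition under the consistent (anti)linearization. Your version merely spells out the explicit two-sided inverse that the paper leaves implicit in the phrase ``a $DL$-dense array is uniquely determined by its horizontal (or vertical) weight.''
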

	
	\begin{proof}
		Since each row and each column of the Young diagram contains at most one staircase corner, and as noted in 
		Remark~\ref{rem::ST::linearizations}, we know that $\vrt$ defines an order-reversing dominant antilinearization of $\St_{\ov{n}}$, while $\hor$ defines an order-preserving dominant linearization.
		On the other hand, each row and each column of the Young diagram contains at most one staircase corner, which implies that a $DL$-dense array $A$ is uniquely determined by its horizontal (or vertical) weight. Furthermore, from the definition, we see that $A$ is $DL$-dense if and only if $\hor(A)$ is $\hor(\St_{\ov{n}})$-dominant (respectively, $\vrt(A)$ is $\vrt(\St_{\ov{n}})$-dominant).
	\end{proof}

	\subsection{Bruhat order on $\DL_{\ov{n}}(\lambda)$}
	\label{sec::Bruhat::DL-dense}
	
	Let $\bbY_{\ov{n}}$ be a given staircase shape, let $\St_{\ov{n}}$ be the corresponding poset of staircase corners (defined in~\S\ref{sec::staircase::corners}), and let $\lambda$ be a partition whose length does not exceed the size of $\St_{\ov{n}}$. 
	The goal of this section is to define and describe the basic properties of the Bruhat partial order on the subset $\DL_{\ov{n}}(\lambda)$ of $DL$-dense arrays, where the multiset of nonzero elements coincides with $\lambda$.
	
	First, recalling Remark~\ref{rem::row::deletion}, we can restrict our attention to the case where each row and each column of $\bbY_{\ov{n}}$ contains a staircase corner. This means that $n_m = m = \#\St_{\ov{n}}$, and we will work under this assumption for the remainder of Section~\ref{sec::Bruhat::DL-dense}.
	
	\begin{definition}
		\label{def::min::DL::dis}
		A pair of staircase corners $(i j)\ttt(i'j')\in \St_{\ov{n}}\times\St_{\ov{n}}$ is called 
		a \emph{minimal $DL$-disorder for a $DL$-dense array $A$} iff 
		\begin{itemize}
			\item staircase corners $(ij)$ and $(i'j')$ are uncomparable in $\St_{\ov{n}}$:
			$$ (i <i') \text{ and } (j<j'),$$
			\item $(ij)\ttt(i'j')$ is a disorder: $A_{ij}>A_{i'j'},$
			\item 
			the following implications hold for all staircase corners $(kl)\in\St_{\ov{n}}$:
			\begin{gather}
				\label{eq::DL::disorder:1a}
				\begin{array}{ccc}
					\left[ 
					\begin{array}{ccc}
						(k>i) & \& & (l<j) \\
						(k>i') & \& & (l<j')
					\end{array}
					\right. & \Rightarrow & A_{kl}\geq A_{ij}, 
				\end{array}
				\\
				\label{eq::DL::disorder:1b}
				\begin{array}{ccc}
					\left[ 
					\begin{array}{ccc}
						(k<i) & \& & (l>j) \\
						(k<i') & \& & (l>j')
					\end{array}
					\right. & \Rightarrow & A_{kl}\leq A_{i'j'}.
				\end{array} \\
				\label{eq::DL::disorder:2}
				(i<k<i') \ \& \ (j<l<j') \ \Rightarrow \ A_{kl}\notin [A_{i'j'},A_{ij}].
			\end{gather}
		\end{itemize}
	\end{definition}
	Let us give a pictorial description of the necessary inequalities for a minimal $DL$-disorder:
	\begin{equation*}
		\begin{array}{l}
			\bullet\text{\small A minimal disorder $(ij)\ttt(i'j')$ is drawn in rose,} \\   
			\bullet\text{\small Assumptions~\eqref{eq::DL::disorder:1a} are drawn in green,}\\
			\bullet\text{\small Assumptions~\eqref{eq::DL::disorder:1b} are blue,}\\
			\bullet\text{\small Assumption~\eqref{eq::DL::disorder:2} is drawn in orange.}
		\end{array}
		\
		\begin{tikzpicture}[scale=0.6]
			\filldraw[fill=cyan!15] (-5,-5) rectangle (-2,2);
			\filldraw[fill=cyan!50] (-1,-5) rectangle (2,-2);
			\filldraw[fill=green!50] (-1,3) rectangle (2,6);
			\filldraw[fill=green!20] (3,-1) rectangle (6,6);
			\filldraw[fill=orange!40] (-1,-1) rectangle (2,2);
			\filldraw[fill=magenta!35] (-2,2) rectangle (-1,3);
			\filldraw[fill=magenta!35] (2,-2) rectangle (3,-1);
			\draw[step=1cm] (-2,-5) grid (-1,6);
			\draw[step=1cm] (-5,-2) grid (6,-1);
			\draw[step=1cm] (-5,2) grid (6,3);
			\draw[step=1cm] (2,-5) grid (3,6);
			\draw (-5,3) rectangle (-2,6);
			\draw (3,3) rectangle (6,6);
			\draw (3,-5) rectangle (6,-2);
			\draw (-5,-5) rectangle (-2,-2);
			\node (a) at (-1.5,2.5) {{\cbl{\small {$A_{ij}$}}}};
			\node (b) at (2.5,-1.5) {{\cbl{\small {$A_{i'j'}$}}}};
			\node (u21) at (-3.5,0.5) {{\cbl{\small {$\geq A_{ij}$}}}};
			\node (u31) at (-3.5,-3.5) {{\cbl{\small {$\geq A_{ij}$}}}};
			\node (u12) at (0.5,4.5) {{\cbl{\small {$\leq A_{i'j'}$}}}};
			\node (u12) at (4.5,4.5) {{\cbl{\small {$\leq A_{i'j'}$}}}};
			\node (u32) at (0.5,-3.5) {{\cbl{\small {$\geq A_{ij}$}}}};
			\node (u23) at (4.5,0.5) {{\cbl{\small {$\leq A_{i'j'}$}}}};
			\node (u22) at (0.5,0.5) {{\cbl{\small 
						{$\left[\begin{array}{l} >A_{ij}, \\ <A_{i'j'}\end{array}\right.$}}}};
			\node (r1) at (-5.5,2.5) {{\small {$i$}}};
			\node (r2) at (-5.5,-1.5) {{\small {$i'$}}};
			\node (c1) at (-1.5,6.5) {{\small {$j$}}};
			\node (c2) at (2.5,6.5) {{\small {$j'$}}};
		\end{tikzpicture}.
	\end{equation*}
	
	\begin{remark}
		The conditions that must be verified for $(ij)\ttt(i'j')$ to be a minimal $DL$-disorder are only those represented in dark green $(k<i)\&(j<l<j')$ and dark blue $(k>i')\&(j<l<j')$, as well as the inequalities in the orange square $(i<k<i')\&(j<l<j')$. The remaining blue and green inequalities follow directly from the $DL$-dense property of $A$.
		However, we suggest keeping all the assumptions to maintain the symmetry of both the conditions and the diagram.
	\end{remark}
	
	Here is a pair of examples of $DL$-disorders for a $DL$-dense array from $\DL_{(2 3^2 4)}(2^2 1^2)$:
	$$
	\begin{array}{cc}
		{
			\begin{tikzpicture}[scale=0.4]
				\draw[step=1cm] (0,0) grid (4,-2);
				\draw[step=1cm] (1,0) grid (4,-3);
				\draw[step=1cm] (3,0) grid (4,-4);
				\node (v21) at (0.5,-1.5) {{\cbl{\small {2}}}};
				\node[fill=magenta!35] (v32) at (1.5,-2.5) {{\cbl{\small {2}}}};
				\node (v13) at (2.5,-.5) {{\cbl{\small {1}}}};
				\node[fill=magenta!35] (v44) at (3.5,-3.5) {{\cbl{\small {1}}}};
			\end{tikzpicture}
		}
		&
		{
			\begin{tikzpicture}[scale=0.4]
				\draw[step=1cm] (0,0) grid (4,-2);
				\draw[step=1cm] (1,0) grid (4,-3);
				\draw[step=1cm] (3,0) grid (4,-4);
				\node[fill=brown!30] (v21) at (0.5,-1.5) {{\cbl{\small {2}}}};
				\node[fill=gray] (v32) at (1.5,-2.5) {{\cbl{\small {2}}}};
				\node (v13) at (2.5,-.5) {{\cbl{\small {1}}}};
				\node[fill=brown!30] (v44) at (3.5,-3.5) {{\cbl{\small {1}}}};
			\end{tikzpicture}
		} 
		\\
		\text{(32)-(44) is a minimal $DL$-disorder,} &
		\text{(21)-(44) is a non-minimal $DL$-disorder.} 
	\end{array}
	$$

	\begin{lemma}
		\label{lem::dis::vrt::hor}	
		The following conditions are equivalent for any $DL$-dense array $A$:
		\begin{itemize}[itemsep=0pt]
			\item $(ij)\ttt(i'j')$ is a minimal $DL$-disorder for $A$;
			\item $(i i')$ is a minimal $\hor(\St_{\ov{n}})$-disorder for $\hor(A)$;
			\item $(j j')$ is a minimal $\vrt(\St_{\ov{n}})$-disorder for $\vrt(A)$.	
		\end{itemize}
	\end{lemma}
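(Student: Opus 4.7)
The plan is to unpack the three definitions and check that, via the bijection of Proposition~\ref{prp::DL::hor::vrt}, they encode exactly the same constraints on $A$. Under the running assumption that every row and every column of $\bbY_{\ov{n}}$ contains a staircase corner, each coordinate $\vrt(A)_l$ equals $A_{kl}$ when $(k,l)\in\St_{\ov{n}}$ and vanishes otherwise, and similarly for $\hor(A)_k$. Moreover, by Lemma~\ref{lem::DL::posets}, $\vrt$ is a consistent anti-linearization and $\hor$ a consistent linearization of the arborescent poset $\St_{\ov{n}}$, and item~(iii) of that lemma ensures that the comparability class of two corners is determined jointly by the relative order of their row and column indices.

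First I would prove $(1)\Leftrightarrow(3)$. Applying Definition~\ref{def::disorder} to the anti-linearization $v=\vrt$, the pair $(jj')$ is a minimal $\vrt(\St_{\ov{n}})$-disorder for $\vrt(A)$ iff $\vrt^{-1}(j)=(i,j)$ and $\vrt^{-1}(j')=(i',j')$ are incomparable with $A_{ij}>A_{i'j'}$, and, for every $l\in(j,j')$ with $\vrt^{-1}(l)=(k,l)\in\St_{\ov{n}}$, a specific inequality on $A_{kl}$ holds depending on the comparability of $(k,l)$ with $(i,j)$ and $(i',j')$. A direct case split using Lemma~\ref{lem::DL::posets}(iii) translates the three poset cases ($(k,l)\prec(i,j)$, $(k,l)\succ(i',j')$, incomparable with both) into the coordinate restrictions $k<i$, $k>i'$, and $i<k<i'$ respectively, which match exactly the ``dark'' portions of conditions~\eqref{eq::DL::disorder:1b}, \eqref{eq::DL::disorder:1a} and~\eqref{eq::DL::disorder:2} restricted to $l\in(j,j')$. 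By the remark immediately following Definition~\ref{def::min::DL::dis}, these are precisely the parts that need to be checked for a minimal $DL$-disorder (the ``light'' inequalities are automatic from the $DL$-dense property of $A$). This yields a term-by-term match and establishes $(1)\Leftrightarrow(3)$.

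The equivalence $(1)\Leftrightarrow(2)$ follows by a parallel argument. Formally, I would invoke Remark~\ref{rk::opposite::linearization} to replace the linearization $\hor$ by the anti-linearization $\op\circ\hor$ on the opposite poset, reducing to the case already treated. Informally, transposing $\bbY_{\ov{n}}$ along the main diagonal swaps rows with columns, exchanges $\hor$ and $\vrt$, and leaves the symmetric picture following Definition~\ref{def::min::DL::dis} invariant, so $(1)\Leftrightarrow(2)$ becomes the transpose of $(1)\Leftrightarrow(3)$.

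The main obstacle is purely bookkeeping --- matching each of the four configurations of an intermediate corner $(k,l)$ relative to the pair $(i,j),(i',j')$ to the correct clause in Definition~\ref{def::min::DL::dis}. The work is made routine by Lemma~\ref{lem::DL::posets}(iii): incomparability of corners reduces to coordinate inequalities, so the poset-theoretic case split of Definition~\ref{def::disorder} coincides on the nose with the coordinate case split of Definition~\ref{def::min::DL::dis}, and no essentially new idea is needed.
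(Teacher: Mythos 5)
Your proposal is correct and takes essentially the same approach as the paper, which simply notes that the equivalence follows from a direct comparison of the inequalities in Definition~\ref{def::min::DL::dis} with Definition~\ref{def::disorder}, as visualized in the accompanying figure. You spell out the bookkeeping (translating poset comparability into coordinate inequalities via Lemma~\ref{lem::DL::posets} and invoking the remark that only the ``dark'' clauses need checking), but the underlying argument is identical.
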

	\begin{proof}
		This follows from a direct comparison of inequalities~\eqref{eq::DL::disorder:1a}--\eqref{eq::DL::disorder:2} and Definition~\ref{def::disorder}, which are visualized in the Figure above.
	\end{proof}

	\begin{corollary}
		\label{cor::hor::vrt::Bruhat}
		For any two $DL$-dense arrays $A$ and $B$, the following inequalities are equivalent:
		$$
		\hor(A) \prec_{\Bruhat} \hor(B) \ \Leftrightarrow  \  \vrt(A) \prec_{\Bruhat} \vrt(B).
		$$
		In other words, the horizontal and vertical Bruhat orders on $\DL_{\ov{n}}(\lambda)$ coincide.		
	\end{corollary}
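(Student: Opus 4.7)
The proof is essentially a synthesis of the results already assembled in the paper, so the plan is to explain how they fit together rather than to do new work. Recall from Proposition~\ref{prp::DL::hor::vrt} that $\hor$ and $\vrt$ give bijections
\[
\bD_{\St_{\ov{n}}}^{\hor}(\lambda) \xleftarrow{\hor} \DL_{\ov{n}}(\lambda) \xrightarrow{\vrt} \bD_{\St_{\ov{n}}}^{\vrt}(\lambda),
\]
where the target sets sit inside $\bS_{n_m}\lambda$ and $\bS_m\lambda$ respectively. Each target inherits a Bruhat partial order, and pulling these back through the bijections yields two a priori different partial orders on $\DL_{\ov{n}}(\lambda)$. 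The goal is to show they coincide.

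First I would reduce the statement to an equality of covering relations. Since both posets are finite and every partial order is determined by its Hasse diagram, it is enough to check that the edges of the Hasse diagrams agree after transport through the bijections. By Theorem~\ref{thm::dominant::edge} (applied to $\sS=\St_{\ov{n}}$ with consistent linearization $\hor$, and again with consistent anti-linearization $\vrt$ --- using the opposite version in Theorem~\ref{thm::bbs::op} for the first case), the covering relations of $\bD_{\St_{\ov{n}}}^{\hor}(\lambda)$ are exactly the transpositions $(ii')$ at minimal $\hor(\St_{\ov{n}})$-disorders of $\hor(A)$, and likewise the covering relations of $\bD_{\St_{\ov{n}}}^{\vrt}(\lambda)$ are the transpositions $(jj')$ at minimal $\vrt(\St_{\ov{n}})$-disorders of $\vrt(A)$.

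Next I would invoke Lemma~\ref{lem::dis::vrt::hor}, which states precisely that a pair $(ij)\ttt(i'j')\in \St_{\ov{n}}\times\St_{\ov{n}}$ is a minimal $DL$-disorder for a $DL$-dense array $A$ if and only if $(ii')$ is a minimal $\hor(\St_{\ov{n}})$-disorder for $\hor(A)$, and if and only if $(jj')$ is a minimal $\vrt(\St_{\ov{n}})$-disorder for $\vrt(A)$. Consequently, the edges of both Hasse diagrams correspond via $\hor^{-1}$ and $\vrt^{-1}$ to the \emph{same} set of pairs of $DL$-dense arrays, namely those pairs $(A,B)$ where $B$ is obtained from $A$ by swapping the entries at a minimal $DL$-disorder. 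This makes both induced orders on $\DL_{\ov{n}}(\lambda)$ generated by the same covering relations, so they are equal. The reduction to the case $n_m=m=\#\St_{\ov{n}}$ assumed at the start of \S\ref{sec::Bruhat::DL-dense} is harmless, since columns and rows containing no staircase corner contribute only trivial zero entries to $\hor$ and $\vrt$ and can be erased by Remark~\ref{rem::row::deletion}.

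I do not anticipate a serious obstacle here: the heavy lifting was done in Theorem~\ref{thm::dominant::edge} and Lemma~\ref{lem::dis::vrt::hor}. The only thing to be careful about is checking that Theorem~\ref{thm::dominant::edge} applies uniformly to linearization and anti-linearization; but this is exactly the content of Remark~\ref{rk::opposite::linearization} and Theorem~\ref{thm::bbs::op}, so no new argument is required.
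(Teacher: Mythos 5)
Your proof is correct and follows essentially the same route as the paper's: identify the covering relations on both sides via Theorem~\ref{thm::dominant::edge} (and its linearization counterpart, Theorem~\ref{thm::bbs::op}), then use Lemma~\ref{lem::dis::vrt::hor} to match minimal $\hor$- and $\vrt$-disorders through the $DL$-disorders of the array, so that the two transported orders have identical generating relations. The paper's argument is a compressed version of exactly this, so there is nothing substantive to add or correct.
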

	\begin{proof}
		Recall that $\vrt$ defines an isomorphism between $\DL_{\ov{n}}(\lambda)$ and $\bD_{\St_{\ov{n}}}^{\vrt}(\lambda)$. The covering relations in the latter poset are given by minimal $\vrt(\St_{\ov{n}})$-disorders for $\St_{\ov{n}}$-dominant weights (Theorem~\ref{thm::dominant::edge}).
		Similarly, the covering relations for the horizontal weights are also given by minimal $\hor(\St_{\ov{n}})$-disorders, which are known to be the same covering relations thanks to Lemma~\ref{lem::dis::vrt::hor}.
	\end{proof}	
	
	In particular, it follows that the set $\DL_{\ov{n}}(\lambda)$ admits a canonical Bruhat partial order, and the corresponding poset is bounded, graded, subthin, and $\EL$-shellable if $\lambda$ is regular.
	
	\subsection{Double Demazure Distributive Lattice}
	\label{sec::DL::Distr}
	The combinatorial structures defined in the preceding sections allow us to construct a distributive lattice of Demazure submodules. This will later be used in the description of the main subject of this paper -- Howe duality for staircase matrices.
	
	\begin{theorem}
		\label{thm::D::Dop::distrib}	
		For each partition $\lambda$ of length at most $\#\St_{\ov{n}}$, 
		the tensor product of $\fb_{n}\ttt\fb_{m}$-Demazure submodules 
		$$\{D_{\hor(A)}\otimes D_{\vrt(B)}^{\op} \ \colon A,B\in\DL_{\ov{n}}(\lambda)\}$$ 
		in the product of two integrable representations $V_\lambda\otimes V_{\lambda}^{\op}$ forms the set of $\vee$-irreducible elements of a distributive lattice, which we denote by $\Lattice_{D\times D^{\op}}^{{\ov{n}}}(\lambda)$.
	\end{theorem}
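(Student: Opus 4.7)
The plan is to assemble the result from three previously established ingredients: (i) the distributivity of the lattice of Demazure submodules and the identification of $\vee$-irreducibles in the sublattice indexed by $\sS$-dominant weights, (ii) the bijections $\hor,\vrt:\DL_{\ov{n}}(\lambda)\simeq \bD_{\St_{\ov{n}}}^{\hor}(\lambda),\,\bD_{\St_{\ov{n}}}^{\vrt}(\lambda)$, and (iii) the standard passage from two distributive lattices of vector subspaces to their ``tensor'' distributive lattice via a common basis.

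First I would recall that by Lemma~\ref{lem::DL::posets}, the poset $\St_{\ov{n}}$ is arborescent with consistent linearization $\hor$ and consistent anti-linearization $\vrt$. Applying Corollary~\ref{cor::distr::dominant} to $(\St_{\ov{n}},\hor)$ produces a distributive sublattice $\Lattice_{D}^{\St_{\ov{n}}}(\lambda)\subset \Lattice_{D}(V_\lambda)$ whose set of $\vee$-irreducible elements is exactly $\{D_{\hor(A)} \colon A\in\DL_{\ov{n}}(\lambda)\}$, using the bijection $\hor:\DL_{\ov{n}}(\lambda)\simeq \bD_{\St_{\ov{n}}}^{\hor}(\lambda)$ from Proposition~\ref{prp::DL::hor::vrt}. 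In parallel, applying Corollary~\ref{cor::distr::dominant::op} to $(\St_{\ov{n}},\vrt)$ gives a distributive sublattice $\Lattice_{D^{\op}}^{\St_{\ov{n}}}(\lambda)\subset \Lattice_{D}(V_\lambda^{\op})$ whose $\vee$-irreducibles are $\{D_{\vrt(B)}^{\op} \colon B\in\DL_{\ov{n}}(\lambda)\}$. Proposition~\ref{prp::distr::vect} supplies, for each of these two sublattices, a common basis $E$ (resp.\ $E^{\op}$) of $V_\lambda$ (resp.\ $V_\lambda^{\op}$) adapted to all $\vee$-irreducibles.

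Next I would observe that the tensor basis $E\otimes E^{\op}:=\{e\otimes f\colon e\in E,\ f\in E^{\op}\}$ is a basis of $V_\lambda\otimes V_\lambda^{\op}$ with the property that for any pair $(A,B)\in \DL_{\ov{n}}(\lambda)^{2}$,
\[
D_{\hor(A)}\otimes D_{\vrt(B)}^{\op} \ = \ \mathsf{Span}\bigl\langle (E\cap D_{\hor(A)})\otimes (E^{\op}\cap D_{\vrt(B)}^{\op})\bigr\rangle,
\]
so that this subspace is spanned by its intersection with $E\otimes E^{\op}$. By Proposition~\ref{prp::distr::vect} (applied in the converse direction), the sublattice $\Lattice_{D\times D^{\op}}^{\ov{n}}(\lambda)\subset \Lattice(V_\lambda\otimes V_\lambda^{\op})$ generated by this family is therefore distributive. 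It remains to identify its $\vee$-irreducibles. Using Birkhoff's correspondence (Fact~\ref{fact::distr::poset}) together with the explicit description of the join operation as the sum of subspaces, a direct check shows that the $\vee$-irreducibles in a lattice of ``tensor boxes'' are exactly the products of $\vee$-irreducibles of the factors: if $D_{\hor(A)}\otimes D_{\vrt(B)}^{\op}= U_1+U_2$ with $U_i\in\Lattice_{D\times D^{\op}}^{\ov{n}}(\lambda)$, then examining the unique rank-one tensor $v_{\hor(A)}\otimes v_{\vrt(B)}$ supported on its ``corner'' basis element forces one of the $U_i$ to already contain it, hence to equal the whole tensor box by $\vee$-irreducibility in each factor.

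The main obstacle I anticipate is the last identification of $\vee$-irreducibles: one has to rule out that sums of ``tensor boxes'' could break into something strictly smaller and still $\vee$-indecomposable. The cleanest way to handle this is via the Birkhoff dictionary: the poset of $\vee$-irreducibles of a distributive lattice of vector subspaces adapted to a common basis $\mathcal{B}$ is exactly the poset of those nonempty subsets of $\mathcal{B}$ that cannot be written as a union of two proper members of the lattice, and for a ``product'' common basis $E\otimes E^{\op}$ with factor lattices generated by $\vee$-irreducibles $\{D_{\hor(A)}\}$ and $\{D_{\vrt(B)}^{\op}\}$, these indecomposable subsets are precisely the products $(E\cap D_{\hor(A)})\otimes (E^{\op}\cap D_{\vrt(B)}^{\op})$. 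Once this step is in place, the theorem follows immediately, and one obtains that the generalized van der Kallen double quotients $\KK_{A,B}^{\ov{n}}=\KK_{\St_{\ov{n}},\hor(A)}\otimes \KK_{\St_{\ov{n}},\vrt(B)}^{\op}$ appearing in the introduction are precisely the minimal subquotients of $\Lattice_{D\times D^{\op}}^{\ov{n}}(\lambda)$.
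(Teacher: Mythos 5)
Your proposal is correct and follows essentially the same route as the paper: identify the two factor lattices via Corollaries~\ref{cor::distr::dominant} and~\ref{cor::distr::dominant::op} (through the bijections of Proposition~\ref{prp::DL::hor::vrt}), use the common-basis characterization of distributivity from Proposition~\ref{prp::distr::vect} to show that the tensor product of the two common bases is a common basis for the family of tensor boxes, and then check that these boxes are exactly the $\vee$-irreducibles. The only difference is that you spell out the $\vee$-irreducibility of $D_{\hor(A)}\otimes D_{\vrt(B)}^{\op}$ via the one-dimensional extremal weight vector $v_{\hor(A)}\otimes v_{\vrt(B)}$, a detail the paper merely asserts.
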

	\begin{proof}
		Thanks to Proposition~\ref{prp::DL::hor::vrt}, we know that the poset $\DL_{\ov{n}}(\lambda)$ is isomorphic to the poset of $\St_{\ov{n}}$-dominant weights $\bD_{\St_{\ov{n}}}^{\hor}(\lambda)$. Consequently, by Corollary~\ref{cor::distr::dominant}, the Demazure submodules $\{D_{\hor(A)} \colon A\in\DL_{\ov{n}}(\lambda)\}$ form the set of $\vee$-irreducible elements of a distributive lattice $\Lattice_{D}^{\St_{\ov{n}}}(\lambda)$ of left $\fb_{n}$-submodules in the $\gl_n$-module $V_{\lambda}$.
		Similarly, by Corollary~\ref{cor::distr::dominant::op}, the opposite Demazure submodules $\{D_{\vrt(B)}^{\op}\colon B\in\DL_{\ov{n}}(\lambda)\}$ form the set of $\vee$-irreducible elements of the distributive lattice $\Lattice_{D^{\op}}^{{\ov{n}}}(\lambda)$ of right $\fb_m$-submodules in the right $\gl_m$-module $V_{\lambda}^{\op}$.
		
		A lattice of vector spaces is distributive if and only if it has a common basis. 
		Therefore, if the distributive lattice $\Lattice$ has a common basis $E$ and the distributive lattice $\Lattice'$ has a common basis $E'$, then the Cartesian product $E\times E'$ defines a common basis for the tensor product of lattices $\Lattice\otimes \Lattice'$. This ensures that the tensor product remains distributive. Moreover, the tensor product of $\vee$-irreducible elements is also $\vee$-irreducible.
	\end{proof}
	
	Recall that for each $\vee$-irreducible subspace $D$ in a distributive lattice $\Lattice$ of vector spaces, one assigns a minimal subquotient $K_D$ (see~\eqref{eq::min::subquot}). By direct inspection, we observe that the minimal subquotients in the tensor product of distributive lattices correspond to the tensor product of minimal subquotients. 
	
	Consequently, the minimal subquotients in the lattice $\Lattice_{D\times D^{\op}}^{{\ov{n}}}(\lambda)$ are indexed by pairs of $DL$-dense arrays and are isomorphic to the product of the generalized van der Kallen modules introduced in~\eqref{eq::vdK::general}:
	$$
	\KK_{A,B}^{\ov{n}}:= K_{\St_{\ov{n}},\hor(A)} \otimes K_{\St_{\ov{n}},\vrt(B)}^{\op}.
	$$
	In particular, its $\fh_{n}\ttt\fh_{m}$-character has the following description based on the formulas~\eqref{eq::gen::vdK::character} and~\eqref{eq::vdK::op::char}:
	\begin{multline}
		\label{eq::vdK::D:Dop}
		\ch_{\fh_{n}\ttt\fh_{m}}(\KK_{A,B}^{\ov{n}}) = \ch_{\fh_n}\left(K_{\St_{\ov{n}},\hor(A)}\right) \cdot \ch_{\fh_m}\left(K_{\St_{\ov{n}},\vrt(B)}^{\op}\right) = \\
		=\left(\sum_{C\preceq A}\mu^{\DL_{\ov{n}}(\lambda)}(C,A) \kappa_{\hor(C)}(x) \right)
		\left( \sum_{C\succeq B}\mu^{\DL_{\ov{n}}(\lambda)}(B,C) \kappa^{\vrt(C)}(y) \right).
	\end{multline}
	Note that the summation in the first term runs over all $DL$-dense arrays less than or equal to $A$, while the summation in the second term ranges over $DL$-dense arrays greater than or equal to $B$.

	\section{Howe Duality for Staircase Matrices}
	\label{sec::Howe::duality}
	
	This paper is motivated by the combinatorics of various Cauchy-type identities for staircase matrices $\Mat_{\ov{n}}$ that we introduced in~\cite{FKM::Cauchy}. All these identities are based on the description of the associated graded components of different filtrations of $S^N(\Mat_{\ov{n}})$. These filtrations originate from the highest weight category structures of left $\fb_{n_m}$-modules and right $\fb_{m}$-modules. In this paper, we propose considering weaker filtrations, which are still sufficient for computing the characters.
	
	\subsection{The Bi-Module $\Mat_{\ov{n}}$ of Staircase Matrices}
	\label{sec::bimodule}
	
	In this section, we fix a collection of integers $\overline{n}:=(n_1\leq \ldots \leq n_m)$ with $n_1>0$, and we denote the largest column $n_m$ by $n$.
	In particular, the Young diagram $\bbY_{\overline{n}}$ is a subdiagram of a rectangular Young diagram $\bbY_{n^m}$. The space of staircase matrices $\Mat_{\overline{n}}$ consists of linear functions $A:\bbY_{\overline{n}}\to\Bbbk$. In other words, $\Mat_{\overline{n}}$ is the subspace of
	rectangular $n\times m$ matrices whose entries vanish outside $\bbY_{\overline{n}}$. 
	
	The space $\Mat_{\overline{n}}$ is acted upon from the left by the Borel subalgebra $\fb_n$ and from the right by the Borel subalgebra $\fb_m$ of upper triangular matrices (via left and right multiplication). 
	These actions commute, yielding a bi-module structure. In what follows, we are particularly interested in the bi-module $S(\Mat_{\overline{n}})$, the symmetric algebra of $\Mat_{\ov{n}}$.
	
	For each $(i,j)\in\Mat_{\ov{n}}$, we denote by $v_{ij}$ the matrix unit that sends the cell $(i,j)$ to $1$ and all other elements to zero. The set $\{v_{ij}\colon (i,j)\in\bbY_{\ov{n}}\}$ forms a basis of $\Mat_{\ov{n}}$. 
	To each array $A$ of shape $\bbY_{\ov{n}}$, we associate the monomial
	$$v^{A}:=\prod_{i,j} v_{ij}^{A_{ij}}.$$ 
	The set $\{v^A\}$, where $A$ ranges over all arrays of degree $N$, forms a basis of the $N$'th symmetric power $S^{N}(\Mat_{\overline{n}})$.
	
	\begin{remark}
		The left action of the matrix unit $E_{ij}\subset \fb_n$ ($i\leq j\leq n$) and the right action of the matrix unit $E_{ij}\subset \fb_m$ on matrix units $\{v_{ij}\colon (ij)\in\bbY_{\ov{n}}\}$ and arrays can be summarized as follows:
		\begin{gather*}
			E_{i j} v_{a b} = \delta_{j,a} v_{ib},\qquad v_{ab} E_{i j} = \delta_{b,i} v_{aj}; \\
			E_{ii}v^A=(\hor(A))_i v^A,\quad v^A E_{ii}=(\vrt(A))_i v^A.
		\end{gather*}
	\end{remark}
	
	\begin{definition}        
		The left $\fh_{n}$-weight  $\nu$-subspace 
		${}_\nu S(\Mat_{\overline{n}})$ of $S(\Mat_{\overline{n}})$  is spanned by the elements $v^A$ such that $\hor(A)=\nu$.
		Similarly, the right $\fh_{m}$-weight  ${\overline d}$-subspace $ S(\Mat_{\overline{n}})_{\overline d}$ of $S(\Mat_{\overline{n}})$  is spanned by the elements $v^A$ such that $\vrt(A)={\overline d}$.
		The notation  ${_\nu S}(\Mat_{\overline{n}})_{\overline d}$ is used for the left-right weight $(\nu,{\overline d})$ subspace of $S(\Mat_{\overline{n}})$.
	\end{definition}
	
	Note that the left $\fb_{n}$-action on the weight subspace ${_\nu S}(\Mat_{\overline{n}})_{\overline d}$ increases the weight $\nu$ while keeping $\ov{d}$ unchanged, whereas the right $\fb_m$-action decreases the weight $\ov{d}$ while keeping $\nu$ unchanged.
	
	The following proposition was proved in~\cite[Prp.4.3]{FKM::Cauchy} based on a direct inspection of the $\fb_{n}\ttt\fb_m$-action:
	\begin{proposition}\label{prop:BimoduleGenerators}
		The monomials $v^A$ where $A$ belongs to $\bigcup_{\lambda\vdash N}\DL_{\ov{n}}(\lambda)$, the set of $DL$-dense arrays of total weight $N$, generate the $\fb_{n_m}\de\fb_{m}$-bimodule $S^N(\Mat_{\overline{n}})$.
	\end{proposition}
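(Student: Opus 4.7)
The plan is to show, by induction on an appropriate statistic, that every monomial $v^A$ with $A$ an arbitrary array of shape $\bbY_{\ov{n}}$ and total weight $N$ lies in the $\U(\fb_n)\otimes \U(\fb_m)$-span of the DL-dense monomials; since the DL-dense monomials are among the basis of $S^N(\Mat_{\ov{n}})$, this proves that they generate the bimodule.

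First I would record the basic moves produced by the bimodule action. For $i<k$, the left action $E_{ik}\cdot v_{a,b}=\delta_{k,a}v_{i,b}$ moves a single matrix-unit entry strictly upward within its column; symmetrically, for $j<l$, the right action $v_{a,b}\cdot E_{jl}=\delta_{b,j}v_{a,l}$ moves an entry strictly rightward within its row. By the Leibniz rule, these operators extend to derivations on $S^{N}(\Mat_{\ov{n}})$; on a monomial $v^A$, each derivation redistributes entries of $A$ only in the up-right direction. Next I would verify the combinatorial reachability input: every cell $(a,b)\in\bbY_{\ov{n}}$ admits a staircase corner $(i,j)\in\St_{\ov{n}}$ with $i\ge a$ and $j\le b$. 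This is immediate from Definition~\ref{def::DL::indices}: letting $b'\le b$ be the smallest index with $n_{b'}=n_{b}$, we have $n_{b'}>n_{b'-1}$, so $(n_{b'},b')\in\St_{\ov{n}}$ by the canonical-corner clause, and $n_{b'}=n_b\ge a$. Given such a corner $(i,j)$, the composite operator $E_{a,i}^{(L)}E_{j,b}^{(R)}$ takes the matrix unit $v_{i,j}$ to $v_{a,b}$, so mass sitting at any staircase corner can be propagated by the bimodule action anywhere in its up-right quadrant.

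The induction runs on the statistic $\sigma(A):=\sum_{(a,b)\in\bbY_{\ov{n}}}A_{a,b}(b-a)$, which is strictly increased by every nontrivial up-right relocation of a unit of mass. For DL-dense $A$, the monomial $v^A$ is already in the generating set. For $A$ not DL-dense, I would pick a nonzero entry witnessing the failure---either $A_{a,b}>0$ with $(a,b)\notin\St_{\ov{n}}$, or a pair of corners $s\prec s'$ in $\St_{\ov{n}}$ with $A_s>A_{s'}$---and choose a staircase corner $(i,j)$ strictly down-left of $(a,b)$, which in the monotonicity case means taking $(i,j)=s'$ and $(a,b)=s$. Let $A'$ be obtained from $A$ by transferring one unit of mass from $(a,b)$ to $(i,j)$; then $\sigma(A')<\sigma(A)$, so by the induction hypothesis $v^{A'}$ already lies in the bimodule span of DL-dense monomials. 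A suitable nonzero rational multiple of $E_{a,i}^{(L)}E_{j,b}^{(R)}$ applied to $v^{A'}$ will recover $v^A$ plus additional monomials of strictly smaller $\sigma$, which are controlled by the same induction, and this gives the desired membership of $v^A$.

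The main obstacle is controlling the Leibniz cross-terms produced when $E_{a,i}^{(L)}E_{j,b}^{(R)}$ acts on $v^{A'}$: the derivation touches every nonzero entry of $A'$ in row $i$ and in column $j$, not only the freshly added entry at $(i,j)$, so a priori one gets many summands. The triangularity I need is that among these summands the only one of $\sigma$-value equal to $\sigma(A)$ is the intended $v^A$; every other relocation of mass inside $A'$ yields a monomial of strictly smaller $\sigma$, because it moves some other entry of $A'$ further up-right by a total amount less than $(b-j)+(i-a)$. Verifying this amounts to a direct accounting of the contribution of each entry of $A'$ to $\sigma$, after which the induction closes.
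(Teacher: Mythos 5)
Your overall plan---every cell of $\bbY_{\ov{n}}$ has a staircase corner weakly to its down-left, so an arbitrary monomial should be rewritable triangularly in terms of monomials of $DL$-dense arrays---is the right idea in spirit (note that the present paper does not reprove this proposition but cites \cite[Prp.~4.3]{FKM::Cauchy}, whose argument is likewise a direct inspection of the bimodule action), and your reachability observation is correct. However, there is a genuine gap at the central step: the statistic $\sigma(A)=\sum_{(a,b)}A_{a,b}(b-a)$ does not separate the intended term from the Leibniz cross-terms. The right action of $E_{j,b}$ is a derivation, each of whose summands moves exactly one unit of mass from some cell $(c,j)$ to $(c,b)$ and therefore increases $\sigma$ by exactly $b-j$, no matter which $c$ occurs; likewise every summand produced by the left action of $E_{a,i}$ increases $\sigma$ by exactly $i-a$. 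Consequently \emph{every} monomial appearing in $E_{a,i}^{(L)}E_{j,b}^{(R)}v^{A'}$ has $\sigma$-value equal to $\sigma(A')+(b-j)+(i-a)=\sigma(A)$: the strict inequality you need (``a total amount less than $(b-j)+(i-a)$'') is in fact always an equality. The claimed triangularity fails, the cross-terms are not covered by the induction hypothesis, and the induction on $\sigma$ does not close. These cross-terms really do occur: even when $A'$ is supported on the rook placement $\St_{\ov{n}}$, the left operator acts on every remaining unit in row $i$, not only on the freshly transported one.

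To repair the argument you need a strictly finer well-founded order on arrays of fixed degree under which $v^A$ is the unique extremal term among the summands of $E_{a,i}^{(L)}E_{j,b}^{(R)}v^{A'}$ --- for instance, refining $\sigma$ by comparing the row- and column-weight vectors $(\hor,\vrt)$ in dominance order, or by a lexicographic order on arrays read along a suitable linear extension of the down-left order --- and then verify that each cross-term $A'-e_{c,j}+e_{c,b}-e_{i,d}+e_{a,d}$ with $(c,d)\neq(i,b)$ is strictly smaller than $A$ in that refined order. Until such a refinement is supplied and checked, the proof is incomplete.
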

	
	\begin{example}
		\label{ex::rectangular}	
		For a rectangular Young diagram $\bbY_{n^m}$, the set of staircase corners $\St_{n^m}$ forms a totally ordered set of size $\min(m,n)$ (see Picture~\eqref{pic::rectangular::corners}). Consequently, the set $\DL_{n^m}(\lambda)$ consists of a single element, which we denote by 
		$$
		A_\lambda:= 
		{\begin{tikzpicture}[scale=0.5]
				\draw[step=1cm] (0,-3) grid (7,2);
				\node (v00) at (.5,-2.5) { {{\small $\lambda_1$}}};
				\node (v10) at  (.5+1,-1.5) {{\small $\lambda_2$ }};
				\node (v20) at (.5+2,-0.5) {{ \small $\lambda_3$}};
				\node (v30) at (.5+3,0.5) {{ \small $\lambda_4$}};
				\node (v40) at (.5+4,1.5) {{ \small $\lambda_5$}};		
			\end{tikzpicture},
		}
		$$ 
		that has nonzero elements on the secondary diagonal starting from the bottom-left corner. 	
	\end{example}
	
	\subsection{Howe Duality and Cauchy-Type Identities}
	\label{sec::Howe::staircase}
	
	The famous Howe duality introduced by R.Howe in \cite{Ho} (see also \cite{Goodman::Wallach}) states the following isomorphism of $\mgl_{n}\ttt\mgl_m$-bimodules:
	\begin{equation}
		\label{eq::Howe::duality}
		S^N(\Mat_{n\times m}) \simeq 
		\bigoplus_{\substack{{\lambda\vdash N} \\ {l(\lambda)\leq\min(n,m)}}} V_{\lambda}^{\mgl_{n}} \otimes (V^{\mgl_m}_\lambda)^{\op}.
	\end{equation}
	Here, $V^{\gl_n}_\lambda$ denotes the integrable $\mgl_n$-representation with highest weight $\lambda$. 
	We are interested in the action of the Borel subalgebras $\fb_n\ttt\fb_m$ of upper-triangular matrices rather than the full matrix Lie algebras.
	In particular, the maximal Demazure $\fb$-submodules of the Borel subalgebra coincide with the corresponding irreducible finite-dimensional $\gl_n$-module:
	$$
	V^{\gl_n}_\lambda\simeq D_{\lambda_{-}}, \quad (V^{\gl_n}_{\lambda})^{\op} \simeq D_{\lambda_{+}}^{\op}.
	$$
	Moreover, the action of upper-triangular matrices defines a standard partial order on the set of dominant weights (partitions):
	
	\begin{definition}
		\label{def::partition::order}
		We say that partitions $\la\ge \nu$ (with $\lambda,\nu\vdash N$) if and only if
		$$\forall r=1,\ldots, n \ \ \la_1 + \dots + \la_r\ge \nu_1 + \dots + \nu_r.$$ 
	\end{definition}
	
	Although the explicit formulas for the generators of the summands on the right-hand side of~\eqref{eq::Howe::duality} are somewhat intricate, it is straightforward to identify representatives with respect to the following left (${}^{\lambda}\calF$) and right ($\calF^{\lambda}$) filtrations indexed by partitions $\lambda\vdash N$:
	\begin{equation}
		\label{eq::FIltrationBimodule}
		\begin{array}{c}	
			\mathcal{F}^{\lambda}\left(S^N\left(\Mat_{n\times m}\right)\right) :=\left({\bigoplus_{\nu \ge \lambda }}{S^{N}\left(\Mat_{n\times  m}\right)_{\nu}}\right)\U(\fb_{m});
			\\
			{}^\lambda\mathcal{F}\left(S^N\left(\Mat_{n\times m}\right)\right) := \U(\fb_{n}) \left(\bigoplus_{{\nu \ge \lambda}} {}_\nu S^{N}\left(\Mat_{n\times m}\right)\right).
		\end{array}		
	\end{equation}
	
	In particular, isomorphism~\eqref{eq::Howe::duality} can be weakened to the following isomorphisms:
	$$
	\calF^{\lambda}\left/ \sum_{\nu\geq \lambda}\calF^{\nu}\right.\left(S^{N}(\Mat_{n\times m})\right) 
	\simeq 
	{}^{\lambda}\calF \left/ \sum_{\nu\geq \lambda}{}^{\nu}\calF\right. \left( S^{N}(\Mat_{n\times m})\right) \simeq   
	D_{\lambda_{-}}\otimes D_{\lambda_{+}}^{\op}.
	$$
	Moreover, both quotients are cyclic $\fb_n\ttt\fb_m$-modules generated by the monomial
	$$
	v^{A_\lambda}:= v_{n, 1}^{\lambda_1} \cdot v_{n-1, 2}^{\lambda_2}\cdot v_{n-3, 3}^{\lambda_3} \ldots. 
	$$ 
	where $A_{\lambda}$ is the unique $DL$-dense array of rectangular shape $n^{m}$ whose multiset of elements is equal to $\lambda$.
	(See Example~\ref{ex::rectangular} for details).
	
	The embedding of the space of staircase matrices into the space of rectangular matrices induces an embedding of the corresponding symmetric tensors, and the symmetric algebra inherits the left and right filtrations by weights:
	\begin{gather*}
		\imath_{\ov{n}}: \Mat_{\ov{n}} \hookrightarrow \Mat_{n\times m};
		\quad
		\imath_{\ov{n}}: S^{N}(\Mat_{\ov{n}}) \hookrightarrow S^{N}(\Mat_{n\times m}); \\
		\calF^{\lambda} S^{N}(\Mat_{\ov{n}}):= S^{N}(\Mat_{\ov{n}}) \bigcap \calF^{\lambda} S^{N}(\Mat_{n\times m}).
	\end{gather*}
	It follows that for each partition $\lambda\vdash N$ with $l(\lambda)\leq\min(n,m)$, we have the following quotient map of $\fb_{n}\ttt\fb_{m}$-bimodules:
	\begin{equation}
		\label{eq::subquotient::map}
		\ov{\imath}_{\ov{n}}^{\lambda}:  \calF^{\lambda}\left/\sum_{\mu\succ\lambda}\calF^{\mu} \right. \left(S^{N}(\Mat_{\ov{n}})\right) \hookrightarrow  
		\calF^{\lambda}\left/\sum_{\mu>\lambda} \calF^{\mu} \right.
		\left(S^{N}(\Mat_{n\times m})\right) \simeq V^{\gl_n}_{\lambda}\otimes (V^{\gl_m}_{\lambda})^{\op}.
	\end{equation}
	and the analogous quotient map for the left filtration.
	
	\begin{proposition}
		\label{prp::monom::Demazure}	
		For each $DL$-dense array $A\in\DL_{\ov{n}}(\lambda)$, the monomial $v^{A}$ belongs to $\calF^{\lambda}S^{N}(\Mat_{n\times m})$ (respectively to ${}^{\lambda}\calF\left(S^{N}(\Mat_{n\times m})\right)$). Moreover, 
		the $\fb_n\ttt\fb_m$-submodule of $D_{\lambda_-}\otimes D_{\lambda_+}^{\op}$ generated by the vector $\ov{\imath}_{\ov{n}}^{\lambda}(v^A)$ is isomorphic to the Demazure submodule:
		\begin{equation}
			\label{eq::Demazure::sub}	
			D_{\hor(A)}\otimes D_{\vrt(A)}^{\op}.
		\end{equation}
	\end{proposition}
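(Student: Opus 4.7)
The plan is to verify the proposition in three stages. \emph{First,} the weight of $v^A=\prod_{(i,j)\in\St_{\ov{n}}}v_{ij}^{A_{ij}}$ under the two Cartan actions is $(\hor(A),\vrt(A))$, which is immediate since each matrix unit $v_{ij}$ carries left weight $\varepsilon_i$ and right weight $\varepsilon_j$. \emph{Second,} the claim that $v^A$ sits in the minimal piece of each filtration rests on the rook placement property of $\St_{\ov{n}}$: each row and each column of $\bbY_{\ov{n}}$ contains at most one staircase corner, so the multiset of nonzero entries of $\hor(A)$ and of $\vrt(A)$ each coincides with the parts of $\lambda$. Hence both $\hor(A)$ and $\vrt(A)$ sort to $\lambda_+$, which places $v^A$ directly into $S^N(\Mat_{\ov{n}})_{\vrt(A)}\subset\calF^\lambda$ and ${}_{\hor(A)}S^N(\Mat_{\ov{n}})\subset{}^\lambda\calF$ with no further $\fb$-closure required.

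\emph{Third} and crucially, I identify the image $\ov{\imath}_{\ov{n}}^{\lambda}(v^A)$ in $V_\lambda\otimes V_\lambda^{\op}$ with a nonzero multiple of the extremal bi-vector $v_{\hor(A)}\otimes v_{\vrt(A)}$. Since $\hor(A)\in\bS_n\lambda$ and $\vrt(A)\in\bS_m\lambda$ are extremal weights, the $(\hor(A),\vrt(A))$-weight subspace of $V_\lambda\otimes V_\lambda^{\op}$ is one-dimensional, so the image is a priori a scalar multiple of $v_{\hor(A)}\otimes v_{\vrt(A)}$ and only nonvanishing is substantive. For this I exploit that $\Mat_{\ov{n}}$ is a sub-bimodule of $\Mat_{n\times m}$: each raising operator in $\fb_n$ moves a matrix unit to a strictly smaller-indexed row, and $\bbY_{\ov{n}}$ is upward-closed in rows; each raising operator in $\fb_m$ moves to a strictly larger-indexed column, compatible with $n_1\leq\cdots\leq n_m$. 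By the Bruhat structure of Demazure modules one may pick $e\in\U(\fb_n)$ and $e^{\op}\in\U(\fb_m)$ with $e\cdot v_{\hor(A)}=c_1 v_{\lambda_+}$ and $v_{\vrt(A)}\cdot e^{\op}=c_2 v_{\lambda_-}$ with $c_1,c_2\neq 0$ (possible because $v_{\lambda_+}\in D_{\hor(A)}$ and $v_{\lambda_-}\in D_{\vrt(A)}^{\op}$). Thus if $\ov{\imath}_{\ov{n}}^{\lambda}(v^A)=c\cdot v_{\hor(A)}\otimes v_{\vrt(A)}$, then $\ov{\imath}_{\ov{n}}^{\lambda}((e\otimes e^{\op})\cdot v^A)=c\,c_1 c_2\cdot v_{\lambda_+}\otimes v_{\lambda_-}$. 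Hence nonvanishing of $c$ reduces to exhibiting a single monomial in $S^N(\Mat_{\ov{n}})$ at weight $(\lambda_+,\lambda_-)$ with nonzero Howe projection to $V_\lambda\otimes V_\lambda^{\op}$ (the classical rectangular Howe duality applied to the monomial $v^{A_\lambda}$ of Example~\ref{ex::rectangular} provides one, and the sub-bimodule property guarantees it stays inside $S^N(\Mat_{\ov{n}})$) and checking that the expansion of $(e\otimes e^{\op})\cdot v^A$ recovers this monomial modulo strictly higher filtration.

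Once the third step is settled, the $\fb_n\otimes\fb_m$-submodule of $V_\lambda\otimes V_\lambda^{\op}$ generated by $\ov{\imath}_{\ov{n}}^{\lambda}(v^A)$ is tautologically $\U(\fb_n)v_{\hor(A)}\otimes v_{\vrt(A)}\U(\fb_m)=D_{\hor(A)}\otimes D_{\vrt(A)}^{\op}$, yielding the isomorphism~\eqref{eq::Demazure::sub}. The main obstacle is the bookkeeping in step three: the expansion of $(e\otimes e^{\op})\cdot v^A$ inside $S^N(\Mat_{\ov{n}})$ contains, besides the expected leading monomial at weight $(\lambda_+,\lambda_-)$, several correction terms, and one must show that they all either lie in $\sum_{\mu>\lambda}\calF^\mu$ (having sorted weight strictly dominating $\lambda$) or else contribute to the \emph{same} one-dimensional $(\lambda_+,\lambda_-)$-weight subspace of $V_\lambda\otimes V_\lambda^{\op}$ in a way that does not cancel. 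Both requirements ultimately rest on the $\St_{\ov{n}}$-dominance of $\hor(A)$ and $\vrt(A)$ established in Proposition~\ref{prp::DL::hor::vrt}, which tightly constrains how the raising actions redistribute the parts of $\lambda$.
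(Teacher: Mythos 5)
Your first two stages are sound, and your observation that since $\hor(A)$ and $\vrt(A)$ are extremal weights the image $\ov{\imath}_{\ov{n}}^{\lambda}(v^A)$ is forced to be a scalar multiple of $v_{\hor(A)}\otimes v_{\vrt(A)}$ is exactly the right reduction. But the third stage, where you try to pin down nonvanishing of that scalar by pushing to the corner weight $(\lambda_+,\lambda_-)$ with $e\otimes e^{\op}$, does not close. First, the seed monomial you propose, $v^{A_\lambda}$, has bi-weight $(\lambda_-,\lambda_+)$, not $(\lambda_+,\lambda_-)$, so it cannot play the role you assign it. Second, and more seriously, $v^{A_\lambda}$ generally does not lie in $S^N(\Mat_{\ov{n}})$ at all: $A_\lambda$ occupies the cell $(n_m,1)$, which belongs to $\bbY_{\ov{n}}$ only when $n_1=n_m$, i.e.\ only for rectangular shapes. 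The "sub-bimodule property" cannot rescue this, because the issue is membership of the seed monomial, not closure under the $\fb$-action. Third, the deferred "bookkeeping" of the expansion of $(e\otimes e^{\op})\cdot v^A$ modulo $\calF^{>\lambda}$ is precisely where all the difficulty of the statement is concentrated, and nothing you wrote makes it tractable; gesturing at $\St_{\ov{n}}$-dominance does not settle it.

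The paper's route sidesteps all of this by a structural observation you did not use: $S^N(\Mat_{n\times m})$ is a genuine $\gl_n\ttt\gl_m$-bimodule, so the Weyl group $\bS_n\times\bS_m$ acts on it, by permutation matrices, i.e.\ by permuting rows and columns of arrays, and this action is compatible with the Howe decomposition and hence with the filtration $\calF^{\bullet}$. Since $v^{A_\lambda}$ represents the bi-cyclic generator of $\gr\calF^{\lambda}\simeq V_\lambda\otimes V_\lambda^{\op}$, and $A$ lies in the $\bS_n\times\bS_m$-orbit of $A_\lambda$ (both are rook placements whose multiset of entries is $\lambda$), the equality $v^A=(\sigma,\tau)\cdot v^{A_\lambda}$ for the appropriate $(\sigma,\tau)$ immediately exhibits $v^A$ as representing the extremal vector of bi-weight $(\hor(A),\vrt(A))$ in the subquotient, with no raising operators, no constants, and no expansion to track. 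If you wish to persist with the $\U(\fb)$-raising approach, you would need to actually carry out the monomial expansion and control the corrections; as written, your step three is a restatement of the problem rather than a proof.
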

	
	\begin{proof}
		The module $S^{N}(\Mat_{n\times m})$, as well as its submodule $V_{\lambda}\otimes V_{\lambda}^{\op}$, is a $\mgl_n\ttt\mgl_m$-module and consequently admits a linear action of the product of two Weyl groups, i.e., the symmetric groups $\bS_n\times\bS_m$. Moreover, this action maps the highest weight vector to an extremal vector in an irreducible representation.
		
		Since $A_{\lambda}\in \DL_{n^m}(\lambda)$ is the unique $DL$-dense array of rectangular shape, the monomial $v^{A_\lambda}$ represents the highest weight vector in $\calF^{\lambda}/\calF^{>\lambda}(S^{N}(\Mat_{n\times m}))$ (see Example~\ref{ex::rectangular}). The monomial $v^{B}$ represents an extremal vector in this subquotient whenever the array $B$ belongs to the $\bS_n\times\bS_m$-orbit of $A_\lambda$. Since $A$ is $DL$-dense, it belongs to this orbit, implying that $\imath_{\ov{n}}(v^A)$ is an extremal vector of $V_\lambda\otimes V_\lambda^{\op}$. Consequently, the $\fb_n\ttt\fb_m$-submodule generated by it is the Demazure submodule $D_{\hor(A)}\otimes D_{\vrt(A)}^{\op}$.
	\end{proof}
	
	Note that the Demazure submodules~\eqref{eq::Demazure::sub} often intersect inside $V_{\lambda}\otimes V_{\lambda}^{\op}$. For example,
	\begin{multline}
		\label{eq::DD::intersect}	
		A\preceq_{\Bruhat} B\in \DL_{\ov{n}}(\lambda) \ \Leftrightarrow \ 
		\begin{cases}
			D_{\hor(A)}\subset D_{\hor(B)},\\
			D_{\vrt(A)}^{\op}\supset D_{\vrt(B)}^{\op}
		\end{cases} \ \Rightarrow \ \\
		\Rightarrow \ 
		\left(D_{\hor(A)}\otimes D_{\vrt(A)}^{\op}\right) \bigcap \left(D_{\hor(B)}\otimes D_{\vrt(B)}^{\op}\right) =  D_{\hor(A)}\otimes D_{\vrt(B)}^{\op}.
	\end{multline}
	However, thanks to Theorem~\ref{thm::D::Dop::distrib}, we know that $D_{\hor(A)}\otimes D_{\vrt(B)}^{\op}$ are $\vee$-irreducible elements of the distributive lattice $\Lattice_{D\times D^{\op}}^{\ov{n}}(\lambda)$. 
	
	\begin{theorem}
		\label{thm::Howe::lattice}	
		For each partition $\lambda\vdash N$ with $l(\lambda)\leq \#\St_{\ov{n}}$, the associated graded component of the left (and right) filtrations~\eqref{eq::FIltrationBimodule} is isomorphic to the following sum of $\fb_n\ttt\fb_m$-Demazure (bi)-submodules:
		\begin{equation}
			\label{eq::subquot::hor::vrt}
			\gr\calF^{\lambda}:=\calF^{\lambda}\left/\sum_{\nu>\lambda} \calF^{\nu} \right. \left(S^{N}(\Mat_{\ov{n}})\right) \simeq \sum_{A\in\DL_{\ov{n}}(\lambda)} D_{\hor(A)}\otimes D_{\vrt(A)}^{\op} \subset V_{\lambda}^{\gl_n}\otimes (V_{\lambda}^{\gl_m})^{\op}. 
		\end{equation}   
		Moreover, the corresponding $\fb_n\ttt\fb_m$-sub-bimodule is an element of the distributive lattice $\Lattice_{D\times D^{\op}}^{\ov{n}}(\lambda)$ whose associated graded module with respect to the standard filtration $\calF_{\Lattice}$, defined in~\eqref{eq::filtration::distributive}, is isomorphic to the following sum:
		\begin{equation}
			\label{eq::Howe::gr}
			\bigoplus_{B\preceq C\in\DL_{\ov{n}}(\lambda)} K_{\St_{\ov{n}},\hor(B)}\otimes K_{\St_{\ov{n}},\vrt(C)}^{\op}.
		\end{equation}
	\end{theorem}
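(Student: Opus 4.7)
The strategy is to combine the two preparatory facts about bimodule generators of $S^N(\Mat_{\ov{n}})$ with the distributive lattice machinery developed in Section~\ref{sec::Demazure::all} and Section~\ref{sec::DL::Distr}. I would first establish the isomorphism~\eqref{eq::subquot::hor::vrt}, then deduce membership in the distributive lattice $\Lattice_{D\times D^{\op}}^{\ov{n}}(\lambda)$, and finally identify the associated graded pieces by reading off the contained $\vee$-irreducibles.

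For~\eqref{eq::subquot::hor::vrt}, I would start from Proposition~\ref{prop:BimoduleGenerators}, which exhibits the set $\{v^A : A \in \DL_{\ov{n}}(\mu),\ \mu\vdash N\}$ as a system of $\fb_n\ttt\fb_m$-bimodule generators of $S^N(\Mat_{\ov{n}})$, and then apply the quotient map $\ov{\imath}_{\ov{n}}^\lambda$ from~\eqref{eq::subquotient::map}. The claim is that only generators with $A\in\DL_{\ov{n}}(\lambda)$ survive nontrivially. Indeed, in the Howe decomposition~\eqref{eq::Howe::duality}, a monomial $v^A$ with $A\in\DL_{\ov{n}}(\mu)$ has left weight $\hor(A)$ and right weight $\vrt(A)$, both being compositions whose underlying partition is $\mu$; consequently $v^A$ projects nontrivially only onto the summands $V_\eta\ttt V_\eta^{\op}$ with $\eta\geq\mu$, placing $v^A$ inside $\calF^\mu$. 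Hence for $\mu>\lambda$ the vector is killed by the quotient $\calF^\lambda/\sum_{\nu>\lambda}\calF^\nu$, while for $\mu\not\geq\lambda$ the vector does not lie in $\calF^\lambda$ at all. Applying Proposition~\ref{prp::monom::Demazure} to the remaining generators, each $\ov{\imath}_{\ov{n}}^\lambda(v^A)$ with $A\in\DL_{\ov{n}}(\lambda)$ generates the Demazure bimodule $D_{\hor(A)}\ttt D_{\vrt(A)}^{\op}$, which yields~\eqref{eq::subquot::hor::vrt}.

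For the second assertion, that this sum lies in $\Lattice_{D\times D^{\op}}^{\ov{n}}(\lambda)$: by Theorem~\ref{thm::D::Dop::distrib}, the diagonal summands $D_{\hor(A)}\ttt D_{\vrt(A)}^{\op}$ are $\vee$-irreducible elements of this distributive lattice, so their join is an element of the lattice. To compute the associated graded with respect to the standard filtration $\calF_\Lattice$ from~\eqref{eq::filtration::distributive}, I would apply Fact~\ref{fact::distributive}: the associated graded of an element $V$ of a distributive lattice of vector spaces is the direct sum of minimal subquotients $\KK_j$ indexed by those $\vee$-irreducibles $V_j$ with $V_j\subset V$. Using that $\vee$-irreducibles in a finite distributive lattice are $\vee$-prime, a generator $D_{\hor(B)}\ttt D_{\vrt(C)}^{\op}$ is contained in $V:=\sum_A D_{\hor(A)}\ttt D_{\vrt(A)}^{\op}$ if and only if it is contained in a single summand, i.e.\ iff there exists $A\in\DL_{\ov{n}}(\lambda)$ with $\hor(B)\preceq_{\Bruhat}\hor(A)$ and $\vrt(A)\preceq_{\Bruhat}\vrt(C)$. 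By Corollary~\ref{cor::hor::vrt::Bruhat}, the horizontal and vertical Bruhat orders on $\DL_{\ov{n}}(\lambda)$ agree, so this condition becomes $B\preceq A\preceq C$ in the Bruhat order on $\DL_{\ov{n}}(\lambda)$, which admits such an intermediate $A$ precisely when $B\preceq C$. This reproduces the indexing set of~\eqref{eq::Howe::gr}, and the corresponding minimal subquotients are $\KK_{\St_{\ov{n}},\hor(B)}\ttt \KK_{\St_{\ov{n}},\vrt(C)}^{\op}$ by Corollaries~\ref{cor::distr::dominant} and~\ref{cor::distr::dominant::op} applied to each tensor factor.

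The main obstacle I anticipate lies in the first step: justifying rigorously that monomials $v^A$ with $A\in\DL_{\ov{n}}(\mu)$ for $\mu$ incomparable to $\lambda$ really do not contribute to $\calF^\lambda\cap S^N(\Mat_{\ov{n}})$. This requires a careful reading of the definition~\eqref{eq::FIltrationBimodule} of $\calF^\lambda$ inside the ambient rectangular module, together with the observation that the inherited filtration on $S^N(\Mat_{\ov{n}})$ is compatible with the Howe decomposition of the ambient space. Everything else, by contrast, is essentially formal once the distributivity of $\Lattice_{D\times D^{\op}}^{\ov{n}}(\lambda)$ is in hand.
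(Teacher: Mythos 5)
Your proposal is correct and follows essentially the same route as the paper's own argument: Proposition~\ref{prop:BimoduleGenerators} plus the weight/Howe-decomposition argument isolate the generators $v^A$ with $A\in\DL_{\ov{n}}(\lambda)$, Proposition~\ref{prp::monom::Demazure} identifies the Demazure bimodules they generate, and the $\vee$-irreducibility analysis in $\Lattice_{D\times D^{\op}}^{\ov{n}}(\lambda)$ together with Corollary~\ref{cor::hor::vrt::Bruhat} yields the indexing $B\preceq C$ in~\eqref{eq::Howe::gr}. The only difference is cosmetic: you invoke $\vee$-primality and Corollary~\ref{cor::hor::vrt::Bruhat} explicitly where the paper leaves them implicit, and the ``obstacle'' you worry about in the last paragraph is in fact already disposed of by the weight argument you gave two paragraphs earlier.
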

	
	\begin{proof}
		Thanks to Proposition~\ref{prp::monom::Demazure}, we know that the elements $v^A$ for $A\in\DL_{\ov{n}}(\lambda)$ belong to $\calF^{\lambda}$ and generate the summands on the right-hand side of~\eqref{eq::subquot::hor::vrt}. Consequently, the surjectivity of~\eqref{eq::subquot::hor::vrt} follows.
		On the other hand, this also implies that $v^{A}$ does not belong to $\calF^{\nu}$ for any $\nu>\lambda$. 
		After Proposition~\ref{prop:BimoduleGenerators}, we know that monomials assigned to $DL$-dense arrays generate the symmetric algebra.
		It follows that $\{v^{A}\colon A\in\DL_{\ov{n}}(\lambda)\}$ generate the subquotient $\calF^{\lambda}/\calF^{>\lambda}$, proving the injectivity of~\eqref{eq::subquot::hor::vrt}.
		
		To describe the set of minimal subquotients $\KK_{A,B}$ appearing in the filtration~$\calF_{\Lattice}$, it remains to identify the set of $\vee$-irreducible elements of the lattice $\Lattice_{D\times D^{\op}}^{\ov{n}}(\lambda)$ that belong to the right-hand side of~\eqref{eq::subquot::hor::vrt}. 
		As mentioned in~\eqref{eq::DD::intersect}, the $\vee$-irreducible subspace of $D_{\ov{a}}$ is $D_{\ov{b}}$ with $b\preceq a$, and on the other hand, $D_{\ov{c}}^{\op}$ is a subspace of $D_{\ov{a}}^{\op}$ if and only if $\ov{c}\succeq \ov{a}$.
		Thus, the $\vee$-irreducible subspaces of $D_{\hor(A)}\otimes D_{\vrt(A)}^{\op}$ consist of the products 
		$$D_{\hor{B}}\otimes D_{\vrt(C)}^{\op} \text{ with } (\hor(B)\preceq\hor(A))\ \&\ (\vrt(C)\succeq \vrt(A)) \ \Leftrightarrow \ B\preceq A \preceq C. $$
		Since, in the right-hand side of~\eqref{eq::subquot::hor::vrt}, we sum over all possible $A$, we can omit the intermediate array $A$ and conclude that products $D_{\hor{B}}\otimes D_{\vrt(C)}^{\op}$ form the complete set of $\vee$-irreducible elements of $\gr\calF^{\lambda}$.
		The indexing set of the set of minimal subquotients of $\gr\calF^{\lambda}$ coincides with the set of $\vee$-irreducible submodules in it, what ensures the summation~\eqref{eq::Howe::gr}.
	\end{proof}
	
	We now compute the $\fh_{n}\ttt\fh_{m}$-character of $\gr\calF^{\lambda}$ based on Theorem~\ref{thm::Howe::lattice}:
	Let us compute the $\fh_{n}\ttt\fh_{m}$-character of $\gr\calF^{\lambda}$ based on Theorem~\ref{thm::Howe::lattice}:
	\begin{multline}
		\label{eq::grF::key::polynom}	
		\ch_{\fh_{n}\ttt\fh_{m}}(\gr\calF^{\lambda}) \stackrel{\eqref{eq::Howe::gr}}{=} \sum_{B\preceq A\in\DL_{\ov{n}}(\lambda)} \ch_{\fh_n}\left(K_{\St_{\ov{n}},\hor(B)}\right) \ch_{\fh_m}\left( K_{\St_{\ov{n}},\vrt(A)}^{\op}\right) = 
		\\
		= \sum_{A\in\DL_{\ov{n}}(\lambda)}
		\left(\sum_{B\preceq A\in\DL_{\ov{n}}(\lambda)} \ch_{\fh_n}\left(K_{\St_{\ov{n}},\hor(B)}\right)\right) \ch_{\fh_m}\left( K_{\St_{\ov{n}},\vrt(A)}^{\op}\right) = \\
		= \sum_{A\in\DL_{\ov{n}}(\lambda)} \ch_{\fb_n}(D_{\hor(A)})\,  \ch_{\fh_m}\left( K_{\St_{\ov{n}},\vrt(A)}^{\op}\right)
		\stackrel{\eqref{eq::vdK::op::char}}{=} 
		\sum_{A\in\DL_{\ov{n}}(\lambda)} \kappa_{\hor(A)}(x)\left(
		\sum_{ \substack{{\ov{d}\in\bS_n\lambda}\\ {\bbs_{\ov{n}}(\ov{d}) = \vrt(A)}}} a^{\ov{d}}(y) \right)
		= \\
		\stackrel{\eqref{eq::vdK::op::char}}{=} 
		\sum_{A\in\DL_{\ov{n}}(\lambda)} \kappa_{\hor(A)}\, \left(\sum_{B\succeq A} \mu^{\DL_{\ov{n}}(\lambda)}(A,B) \kappa^{\vrt(B)}(y) \right) =
		\sum_{A\preceq B \in\DL_{\ov{n}}(\lambda)} \mu^{\DL_{\ov{n}}(\lambda)}(A,B)\, \kappa_{\hor(A)}(x) \, \kappa^{\vrt(B)}(y).
	\end{multline}
	
	Finally, we reproduce the main identities from~\cite{FKM::Cauchy}, known as "\emph{Cauchy identities for staircase matrices}":
	\begin{corollary}
		\label{cor::Cauchy::identity}
		The following identities hold for any Young diagram $\bbY_{\ov{n}}$:
		\begin{gather}
			\label{eq::Cauchy::bs}	
			\prod_{(i,j)\in \bbY_{\overline{n}}}\frac{1}{1-x_iy_j}=
			\sum_{A\in\DL_{\ov{n}}} \kappa_{\hor(A)}(x) \cdot \left(\sum_{\ov{d} \colon \bbs_{\ov{n}}(\ov{d}) = \vrt(A)} a^{\ov{d}}(y)\right),\\
			\label{eq::Cauchy::Moebius}	
			\prod_{(i,j)\in \bbY_{\overline{n}}}\frac{1}{1-x_i y_j}= \sum_{N}
			\sum_{\substack{{\lambda\vdash N}\\ {l(\lambda)\leq \#\St_{\ov{n}}}} }
			\sum_{\substack{A\succeq B\\ A,B \in \DL_{\overline n}(\lambda)}} \mu^{{\DL_{\ov{n}}({\lambda})}}(A,B)\, \kappa_{\hor(A)}(x)\, \kappa^{\vrt(B)}(y).
		\end{gather}
	\end{corollary}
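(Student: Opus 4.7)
The plan is to compute the bi-graded character $\ch_{\fh_n\ttt\fh_m}(S(\Mat_{\ov{n}}))$ in two different ways and match them. On the left-hand side, I would first observe that $S(\Mat_{\ov{n}})$ has the obvious monomial basis $\{v^A\}$ indexed by arbitrary arrays of shape $\bbY_{\ov{n}}$, and that the monomial $v^A$ has bi-weight $(\hor(A),\vrt(A))$. Summing the geometric series cell-by-cell gives
\[
\ch_{\fh_n\ttt\fh_m}\bigl(S(\Mat_{\ov{n}})\bigr) \;=\; \prod_{(i,j)\in\bbY_{\ov{n}}}\frac{1}{1-x_iy_j},
\]
which is the common left-hand side of both identities.

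On the right-hand side, I would use the filtration $\calF^{\lambda}$ from~\eqref{eq::FIltrationBimodule}, whose associated graded decomposes $S^N(\Mat_{\ov{n}})$ into the bi-submodules $\gr\calF^{\lambda}$ indexed by partitions $\lambda\vdash N$ of length at most $\#\St_{\ov{n}}$. Since the character is additive on associated graded pieces,
\[
\ch_{\fh_n\ttt\fh_m}\bigl(S(\Mat_{\ov{n}})\bigr) \;=\; \sum_{N}\sum_{\substack{\lambda\vdash N\\ l(\lambda)\leq \#\St_{\ov{n}}}} \ch_{\fh_n\ttt\fh_m}\bigl(\gr\calF^{\lambda}\bigr).
\]
Theorem~\ref{thm::Howe::lattice} identifies $\gr\calF^{\lambda}$ as an element of the distributive lattice $\Lattice_{D\times D^{\op}}^{\ov{n}}(\lambda)$ whose associated graded (with respect to the standard filtration $\calF_{\Lattice}$) is the direct sum~\eqref{eq::Howe::gr}. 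This is the main input already done, and it furnishes the full character computation~\eqref{eq::grF::key::polynom}.

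The identity~\eqref{eq::Cauchy::bs} now follows by taking the first form of~\eqref{eq::grF::key::polynom}, namely
\[
\ch_{\fh_n\ttt\fh_m}(\gr\calF^{\lambda}) \;=\; \sum_{A\in\DL_{\ov{n}}(\lambda)} \kappa_{\hor(A)}(x)\Bigl(\sum_{\ov{d}\colon \bbs_{\ov{n}}(\ov{d})=\vrt(A)} a^{\ov{d}}(y)\Bigr),
\]
where the inner sum uses Corollary~\ref{cor::min::char} applied to the anti-linearization $\vrt:\St_{\ov{n}}\hookrightarrow [1,m]$ to express $\ch_{\fh_m}(K_{\St_{\ov{n}},\vrt(A)}^{\op})$ as a sum of right Demazure atoms over the $\bbs_{\ov{n}}$-fibre of $\vrt(A)$. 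Summing over $\lambda$ and $N$, and absorbing the constraint $l(\lambda)\leq \#\St_{\ov{n}}$ into the indexing set $\DL_{\ov{n}}$, yields~\eqref{eq::Cauchy::bs}.

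For~\eqref{eq::Cauchy::Moebius} the argument is identical but starts from the other form of~\eqref{eq::grF::key::polynom}: substitute the M\"obius-inversion expression
\[
\ch_{\fh_m}\bigl(K_{\St_{\ov{n}},\vrt(A)}^{\op}\bigr) \;=\; \sum_{B\succeq A}\mu^{\DL_{\ov{n}}(\lambda)}(A,B)\,\kappa^{\vrt(B)}(y),
\]
obtained from Fact~\ref{fact::Mobius} applied to the poset $\DL_{\ov{n}}(\lambda)\simeq \bD_{\St_{\ov{n}}}^{\vrt}(\lambda)$, and again sum over $\lambda$ and $N$. There is no real obstacle left: all combinatorial, lattice-theoretic and representation-theoretic inputs are already assembled, and the proof reduces to assembling~\eqref{eq::grF::key::polynom} with the decomposition of the character by $\lambda$. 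The only care needed is in checking that both character formulas reindex correctly when the constraint on $l(\lambda)$ is relaxed---but this is automatic since $\DL_{\ov{n}}(\lambda)$ is empty whenever $l(\lambda)>\#\St_{\ov{n}}$ (the first bullet of Theorem~\ref{thm::poset::Bruhat::intro}), so the vacuous terms can be harmlessly included or excluded.
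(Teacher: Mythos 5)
Your proposal is correct and follows essentially the same route as the paper: identify the left-hand side as $\ch_{\fh_n\ttt\fh_m}(S(\Mat_{\ov{n}}))$, decompose via the filtration $\calF^{\lambda}$ using Theorem~\ref{thm::Howe::lattice}, and read off the two identities from the two presentations of $\ch(\gr\calF^{\lambda})$ in~\eqref{eq::grF::key::polynom} --- one via bubble-sort fibres and one via M\"obius inversion. The remark about vacuous terms when $l(\lambda)>\#\St_{\ov{n}}$ is a correct and welcome, if minor, clarification that the paper leaves implicit.
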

	\begin{proof}
		The left-hand side of these identities represents the $\fh_{n}\ttt\fh_{m}$-character of the symmetric algebra $S^{\udot}\left(\Mat_{\ov{n}}\right)$. In contrast, the right-hand side follows from the two different presentations of the character of the associated graded factors $\gr\calF^{\lambda}$ described in~\eqref{eq::grF::key::polynom}.
	\end{proof}
	\begin{remark}
		In this paper, we use the \emph{bubble-sort} operation $\bbs_{\ov{n}}$ instead of its variant $\hb$, referred to as \emph{half-bubble-sort} in our previous work~\cite{FKM::Cauchy}, which is somewhat more cumbersome.    
	\end{remark}
	
	\begin{remark}
		Our choice of how to draw the Young diagram is motivated by staircase matrices, where we consider the left and right actions of upper triangular matrices.
		Recall that the British style of drawing a Young diagram $\bbY_{\ov{m}}^{\mathsf{Br}}$ associated to a partition $\ov{m}:=(m_1\geq m_2\geq\ldots \geq m_n)$ 
		is typically organized from top to bottom and from left to right ($m_i$ is the number of cells in the $i$'th row). 
		By flipping the Young diagram of staircase shape along the $y$-axis, one can naturally arrive at the definitions of the set of staircase corners $\St_{\ov{m}}^{\mathsf{Br}}$ and 
		the set $\mathsf{DR}_{\ov{m}}(\lambda)$ of DOWN-RIGHT dense arrays whose multiset of nonzero entries corresponds to a given partition $\lambda$.
		Finally, the Cauchy-type identity takes the following form:
		\[
		\prod_{(i,j)\in \bbY_{\ov{m}}^{\mathsf{Br}}}\frac{1}{1-x_i y_j}= \sum_{N}
		\sum_{\substack{{\lambda\vdash N}\\ {l(\lambda)\leq \#\St_{\ov{m}}^{\mathsf{Br}}}} }
		\sum_{\substack{A\succeq B\\ A,B \in \mathsf{DR}_{\overline m}(\lambda)}} \mu^{{\mathsf{DR}_{\ov{n}}({\lambda})}}(A,B)\, \kappa_{\hor(A)}(x)\, \kappa_{\vrt(B)}(y).
		\]
		One advantage of the British-style drawing is that the right-hand side involves only {\it (left)} key polynomials.
	\end{remark}
	
	Instead of working out a precise formal definition for British-style Young diagrams, staircase corners and $DR$-arrays, we present a pictorial example for $\ov{m}:=(6^2\,5\,2^2)$.
	
	\begin{example}
		Bullets represent elements of the poset \( \St_{\ov{m}}^{\mathsf{Br}} \), and arrows indicate the covering relations in this poset:
		\begin{equation}
			\label{pic::Y::ST::intro}
			\begin{array}{ccc}
				\begin{tikzpicture}[xscale=-1, scale=0.3,shift={(0,2)}]
					\draw[step=1cm] (0,0) grid (6,-2);
					\draw[step=1cm] (1,0) grid (6,-3);
					\draw[step=1cm] (4,0) grid (6,-5);
				\end{tikzpicture} 
				&
				\begin{tikzpicture}[xscale=-1, scale=0.3,shift={(0,2)}]
					\fill[cyan!10] (0,0) -- (0,-2) -- (1,-2)--(1,0);
					\fill[cyan!20] (0,-1) -- (0,-2) -- (6,-2)--(6,-1);
					\fill[yellow] (1,0) -- (1,-3) -- (2,-3)--(2,0);
					\fill[yellow] (1,-2) -- (1,-3) -- (6,-3)--(6,-2);
					\fill[magenta!25] (4,0) -- (4,-5) -- (5,-5)--(5,0);
					\fill[magenta!25] (4,-4) -- (4,-5) -- (6,-5)--(6,-4);
					\fill[green!30] (2,0) -- (2,-1) -- (4,-1)--(4,0);
					\fill[lime!40] (5,0) -- (5,-4) -- (6,-4)--(6,0);
					\draw[step=1cm] (0,0) grid (6,-2);
					\draw[step=1cm] (1,0) grid (6,-3);
					\draw[step=1cm] (4,0) grid (6,-5);
					\node (v21) at (0.5,-1.5) {{ \color{blue}$\bullet$ }};
					\node (v32) at (1.5,-2.5) {{ \color{blue}$\bullet$ }};
					\node (v13) at (2.5,-.5) {{ \color{blue}$\bullet$ }};
					\node (v54) at (4.5,-4.5) {{ \color{blue}$\bullet$ }};
					\node (v46) at (5.5,-3.5) {{ \color{blue}$\bullet$ }};  
				\end{tikzpicture}
				&
				\begin{tikzpicture}[xscale=-1, scale=0.3,shift={(0,2)}]
					\node (v21) at (0.5,-1.5) {{ \color{blue}$\bullet$ }};
					\node (v32) at (1.5,-2.5) {{ \color{blue}$\bullet$ }};
					\node (v13) at (2.5,-.5) {{ \color{blue}$\bullet$ }};
					\node (v54) at (4.5,-4.5) {{ \color{blue}$\bullet$ }};
					\node (v46) at (5.5,-3.5) {{ \color{blue}$\bullet$ }};  
					\draw [blue,line width=1.1pt,->] (2.5,-.5) -- (.5,-1.5);
					\draw [blue,line width=1.1pt,->] (2.5,-.5) -- (1.5,-2.5);
					\draw [blue,line width=1.1pt,->] (v54) -- (v46);
				\end{tikzpicture} 
				\\
				\begin{array}{c}
					\text{ British Young diagram }\\
					\bbY_{(6^2\,5\,2^2)}^{\mathsf{Br}}
				\end{array}
				& 
				\begin{array}{c}
					\text{ "rook placement" of}\\
					\text{ staircase corners } 
				\end{array}
				&
				\begin{array}{c}
					\text{ Hasse diagram}\\
					\text{ of } \St_{(6^2\,5\,2^2)}^{\mathsf{Br}}
				\end{array}
			\end{array}
		\end{equation}
	\end{example}

	\bigskip
    \bigskip
    \bigskip
    \bigskip
    \bigskip
    \bigskip
    \bigskip
	\appendix
	
	\section{Pictorial examples}
	\label{sec::Example::DL}
	
	We present several illustrative examples of:  
	\begin{itemize}
		\item Arborescent posets \( \sS \) with a consistent (anti)linearization.
		\item The Hasse diagram of the poset \( \bD_{\sS}(\lambda) \) ordered by the Bruhat partial order.
		\item The Hasse diagram of the poset \( \DL_{\ov{n}}(\lambda) \) of \( DL \)-dense arrays, along with its vertical and horizontal embeddings into the Bruhat graph.
	\end{itemize}
	Note that all our pictorial descriptions of the posets are slightly reversed compared to standard notation.  
	Specifically, we draw the minimal element at the top and the maximal element at the bottom.  
	However, the corresponding Bruhat (sub)graphs are presented in the standard orientation.  
	This reversal occurs because we consider the dual order to the Bruhat order, based on comparisons of Demazure modules.
	
	\begin{example}
		\label{ex::tri::arb}
		Here is a list of arborescent posets of cardinality \( 3 \) equipped with a surjective, consistent anti-linearization:
		$$
		\begin{array}{ccccc}
			\sS^1=
			\begin{tikzpicture}[scale=0.5, shift={(0,-.5)}]
				\draw[step=1cm] (0,1) grid (3,0);
				\node (v1) at (.5,.4) {{ \color{blue}$\bullet$ }};
				\node (v2) at (1.5,.4) {{ \color{blue}$\bullet$ }};
				\node (v3) at (2.5,.4) {{ \color{blue}$\bullet$ }};
				\draw  (2.5,.6) edge[blue,line width=1.1pt,->,bend right=90] (.5,.6);
				\draw (2.5,.6) edge[blue,line width=1.1pt,->,bend right=60] (1.5,.6);
			\end{tikzpicture},
			& 
			\sS^2=
			\begin{tikzpicture}[scale=0.5, shift={(0,-.5)}]
				\draw[step=1cm] (0,1) grid (3,0);
				\node (v1) at (.5,.4) {{ \color{blue}$\bullet$ }};
				\node (v2) at (1.5,.4) {{ \color{blue}$\bullet$ }};
				\node (v3) at (2.5,.4) {{ \color{blue}$\bullet$ }};
				\draw  (1.5,.6) edge[blue,line width=1.1pt,->,bend right=60] (.5,.6);
				\draw (2.5,.6) edge[blue,line width=1.1pt,->,bend right=60] (1.5,.6);
			\end{tikzpicture},
			&
			\sS^3=
			\begin{tikzpicture}[scale=0.5, shift={(0,-.5)}]
				\draw[step=1cm] (0,1) grid (3,0);
				\node (v1) at (.5,.4) {{ \color{blue}$\bullet$ }};
				\node (v2) at (1.5,.4) {{ \color{blue}$\bullet$ }};
				\node (v3) at (2.5,.4) {{ \color{blue}$\bullet$ }};
				\draw (2.5,.6) edge[blue,line width=1.1pt,->,bend right=60] (1.5,.6);
			\end{tikzpicture},
			&
			\sS^4=
			\begin{tikzpicture}[scale=0.5, shift={(0,-.5)}]
				\draw[step=1cm] (0,1) grid (3,0);
				\node (v1) at (.5,.4) {{ \color{blue}$\bullet$ }};
				\node (v2) at (1.5,.4) {{ \color{blue}$\bullet$ }};
				\node (v3) at (2.5,.4) {{ \color{blue}$\bullet$ }};
				\draw (1.5,.6) edge[blue,line width=1.1pt,->,bend right=60] (.5,.6);
			\end{tikzpicture},
			&
			\sS^5=
			\begin{tikzpicture}[scale=0.5, shift={(0,-.5)}]
				\draw[step=1cm] (0,1) grid (3,0);
				\node (v1) at (.5,.4) {{ \color{blue}$\bullet$ }};
				\node (v2) at (1.5,.4) {{ \color{blue}$\bullet$ }};
				\node (v3) at (2.5,.4) {{ \color{blue}$\bullet$ }};
			\end{tikzpicture}, 
			\\
			& & & & \\
			\bD_{\sS^1}\simeq \bS_2,
			&
			\bD_{\sS^2}\simeq \bS_1,
			&
			\bD_{\sS^3}\simeq \bS_2\backslash\bS_3,
			&
			\bD_{\sS^4}\simeq \bS_2\backslash\bS_3,
			&
			\bD_{\sS^5}\simeq \bS_3.
		\end{array}
		$$
	\end{example}
	Note that if the Hasse diagram of an arborescent poset \( \sS = \{s_1, \ldots, s_k\} \) with a consistent antilinearization \( v:\sS\stackrel{s_i\mapsto i}{\longrightarrow}[1,k] \) is connected, then \( s_k \) is the unique minimal element of \( \sS \), and we obtain the following isomorphism of posets of dominant compositions:
	\[
	\bD_{\sS}^{v}((\lambda_1\geq\ldots\geq\lambda_k)) \simeq \bD_{\sS'}^{v}((\lambda_1\geq\ldots\geq\lambda_{k-1})), \quad \text{where } \sS':=\{s_1,\ldots,s_{k-1}\}.
	\]
	
	\newpage 
	\begin{example}
		There exists a unique arborescent poset \( \sS \) of cardinality \( 4 \) whose Hasse diagram has multiple connected components and is not a disjoint union of linearly ordered sets, which are covered by Example~\ref{ex::Double::coset}. 
		
		The corresponding poset is the set of staircase corners for the shapes \( \bbY_{(2 3^2 4)} \) and \( \bbY_{(1 3^2 4)} \). These two shapes differ by a transposition, leading to two distinct antilinearizations that can be visualized as vertical map $\vrt$ and opposite to the horisontal map $\hor$.
		$$
		\begin{tikzcd}
			\begin{tikzpicture}[scale=0.4,shift={(0,2)}]
				\draw[step=1cm] (0,0) grid (4,-1);
				\draw[step=1cm] (1,0) grid (4,-3);
				\draw[step=1cm] (2,0) grid (4,-4);
				\node (v21) at (0.5,-.7) {{ \color{blue}$\bullet$ }};
				\node (v32) at (1.5,-2.7) {{ \color{blue}$\bullet$ }};
				\node (v13) at (2.5,-3.7) {{ \color{blue}$\bullet$ }};
				\node (v44) at (3.5,-1.7) {{ \color{blue}$\bullet$ }};
				\draw  (3.5,-1.5) edge[blue,line width=1.1pt,->] (1.5,-2.5);
				\draw (3.5,-1.5) edge[blue,line width=1.1pt,->] (2.5,-3.5);  
			\end{tikzpicture}
			\arrow[r,"\vrt"]
			&
			\begin{tikzpicture}[scale=0.5, shift={(0,-.5)}]
				\draw[step=1cm] (-1,1) grid (3,0);
				\node (v1) at (.5,.4) {{ \color{blue}$\bullet$ }};
				\node (v2) at (1.5,.4) {{ \color{blue}$\bullet$ }};
				\node (v3) at (2.5,.4) {{ \color{blue}$\bullet$ }};
				\node (v4) at (-.5,.4) {{ \color{blue}$\bullet$ }};
				\draw  (2.5,.6) edge[blue,line width=1.1pt,->,bend right=90] (.5,.6);
				\draw (2.5,.6) edge[blue,line width=1.1pt,->,bend right=60] (1.5,.6);
			\end{tikzpicture}
			\arrow[r,"\op"]
			&
			\arrow[l]
			\begin{tikzpicture}[scale=0.5]
				\draw[step=1cm] (0,2) grid (1,-2);    
				\node (v1) at (.5,.3) {{ \color{blue}$\bullet$ }};
				\node (v2) at (.5,-.7) {{ \color{blue}$\bullet$ }};
				\node (v3) at (.5,1.3) {{ \color{blue}$\bullet$ }};
				\node (v5) at (.5,-1.7) {{ \color{blue}$\bullet$ }};
				\draw  (.1,1.5) edge[blue,line width=1.1pt,->,bend right=90] (.1,-.5);
				\draw (.1,1.5) edge[blue,line width=1.1pt,->,bend right=70] 
				(.1,.5);
			\end{tikzpicture}
			&
			\arrow[l,"\hor"']
			\begin{tikzpicture}[scale=0.4,shift={(0,2)}]
				\draw[step=1cm] (0,0) grid (4,-2);
				\draw[step=1cm] (1,0) grid (4,-3);
				\draw[step=1cm] (3,0) grid (4,-4);
				\node (v21) at (0.5,-1.7) {{ \color{blue}$\bullet$ }};
				\node (v32) at (1.5,-2.7) {{ \color{blue}$\bullet$ }};
				\node (v13) at (2.5,-.7) {{ \color{blue}$\bullet$ }};
				\node (v44) at (3.5,-3.7) {{ \color{blue}$\bullet$ }};
				\draw  (2.5,-.5) edge[blue,line width=1.1pt,->] (.5,-1.5);
				\draw (2.5,-.5) edge[blue,line width=1.1pt,->] (1.5,-2.5);  
			\end{tikzpicture}
			\arrow[r,"\vrt"]
			&
			\begin{tikzpicture}[scale=0.4, shift={(0,-.5)}]
				\draw[step=1cm] (0,1) grid (4,0);
				\node (v1) at (.5,.4) {{ \color{blue}$\bullet$ }};
				\node (v2) at (1.5,.4) {{ \color{blue}$\bullet$ }};
				\node (v3) at (2.5,.4) {{ \color{blue}$\bullet$ }};
				\node (v4) at (3.5,.4) {{ \color{blue}$\bullet$ }};
				\draw  (2.5,.6) edge[blue,line width=1.1pt,->,bend right=90] (.5,.6);
				\draw (2.5,.6) edge[blue,line width=1.1pt,->,bend right=60] (1.5,.6);
			\end{tikzpicture}.
		\end{tikzcd}
		$$
		These two different anti-linearisations appear as the vertical map $\vrt$ and the opposite to the horisontal map $\hor$ of the same shape $\bbY_{(2 3^2 4)}$.
		Below we draw the Bruhat graph for this set $\DL_{(2 3^2 4)}$ together with the vertical embedding (the elements in the image are filled in orange).
		$$
		\begin{tikzcd}
			\begin{tikzpicture}[xscale=0.7, yscale=1,shift={(0,-3)}]
				\node (4321) at (0,8) {$\gridDL4321$};
				\node (3421) at (-2,6) {$\gridDL3421$};
				\node (4312) at (2,6) {$\gridDL4312$};
				\node (3412) at (-2,4) {$\gridDL3412$};
				\node (4213) at (2,4) {$\gridDL4213$};
				\node (2413) at (-2,2) {$\gridDL2413$};
				\node (3214) at (2,2) {$\gridDL3214$};
				\node (2314) at (0,0) {$\gridDL2314$};
				\draw[black] (-0.25,7.2) edge (-2.25,6.1);
				\draw[blue] (-0.25,7.2) -- (1.75,6.1);
				\draw[blue] (-2.25,5.2) -- (-2.25,4.1);
				\draw[black] (1.75,5.2) -- (-2.25,4.1);
				\draw[brown] (1.75,5.2) -- (1.75,4.1);
				\draw[orange] (-2.25,3.2) -- (-2.25,2.1);
				\draw[brown] (-1,3.2) -- (1.75,2.1);
				\draw[black] (1.75,3.2) -- (-2.25,2.1);
				\draw[orange] (1.75,3.2) -- (1.75,2.1);
				\draw[orange] (-2.25,1.2) -- (-0.25,.1);
				\draw[black] (1.75,1.2) -- (-0.25,.1);
			\end{tikzpicture}
			\arrow[r,"\vrt"]
			&		
			\begin{tikzpicture}[xscale=1, yscale=1.3, shift={(0,-3)},
				node distance=0.8cm and 1.2cm, 
				every node/.style={draw, rectangle, minimum size=3mm, font=\small},thick]
				
				\node[draw, fill=orange!35] (4321) at (0, 6) {{4}{3}{2}{1}};
				
				\node[draw, fill=orange!35] (3421) at (-2, 5) {{3}{4}{2}{1}};
				\node (4231) at (0, 5) {{4}{2}{3}{1}};
				\node[draw, fill=orange!35] (4312) at (2, 5) {{4}{3}{1}{2}};
				
				\node (2431) at (-3, 4) {{2}{4}{3}{1}};
				\node (3241) at (-1.5, 4) {{3}{2}{4}{1}};
				\node (4132) at (0, 4) {{4}{1}{3}{2}};
				\node[draw, fill=orange!35] (3412) at (1.5, 4) {{3}{4}{1}{2}};
				\node[draw, fill=orange!35] (4213) at (3, 4) {{4}{2}{1}{3}};
				
				\node (1432) at (-3.5, 3) {{1}{4}{3}{2}};
				\node (2341) at (-2, 3) {{2}{3}{4}{1}};
				\node (3142) at (-0.7, 3) {{3}{1}{4}{2}};
				\node[draw, fill=orange!35] (2413) at (0.7, 3) {{2}{4}{1}{3}};
				\node (4123) at (2, 3) {{4}{1}{2}{3}};
				\node[draw, fill=orange!35] (3214) at (3.5, 3) {{3}{2}{1}{4}};
				
				\node (1342) at (-3, 2) {{1}{3}{4}{2}};
				\node (1423) at (-1.5, 2) {{1}{4}{2}{3}};
				\node (2143) at (0, 2) {{2}{1}{4}{3}};
				\node[draw, fill=orange!35] (2314) at (1.5, 2) {{2}{3}{1}{4}};
				\node (3124) at (3, 2) {{3}{1}{2}{4}};
				
				\node (1243) at (-2, 1) {{1}{2}{4}{3}};
				\node (1324) at (0, 1) {{1}{3}{2}{4}};
				\node (2134) at (2, 1) {{2}{1}{3}{4}};
				
				\node (1234) at (0, 0) {{1}{2}{3}{4}};
				
				
				
				\draw[black] (4321) -- (3421);
				\draw[green] (4321) -- (4231);
				\draw[blue] (4321) -- (4312);
				
				\draw[magenta] (3421) -- (2431);
				\draw[green] (3421) -- (3241);
				\draw[blue] (3421) -- (3412);
				
				\draw[black] (4231) -- (2431);
				\draw[magenta] (4231) -- (3241);
				\draw[brown] (4231) -- (4132);
				\draw[blue] (4231) -- (4213);
				
				\draw[black] (4312) -- (3412);
				\draw[brown] (4312) -- (4213);
				\draw[green] (4312) -- (4132);
				
				\draw[orange] (2431) -- (1432);
				\draw[green] (2431) -- (2341);
				\draw[blue] (2431) -- (2413);
				
				\draw[black] (3241) -- (2341);
				\draw[brown] (3241) -- (3142);
				\draw[blue] (3241) -- (3214);
				
				\draw[black] (4132) -- (1432);
				\draw[magenta] (4132) -- (3142);
				\draw[blue] (4132) -- (4123);
				
				\draw[magenta] (3412) -- (1432);
				\draw[green] (3412) -- (3142);
				\draw[orange] (3412) -- (2413);
				\draw[brown] (3412) -- (3214);

				\draw[black] (4213) -- (2413);
				\draw[green] (4213) -- (4123);			
				\draw[orange] (4213) -- (3214);
				
				\draw[green] (1432) -- (1342);
				\draw[blue] (1432) -- (1423);
				
				\draw[orange] (2341) -- (1342);
				\draw[brown] (2341) -- (2143);
				\draw[green] (2341) -- (2314);
				
				\draw[black] (3142) -- (1342);
				\draw[orange] (3142) -- (2143);
				\draw[green] (3142) -- (3124);
				
				\draw[magenta] (2413) -- (1423);
				\draw[green] (2413) -- (2143);
				\draw[brown] (2413) -- (2314);
				
				\draw[black] (4123) -- (1423);
				\draw[magenta] (4123) -- (1423);
				\draw[orange] (4123) -- (2143);
				
				\draw[black] (3214) -- (2314);
				\draw[green] (3214) -- (3124);
				
				\draw[brown] (1342) -- (1243);
				\draw[green] (1342) -- (1324);
				
				\draw[green] (1423) -- (1243);
				\draw[brown] (1423) -- (1324);
				
				\draw[black] (2143) -- (1243);
				\draw[green] (2143) -- (2134);
				
				\draw[magenta] (2314) -- (1324);
				\draw[green] (2314) -- (2134);
				
				\draw[black] (3124) -- (1324);
				\draw[magenta] (3124) -- (2134);
				
				\draw[green] (1243) -- (1234);
				\draw[green] (1324) -- (1234);
				\draw[black] (2134) -- (1234);
			\end{tikzpicture}
		\end{tikzcd}
		$$
		
		The next picture shows the horizontal embedding of the same poset $\DL_{(2,3,3,4)}$:
		$$
		\begin{tikzcd}
			\begin{tikzpicture}[xscale=0.8, yscale=1.7, shift={(0,-3)},
				node distance=0.8cm and 1.2cm, 
				every node/.style={draw, rectangle, font=\small},
				thick]
				\node (4321) at (0, 6) 
				{\scalebox{0.7}{$\begin{array}{c} 4 \\ 3 \\ 2 \\ 1 \end{array}$}};
				
				\node (3421) at (-2, 5) {\scalebox{0.7}{$\begin{array}{c} 3 \\ 4 \\ 2 \\ 1 \end{array}$}};
				\node (4231) at (0, 5) {\scalebox{0.7}{$\begin{array}{c} 4 \\ 2 \\ 3 \\ 1 \end{array}$}};
				\node (4312) at (2, 5) {\scalebox{0.7}{$\begin{array}{c} 4 \\ 3 \\ 1 \\ 2 \end{array}$}};
				
				\node[draw, fill=orange!35] (2431) at (-3, 4) {\scalebox{0.7}{$\begin{array}{c} 2 \\ 4 \\ 3 \\ 1 \end{array}$}};
				\node (3241) at (-1.5, 4) {\scalebox{0.7}{$\begin{array}{c} 3 \\ 2 \\ 4 \\ 1 \end{array}$}};
				\node (4132) at (0, 4) {\scalebox{0.7}{$\begin{array}{c} 4 \\ 1 \\ 3 \\ 2 \end{array}$}};
				\node (3412) at (1.5, 4) {\scalebox{0.7}{$\begin{array}{c} 3 \\ 4 \\ 1 \\ 2 \end{array}$}};
				\node (4213) at (3, 4) {\scalebox{0.7}{$\begin{array}{c} 4 \\ 2 \\ 1 \\ 3 \end{array}$}};
				
				\node[draw, fill=orange!35] (1432) at (-3.5, 3) {\scalebox{0.7}{$\begin{array}{c} 1 \\ 4 \\ 3 \\ 2 \end{array}$}};
				\node[draw, fill=orange!35] (2341) at (-2, 3) {\scalebox{0.7}{$\begin{array}{c} 2 \\ 3 \\ 4 \\ 1 \end{array}$}};
				\node (3142) at (-0.7, 3) {\scalebox{0.7}{$\begin{array}{c} 3 \\ 1 \\ 4 \\ 2 \end{array}$}};
				\node (2413) at (0.7, 3) {\scalebox{0.7}{$\begin{array}{c} 2 \\ 4 \\ 1 \\ 3 \end{array}$}};
				\node (4123) at (2, 3) {\scalebox{0.7}{$\begin{array}{c} 4 \\ 1 \\ 2 \\ 3 \end{array}$}};
				\node (3214) at (3.5, 3) {\scalebox{0.7}{$\begin{array}{c} 3 \\ 2 \\ 1 \\ 4 \end{array}$}};
				
				\node[draw, fill=orange!35] (1342) at (-3, 2) {\scalebox{0.7}{$\begin{array}{c} 1 \\ 3 \\ 4 \\ 2 \end{array}$}};
				\node[draw, fill=orange!35] (1423) at (-1.5, 2) {\scalebox{0.7}{$\begin{array}{c} 1 \\ 4 \\ 2 \\ 3 \end{array}$}};
				\node (2143) at (0, 2) {\scalebox{0.7}{$\begin{array}{c} 2 \\ 1 \\ 4 \\ 3 \end{array}$}};
				\node (2314) at (1.5, 2) {\scalebox{0.7}{$\begin{array}{c} 2 \\ 3 \\ 1 \\ 4 \end{array}$}};
				\node (3124) at (3, 2) {\scalebox{0.7}{$\begin{array}{c} 3 \\ 1 \\ 2 \\ 4 \end{array}$}};
				
				\node[draw, fill=orange!35] (1243) at (-2, 1) {\scalebox{0.7}{$\begin{array}{c} 1 \\ 2 \\ 4 \\ 3 \end{array}$}};
				\node[draw, fill=orange!35] (1324) at (0, 1) {\scalebox{0.7}{$\begin{array}{c} 1 \\ 3 \\ 2 \\ 4 \end{array}$}};
				\node (2134) at (2, 1) {\scalebox{0.7}{$\begin{array}{c} 2 \\ 1 \\ 3 \\ 4 \end{array}$}};
				
				\node[draw, fill=orange!35] (1234) at (0, 0) {\scalebox{0.7}{$\begin{array}{c} 1 \\ 2 \\ 3 \\ 4 \end{array}$}};

				
				
				\draw[black] (4321) -- (3421);
				\draw[blue] (4321) -- (4231);
				\draw[green] (4321) -- (4312);
				
				\draw[magenta] (3421) -- (2431);
				\draw[blue] (3421) -- (3241);
				\draw[green] (3421) -- (3412);
				
				\draw[black] (4231) -- (2431);
				\draw[magenta] (4231) -- (3241);
				\draw[brown] (4231) -- (4132);
				\draw[green] (4231) -- (4213);
				
				\draw[black] (4312) -- (3412);
				\draw[brown] (4312) -- (4213);
				\draw[blue] (4312) -- (4132);
				
				\draw[orange] (2431) -- (1432);
				\draw[blue] (2431) -- (2341);
				\draw[green] (2431) -- (2413);
				
				\draw[black] (3241) -- (2341);
				\draw[brown] (3241) -- (3142);
				\draw[green] (3241) -- (3214);
				
				\draw[black] (4132) -- (1432);
				\draw[magenta] (4132) -- (3142);
				\draw[green] (4132) -- (4123);
				
				\draw[magenta] (3412) -- (1432);
				\draw[blue] (3412) -- (3142);
				\draw[orange] (3412) -- (2413);
				\draw[brown] (3412) -- (3214);

				\draw[black] (4213) -- (2413);
				\draw[blue] (4213) -- (4123);			
				\draw[orange] (4213) -- (3214);
				
				\draw[blue] (1432) -- (1342);
				\draw[green] (1432) -- (1423);
				
				\draw[orange] (2341) -- (1342);
				\draw[brown] (2341) -- (2143);
				\draw[green] (2341) -- (2314);
				
				\draw[black] (3142) -- (1342);
				\draw[orange] (3142) -- (2143);
				\draw[green] (3142) -- (3124);
				
				\draw[magenta] (2413) -- (1423);
				\draw[blue] (2413) -- (2143);
				\draw[brown] (2413) -- (2314);
				
				\draw[black] (4123) -- (1423);
				\draw[magenta] (4123) -- (1423);
				\draw[orange] (4123) -- (2143);
				
				\draw[black] (3214) -- (2314);
				\draw[blue] (3214) -- (3124);
				
				\draw[brown] (1342) -- (1243);
				\draw[green] (1342) -- (1324);
				
				\draw[blue] (1423) -- (1243);
				\draw[brown] (1423) -- (1324);
				
				\draw[black] (2143) -- (1243);
				\draw[green] (2143) -- (2134);
				
				\draw[magenta] (2314) -- (1324);
				\draw[blue] (2314) -- (2134);
				
				\draw[black] (3124) -- (1324);
				\draw[magenta] (3124) -- (2134);
				
				\draw[green] (1243) -- (1234);
				\draw[blue] (1324) -- (1234);
				\draw[black] (2134) -- (1234);
				
			\end{tikzpicture}
			&
			\begin{tikzpicture}[xscale=0.7, yscale=1,shift={(0,-3)}]
				\node (4321) at (0,8) {$\gridDL4321$};
				\node (3421) at (-2,6) {$\gridDL3421$};
				\node (4312) at (2,6) {$\gridDL4312$};
				\node (3412) at (-2,4) {$\gridDL3412$};
				\node (4213) at (2,4) {$\gridDL4213$};
				\node (2413) at (-2,2) {$\gridDL2413$};
				\node (3214) at (2,2) {$\gridDL3214$};
				\node (2314) at (0,0) {$\gridDL2314$};
				\draw[black] (-0.25,7.2) edge (-2.25,6.1);
				\draw[blue] (-0.25,7.2) -- (1.75,6.1);
				\draw[blue] (-2.25,5.2) -- (-2.25,4.1);
				\draw[black] (1.75,5.2) -- (-2.25,4.1);
				\draw[brown] (1.75,5.2) -- (1.75,4.1);
				\draw[orange] (-2.25,3.2) -- (-2.25,2.1);
				\draw[brown] (-1,3.2) -- (1.75,2.1);
				\draw[black] (1.75,3.2) -- (-2.25,2.1);
				\draw[orange] (1.75,3.2) -- (1.75,2.1);
				\draw[orange] (-2.25,1.2) -- (-0.25,.1);
				\draw[black] (1.75,1.2) -- (-0.25,.1);
			\end{tikzpicture}
			\arrow[l,"\hor"] 
		\end{tikzcd}
		$$	
	\end{example}
	
	\newpage 
	
	Here is an example of the Hasse diagram of the smaller poset $\DL_{\ov{n}}(\lambda)$ of the same shape $\ov{n}:=(2,3,3,4)$ but the partition $\lambda=(2,2,1,1)$ has equal elements. So the poset is embedded in the parabolic Bruhat graph:
	
	\begin{equation}
		\label{ex::Bruhat::2211}
		\begin{tikzcd}
			\begin{tikzpicture}[xscale=1, yscale=1.7, shift={(0,-3)},
				node distance=0.8cm and 1.2cm, 
				every node/.style={draw, rectangle, font=\small},thick]
				\node (2211) at (0, 4) {\scalebox{0.7}{$\begin{array}{c} 2 \\ 2 \\ 1 \\ 1 \end{array}$}}; 
				\node (2121) at (0, 3) {\scalebox{0.7}{$\begin{array}{c} 2 \\ 1 \\ 2 \\ 1 \end{array}$}}; 
				\node[draw, fill=orange!35] (1221) at (-1, 2) {\scalebox{0.7}{$\begin{array}{c} 1 \\ 2 \\ 2 \\ 1 \end{array}$}}; 
				\node (2112) at (1, 2) {\scalebox{0.7}{$\begin{array}{c} 2 \\ 1 \\ 1 \\ 2 \end{array}$}};
				\node[draw, fill=orange!35] (1212) at (0, 1) {\scalebox{0.7}{$\begin{array}{c} 1 \\ 2 \\ 1 \\ 2 \end{array}$}};
				\node[draw, fill=orange!35] (1122) at (0, 0) {\scalebox{0.7}{$\begin{array}{c} 1 \\ 1 \\ 2 \\ 2 \end{array}$}};
				\draw (2211) -- (2121);
				\draw (2121) -- (1221);
				\draw (2121) -- (2112);
				\draw (1221) -- (1212);
				\draw (2112) -- (1212);
				\draw (1212) -- (1122);
			\end{tikzpicture}
			&
			\begin{tikzpicture}[xscale=0.8, yscale=1.25,shift={(0,-3)}]
				\node (2211) at (0,4) {$\gridDL2211$};
				\node (2112) at (0,2) {$\gridDL2112$};
				\node (1212) at (0,0) {$\gridDL1212$};
				\draw (2211) -- (2112);
				\draw (2112) -- (1212);
			\end{tikzpicture}
			\arrow[r,"\vrt"]
			\arrow[l,"\hor"']
			&
			\begin{tikzpicture}[xscale=1, yscale=1.5, shift={(0,-3)},
				node distance=0.8cm and 1.2cm, 
				every node/.style={draw, rectangle, minimum size=3mm, font=\small},thick]
				\node[draw, fill=orange!35] (2211) at (0, 4) {{2}{2}{1}{1}};
				\node (2121) at (0, 3) {{2}{1}{2}{1}};
				\node (1221) at (-1, 2) {{1}{2}{2}{1}};
				\node[draw, fill=orange!35] (2112) at (1, 2) {{2}{1}{1}{2}};
				\node[draw, fill=orange!35] (1212) at (0, 1) {{1}{2}{1}{2}};
				\node (1122) at (0, 0) {{1}{1}{2}{2}};
				\draw (2211) -- (2121);
				\draw (2121) -- (1221);
				\draw (2121) -- (2112);
				\draw (1221) -- (1212);
				\draw (2112) -- (1212);
				\draw (1212) -- (1122);
			\end{tikzpicture}
		\end{tikzcd}
	\end{equation}

	\begin{example}
		Let us now present the next nontrivial example of an arborescent poset with cardinality \( 5 \). We consider the set of staircase corners \( \St_{\ov{n}} \) corresponding to the partition \( \ov{n} := (2 3^2 5^2) \). 
		
		Below, we illustrate the Hasse diagram of \( \St_{\ov{n}} \) along with the antilinearization \( \vrt \), as well as the Hasse diagram of the poset \( \DL_{\ov{n}} \).
		$$
		\begin{tikzcd}
			\begin{tikzpicture}[scale=0.5, shift={(0,-.5)}]
				\draw[step=1cm] (0,1) grid (5,0);
				\node (v1) at (.5,.4) {{ \color{blue}$\bullet$ }};
				\node (v2) at (1.5,.4) {{ \color{blue}$\bullet$ }};
				\node (v3) at (2.5,.4) {{ \color{blue}$\bullet$ }};
				\node (v4) at (3.5,.4) {{ \color{blue}$\bullet$ }};
				\node (v5) at (4.5,.4) {{ \color{blue}$\bullet$ }};
				\draw  (2.5,.6) edge[blue,line width=1.1pt,->,bend right=90] (.5,.6);
				\draw (2.5,.6) edge[blue,line width=1.1pt,->,bend right=60] (1.5,.6);
				\draw (4.5,.6) edge[blue,line width=1.1pt,->,bend right=60] (3.5,.6);
			\end{tikzpicture},
			\\
			\arrow[u,"\vrt"']
			\begin{tikzpicture}[scale=0.4,shift={(0,2)}]
				\draw[step=1cm] (0,0) grid (5,-2);
				\draw[step=1cm] (1,0) grid (5,-3);
				\draw[step=1cm] (3,0) grid (5,-5);
				\node (v21) at (0.5,-1.7) {{ \color{blue}$\bullet$ }};
				\node (v32) at (1.5,-2.7) {{ \color{blue}$\bullet$ }};
				\node (v13) at (2.5,-.7) {{ \color{blue}$\bullet$ }};
				\node (v54) at (3.5,-4.7) {{ \color{blue}$\bullet$ }};
				\node (v45) at (4.5,-3.7) {{ \color{blue}$\bullet$ }};
				\draw  (2.5,-.5) edge[blue,line width=1.1pt,->] (.5,-1.5);
				\draw (2.5,-.5) edge[blue,line width=1.1pt,->] (1.5,-2.5);  
				\draw (4.5,-3.5) edge[blue,line width=1.1pt,->] (3.5,-4.5);  
			\end{tikzpicture}
		\end{tikzcd}
		\quad
		\begin{tikzpicture}[scale=0.45,shift={(0,+7)}]
			\node (v0) at (0,0) {$\fgrid{5}{4}{3}{2}{1}$};
			\node (v10) at (-3,-2) {$\fgrid{5}{4}{2}{3}{1}$};
			\node (v12) at (3,-2) {$\fgrid{4}{5}{3}{2}{1}$};
			\node (v20) at (-6,-4) {$\fgrid{5}{4}{1}{3}{2}$};
			\node (v21) at (0,-4) {$\fgrid{5}{3}{2}{4}{1}$};
			\node (v22) at (6,-4) {$\fgrid{4}{5}{2}{3}{1}$};
			\node (v30) at (-10,-6) {$\fgrid{5}{3}{1}{4}{2}$};
			\node (v31) at (-4,-6) {$\fgrid{4}{5}{1}{3}{2}$};
			\node (v32) at (4,-6) {$\fgrid{4}{3}{2}{5}{1}$};
			\node (v33) at (10,-6) {$\fgrid{3}{5}{2}{4}{1}$};
			\node (v40) at (-10,-8) {$\fgrid{5}{2}{1}{4}{3}$};
			\node (v41) at (-4,-8) {$\fgrid{4}{3}{1}{5}{2}$};
			\node (v42) at (4,-8) {$\fgrid{3}{5}{1}{4}{2}$};
			\node (v43) at (10,-8) {$\fgrid{3}{4}{2}{5}{1}$};
			\node (v50) at (-6,-10) {$\fgrid{4}{2}{1}{5}{3}$};
			\node (v51) at (0,-10) {$\fgrid{2}{5}{1}{4}{3}$};
			\node (v52) at (6,-10) {$\fgrid{3}{4}{1}{5}{2}$};
			\node (v60) at (-3,-12) {$\fgrid{3}{2}{1}{5}{4}$};
			\node (v61) at (3,-12) {$\fgrid{2}{4}{1}{5}{3}$};
			\node (v80) at (0,-14) {$\fgrid{2}{3}{1}{5}{4}$};
			\draw (0,-.3) -- (-3,-1.4);
			\draw (0,-.3) -- (3,-1.4);
			\draw (-3,-2.3) -- (-6,-3.4);
			\draw (-3,-2.3) -- (0,-3.4);
			\draw (-3,-2.3) -- (6,-3.4);
			\draw (3,-2.3) -- (6,-3.4);
			\draw (-6,-4.3) -- (-10,-5.4);
			\draw (-6,-4.3) -- (-4,-5.4);
			\draw (0,-4.3) -- (-10,-5.4);
			\draw (0,-4.3) -- (4,-5.4);
			\draw (0,-4.3) -- (10,-5.4);
			\draw (6,-4.3) -- (-4,-5.4);
			\draw (6,-4.3) -- (4,-5.4);
			\draw (6,-4.3) -- (10,-5.4);
			\draw (-10,-6.3) -- (-10,-7.4);
			\draw (-10,-6.3) -- (-4,-7.4);
			\draw (-10,-6.3) -- (4,-7.4);
			\draw (-4,-6.3) -- (-4,-7.4);
			\draw (-4,-6.3) -- (4,-7.4);
			\draw (4,-6.3) -- (-4,-7.4);
			\draw (4,-6.3) -- (10,-7.4);
			\draw (10,-6.3) -- (4,-7.4);
			\draw (10,-6.3) -- (10,-7.4);
			\draw (-10,-8.3) -- (-6,-9.4);
			\draw (-10,-8.3) -- (0,-9.4);
			\draw (-4,-8.3) -- (-6,-9.4);
			\draw (-4,-8.3) -- (6,-9.4);
			\draw (4,-8.3) -- (0,-9.4);
			\draw (4,-8.3) -- (6,-9.4);
			\draw (10,-8.3) -- (6,-9.4);
			\draw (-6,-10.3) -- (-4,-11.4);
			\draw (-6,-10.3) -- (4,-11.4);
			\draw (0,-10.3) -- (4,-11.4);
			\draw (6,-10.3) -- (-4,-11.4);
			\draw (6,-10.3) -- (4,-11.4);
			\draw (-4,-12.3) -- (0,-13.4);
			\draw (4,-12.3) -- (0,-13.4);

		\end{tikzpicture}
		$$
	\end{example}

\end{document}